\numberwithin{equation}{section}
\numberwithin{figure}{section}
\theoremstyle{plain}
\newtheorem{thm}{Theorem}[section]
  \theoremstyle{plain}
  \newtheorem{lem}[thm]{Lemma}
  \theoremstyle{plain}
  \newtheorem{cor}[thm]{Corollary}
  \theoremstyle{plain}
  \newtheorem{prop}[thm]{Proposition}
  \theoremstyle{definition}
  \newtheorem{rem}[thm]{Remark}
\numberwithin{equation}{section} 
\begin{document}

\title{Cover Times in the Discrete Cylinder%
\thanks{This research has been supported by the grant ERC-2009-AdG 245728-RWPERCRI.%
}}

\author{David Belius}
\date{}
\maketitle
\begin{abstract}
This article proves that, in terms of local times, the properly rescaled
and recentered cover times of finite subsets of the discrete cylinder
by simple random walk converge in law to the Gumbel distribution,
as the cardinality of the set goes to infinity. As applications we
obtain several other results related to covering in the discrete cylinder.
Our method is new and involves random interlacements, which were introduced
in \cite{Sznitman2007}. To enable the proof we develop a new stronger
coupling of simple random walk in the cylinder and random interlacements,
which is also of independent interest. 
\end{abstract}

\section*{0 Introduction}

In this article we prove precise results about the asymptotic distribution
of cover times of certain finite subsets of the discrete cylinder,
with base a $d-$dimensional torus for $d\ge2$, using the theory
of \emph{random interlacements}. For families of \emph{finite} graphs
the cover time $C_{V}$ of the whole vertex set $V$ has been extensively
studied (see for instance \cite{DemboPeresEtAl-CoverTimesforBMandRWin2D,aldous-fill:book,Aldous-OnTimeTaken...,Brummelhuis1991,Aldous-ThresholdLimitsforCT,MatthewsCoveringProbsForMCs,DevroyeSbihiRWonHighlySymmGraphs}).
For many families one can show that $\mathbb{E}C_{V}$ is of order
$c|V|\log|V|$, and also that $C_{V}/(c|V|\log|V|)\rightarrow1$ in
probability as $|V|\rightarrow\infty$ (see Chapter 6 of \cite{aldous-fill:book}).
For a quite restricted class of families of {}``especially nice graphs'',
one can also prove the finer result that $C_{V}/(c|V|)-\log|V|$ tends
in law to the standard Gumbel distribution (see \cite{MatthewsCoveringProbsForMCs,DevroyeSbihiRWonHighlySymmGraphs}).
In this article we are able to prove the corresponding statement for
the cover times of subsets $F$ of the discrete cylinder (seen as
an infinite graph): we show that $L_{C_{F}}/(cN^{d})-\log|F|$ tends
in law to the Gumbel distribution as $|F|\rightarrow\infty$, provided
the sets $F$ are close to the zero level, where $L_{C_{F}}$ is the
\emph{local time} at the zero level of the cylinder when $F$ is covered.
As applications we obtain several other results related to covering.
To prove the Gumbel distributional limit result we develop an improved
coupling of simple random walk in the cylinder and random interlacements,
which is also of independent interest.

We now introduce the objects of study and our results more precisely.
We denote by $\mathbb{T}_{N}=(\mathbb{Z}/N\mathbb{Z})^{d}$ the discrete
torus and by $E_{N}=\mathbb{T}_{N}\times\mathbb{Z}$ the discrete
cylinder for $d\ge2$. Let $P$ be the canonical law of simple random
walk in $E_{N}$ starting uniformly on the zero level $\mathbb{T}_{N}\times\{0\}$,
and let $X_{n}$ denote the canonical discrete time process. For any
finite set $F\subset E_{N}$ the cover time $C_{F}$ of $F$ is the
first time $X_{n}$ has visited every vertex of $F$: \[
C_{F}=\inf\{n\ge0:F\subset X(0,n)\},\]
where $X(0,n)$ denotes the set of vertices visited up to time $n$.

We start by stating the applications of our main result. In \prettyref{cor:RealCovTimeConv}
we show that if $F_{N}\subset\mathbb{T}_{N}\times[-\frac{N}{2},\frac{N}{2}]$
is a sequence of sets such that $|F_{N}|\rightarrow\infty$, then
under $P$\begin{equation}
\frac{C_{F_{N}}}{(N^{d}\log|F_{N}|)^{2}}\overset{\mbox{law}}{\rightarrow}\zeta(\frac{g(0)}{\sqrt{d+1}}),\mbox{ as }N\rightarrow\infty,\label{eq:RealCovTimeConvInformal}\end{equation}
where $\zeta(\tau)$ denotes the first time the local time at zero
of a Brownian motion reaches $\tau$ and $g(\cdot)$ is the $\mathbb{Z}^{d+1}$
Green function (see \prettyref{eq:DefOfZdGreensFunc}). In \cite{Sznitman-HowUniveral...,Sznitman2006}
the cover time $C_{\mathbb{T}_{N}\times\{0\}}$ was studied and found
to be of order $N^{2d+o(1)}$. The result \prettyref{eq:RealCovTimeConvInformal}
sharpens this and provides the correct form of the log correction
term.

To state our second application we introduce $L_{n}$, the local time
at zero of the $\mathbb{Z}-$component of $X_{n}$ (which we often
refer to as {}``the local time of the zero level''). For any $z\in\mathbb{R}$
let $\mathcal{N}_{N}^{z}$ be the point process on $(\mathbb{R}/\mathbb{Z})^{d}\times\mathbb{R}$
defined by: \begin{equation}
\mathcal{N}_{N}^{z}=\sum_{x\in\mathbb{T}_{N}\times\{0\}}\delta_{x/N}1_{\{L_{H_{x}}>N^{d}u(z)\}},\label{eq:PointProcOfPointsCovereLastDef}\end{equation}
where $u(z)=g(0)\{\log|\mathbb{T}_{N}\times\{0\}|+z\}$. In other
words $\mathcal{N}_{N}^{z}$ counts the vertices of $\mathbb{T}_{N}\times\{0\}$
that are hit after the local time of the zero level reaches $N^{d}u(z)$
(it will later become clear that $N^{d}u(0)$ is the {}``typical''
local time at which covering of the zero level of the cylinder is
completed). We call $\mathcal{N}_{N}^{z}$ the {}``point process
of vertices covered last''. Let $\lambda$ be Lebesgue measure on
$(\mathbb{R}/\mathbb{Z})^{d}\times\{0\}$. We show in \prettyref{cor:PointProcessConv}
that \begin{equation}
\begin{array}{c}
\mathcal{N}_{N}^{z}\mbox{ converges weakly to a Poisson point process}\\
\mbox{ on }(\mathbb{R}/\mathbb{Z})^{d}\times\mathbb{R}\mbox{ of intensity }\exp(-z)\lambda.\end{array}\label{eq:PointProcConvInformal}\end{equation}
As a consequence we obtain in \prettyref{cor:LastTwoIndians} that
\begin{equation}
\begin{array}{c}
\mbox{the last two vertices of }\mathbb{T}_{N}\times\{0\}\mbox{ to be visted by }X_{n}\\
\mbox{are "far apart" at typical distance of order }N.\end{array}\label{eq:LastTwoIndiansInformal}\end{equation}
The proofs of \prettyref{eq:PointProcConvInformal} and \prettyref{eq:LastTwoIndiansInformal}
also provide similar results with other subsets of $\mathbb{T}_{N}\times[-\frac{N}{2},\frac{N}{2}]$
in place of $\mathbb{T}_{N}\times\{0\}$ (for example for $\mathbb{T}_{N}\times[-\frac{N}{2},\frac{N}{2}]$
itself). The three applications \prettyref{eq:RealCovTimeConvInformal},
\prettyref{eq:PointProcConvInformal} and \prettyref{eq:LastTwoIndiansInformal}
are consequences of the following main theorem and the coupling (see
\prettyref{eq:IntroOneBoxCoupling} below):
\begin{thm}[Convergence to Gumbel]
\textup{\label{thm:GumbelForLocTime}}\textup{\emph{Let $F_{N}\subset\mathbb{T}_{N}\times[-\frac{N}{2},\frac{N}{2}],N\ge1$,
be a sequence of sets such that $|F_{N}|\rightarrow\infty$, as $N\rightarrow\infty$.
Then under $P$\begin{equation}
\frac{L_{C_{F_{N}}}}{g(0)N^{d}}-\log|F_{N}|\overset{\mbox{law}}{\rightarrow}G,\mbox{ as }N\rightarrow\infty,\label{eq:GumbelForLocTime}\end{equation}
where $G$ denotes the standard Gumbel distribution (see \eqref{eq:GumbelCDF}).}}
\end{thm}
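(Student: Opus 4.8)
The plan is to transfer the covering problem for simple random walk in the cylinder to the corresponding problem for random interlacements, where the relevant hitting configuration decouples nicely. Concretely, I would use the improved coupling between simple random walk run up to a time when the local time at the zero level equals $N^d u$ and the random interlacement at level $u$ (alluded to in the excerpt as \eqref{eq:IntroOneBoxCoupling}) to show that, on a high-probability event, the set of vertices of $F_N$ not yet hit by the walk coincides with the set of vertices of $F_N$ not covered by the interlacement at level (approximately) $u = g(0)^{-1} L_{C_{F_N}}/N^d$. Under this identification, $\{L_{C_{F_N}}/(g(0) N^d) - \log|F_N| \le z\}$ becomes, up to negligible error, the event that the random interlacement at level $u(z) = g(0)\{\log|F_N| + z\}$ covers all of $F_N$, i.e. $\bigl\{ F_N \subset \mathcal{I}^{u(z)} \bigr\}$ (with $\mathcal{I}^u$ the interlacement set). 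So the whole theorem reduces to computing $\lim_{N\to\infty} \mathbb{P}\bigl[F_N \subset \mathcal{I}^{u(z)}\bigr]$ and showing it equals the Gumbel CDF $\exp(-e^{-z})$.

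For the interlacement computation I would write $\mathbb{P}\bigl[F_N \subset \mathcal{I}^{u(z)}\bigr] = \mathbb{P}\bigl[F_N \cap \mathcal{V}^{u(z)} = \emptyset\bigr]$, where $\mathcal{V}^{u}$ is the vacant set, and use the fundamental property that for a single point $x$, $\mathbb{P}[x \in \mathcal{V}^{u}] = \exp(-u/g(0))$; with $u = u(z)$ this is exactly $|F_N|^{-1} e^{-z}$, so the expected number of uncovered points of $F_N$ converges to $e^{-z}$. Then I would run a second-moment / Poissonization argument: estimate $\mathbb{P}[x,y \in \mathcal{V}^{u(z)}]$ using the two-point function of interlacements, $\mathbb{P}[x,y\in\mathcal{V}^u] = \exp\bigl(-u \,\mathrm{cap}(\{x,y\})\bigr)$ up to the appropriate normalization, and show that for the vast majority of pairs in $F_N$ (those at distance much larger than $O(1)$) the capacity is close to $2/g(0)$ so correlations are asymptotically negligible, while the contribution of close pairs is controlled because $F_N \subset \mathbb{T}_N \times [-N/2, N/2]$ forces most points to be spread out (or, more robustly, a direct combinatorial bound shows close pairs contribute $o(1)$ to the relevant factorial moments). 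By the method of moments for the number of uncovered points, this number converges in law to a Poisson random variable of mean $e^{-z}$, hence $\mathbb{P}\bigl[F_N \subset \mathcal{I}^{u(z)}\bigr] \to e^{-e^{-z}}$.

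The remaining issue is to make the reduction to interlacements uniform enough in the level parameter $u$. Since $L_{C_{F_N}}$ is itself random, I cannot just couple at a single deterministic level; instead I would fix $z$, set $u_\pm = u(z) \pm \delta$ for small $\delta > 0$, couple simple random walk simultaneously with interlacements at these nearby levels, and use monotonicity in $u$ together with the already-established convergence of $\mathbb{P}[F_N \subset \mathcal{I}^{u(z\pm\delta')}]$ to $e^{-e^{-(z\pm\delta')}}$, to sandwich $\mathbb{P}[L_{C_{F_N}}/(g(0)N^d) - \log|F_N| \le z]$ between two quantities converging to values that are within $O(\delta')$ of $e^{-e^{-z}}$; letting $\delta' \to 0$ gives the claim. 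I also need that the local time $L_n$ at the zero level is, with overwhelming probability, already of order $N^d \log|F_N|$ by the cover time (which follows from the same interlacement comparison plus crude bounds), so that the coupling regime is the relevant one.

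The main obstacle I expect is the coupling step itself: controlling the error between the trace of simple random walk on (a neighborhood of) the zero level up to a prescribed local time and the random interlacement trace, uniformly over the family $F_N$ and over a window of levels, with an error probability that is $o(1)$. This is precisely the "new stronger coupling" advertised in the abstract and developed earlier in the paper; granting it as stated, the rest of the argument is the soft Poisson-approximation computation sketched above, whose only delicate point is the bookkeeping of short-range correlations among points of $F_N$.
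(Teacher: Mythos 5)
Your overall strategy --- transfer the covering problem to random interlacements via the coupling, then compute there via a Poisson/second-moment argument, then sandwich over nearby levels --- is the right skeleton and does match the paper's philosophy. The second-moment computation on the interlacements side is essentially Theorem~0.1 of \cite{Belius2010}, which the paper invokes as a black box \prettyref{eq:QuantCovLevResult-1}, so re-deriving it is unnecessary but not wrong.

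There is, however, a genuine gap in your transfer step, and it is precisely where the main technical work of the paper lies. You write that the coupling allows you to identify, with high probability, the set of uncovered vertices of $F_N$ with the vacant set of \emph{a single} random interlacement at the relevant level. The coupling \prettyref{thm:CouplingManyBoxes} does not provide this. It couples the walk's trace only inside boxes of side $N^{1-\varepsilon}$, and, crucially, it produces \emph{independent} interlacement copies in distinct boxes, with an error that degrades according to the mutual energy $\mathcal{E}(S_i+x_i, S_j+x_j)$ between the pieces (so the pieces must be far apart for the error to be small). For $F_N$ that are large and spatially dense --- the paradigmatic case $F_N = \mathbb{T}_N\times\{0\}$ with $|F_N|=N^d$ --- you cannot decompose $F_N$ into well-separated pieces each contained in a local box, so your single-interlacement identity $\{F_N \subset X(0,\cdot)\} \approx \{F_N \subset \mathcal{I}^{u}\}$ cannot be extracted from the available coupling. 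Your Poisson approximation therefore never gets off the ground in the regime that actually matters.

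The paper circumvents this with a two-case argument (\prettyref{pro:PropSmallSets} and \prettyref{pro:PropLargeSets}): if $|F_N|\le N^{1/8}$ then $F_N$ can indeed be split into far-apart pieces in local boxes and your plan (with independent interlacements patched back into one via \prettyref{eq:Lemma21Belius2010}) works; if $|F_N|>N^{1/8}$, one instead runs the walk for a fraction $(1-\rho)$ of the typical number of excursions, uses the coupling box-by-box together with a concentration estimate (\prettyref{lem:GoodEvent}) to show the set $F_N^{\rho}$ of surviving vertices is small ($\approx |F_N|^{\rho}\le N^{1/8}$) with high probability, then "forgets" the shape of $F_N^{\rho}$ across a long excursion and applies the small-set case to it. This bootstrapping step is the missing ingredient in your proposal; without it, the argument fails exactly for the sets the theorem is most interested in.
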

As mentioned in the first paragraph the class of finite graphs for
which one can obtain a Gumbel distributional limit for the cover time
is quite restricted (it includes the complete graph, the star graph
(see \cite{aldous-fill:book}) and graphs that are {}``highly symmetric''
in the sense of \cite{DevroyeSbihiRWonHighlySymmGraphs}, but for
example not the graph $\mathbb{T}_{N},d\ge3$). An additional interest
of \prettyref{thm:GumbelForLocTime} stems from the method we employ
in its proof, which relies on random interlacements. It is open whether
the method could be used to prove Gumbel distributional limits for
the cover times of other graphs; for more on this see \prettyref{rem:EndRemark}
(1).

Before describing the method in more detail let us briefly discuss
the random interlacement model. The model was introduced in \cite{Sznitman2007}
and helps to understand the {}``local picture'' left by a simple
random walk in e.g. the discrete torus $\mathbb{T}_{N},d\ge3$, (see
\cite{Windisch2008}) or the discrete cylinder $E_{N},d\ge2$, (see
\cite{Sznitman2009-RWonDTandRI}) when the walk is run up to times
of a suitable scale. The random interlacements consist of a Poisson
cloud of doubly infinite trajectories module time-shift in $\mathbb{Z}^{d},d\ge3$,
where $u$ multiplies the intensity. The trace of the trajectories
in the cloud up to a level $u$ is denoted by $\mathcal{I}^{u}\subset\mathbb{Z}^{d}$,
so that $(\mathcal{I}^{u})_{u\ge0}$ is an increasing family of random
sets. Intuitively speaking, for a value $u$ related to the time up
to which the random walk is run, the trace of the random walk in a
{}``local box'' in the torus or cylinder in some sense {}``looks
like'' $\mathcal{I}^{u}$. The previous sentence has further been
made precise in the case of the cylinder by means of a coupling in
\cite{Sznitman2009-OnDOMofRWonDCbyRI,Sznitman2009-UBonDTofDCandRI}.
The first main ingredient in the proof of \prettyref{thm:GumbelForLocTime}
is a strengthened version of this coupling. To state it we fix an
$\varepsilon\in(0,1)$ and let $A$ be a box of side length $N^{1-\varepsilon}$
with centre at $x$ for some $x\in\mathbb{T}_{N}\times[-\frac{N}{2},\frac{N}{2}]$.
We further let $R_{k}$ denote the successive returns to $\mathbb{T}_{N}\times[-N,N]$
and $D_{k}$ the successive departures from $\mathbb{T}_{N}\times(-h_{N},h_{N})$,
where $h_{N}$ has order $N(\log N)^{2}$, (see \prettyref{eq:rnbnbbtilde}).
Then the coupling result, see \prettyref{thm:CouplingManyBoxes},
implies that: \begin{eqnarray}
 & \mbox{For }N\mbox{ large enough, and for any }u\ge\frac{1}{\sqrt{N}},\delta\ge\frac{c_{2}}{(\log N)^{2}}\mbox{, we can construct a coupling}\nonumber \\
 & Q_{1}\mbox{ of }X_{\cdot}\mbox{ under }P\mbox{, and of joint random interlacements }\mathcal{I}^{u(1-\delta)},\mathcal{I}^{u(1+\delta)}\mbox{ for which}\nonumber \\
 & Q_{1}(\mathcal{I}^{u(1-\delta)}\cap A\subset X(0,D_{[uK_{N}]})\cap A\subset\mathcal{I}^{u(1+\delta)}\cap A)\ge1-cuN^{-3d-1},\label{eq:IntroOneBoxCoupling}\end{eqnarray}
where $K_{N}$ essentially equals $\frac{N^{d}}{(d+1)h_{N}}$ (see
\prettyref{eq:DefOfKn}). In fact (and importantly for our proof of
\prettyref{thm:GumbelForLocTime}), \prettyref{thm:CouplingManyBoxes}
is stronger that what is stated in \eqref{eq:IntroOneBoxCoupling}
because it couples the trace of $X_{\cdot}$ in several disjoint regions
of the cylinder with independent random interlacements, as long as
these regions are {}``far apart''.

An interest of \prettyref{eq:IntroOneBoxCoupling} is that it couples
$X(0,D_{[uK_{N}]})$ with \emph{joint} random interlacements $\mathcal{I}^{u(1-\delta)}$
and $\mathcal{I}^{u(1+\delta)}$ (combining the one-sided couplings
of \cite{Sznitman2009-UBonDTofDCandRI,Sznitman2009-OnDOMofRWonDCbyRI}
to get a two-sided coupling does not guarantee the correct joint law
of $\mathcal{I}^{u(1-\delta)}$ and $\mathcal{I}^{u(1+\delta)}$).
This makes it more useful as a {}``transfer mechanism'' from random
interlacements to random walk; see \prettyref{rem:EndRemark} (2)
for more on this topic.

Thanks to the Poissonian structure of random interlacements one has
a number of algebraic properties that only hold approximately for
the trace of random walk (cf. \prettyref{eq:lawofIuCAPK}, \prettyref{eq:markovpropforI},
\prettyref{eq:RIOneandTwoPoint}). In \cite{Belius2010} we could
take advantage of this feature and give a precise result for the asymptotic
distributions of so called \emph{cover levels }in random interlacements.
The cover level a of a finite set $F\subset\mathbb{Z}^{d+1}$ is:
\begin{equation}
\tilde{C}_{F}=\inf\{u\ge0:F\subset\mathcal{I}^{u}\}.\label{eq:coverleveldef}\end{equation}
Theorem 0.1 of \cite{Belius2010} implies that, in the notation of
\eqref{eq:GumbelForLocTime}: \begin{equation}
\frac{\tilde{C}_{F}}{g(0)}-\log|F|\overset{\mbox{law}}{\rightarrow}G,\mbox{ as }|F|\rightarrow\infty.\label{eq:coverlevelgumbnonquant}\end{equation}

The method used to prove \prettyref{thm:GumbelForLocTime} is essentially
speaking to combine \prettyref{eq:coverlevelgumbnonquant} with the
coupling \prettyref{eq:IntroOneBoxCoupling}. It will turn out that
when the local time $L_{n}$ of the zero level is $uN^{d}$ then,
roughly speaking, there have been about $[uK_{N}]$ excursions (see
\prettyref{eq:ExcTimeToLocTime}). On the other hand \prettyref{eq:IntroOneBoxCoupling}
intuitively says that after $[uK_{N}]$ excursions the picture left
in a local box (meaning a box of side length $N^{1-\varepsilon},\varepsilon>0$)
looks like random interlacements at level $u$. Thus {}``when the
local time at the zero level is $uN^{d}$ the picture in a local box
looks like $\mathcal{I}^{u}$'' (and this also holds simultaneously
for the picture left in several {}``distant'' regions contained
in local boxes). Now \eqref{eq:coverlevelgumbnonquant} essentially
speaking says that $\tilde{C}_{F}$ is close in distribution to $g(0)\{\log|F|+G\}$,
and thus we roughly find that if $F$ is contained in one or several
{}``distant'' local boxes then $L_{C_{F}}$, the local time at the
zero level when $F$ is covered, is close in distribution to $N^{d}g(0)\{\log|F|+G\}$.
But this is the intuitive meaning of \eqref{eq:GumbelForLocTime}.
When $F$ is contained in one or several {}``distant'' local boxes
this intuitive explanation can be turned into a rigorous proof.

However sets like $F_{N}=\mathbb{T}_{N}\times\{0\}$ can not be split
into pieces that are contained in distant local boxes. To deal with
this problem we consider two cases. The first, considered in \prettyref{pro:PropSmallSets},
is when the $F_{N}$ are small in the sense that $|F_{N}|\le N^{1/8}$.
It turns out that we can split such small sets into pieces $S_{1},S_{2},...,S_{k}$
such that the pieces are contained in {}``distant'' local boxes,
so that we are in the situation discussed in the previous paragraph
and can prove that the limit distribution is the Gumbel distribution.

The second case, considered in \prettyref{pro:PropLargeSets}, is
when the sets are {}``large'' in the sense that $|F_{N}|>N^{1/8}$.
It turns out that such a set is typically covered completely when
the \emph{local }time (at the zero level) reaches roughly $N^{d}g(0)\log|F_{N}|$.
We consider the set $F_{N}^{\rho}$ of vertices not covered when the
local time at the zero level reaches a fraction $(1-\rho)$ of the
typical local time $N^{d}g(0)\log|F_{N}|$ (in fact $F_{N}^{\rho}$
will be defined in terms of excursions). By tiling the cylinder with
local boxes, using the coupling \prettyref{eq:IntroOneBoxCoupling}
once for each box, and using a calculation inside the random interlacements
model we are able to show (for appropriate values of $\rho$) that
$F_{N}^{\rho}$ is with high probability {}``small'' in the sense
that $|F_{N}^{\rho}|\le N^{1/8}$ and that $|F_{N}^{\rho}|$ concentrates
around its typical value, which turns out to be $|F_{N}|^{\rho}$.
By excluding a short segment of the random walk (when it is far away
from $F_{N}$ and thus does not affect $F_{N}^{\rho}$) during which
it {}``forgets'' the shape of $F_{N}^{\rho}$ we will show that
the way in which $X_{\cdot}$ covers $F_{N}^{\rho}$ is essentially
the same as the way an independent random walk would cover $F_{N}^{\rho}$.
Thus $L_{C_{F_{N}}}$ should be close in distribution to $(1-\rho)N^{d}g(0)\log|F_{N}|+L_{C_{F'}}$,
where $F'$ is independent from $X_{\cdot}$ and distributed as $F_{N}^{\rho}$.
Since $F_{N}^{\rho}$ is {}``small'' with high probability we can
apply the previous case for typical realisations of $F'$ to get that
$L_{C_{F'}}$ is close in distribution to $N^{d}g(0)\{\log|F^{'}|+G\}$.
Since $|F_{N}^{\rho}|$ concentrates around $|F_{N}|^{\rho}$ we find
that $\log|F'|\approx\rho\log|F_{N}|$ so adding $L_{C_{F'}}$ to
the deterministic part $(1-\rho)N^{d}g(0)\log|F_{N}|$ we get that
$L_{C_{F_{N}}}$ has law close to $N^{d}g(0)\{\log|F_{N}|+G\}$ (which
is the intuitive interpretation of \prettyref{eq:GumbelForLocTime}).

We now describe how this article is organized. In \prettyref{sec:Notation}
we fix notation, recall some standard results on random walks and
random interlacements and prove some preliminary lemmas. In \prettyref{sec:Applications}
we use our main result \prettyref{thm:GumbelForLocTime} to prove
\prettyref{eq:RealCovTimeConvInformal}, \prettyref{eq:PointProcConvInformal}
and \prettyref{eq:LastTwoIndiansInformal}. In \prettyref{sec:ConvToGumbel}
we then prove \prettyref{thm:GumbelForLocTime}, using the full version
of the coupling \prettyref{eq:IntroOneBoxCoupling} (i.e. \prettyref{thm:CouplingManyBoxes}),
and a quantitative version of \prettyref{eq:coverlevelgumbnonquant}
(see \prettyref{eq:QuantCovLevResult-1}). The proof of \prettyref{thm:CouplingManyBoxes}
is contained in sections 4, 5 and 6.

Finally a note on constants. Named constants are denoted by $c_{0},c_{1},..$
and have fixed values. Unnamed constants are denoted by $c$ and may
change from line to line and within formulas. All constants are strictly
positive and unless otherwise indicated they only depend on $d$.
Further dependence on e.g. parameters $\alpha,\beta$ is denoted by
$c(\alpha,\beta)$.

\section{\label{sec:Notation}Notation and and some useful results}

In this section we fix notation and recall some known results about
random walk and random interlacements. We also state and prove \prettyref{lem:GreensFuncDecayInCylinder}
which gives an upper bound on a certain killed Green function in the
cylinder, \prettyref{lem:ExcTimeToLocTime} which relates local time
of the random walk to excursion times and to Brownian local time,
and \prettyref{lem:Calc} which gives a bound on certain sums of the
{}``two point function'' in the random interlacements model.

In this article $\mathbb{N}=\{0,1,2,...\}$. For any real $x\ge0$
we denote the integer part of $x$ by $[x]$. If $U$ is a set $|U|$
denotes the cardinality of $U$.

We denote by $|\cdot|_{\infty}$ and $|\cdot|$ the $l_{\infty}$
and Euclidean norms on $\mathbb{R}^{d+1}$ and by $d_{\infty}(\cdot,\cdot)$
and $d(\cdot,\cdot)$ the corresponding induced distances on $(\mathbb{R}/\mathbb{Z})^{d}\times\mathbb{R}$,
$\mathbb{Z}^{d+1}$, and $E_{N}$. For any two sets $A,B\subset\mathbb{Z}^{d+1}$
or $A,B\subset E_{N}$ we denote their mutual Euclidean distance $\inf_{x\in A,y\in B}d(x,y)$
by $d(A,B)$. The closed $l_{\infty}$-ball centred at $x$ in $\mathbb{Z}^{d+1}$
or $E_{N}$ of radius $R$ is denoted by $B(x,R)$. For any set $U\subset\mathbb{Z}^{d+1}$
or $E_{N}$ we define the inner and outer boundaries by\[
\partial_{i}U=\{x\in U:d(\{x\},U^{c})=1\}\mbox{ and }\partial_{e}U=\{x\in U^{c}:d(\{x\},U)=1\}.\]
A trajectory (or path) is a sequence $w(n),n\in\mathbb{N}$, in $\mathbb{Z}^{d+1}$
or $E_{N}$ such that $d(w(n+1),w(n))\le1$ for all $n\ge0$. We define
the trace of the trajectory as follows:\begin{equation}
w(a,b)=\{x:w(n)=x\mbox{ for some }n\in[a,b]\},a\le b\mbox{ in }\mathbb{N}.\label{eq:TraceDefinition}\end{equation}
We write $\mathcal{T}$ for the space of trajectories in $E_{N}$
and $W$ for the space of trajectories in $\mathbb{Z}^{d+1}$. For
any set $F\subset E_{N}$ or $F\subset\mathbb{Z}^{d+1}$ we write
$\mathcal{T}_{F}$ for the countable subset of $\mathcal{T}$ consisting
of trajectories that are contained in $F\cup\partial F$ and stay
constant after a finite time. The canonical coordinates on $\mathcal{T}$
and $W$ are denoted by $(X_{n})_{n\ge0}$ and the canonical shift
by $(\theta_{n})_{n\ge0}$. For a subset $U$ of $E_{N}$ or $\mathbb{Z}^{d+1}$
we define the entrance time $H_{U}$, the hitting time $\tilde{H}_{U}$,
and the exit time $T_{U}$ by:\begin{eqnarray*}
H_{U} & = & \inf\{n\ge0:X_{n}\in U\},\tilde{H}_{U}=\inf\{n\ge1:X_{n}\in U\},\\
T_{U} & = & \inf\{n\ge0:X_{n}\notin U\}.\end{eqnarray*}
When $U$ is the singleton $\{x\}$ we write $H_{x}$ or $\tilde{H}_{x}$
for simplicity. We define the special levels $r_{N},h_{N}$ and the
special slabs $B,\tilde{B}$ of $E_{N}$ by \begin{equation}
r_{N}=N,h_{N}=[N(2+(\log N)^{2})]\mbox{ and }B=\mathbb{T}_{N}\times[-r_{N},r_{N}],\tilde{B}=\mathbb{T}_{N}\times(-h_{N},h_{N}).\label{eq:rnbnbbtilde}\end{equation}
The successive returns to $B$ and departures from $\tilde{B}$ are
given by \begin{equation}
\begin{array}{cl}
 & R_{1}=H_{B},D_{1}=T_{\tilde{B}}\circ\theta_{R_{1}}+R_{1},\mbox{ and for }k\ge1,\\
 & R_{k+1}=R_{1}\circ\theta_{D_{k}}+D_{k},\mbox{ and }D_{k}=D_{1}\circ\theta_{D_{k}}+D_{k}.\end{array}\label{eq:ExcursionDef}\end{equation}
For $x\in\mathbb{Z}^{d+1}$ we denote by $P_{x}^{\mathbb{Z}^{d+1}}$
the law on $W$ of simple random starting at $x$. For $x\in E_{N}$
we denote by $P_{x}$ the law on $\mathcal{T}$ of simple random starting
at $x$. If $e$ is a measure on $\mathbb{Z}^{d+1}$ or $E_{N}$ we
denote by $P_{e}^{\mathbb{Z}^{d+1}},P_{e}$ the measures $\sum_{a}e(a)P_{e}^{\mathbb{Z}^{d+1}}$
and $\sum_{a}e(a)P_{e}$ respectively. A special role will be played
by the measures

\begin{equation}
q=\frac{1}{2N^{d}}\sum_{x\in\mathbb{T}_{N}\times\{-r_{N},r_{N}\}}\delta_{x}\mbox{ and }q_{z}=\frac{1}{N^{d}}\sum_{x\in\mathbb{T}_{N}\times\{z\}}\delta_{x},z\in\mathbb{Z}.\label{eq:qnotation}\end{equation}
Note that the measure $P$ that appears in the introduction coincides
with $P_{q_{0}}$. For any finite $K\subset\mathbb{Z}^{d+1}$ we define
the escape probability (or equilibrium measure) $e_{K}$ and capacity
$\textnormal{cap}(K)$ by\begin{eqnarray}
 &  & e_{K}(x)=P_{x}^{\mathbb{Z}^{d+1}}(\tilde{H}_{K}=\infty)1_{K}(x)\mbox{ and }\textnormal{cap}(K)=\sum_{x\in K}e_{K}(x).\label{eq:DefOfCapInZd}\end{eqnarray}
If $K\subset U\subset E_{N}$ with $U$ finite, then we define the
escape probability and capacity of $K$ relative to $U$ by\begin{eqnarray}
 &  & e_{K,U}(x)=P_{x}(\tilde{H}_{K}>T_{U})1_{K}(x)\mbox{ and }\textnormal{cap}_{U}(K)=\sum_{x\in K}e_{K,U}(x).\label{eq:DefOfCapInCylinder}\end{eqnarray}
We define the $\mathbb{Z}^{d+1}$ Green function by\begin{equation}
g(x,y)=\sum_{n\ge0}P_{x}^{\mathbb{Z}^{d+1}}(X_{n}=y)\mbox{ and }g(\cdot)=g(\cdot,0)\mbox{ for }x,y\in\mathbb{Z}^{d+1}.\label{eq:DefOfZdGreensFunc}\end{equation}
The Green function killed on exiting $U$ for $U\subset\mathbb{Z}^{d+1}$
is defined by\[
g_{U}(x,y)=\sum_{n\ge0}P_{x}^{\mathbb{Z}^{d+1}}(X_{n}=y,n<T_{U}),\]
and similarly for $U\subset E_{N}$ with $P_{x}$ in place of $P_{x}^{\mathbb{Z}^{d+1}}$.
Classically, if $K\subset U\subset E_{N}$ with $U$ finite, then
for all $x\in U$\begin{equation}
P_{x}(H_{K}<T_{U})=\sum_{y\in K}g_{U}(x,y)e_{K,U}(y).\label{eq:HittingToGreenEquil}\end{equation}
For two disjoint sets $S_{1},S_{2}\subset\tilde{B}$ we define their
{}``mutual energy'' relative to $\tilde{B}$: \begin{equation}
\mathcal{E}(S_{1},S_{2})=\sum_{x\in S_{1},y\in S_{2}}e_{S_{1},\tilde{B}}(x)g_{\tilde{B}}(x,y)e_{S_{2},\tilde{B}}(y).\label{eq:DefOfMutualEnergy}\end{equation}
The following classical bounds on the Green function $g(x)$ follow
Theorem 1.5.4 p. 31 of \cite{LawlersLillaGrona}:
\begin{lem}
($d\ge2$) For all non-zero $x\in\mathbb{Z}^{d+1}$\begin{equation}
c|x|^{1-d}\le g(x)\le c|x|^{1-d}.\label{eq:ZdGreensFuncBound}\end{equation}

\end{lem}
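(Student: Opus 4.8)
The plan is to read \eqref{eq:ZdGreensFuncBound} off the quoted result of \cite{LawlersLillaGrona} directly, handling the finitely many points near the origin by an elementary compactness argument. Note first that the inequality, as displayed, has the letter $c$ on both sides only because of the paper's convention that unnamed constants may change from line to line; what is to be shown is that there exist $0<c_-\le c_+<\infty$, depending only on $d$, with $c_-|x|^{1-d}\le g(x)\le c_+|x|^{1-d}$ for all non-zero $x\in\mathbb{Z}^{d+1}$.

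First I would invoke Theorem 1.5.4 of \cite{LawlersLillaGrona}. Since $d\ge 2$, the dimension $d+1$ is at least $3$, so simple random walk on $\mathbb{Z}^{d+1}$ is transient and its Green function admits the asymptotic expansion $g(x)=a_{d+1}|x|^{-(d-1)}+O(|x|^{-(d+1)})$ as $|x|\to\infty$, where $a_{d+1}>0$ depends only on $d$; here the exponent $2-(d+1)=1-d$ is exactly the one in \eqref{eq:ZdGreensFuncBound}. Consequently there is $R_{0}=R_{0}(d)$ such that for every $x$ with $|x|\ge R_{0}$ one has $\tfrac12 a_{d+1}|x|^{1-d}\le g(x)\le 2a_{d+1}|x|^{1-d}$, which gives the claimed two-sided bound in that range.

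It remains to treat the lattice points $x$ with $1\le |x|<R_{0}$, of which there are only finitely many. On each such $x$ the value $g(x)$ lies in $(0,\infty)$: it is finite by transience and strictly positive because $0$ is reachable from $x$ with positive probability. Hence $m:=\min_{1\le|x|<R_{0}}g(x)>0$ and $M:=\max_{1\le|x|<R_{0}}g(x)<\infty$. Because $d\ge 2$, the map $t\mapsto t^{1-d}$ is decreasing, so $R_{0}^{1-d}\le |x|^{1-d}\le 1$ on this range; therefore $g(x)\ge m\ge m|x|^{1-d}$ and $g(x)\le M\le MR_{0}^{d-1}|x|^{1-d}$. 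Combining the two regimes, i.e.\ taking $c_-=\min(m,\tfrac12 a_{d+1})$ and $c_+=\max(MR_{0}^{d-1},2a_{d+1})$, yields \eqref{eq:ZdGreensFuncBound} for all non-zero $x$.

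There is no real obstacle here, as the statement merely repackages a standard estimate; the only point requiring a little care is keeping the direction of the inequalities straight for the power $|x|^{1-d}$, which is decreasing in $|x|$ since $d\ge 2$, when passing between the large-$|x|$ asymptotic regime and the finite set of points close to the origin.
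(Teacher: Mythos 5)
Your proof is correct and takes the same approach the paper intends: the paper simply attributes \eqref{eq:ZdGreensFuncBound} to Theorem 1.5.4 of \cite{LawlersLillaGrona} without spelling out the deduction, and what you have written — read off the large-$|x|$ asymptotic $g(x)=a_{d+1}|x|^{1-d}+O(|x|^{-(d+1)})$ and handle the finitely many points with $1\le|x|<R_0$ by positivity and finiteness of $g$ there — is exactly the routine two-step argument that citation is standing in for. The only substantive care you needed, and took, was the monotonicity of $t\mapsto t^{1-d}$ when absorbing the bounded region into the constants.
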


We also have similar bounds on $g_{\tilde{B}}(x,y)$:
\begin{lem}
\label{lem:GreensFuncDecayInCylinder}$(d\ge2,N\ge1)$ For any $x,y\in B$
with $x\ne y$\begin{equation}
c|x-y|^{1-d}\le g_{\tilde{B}}(x,y)\le c|x-y|^{1-d}+c\frac{h_{N}}{N^{d}}.\label{eq:GreensFunctionBoundInCylinder}\end{equation}
\end{lem}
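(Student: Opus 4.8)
The plan is to exploit the product structure $E_N=\mathbb{T}_N\times\mathbb{Z}$ at the level of the driving randomness and reduce everything to one–dimensional and toroidal heat–kernel estimates. Realise the walk under $P_x$, writing $x=(\bar x,x_{d+1})$ with $\bar x\in\mathbb{T}_N$, as $X_n=(\hat Y_{K_n},\hat Z_{J_n})$, where $\hat Y$ is simple random walk on $\mathbb{T}_N$, $\hat Z$ is simple random walk on $\mathbb{Z}$, the ``schedule'' $(J_n,K_n)_{n\ge0}$ (with $J_n+K_n=n$) increments its first coordinate with probability $\frac{1}{d+1}$ and its second with probability $\frac{d}{d+1}$ at each step, and $\hat Y$, $\hat Z$ and the schedule are independent. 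Since $\tilde B=\mathbb{T}_N\times(-h_N,h_N)$ depends only on the $\mathbb{Z}$–coordinate, $\{n<T_{\tilde B}\}=\{J_n<\hat T\}$ with $\hat T$ the exit time of $\hat Z$ from $(-h_N,h_N)$; conditioning on the schedule and summing over $n$ gives the exact identity
\begin{equation}
g_{\tilde B}(x,y)=\sum_{j,k\ge0}m(j,k)\,\bar p_j(x_{d+1},y_{d+1})\,p^{\mathbb{T}_N}_k(\bar x,\bar y),\qquad m(j,k)=\binom{j+k}{j}\frac{d^k}{(d+1)^{j+k}},
\end{equation}
where $\bar p_j$ is the transition kernel of $\hat Z$ killed outside $(-h_N,h_N)$ and $p^{\mathbb{T}_N}_k$ that of $\hat Y$. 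One checks $\sum_k m(j,k)=d+1$, and given $j$ the weight $m(j,\cdot)$ is $(d+1)$ times a negative binomial law, concentrated at $\asymp(j+1)d$ with fluctuations $\asymp\sqrt{j+1}$, so $m(j,k)\asymp(j+1)^{-1/2}$ for $k$ within $O(\sqrt{j+1})$ of its mode.

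\emph{Upper bound.} I would insert the standard bounds $\bar p_j(a,b)\le C(j\vee1)^{-1/2}e^{-c|a-b|^2/(j\vee1)}$, $\sum_j\bar p_j(a,b)=\sum_j\bar p_j(a,b)\le Ch_N$ (the killed one–dimensional Green function), and $p^{\mathbb{T}_N}_k(u,v)\le CN^{-d}+C(k\vee1)^{-d/2}e^{-c|u-v|^2/(k\vee1)}$. Splitting $p^{\mathbb{T}_N}_k$ into its two pieces, the $N^{-d}$ piece contributes $CN^{-d}(d+1)\sum_j\bar p_j(x_{d+1},y_{d+1})\le C'(d+1)N^{-d}h_N\le ch_N/N^d$ --- this is where the slab width $h_N$ enters, through the one–dimensional Green function. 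For the Gaussian piece I would first sum over $k$ for fixed $j$: using the concentration of $m(j,\cdot)$ one gets $\sum_k m(j,k)(k\vee1)^{-d/2}e^{-c|u-v|^2/(k\vee1)}\le C(j\vee1)^{-d/2}e^{-c'|u-v|^2/(j\vee1)}$ up to a correction exponentially small in $j$, so the double sum is bounded by $C\sum_{j\ge1}j^{-(d+1)/2}e^{-c''|x-y|^2/j}+(\text{negligible})\asymp|x-y|^{1-d}$. This yields $g_{\tilde B}(x,y)\le c|x-y|^{1-d}+ch_N/N^d$, in fact for all $x,y\in\tilde B$.

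\emph{Lower bound.} Since all summands are nonnegative I would retain only $j$ in the block $[\,|x-y|^2,2|x-y|^2\,]$ with $j\equiv|x_{d+1}-y_{d+1}|\bmod 2$, and for each such $j$ only the $\asymp\sqrt{j}$ values of $k$ within $\sqrt{j+1}$ of the mode of $m(j,\cdot)$ having the parity of $|\bar x-\bar y|_1$. For $x,y\in B$ one has $|x-y|\le cN$, so $j\le cN^2\ll h_N^2$, while $x_{d+1},y_{d+1}\in[-N,N]$ lie at distance $\ge h_N-N\gg|x-y|$ from $\pm h_N$; hence killing changes $\bar p_j(x_{d+1},y_{d+1})$ by at most $P[\hat Z\ \text{exits}\ (-h_N,h_N)\ \text{by time}\ j]\le e^{-c(h_N-N)^2/j}=o(j^{-1/2})$, so $\bar p_j\ge\tfrac12 p^{\mathbb{Z}}_j\ge cj^{-1/2}$ on this block by the one–dimensional local CLT. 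On the same block $m(j,k)\asymp j^{-1/2}$ and $p^{\mathbb{T}_N}_k(\bar x,\bar y)\ge ck^{-d/2}$ (here $|\bar x-\bar y|\le\sqrt k$ since $k\asymp jd\ge|x-y|^2$). Summing the resulting $\asymp j^{-1/2}\cdot j^{-1/2}\cdot j^{-d/2}$ over the $\asymp\sqrt j$ admissible $k$ and then over $j\asymp|x-y|^2$ gives $g_{\tilde B}(x,y)\ge c\sum_{j\asymp|x-y|^2}j^{-(d+1)/2}\ge c|x-y|^{1-d}$.

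The step I expect to cost the most work is the Gaussian half of the upper bound: the passage from $p^{\mathbb{T}_N}_k$ to $\bar p_j$ through the weights $m(j,k)$ is in effect a convolution of a one–dimensional and a $d$–dimensional heat profile along the schedule, and making $\sum_k m(j,k)(k\vee1)^{-d/2}e^{-c|u-v|^2/(k\vee1)}\le C(j\vee1)^{-d/2}e^{-c'|u-v|^2/(j\vee1)}$ uniform over the whole range of $|u-v|$ (in particular for $|u-v|$ comparable to $N$ and $j$ small) requires splitting the $k$–sum into the bulk and the tail of $m(j,\cdot)$ and separately pairing the small–$j$ terms against the Gaussian factor in $\bar p_j$; everything else is the exact identity above together with classical local–CLT bounds for $\mathbb{Z}$ and $\mathbb{T}_N$.
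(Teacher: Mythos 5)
Your proof is correct in its essentials, but it follows a genuinely different route from the paper's. The paper ``unwraps'' the torus directions and writes
$g_{\tilde{B}}(x,y)=\sum_{n\in\mathbb{Z}^{d},\,n\cdot e=0}g_{U}(x',y'+nN)$
with $U=\mathbb{Z}^{d}\times(-h_{N},h_{N})$ the infinite slab; the lower bound then comes in one line from $g_U\ge g_{B(y',h_N/3)}$ and Lawler's Proposition 1.5.9, and the upper bound from a citation of (2.13) of \cite{SznitmanNewExofRWRE}, which provides a bound on the slab Green function $g_U(x',y'+nN)$ with exponential decay in $|n|N/h_N$; summing over $n$ immediately produces the $c|x-y|^{1-d}+ch_N/N^d$ form. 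You instead split the walk into a torus SRW, a one-dimensional SRW and a Bernoulli schedule, and then push through local CLT / heat kernel estimates component by component; the $N^{-d}$ term in the torus heat kernel pairs with the $O(h_N)$ one-dimensional killed Green function to give the $h_N/N^d$ contribution, while the Gaussian pieces recombine through the negative-binomial weights $m(j,\cdot)$ into $|x-y|^{1-d}$. The exact identity and the scalings you use (concentration of $m(j,\cdot)$ at $k\asymp jd$, $\sum_k m(j,k)=d+1$, $|x-y|^2=|a-b|^2+|u-v|^2$, $j\lesssim N^2\ll h_N^2$ so killing is negligible on the block used for the lower bound) are all right, and the parity bookkeeping is handled correctly. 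The paper's approach is much shorter because it imports a strong slab estimate from the literature, whereas yours is more elementary and essentially self-contained, at the cost of more work in the step you flagged: making the convolution bound $\sum_k m(j,k)(k\vee1)^{-d/2}e^{-c|u-v|^2/(k\vee1)}\le C(j\vee1)^{-d/2}e^{-c'|u-v|^2/(j\vee1)}$ uniform requires splitting into the bulk and tails of $m(j,\cdot)$ as you describe. Both are legitimate proofs of the lemma.
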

\begin{proof}
Let $e$ denote the vector $(0,...,0,1)\in\mathbb{Z}^{d+1}$. By {}``unwrapping''
the cylinder $E_{N}$ we see that for any $x,y\in\tilde{B}$ \begin{equation}
g_{\tilde{B}}(x,y)=\sum_{n\in\mathbb{Z}^{d+1},n\cdot e=0}g_{U}(x',y'+nN),\label{eq:GreensFunctionUnwrapping}\end{equation}
where $U=\{x\in\mathbb{Z}^{d+1}:|x\cdot e|<h_{N}\}$ and $x',y'\in\mathbb{Z}^{d}\times[-r_{N},r_{N}]$
are representatives in $\mathbb{Z}^{d+1}$ of $x,y$ such that $|x-y|=|x'-y'|$.
Now the lower bound of \prettyref{eq:GreensFunctionBoundInCylinder}
is a consequence of $g_{\tilde{B}}(x,y)\ge g_{U}(x',y')\ge g_{B(y',\frac{1}{3}h_{N})}(x',y')\ge c|x'-y'|^{1-d}$
where the last inequality follows from Proposition 1.5.9 p. 35 of
\cite{LawlersLillaGrona}. Furthermore it follows from (2.13) of \cite{SznitmanNewExofRWRE}
with $L=h_{N}$ that if $n\cdot e=0$\begin{eqnarray*}
 &  & g_{U}(x',y'+nN)\le c|x-y|^{1-d}1_{\{|n|<3\}}+\frac{1}{(|n|N)^{d-1}}\exp\left(-c\frac{N|n|}{h_{N}}\right)1_{\{|n|\ge3\}}.\end{eqnarray*}
But $\sum_{n\in\mathbb{Z}^{d}}\frac{1}{|n|^{d-1}}\exp(-c\frac{N|n|}{h_{N}})\le c\frac{h_{N}}{N}$,
so summing over $n$ in \prettyref{eq:GreensFunctionUnwrapping} one
obtains the upper bound of \prettyref{eq:GreensFunctionBoundInCylinder}.
\end{proof}
Note that thanks to \prettyref{eq:GreensFunctionBoundInCylinder}
we have the following bound on $\mathcal{E}(S_{1},S_{2})$ when $S_{1},S_{2}\subset B$:\begin{eqnarray}
\begin{array}{ccl}
\mathcal{E}(S_{1},S_{2}) & \overset{\prettyref{eq:DefOfCapInCylinder},\prettyref{eq:GreensFunctionBoundInCylinder}}{\le} & c\textnormal{cap}_{\tilde{B}}(S_{1})\textnormal{cap}_{\tilde{B}}(S_{2})\left\{ (d(S_{1},S_{2}))^{1-d}+\frac{h_{N}}{N^{d}}\right\} \\
 & \overset{\prettyref{eq:DefOfCapInCylinder}}{\le} & c|S_{1}||S_{2}|\left\{ (d(S_{1},S_{2}))^{1-d}+\frac{h_{N}}{N^{d}}\right\} .\end{array}\label{eq:BoundOnMutualEnergy}\end{eqnarray}
The equalities contained in the following lemma will be essential:
\begin{lem}
($N\ge3$) For all $K\subset\mathbb{T}_{N}\times(-r_{N},r_{N})$\begin{equation}
P_{q}(H_{K}<T_{\tilde{B}},X_{H_{K}}=x)=\frac{1}{K_{N}}e_{K,\tilde{B}}(x),x\in K\mbox{ and }\label{eq:GeometricLemma}\end{equation}
\begin{equation}
P_{q}(H_{K}<T_{\tilde{B}},(X_{H_{K}+\cdot})\in dw)=\frac{1}{K_{N}}P_{e_{K,\tilde{B}}}(dw),\label{eq:GeometricLemmaStrongMarkov}\end{equation}
where \begin{equation}
K_{N}=\frac{N^{d}}{(d+1)(h_{N}-r_{N})}.\label{eq:DefOfKn}\end{equation}
\end{lem}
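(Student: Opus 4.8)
The plan is to establish \eqref{eq:GeometricLemma} first, since \eqref{eq:GeometricLemmaStrongMarkov} follows from it by the strong Markov property applied at the stopping time $H_K$ on the event $\{H_K < T_{\tilde B}\}$. For \eqref{eq:GeometricLemma}, the key observation is that a random walk started from the uniform measure $q$ on $\mathbb{T}_N\times\{-r_N,r_N\}$ will, before exiting $\tilde B$, either hit $K$ or exit $\tilde B$ through $\mathbb{T}_N\times\{-h_N,h_N\}$; and by symmetry and homogeneity of the torus slab under $\mathbb{Z}^d$-translations, the quantity $P_q(H_K < T_{\tilde B}, X_{H_K} = x)$ should be expressible via the equilibrium measure $e_{K,\tilde B}$. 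Concretely, I would start from the standard identity relating hitting to the killed Green function and equilibrium measure: reversing the walk, $P_q(H_K<T_{\tilde B}, X_{H_K}=x)$ equals $\sum_{y} q(y)\, g_{\tilde B}(y,x)\, \widehat{?}$ — but cleaner is to use the last-exit decomposition, writing
\begin{equation}
P_q(H_K < T_{\tilde B}, X_{H_K}=x) = \sum_{y \in \mathbb{T}_N\times\{-r_N,r_N\}} q(y)\, g_{\tilde B}(y,x)\, \ldots
\end{equation}
and instead I think the right route is to compute $P_x(\tilde H_K > T_{\tilde B})$ (which enters $e_{K,\tilde B}$) against the Green function normalisation.

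The cleanest approach: for $x \in K$, use a last-exit (equilibrium) decomposition of the walk under $P_q$ killed at $T_{\tilde B}$. By reversibility of simple random walk and the fact that $q$ is (a multiple of) the restriction of the stationary measure (counting measure) to the boundary slices $\mathbb{T}_N\times\{\pm r_N\}$, one gets that the harmonic measure of $K$ from $q$ inside $\tilde B$ has the form $c_N\, e_{K,\tilde B}(x)$ for a constant $c_N$ not depending on $K$ or $x$. To identify $c_N = 1/K_N$ I would apply the formula to the simplest possible choice of $K$, namely a single full slice $K = \mathbb{T}_N\times\{0\}$ (or a slice $\mathbb{T}_N\times\{j\}$ for $|j|<r_N$), for which both sides can be evaluated by a one-dimensional computation: the $\mathbb{Z}$-component of $X$ performs a lazy (or suitably time-changed) one-dimensional random walk, and $e_{K,\tilde B}$ for such a slice is constant on the slice and computable, while $P_q(H_K < T_{\tilde B})$ and $\mathrm{cap}_{\tilde B}(K)$ reduce to gambler's-ruin-type quantities on $\{-h_N,\dots,h_N\}$. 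Summing \eqref{eq:GeometricLemma} over $x\in K$ also gives the useful companion identity $P_q(H_K<T_{\tilde B}) = \mathrm{cap}_{\tilde B}(K)/K_N$, which is really the content one wants.

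More explicitly I expect the one-dimensional computation to go as follows: for the slice $K=\mathbb{T}_N\times\{0\}$, the projection of the walk onto the $\mathbb{Z}$-coordinate is a simple random walk that moves in the $\mathbb{Z}$-direction with probability $\tfrac{1}{d+1}$ at each step and is otherwise constant in that coordinate; started at height $\pm r_N$ it hits $0$ before leaving $(-h_N,h_N)$ with probability $(h_N - r_N)/h_N$ by the gambler's ruin estimate, so $P_q(H_K<T_{\tilde B}) = (h_N-r_N)/h_N$. On the other hand $\mathrm{cap}_{\tilde B}(K) = \sum_{x\in K} P_x(\tilde H_K > T_{\tilde B}) = N^d\, P_0^{1\mathrm d}(\text{lazy walk leaves } (-h_N,h_N) \text{ before returning to }0)$, and the latter probability is $\tfrac{1}{d+1}\cdot \tfrac{1}{h_N}\cdot 2 \cdot \tfrac12 = \tfrac{1}{(d+1)h_N}$ up to the precise bookkeeping of the laziness factor, giving $\mathrm{cap}_{\tilde B}(K) = N^d/((d+1)h_N)$ — wait, this must be reconciled with the $(h_N - r_N)$ in \eqref{eq:DefOfKn}, and indeed the correct statement is that after conditioning/translating one should compare inside the slab between levels $\pm r_N$, so the relevant effective length is $h_N - r_N$; the constant then comes out as $c_N = \mathrm{cap}_{\tilde B}(K)/P_q(H_K<T_{\tilde B})^{-1}$... in any case $c_N = 1/K_N$ with $K_N$ as in \eqref{eq:DefOfKn}.

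The main obstacle I anticipate is not conceptual but is the careful verification that the harmonic measure from $q$ is exactly proportional to $e_{K,\tilde B}$ with a $K$-independent constant — this requires a genuine reversibility / last-exit-decomposition argument (simple random walk on $E_N$ restricted to $\tilde B$ is reversible with respect to counting measure, $q$ is proportional to counting measure on the slices $\mathbb{T}_N\times\{\pm r_N\}$, and no trajectory from those slices can exit $\tilde B$ without first either hitting $K$ or passing through $\mathbb{T}_N \times\{\pm(h_N-1)\}$ adjacent to the boundary) — together with getting the one-dimensional gambler's-ruin constants exactly right (including the laziness factor $1/(d+1)$ and the off-by-one conventions distinguishing $h_N$, $h_N - 1$, $r_N$) so that the final constant matches $K_N = N^d/((d+1)(h_N - r_N))$ rather than some nearby expression. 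Everything else is bookkeeping: \eqref{eq:GeometricLemmaStrongMarkov} is then immediate from \eqref{eq:GeometricLemma} and the strong Markov property at $H_K$, since on $\{H_K < T_{\tilde B}\}$ the post-$H_K$ trajectory is, conditionally on $X_{H_K}=x$, distributed as $P_x$.
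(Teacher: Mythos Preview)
Your handling of \eqref{eq:GeometricLemmaStrongMarkov} via the strong Markov property at $H_K$ matches the paper exactly. For \eqref{eq:GeometricLemma} the paper simply cites Lemma~1.1 of \cite{Sznitman2009-UBonDTofDCandRI}, so your proposal is more ambitious in attempting a self-contained argument. However, there is a genuine gap precisely at the step you flag as ``the main obstacle'': you assert that reversibility and a last-exit decomposition yield $P_q(H_K < T_{\tilde B}, X_{H_K} = x) = c_N\, e_{K,\tilde B}(x)$ with $c_N$ independent of $K$ and $x$, but you never actually carry this out, and a generic reversibility argument does not by itself produce a $K$-independent constant.

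What fills the gap is the following. Time reversal gives
\[
P_q(H_K < T_{\tilde B}, X_{H_K} = x) = \frac{1}{2N^d}\, E_x[V_K],
\]
where $V_K$ counts visits to $S = \mathbb{T}_N \times \{\pm r_N\}$ before $\tilde H_K \wedge T_{\tilde B}$. The key point is that $V := \#\{n < T_{\tilde B}: X_n \in S\}$ satisfies $E_z[V] = 2(d+1)(h_N - r_N)$ for \emph{every} $z \in \mathbb{T}_N \times (-r_N, r_N)$: this reduces to the killed Green function of the lazy one-dimensional $\mathbb{Z}$-component on $(-h_N,h_N)$, and a gambler's-ruin computation shows that $g^Z_{(-h_N,h_N)}(j,r_N) + g^Z_{(-h_N,h_N)}(j,-r_N)$ is independent of $j \in (-r_N,r_N)$. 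Applying the strong Markov property at $\tilde H_K$ (noting $X_{\tilde H_K} \in K \subset \mathbb{T}_N \times (-r_N, r_N)$, so the constancy just established applies) then gives $E_x[V - V_K] = E_x[V]\, P_x(\tilde H_K < T_{\tilde B})$, hence $E_x[V_K] = E_x[V]\, e_{K,\tilde B}(x)$. This yields the proportionality and the constant $1/K_N$ simultaneously, making your separate test-case step unnecessary --- though your test-case computation is essentially correct and works as a sanity check once the constants are tracked carefully.
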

\begin{proof}
\eqref{eq:GeometricLemma} follows from Lemma 1.1 of \cite{Sznitman2009-UBonDTofDCandRI}
and \eqref{eq:GeometricLemmaStrongMarkov} follows from \eqref{eq:GeometricLemma}
by an application of the strong Markov property.
\end{proof}
Incidentally \prettyref{eq:GeometricLemma} can be used to see that
$\mbox{cap}_{\tilde{B}}(\{x\})\le\mbox{cap}_{\tilde{B}}(K)$ when
$x\in K\subset\mathbb{T}_{N}\times(-r_{N},r_{N})$ and therefore together
with the bound $\mbox{cap}_{\tilde{B}}(\{x\})\ge P_{x}^{\mathbb{Z}^{d+1}}(T_{B(y,\frac{1}{4}N)}>\tilde{H}_{x})-\sup_{y\in\partial B(y,\frac{1}{4}N)}g_{\tilde{B}}(x,y)\overset{\prettyref{eq:GreensFunctionBoundInCylinder}}{\ge}c$
valid for all $x\in\mathbb{T}_{N}\times[r_{N},r_{N}]$ and $N\ge c$
we see that \begin{equation}
\mathcal{E}(S_{1},S_{2})\overset{\prettyref{eq:DefOfMutualEnergy},\prettyref{eq:GreensFunctionBoundInCylinder}}{\ge}cN^{1-d}\mbox{ for all }N\ge1\mbox{ and non-empty }S_{1},S_{2}\subset\mathbb{T}_{N}\times(-r_{N},r_{N}).\label{eq:MutualEnergyLowerBound}\end{equation}

The local time of $X_{n}$ at the zero level (or equivalently the
local time at $0$ of the $\mathbb{Z}-$component of $X_{n}$) is
denoted by\begin{equation}
L_{n}=|\{i\in[0,n]:X_{i}\in\mathbb{T}_{N}\times\{0\}\}|,n\in\mathbb{N},\label{eq:DefOfLocTime}\end{equation}
and the first time the local time at the zero level is at least $u$
by\begin{equation}
\gamma_{u}=\inf\{n\ge0:L_{n}\ge u\},u\ge0.\label{eq:DefOfGamma}\end{equation}
Similarly we define\begin{equation}
\zeta(u)=\inf\{t\ge0:\hat{L}_{t}\ge u\},\label{eq:DefOfZeta}\end{equation}
where the continuous process $\hat{L}_{t}$ is the local time at zero
of a canonical Brownian motion. Note that by the scaling invariance
of Brownian motion \begin{equation}
\zeta(u)\mbox{ satisfies the scaling relation }\zeta(u)\overset{\textnormal{law}}{=}u^{2}\zeta(1),u>0.\label{eq:ZetaScalingRelation}\end{equation}
The cumulative distribution function of $\zeta(u)$ is known explicitly
(see e.g. Theorem 2.3 p. 240 of \cite{RevuzYor-ContMartAndBM}) it
is the continuous function\begin{equation}
F(z)=1_{\{z>0\}}\sqrt{\frac{2}{\pi}}\int_{-\infty}^{-\frac{u}{\sqrt{z}}}e^{-x^{2}/2}dx.\label{eq:ZetaCDF}\end{equation}
The following lemma relates $\gamma_{u}$ to the excursion times $D_{k}$
and to the law of $\zeta(\cdot)$:
\begin{lem}
\label{lem:ExcTimeToLocTime}For any $N\ge c$, $\frac{1}{2}>\delta\ge c_{0}\frac{r_{N}}{h_{N}}$
and $u$ such that $uK_{N}\ge2$ we have \begin{equation}
\begin{array}{ccc}
P(D_{[(1-\delta)uK_{N}]}<\gamma_{uN^{d}}\le D_{[(1+\delta)uK_{N}]})\ge1-c\exp(-cu\sqrt{N}).\end{array}\label{eq:ExcTimeToLocTime}\end{equation}
Also under $P$, for any fixed $N\ge3$,\begin{equation}
\frac{\gamma_{u}}{u^{2}}\overset{\mbox{law}}{\rightarrow}\zeta(\frac{1}{\sqrt{d+1}}),\mbox{ as }u\rightarrow\infty.\label{eq:RWLocTimeToBMLocTime}\end{equation}
\end{lem}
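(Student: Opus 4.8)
The plan is to treat the two displays in \prettyref{eq:ExcTimeToLocTime} and \prettyref{eq:RWLocTimeToBMLocTime} separately.

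For \prettyref{eq:ExcTimeToLocTime}, the structural point is that the zero-level local time only increases while $X_{\cdot}$ is inside $\tilde B$: on an interval $(D_k,R_{k+1})$ the $\mathbb{Z}$-component has modulus $\ge r_N$, so $L_{\cdot}$ is constant there, and under $P=P_{q_{0}}$ one has $R_1=0$. Hence, writing $\ell_k$ for the increment of $L_{\cdot}$ over $[D_{k-1},D_k]$ (with $D_0:=0$), we have $L_{D_m}=\sum_{k=1}^{m}\ell_k$, and since $\gamma_v\le m\Leftrightarrow L_m\ge v$,
\[
\big\{D_{[(1-\delta)uK_N]}<\gamma_{uN^d}\le D_{[(1+\delta)uK_N]}\big\}^{c}\subset\big\{L_{D_{[(1-\delta)uK_N]}}\ge uN^d\big\}\cup\big\{L_{D_{[(1+\delta)uK_N]}}<uN^d\big\},
\]
so it suffices to bound each event on the right by $c\exp(-cu\sqrt N)$. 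Using the strong Markov property at the times $D_k$ (where $X_{D_k}$ lies on $\mathbb{T}_N\times\{\pm h_N\}$, and re-entry to $B$ occurs on $\mathbb{T}_N\times\{\pm r_N\}$, on which $q$ is uniform) together with the translation and reflection symmetry of $E_N$, the variables $(\ell_k)_{k\ge1}$ are independent, with $\ell_1\overset{\mathrm{law}}{=}L_{T_{\tilde B}}$ under $P_{q_{0}}$ and $\ell_k\overset{\mathrm{law}}{=}L_{T_{\tilde B}}$ under $P_q$ for $k\ge2$.

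Next I would identify the law of $\ell_k$ using that the $\mathbb{Z}$-component of $X_{\cdot}$ is the lazy nearest-neighbour walk on $\mathbb{Z}$ that stays put with probability $d/(d+1)$: a one-dimensional killed-Green-function (gambler's ruin) computation, combined with the fact that each sojourn at $0$ is an independent geometric variable of mean $d+1$, gives $\mathbb{E}[\ell_k]=(d+1)(h_N-r_N)=N^d/K_N$ for $k\ge2$ and $\mathbb{E}[\ell_1]=(d+1)h_N$, and exhibits $\ell_k$ as, up to an independent Bernoulli factor, a geometric sum of those geometric sojourns, hence sub-exponential with variance proxy $O(h_N^{\,2})$ and scale $O(h_N)$. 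Feeding $n\asymp uK_N$ summands, a deviation of order $\delta uN^d$ from the mean, and these parameters into Bernstein's inequality, and then substituting $h_N\asymp N(\log N)^2$, $K_N\asymp N^{d-1}/(\log N)^2$ and the hypothesis $\delta\ge c_0r_N/h_N\asymp(\log N)^{-2}$, one checks that both regimes of Bernstein's bound, $t^2/(nv^2)\gtrsim uN^{d-1}/(\log N)^6$ and $t/b\gtrsim uN^{d-1}/(\log N)^4$, yield an exponent $\ge u\sqrt N$ as soon as $N$ is large; on the complementary range, where $\delta uK_N$ stays bounded and hence (given $uK_N\ge2$ and $d\ge2$) $u\sqrt N$ stays bounded, the asserted inequality is trivial after enlarging $c$. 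I expect this verification — that the Bernstein exponent really dominates $u\sqrt N$ uniformly over all admissible $(N,u,\delta)$ with $N\ge c$ — to be the main bookkeeping obstacle; pinning down the correct sub-exponential parameters of $\ell_k$ is the main conceptual point.

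For \prettyref{eq:RWLocTimeToBMLocTime}, the key remark is that for $N\ge3$ the $\mathbb{Z}$-component $(Z_n)$ of $X_{\cdot}$ under $P$ is exactly the lazy nearest-neighbour walk on $\mathbb{Z}$ from $0$ with staying probability $d/(d+1)$ — a process that does not depend on $N$ — and $L_n=|\{i\le n:Z_i=0\}|$. Its one-step increment has variance $\sigma^2=1/(d+1)$, so by Donsker's theorem $(n^{-1/2}Z_{[nt]})_{t\ge0}$ converges in law to $\sigma B$, and by the invariance principle for local times of one-dimensional walks (e.g.\ via a Skorokhod embedding of $Z$ into $\sigma B$) the rescaled visit counts $(n^{-1/2}L_{[nt]})_{t\ge0}$ converge in law to the occupation density at $0$ of $\sigma B$, which is $\sigma^{-1}\hat L_t=\sqrt{d+1}\,\hat L_t$ — the normalisation being fixed by the local CLT, since $\mathbb{E}|\{i\le n:Z_i=0\}|\sim\sigma^{-1}\sqrt{2n/\pi}=\mathbb{E}[\sqrt{d+1}\,\hat L_n]$. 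Applying the first-passage functional $g\mapsto\inf\{t:g(t)\ge1\}$, which is a.s.\ continuous at the limit (for fixed $\tau$ the set $\{t:\hat L_t=\tau\}$ is a.s.\ a single point), gives $\gamma_{\sqrt n}/n\Rightarrow\inf\{t:\sqrt{d+1}\,\hat L_t\ge1\}=\zeta(1/\sqrt{d+1})$ along integers $n$. Finally, for real $u\to\infty$ I would put $n=\lfloor u^2\rfloor$ and use monotonicity of $\gamma$ in its subscript to sandwich $\gamma_{\sqrt n}\le\gamma_u\le\gamma_{\sqrt{n+1}}$; since $n/u^2\to1$ and both endpoints divided by $u^2$ converge in law to $\zeta(1/\sqrt{d+1})$, so does $\gamma_u/u^2$. (Alternatively, for integer $u$, $\gamma_u$ is a sum of $u-1$ i.i.d.\ return times to $0$ with tail $\sim\sqrt{2/\pi}\,\sigma\,t^{-1/2}$, and Feller's stable limit theorem gives $u^{-2}\gamma_u\Rightarrow S$ with $\mathbb{E}[e^{-\lambda S}]=e^{-\sigma\sqrt{2\lambda}}$, which by $\mathbb{E}[e^{-\lambda\zeta(\tau)}]=e^{-\tau\sqrt{2\lambda}}$, consistent with \prettyref{eq:ZetaCDF}, equals $\zeta(\sigma)=\zeta(1/\sqrt{d+1})$.) For this part the delicate points are getting the local-time normalisation right so that the limit is exactly $\zeta(1/\sqrt{d+1})$ and not a rescaled version, and invoking the invariance principle with a mode of convergence strong enough to pass to the first-passage inverse.
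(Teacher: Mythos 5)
Your proof is correct, and on both halves it takes a route that is genuinely parallel to, but not identical with, the paper's. For \prettyref{eq:ExcTimeToLocTime} you decompose $L_{D_m}$ into the iid per-excursion local-time increments $\ell_k$ (each a Bernoulli-modulated compound geometric) and apply a single Bernstein bound; the paper instead performs a two-layer decomposition, writing $L_{D_n}=\sum_{k\le V_1+\dots+V_n}(\tilde D_k-\tilde R_k)$ with $V_n$ the number of level-zero sojourns in the $n$-th excursion, and applies Chernoff bounds separately to the geometric sum $\sum V_n$ and to the geometric sojourn lengths, landing on the same exponent $\delta^2uK_N$. Your approach is more compact, but it shifts the burden onto identifying the sub-exponential parameters of $\ell_k$ (scale $O(h_N)$, variance proxy $O(h_N^2)$) — which is fine, since the compound-geometric structure makes this explicit — while the paper stays with pure geometric variables whose MGFs are closed-form. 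One small point worth making explicit in your write-up: the mean of $\ell_1$ differs from that of $\ell_k$, $k\ge 2$, by $(d+1)r_N$, and one needs $c_0$ chosen so that this shift is dominated by the deviation budget $\delta uN^d/2$, which is exactly the role of the hypothesis $\delta\ge c_0 r_N/h_N$ combined with $uK_N\ge 2$. For \prettyref{eq:RWLocTimeToBMLocTime} the contrast is sharper: the paper invokes Cs\'aki--R\'ev\'esz strong invariance, giving an a.s.\ coupling $\sup_n n^{-3/8}|(d+1)\hat L_{n/(d+1)}-L_n|<\infty$ and then concluding by a sandwich argument that never needs to discuss continuity of a functional; you use weak convergence (Donsker plus the local-time invariance principle, or the cleaner Feller stable-limit argument for sums of iid return times) and then apply the first-passage map, justifying its a.s.\ continuity at the limit via a.s.\ continuity of the stable subordinator $\zeta$ at the fixed level $1$. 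Your normalisation $\sqrt{d+1}\,\hat L_t$ (occupation-density local time at $0$ of $\sigma B$ with $\sigma^2=1/(d+1)$) matches the paper's $(d+1)\hat L_{\cdot/(d+1)}$ after scaling, so the limit $\zeta(1/\sqrt{d+1})$ comes out the same. Both of your routes for this part are legitimate; the strong-invariance route the paper uses is shorter once one accepts the black box, whereas your Feller alternative is arguably the most elementary and self-contained.
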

\begin{proof}
We start with \prettyref{eq:ExcTimeToLocTime}. Note that\begin{equation}
\{\gamma_{a}\le b\}\overset{\prettyref{eq:DefOfLocTime},\prettyref{eq:DefOfGamma}}{=}\{a\le L_{[b]}\}\mbox{ for real }a,b\ge0.\label{eq:GammaLocRelation}\end{equation}
Thus it suffices to show (using also that $\delta^{2}K_{N}\ge c\sqrt{N}$)
\begin{equation}
P(L_{D_{[(1-\delta)uK_{N}]}}<uN^{d}\le L_{D_{[(1+\delta)uK_{N}]}})\ge1-c\exp(-c\delta^{2}uK_{N}).\label{eq:ExcLocSuffToShow}\end{equation}
Define the successive returns $\tilde{R}_{k},k\ge1$, to $\mathbb{T}_{N}\times\{0\}$
and departures $\tilde{D}_{k},k\ge1$, from $\mathbb{T}_{N}\times\{0\}$
analogously to \prettyref{eq:ExcursionDef} with $\mathbb{T}_{N}\times\{0\}$
replacing both $B$ and $\tilde{B}$. Let $V_{n}=|\{k:\tilde{D}_{k}\in[R_{n},D_{n}]\}|$
denote the number of contiguous intervals of time spent in the zero
level during the $n-$th excursion between $B$ and $\partial_{e}\tilde{B}$.
Then \begin{equation}
L_{D_{n}}=\sum_{k=1}^{V_{1}+...+V_{n}}(\tilde{D}_{k}-\tilde{R}_{k})\mbox{ for all }n\ge1.\label{eq:LDnDecomposition}\end{equation}
By the strong Markov property $V_{n},n\ge1$, are independent and
$V_{1}$ is geometric with support $\{1,2,...\}$ and parameter $\frac{1}{h_{N}}$
(the probability $P_{x}(\tilde{R}_{2}>D_{1})$ when $x\in\mathbb{T}_{N}\times\{0\}$)
and $V_{n}\overset{\mbox{law}}{=}UV_{1}$ for $n\ge2$, where $U$
is a Bernoulli random variable, independent from $V_{1}$, with $\mathbb{P}(U=0)=\frac{r_{N}}{h_{N}}$
(the probability that $X_{n}$ leaves $\tilde{B}$ before hitting
$\mathbb{T}_{N}\times\{0\}$, when starting in $\partial_{i}B$).
By a standard large deviation bound using the exponential Chebyshev
inequality and the small exponential moments of $\frac{V_{i}}{h_{N}}$
we see that if $\delta\ge4\frac{r_{N}}{h_{N}}$ then\begin{eqnarray*}
P(V_{1}+...+V_{[(1+\delta)uK_{N}]}\le(1+\frac{\delta}{2})uK_{N}h_{N}) & \le & \exp(-c\delta^{2}uK_{N})\mbox{ and }\\
P(V_{1}+...+V_{[(1-\delta)uK_{N}]}\ge(1-\frac{\delta}{2})uK_{N}h_{N}) & \le & \exp(-c\delta^{2}uK_{N}).\end{eqnarray*}
Combining this with \prettyref{eq:LDnDecomposition} we see that \prettyref{eq:ExcLocSuffToShow}
(and therefore also \prettyref{eq:ExcTimeToLocTime}) follows once
we show that\begin{equation}
P(\sum_{k=1}^{[(1-\frac{\delta}{2})uK_{N}h_{N}]}(\tilde{D}_{k}-\tilde{R}_{k})<uN^{d}\le\sum_{k=1}^{[(1+\frac{\delta}{2})uK_{N}h_{N}]}(\tilde{D}_{k}-\tilde{R}_{k}))\ge1-c\exp(-c\delta^{2}uK_{N}).\label{eq:SuffToShow2}\end{equation}
Now by the strong Markov property $\tilde{D}_{k}-\tilde{R}_{k},k\ge1,$
are independent geometric random variables with support $\{1,2,...\}$
and parameter $\frac{1}{d+1}$ (the probability that $X_{n+1}\notin\mathbb{T}_{N}\times\{0\}$
conditioned on $X_{n}\in\mathbb{T}_{N}\times\{0\}$). Therefore by
a standard large deviation bound we see that\begin{eqnarray*}
P(\sum_{k=1}^{[(1-\frac{\delta}{2})uK_{N}h_{N}]}(\tilde{D}_{k}-\tilde{R}_{k})\ge(1-\frac{\delta}{4})(d+1)uK_{N}h_{N}) & \le & c\exp(-c\delta^{2}uK_{N}h_{N})\mbox{ and }\\
P(\sum_{k=1}^{[(1+\frac{\delta}{2})uK_{N}h_{N}]}(\tilde{D}_{k}-\tilde{R}_{k})\le(d+1)uK_{N}h_{N}) & \le & c\exp(-c\delta^{2}uK_{N}h_{N}).\end{eqnarray*}
From this \prettyref{eq:SuffToShow2} follows by observing that $(1-\frac{\delta}{4})(d+1)uK_{N}h_{N}\overset{\prettyref{eq:DefOfKn}}{=}uN^{d}\frac{1-\delta/4}{1-r_{N}/h_{N}}\le uN^{d}\overset{\prettyref{eq:DefOfKn}}{\le}(d+1)uK_{N}h_{N}$
if $\delta\ge4\frac{r_{N}}{h_{N}}$. This completes the proof of \prettyref{eq:ExcTimeToLocTime}.

We now turn to \prettyref{eq:RWLocTimeToBMLocTime}. For fixed $N\ge3$
let $Z_{n}$ denote the $\mathbb{Z}-$component of $X_{n}$. Then
$L_{n}$ is the local time of $Z_{n}$ at zero. By (1.22) of \cite{StrongInvForLocTime}
we can couple $L_{\cdot}$ with $\hat{L}_{\cdot}$, the local time
at $0$ of a Brownian motion, so that\begin{equation}
\sup_{n\ge1}n^{-3/8}|(d+1)\hat{L}_{\frac{n}{d+1}}-L_{n}|<\infty\mbox{ a.s.}\label{eq:RWLocTimeBMLocTimeCouple}\end{equation}
For any $u\ge0,z\ge0$ we have $P(\gamma_{u}\le zu^{2})=P(u\le L_{[zu]^{2}})$
by $\prettyref{eq:GammaLocRelation}$ and thus it follows from \prettyref{eq:RWLocTimeBMLocTimeCouple}
that for any $\alpha\in(0,1)$\begin{equation}
\begin{array}{cll}
\underline{\lim}_{u\rightarrow\infty}P(u(1+\alpha)\le(d+1)\hat{L}_{\frac{[zu^{2}]}{d+1}}) & \le & \underline{\overline{\lim}}_{u\rightarrow\infty}P(\gamma_{u}\le zu^{2})\\
 & \le & \overline{\lim}_{u\rightarrow\infty}P(u(1-\alpha)\le(d+1)\hat{L}_{\frac{[zu^{2}]}{d+1}}).\end{array}\label{eq:Sandwhich}\end{equation}
But by \prettyref{eq:DefOfZeta} and \prettyref{eq:ZetaScalingRelation}
we have $P(u(1\pm\alpha)\le(d+1)\hat{L}_{\frac{[zu^{2}]}{d+1}})=P(\zeta(\frac{1}{\sqrt{d+1}})\le\frac{[zu^{2}]}{u^{2}(1\pm\alpha)^{2}})$
and therefore from \prettyref{eq:Sandwhich}\[
\mathbb{P}(\zeta(\frac{1}{\sqrt{d+1}})\le\frac{z}{(1+\alpha)^{2}})\overset{\prettyref{eq:ZetaCDF}}{\le}\underline{\overline{\lim}}_{u\rightarrow\infty}P(\gamma_{u}\le zu^{2})\overset{\prettyref{eq:ZetaCDF}}{\le}\mathbb{P}(\zeta(\frac{1}{\sqrt{d+1}})\le\frac{z}{(1-\alpha)^{2}}).\]
Thus taking $\alpha\rightarrow0$ we get \prettyref{eq:RWLocTimeToBMLocTime}.
\end{proof}
In the proof of the coupling result \prettyref{eq:IntroOneBoxCoupling}
(i.e. \prettyref{thm:CouplingManyBoxes}) the first step is to couple
random walk with so called {}``Poisson Processes of Excursions''.
To introduce them we first define for any law $e$ on $E_{N}$ the
probability\begin{equation}
\kappa_{e}(dw)=P_{e}(X_{\cdot\wedge T_{\tilde{B}}}\in dw).\label{eq:KappaDefinition}\end{equation}
A special role will be played by $\kappa_{q}$ where $q$ as in \prettyref{eq:qnotation}.
A {}``Poisson process of excursions'' is a Poisson process on the
space $\mathcal{T}_{\tilde{B}}$ (see below \prettyref{eq:TraceDefinition})
of intensity which is a multiple of\begin{equation}
\nu(dw)=K_{N}\kappa_{q}(dw)=K_{N}P_{q}(X_{\cdot\wedge T_{\tilde{B}}}\in dw).\label{eq:PPoEIntensity}\end{equation}
If $\mu=\sum_{n\ge0}\delta_{w_{n}}$ is a point process on one of
the spaces $\mathcal{T}_{\tilde{B}}$ or $W$ we define the trace
$\mathcal{I}(\mu)$ of $\mu$ by (see \eqref{eq:TraceDefinition}
for notation)\begin{equation}
\mathcal{I}(\mu)=\bigcup_{n\ge0}w_{n}(0,\infty)\subset E_{N}\mbox{ or }\mathbb{Z}^{d+1}.\label{eq:ITraceDef}\end{equation}

We now recall some facts about random interlacements. They are defined
as a Poisson point process on a certain space of trajectories modulo
time-shift, on a probability space we denote by $(\Omega_{0},\mathcal{A}_{0},Q_{0})$.
For a detailed construction we refer to Section 1 of \cite{Sznitman2007}
or Section 1 of \cite{SidoraviciusSznitman2009}. In this article
we will only need the facts that now follow. On $(\Omega_{0},\mathcal{A}_{0},Q_{0})$
there is a family $(\mathcal{I}^{u})_{u\ge0}$ of random subsets of
$\mathbb{Z}^{d+1}$, indexed by a parameter $u$. We call $\mathcal{I}^{u}$,
or any random set with the law of $\mathcal{I}^{u}$, \emph{a random
interlacement at level $u$}. Intuitively speaking $\mathcal{I}^{u}$
is the trace of the Poisson cloud of trajectories mentioned in the
introduction. Two basic properties of random interlacements are that
\begin{equation}
\begin{array}{c}
\mbox{the }\mathcal{I}^{u}\mbox{ are translation invariant and increasing,}\\
\mbox{ in the sense that if }v\le u\mbox{ then }\mathcal{I}^{v}\subset\mathcal{I}^{u}.\end{array}\label{eq:InterlacementIsIncreasing}\end{equation}
We can characterise the law of $\mathcal{I}^{u}\cap K$ for finite
sets $K\subset\mathbb{Z}^{d+1}$ in the following manner (see (1.18),
(1.20), (1.53) of \cite{Sznitman2007}) \begin{eqnarray}
\mathcal{I}^{u}\cap K\overset{\textnormal{law}}{=}\mathcal{I}(\mu_{K,u})\cap K\mbox{ for each }u\ge0\mbox{ where }\mu_{K,u}\mbox{ is a}\label{eq:lawofIuCAPK}\\
\mbox{Poisson point process on }W\mbox{ of intensity }uP_{e_{K}}^{\mathbb{Z}^{d+1}}.\nonumber \end{eqnarray}
An important fact is that if $u\le v$ and $\mathcal{I}_{1}$ and
$\mathcal{I}_{2}$ are independent random interlacements then\begin{equation}
(\mathcal{I}_{1}^{u},\mathcal{I}_{1}^{v})\overset{\mbox{\textnormal{law}}}{=}(\mathcal{I}_{1}^{u},\mathcal{I}_{1}^{u}\cup\mathcal{I}_{2}^{v-u}).\label{eq:markovpropforI}\end{equation}
The law of $(\mathcal{I}^{u})^{c}$ (also called the vacant set) on
$\{0,1\}^{\mathbb{Z}^{d+1}}$ is characterized by (see (2.16) of \cite{Sznitman2007}):\[
Q_{0}(A\subset(\mathcal{I}^{u})^{c})=\exp(-u\cdot\textnormal{cap}(A))\mbox{ for all finite }A\subset\mathbb{Z}^{d+1}.\]
Since $\textnormal{cap}(\{x\})=\frac{1}{g(0)}$ and $\textnormal{cap}(\{x,y\})=\frac{2}{g(x)+g(x-y)}$
(see (1.62) and (1.64) of \cite{Sznitman2007}) we have:\begin{eqnarray}
Q_{0}(x\notin\mathcal{I}^{u})=\exp(-\frac{u}{g(0)})\mbox{ and }Q_{0}(x,y\notin\mathcal{I}^{u})=\exp(-u\frac{2}{g(0)+g(x-y)}).\label{eq:RIOneandTwoPoint}\end{eqnarray}
If $A$ and $B$ are two disjoint finite sets in $\mathbb{Z}^{d+1}$
that are {}``far apart'' we have the following independence result
which is a direct consequence of Lemma 2.1 of \cite{Belius2010} \begin{equation}
\begin{array}{c}
|Q_{0}(A\subset\mathcal{I}^{u},B\subset\mathcal{I}^{u})-Q_{0}(A\subset\mathcal{I}^{u})Q_{0}(B\subset\mathcal{I}^{u})|\le cu\frac{\textnormal{cap}(A)\textnormal{cap}(B)}{(d(A,B))^{d-1}}\\
\mbox{for all disjoint }A,B\subset\mathbb{Z}^{d+1}\mbox{ and }u>0.\end{array}\label{eq:Lemma21Belius2010}\end{equation}
The next lemma gives a bound on certain sums of the {}``two point
probability'' $Q_{0}(x,y\notin\mathcal{I}^{u})$. It is a generalisation
of Lemma 2.5 of \cite{Belius2010}.
\begin{lem}
\label{lem:Calc}($d\ge2$)There is a constant $c_{1}>1$ such that
for any $F\subset\mathbb{Z}^{d+1},u\ge0$ and $a>0$\begin{equation}
\sum_{0<|x|<a,x\in F}Q_{0}(0,x\notin\mathcal{I}^{g(0)u})\le e^{-2u}\{|F|\wedge a^{d+1}+cu|F|^{\frac{2}{d+1}}\}+ce^{-c_{1}u}.\label{eq:LemmaCalcBound}\end{equation}
\end{lem}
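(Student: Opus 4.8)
The plan is to split the sum according to the size of $|x|$ and then use the explicit two-point probability from \prettyref{eq:RIOneandTwoPoint}. First I would isolate the contribution of points with $|x|\le c_2$ for a suitable constant (depending only on $d$): there the Green function $g(x)$ is bounded below, so $Q_0(0,x\notin\mathcal{I}^{g(0)u})=\exp(-g(0)u\cdot\frac{2}{g(0)+g(x)})\le\exp(-c_1 u)$ for some $c_1>1$; since there are at most $c$ such points, this part contributes at most $ce^{-c_1 u}$, giving the last term of \prettyref{eq:LemmaCalcBound}. For the remaining points, $|x|$ is large and by \prettyref{eq:ZdGreensFuncBound} we have $g(x)\le c|x|^{1-d}$, so $\frac{2}{g(0)+g(x)}=\frac{1}{g(0)}\cdot\frac{2}{1+g(x)/g(0)}\ge\frac{1}{g(0)}(2-cg(x))\ge\frac{1}{g(0)}(2-c|x|^{1-d})$, whence $Q_0(0,x\notin\mathcal{I}^{g(0)u})\le e^{-2u}e^{cu|x|^{1-d}}$.

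Next I would estimate $\sum_{c_2<|x|<a,\,x\in F}e^{-2u}e^{cu|x|^{1-d}}$. The factor $e^{-2u}$ comes out. For the rest, I expect to use the elementary bound $e^{cu|x|^{1-d}}\le 1+c'u|x|^{1-d}$ valid when $cu|x|^{1-d}$ is bounded — which holds since $|x|>c_2$ makes $|x|^{1-d}$ bounded; when $u$ is large enough that this fails one can instead just absorb everything into the $ce^{-c_1 u}$ term after noting $e^{-2u}e^{cu|x|^{1-d}}$ is then still exponentially small per point, but cleanly the split $1+(e^{t}-1)\le 1+ c't$ for $t$ in a bounded range handles it. So the sum is at most $e^{-2u}\bigl(|F\cap\{|x|<a\}| + c'u\sum_{0<|x|<a,\,x\in F}|x|^{1-d}\bigr)$. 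The first piece is bounded by $|F|\wedge(\#\{|x|<a\})\le |F|\wedge ca^{d+1}$, matching the $|F|\wedge a^{d+1}$ term up to the constant (which is absorbed since $a^{d+1}$ may be replaced by $ca^{d+1}$ — here I would be slightly careful, but the $\wedge$ lets one just keep $|F|$ when $a$ is small and $ca^{d+1}$ otherwise, and the stated bound has an implicit constant on the bracket overall so this is fine).

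The main obstacle is the term $\sum_{0<|x|<a,\,x\in F}|x|^{1-d}$, which must be bounded by $c|F|^{2/(d+1)}$ independently of $a$. This is the rearrangement/isoperimetric step: among all sets $F'\subset\mathbb{Z}^{d+1}$ with $|F'|=|F|$, the sum $\sum_{x\in F',x\ne 0}|x|^{1-d}$ is maximised (up to constants) by taking $F'$ to be a ball around the origin of radius $\sim|F|^{1/(d+1)}$, and for such a ball $\sum_{0<|x|\le\rho}|x|^{1-d}\le c\int_1^{\rho} r^{1-d}r^{d}\,\frac{dr}{r}=c\int_1^\rho r\,\frac{dr}{r}$... more precisely $\sum_{0<|x|\le\rho}|x|^{1-d}\asymp\int_0^\rho r^{1-d}r^d\frac{dr}{r}\asymp\rho^{2}$? — no: the correct count is $\sum_{0<|x|\le\rho}|x|^{1-d}\asymp\sum_{j=1}^{\log_2\rho} 2^{j(1-d)}\cdot 2^{j(d+1)}=\sum_j 2^{2j}\asymp\rho^2$, and with $\rho\sim|F|^{1/(d+1)}$ this is $|F|^{2/(d+1)}$. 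So the key lemma to prove (this is essentially Lemma 2.5 of \cite{Belius2010} and I would cite or adapt its proof) is: for any finite $F\subset\mathbb{Z}^{d+1}$, $\sum_{x\in F,x\ne0}|x|^{1-d}\le c|F|^{2/(d+1)}$, via the layer-cake argument $|x|^{1-d}=\int_0^\infty 1_{\{|x|^{1-d}>t\}}dt$ combined with $\#\{x\in F:0<|x|<R\}\le\min(|F|,cR^{d+1})$. Note $\frac{1}{d-1}\int_0^\infty\min(|F|,cs^{-(d+1)/(d-1)})\,ds$ — balancing at $s\sim|F|^{-(d-1)/(d+1)}$ — yields exactly the exponent $2/(d+1)$. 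Everything else is routine bookkeeping of constants, ensuring $c_1>1$ can be arranged by possibly shrinking the neighbourhood $\{|x|\le c_2\}$.
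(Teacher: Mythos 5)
Your overall plan (split by the size of $|x|$; crude exponential bound for small $|x|$; linearize the two-point exponent for large $|x|$; rearrange to bound $\sum_{x\in F}|x|^{1-d}\le c|F|^{2/(d+1)}$) is the same as the paper's, and your layer-cake derivation of the rearrangement bound is a perfectly good alternative to the explicit rearrangement used there. But there is a genuine gap in where you cut. You split at a \emph{fixed} constant $c_{2}$, and for $|x|>c_{2}$ you want to use $e^{cu|x|^{1-d}}\le1+c'u|x|^{1-d}$. That linearization needs $u|x|^{1-d}$ bounded, and $|x|>c_{2}$ alone does not give that: when $u$ is large and $|x|$ lies in the range $c_{2}<|x|\lesssim u^{1/(d-1)}$, the quantity $u|x|^{1-d}$ is as large as $u\,c_{2}^{1-d}$, which is unbounded in $u$. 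Your fallback --- ``absorb everything into the $ce^{-c_{1}u}$ term since $e^{-2u}e^{cu|x|^{1-d}}$ is exponentially small per point'' --- does not close this, because the $ce^{-c_{1}u}$ term in \eqref{eq:LemmaCalcBound} is uniform in $F$ and $a$, while the number of $x$ in the problematic middle range is not; multiplying a per-point bound by an $a$- or $u$-unbounded count does not give a constant times $e^{-c_{1}u}$.

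The paper's cut is at $|x|=u\wedge a$, and that choice is doing two jobs simultaneously: (i) for $0<|x|<u\wedge a$ the number of lattice points is at most $cu^{d+1}$ (\emph{not} at most $|F|\wedge ca^{d+1}$), so the crude per-point bound $Q_{0}(0,x\notin\mathcal{I}^{g(0)u})\le e^{-c_{1}'u}$ with $c_{1}'=2(1+g(e_{1})/g(0))^{-1}>1$ gives a total $\le cu^{d+1}e^{-c_{1}'u}\le ce^{-c_{1}u}$; (ii) for $|x|\ge u$ one has $ug(x)\le c\,u\cdot u^{1-d}=cu^{2-d}\le c$ (using $d\ge2$), so the linearization is legitimate there. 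Your fixed $c_{2}$ delivers neither of these: it does not bound the count in the middle range by a power of $u$, and it does not make $u|x|^{1-d}$ small there. Also, your closing remark that $c_{1}>1$ ``can be arranged by possibly shrinking the neighbourhood $\{|x|\le c_{2}\}$'' points in the wrong direction: shrinking $c_{2}$ only enlarges the problematic middle range, and in any case $c_{1}>1$ comes from $g(x)\le g(e_{1})<g(0)$ for every $x\ne0$, which is independent of where you cut. To repair the argument you must make the threshold grow with $u$ (anything $\ge c\,u^{1/(d-1)}$, e.g.\ $u\wedge a$ as in the paper) so that both the count in the near region and the linearization error in the far region are controlled.
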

\begin{proof}
We assume $|F|\ge1$. The left-hand side of \prettyref{eq:LemmaCalcBound}
equals $I_{1}+I_{2}$ where\begin{eqnarray}
I_{1} & \overset{\eqref{eq:RIOneandTwoPoint}}{=} & \sum_{0<|x|<u\wedge a,x\in F}\exp\left(-2u\left(1+g(x)/g(0)\right)^{-1}\right)\mbox{ and }\label{eq:DefOfI1}\\
I_{2} & \overset{\eqref{eq:RIOneandTwoPoint}}{=} & \sum_{u\wedge a\le|x|<a,x\in F}\exp\left(-2u\left(1+g(x)/g(0)\right)^{-1}\right).\label{eq:DefOfI2}\end{eqnarray}
We first bound $I_{1}$. Note that for $x\ne0$ we have $g(x)<g(e_{1})<g(0)$,
where $e_{1}$ is a unit vector in $\mathbb{Z}^{d}$, so that the
summand in \eqref{eq:DefOfI1} is bounded by $\exp(-c_{1}^{'}u)$
where $c_{1}^{'}=2\left(1+g(1)/g(0)\right)^{-1}>1$. Therefore we
find that\begin{equation}
I_{1}\le cu^{d+1}\exp(-c_{1}^{'}u)\le c\exp(-c_{1}u)\label{eq:BoundOnI1}\end{equation}
for a constant $c_{1}$ such that $c_{1}^{'}>c_{1}>1$. Next to bound
$I_{2}$ we use the elementary inequality $(1-x)^{-1}\ge1+x,x\ge0$
to get \begin{eqnarray*}
I_{2} & \le & \sum_{u\wedge a\le|x|<a,x\in F}\exp\left(-2u\left(1-g(x)/g(0)\right)\right)\\
 & \le & \exp(-2u)\sum_{u\wedge a\le|x|<a,x\in F}\left(1+cug(x)/g(0)\right)\end{eqnarray*}
where in the last inequality we have used that $ug(x)\overset{\prettyref{eq:ZdGreensFuncBound}}{\le}(cu\cdot u^{1-d}\wedge c)\le c$
for $u\wedge a\le|x|<a$. We thus get\[
I_{2}\le\exp(-2u)\left\{ |F|\wedge(ca^{d+1})+cu\sum_{x\in F}g(x)\right\} .\]
Let $F=\{f_{1},...,f_{|F|}\}$ with $|f_{1}|\le|f_{2}|\le...\le|f_{|F|}|$
and let $h_{1},h_{2},..$ be an enumeration of $\mathbb{Z}^{d+1}$
such that $|h_{1}|\le|h_{2}|\le...$. Then since $g(f_{i})\overset{\prettyref{eq:ZdGreensFuncBound}}{\le}c|f_{i}|^{1-d}\le c|h_{i}|^{1-d}$
and $\{h_{1},...,h_{|F|}\}$ is contained in the Euclidean ball $B'$
of radius $c|F|^{\frac{1}{d+1}}$ around the origin we have $\sum_{x\in F}g(x)\le\sum_{i=1}^{|F|}(c|h_{i}|^{1-d}\wedge g(0))\le\sum_{x\in B'}(c|x|^{1-d}\wedge g(0))\le c|F|^{2/(d+1)}$.
Therefore we see that\[
I_{2}\le c\exp(-2u)\left\{ |F|\wedge a^{d+1}+cu|F|^{2/(d+1)}\right\} ,\]
which in combination with \eqref{eq:BoundOnI1} yields \eqref{eq:LemmaCalcBound}.
\end{proof}
In proving \prettyref{thm:GumbelForLocTime} we will often consider
events similar to $\{L_{C_{F_{N}}}\le N^{d}g(0)\{\log|F_{N}|+z\}\}$.
To simply formulas we define \begin{equation}
u_{F}(z)=g(0)\{\log|F|+z\}\mbox{ and }u_{N}=u_{F_{N}},\label{eq:DefOfuF}\end{equation}
so that the previous event coincides with $\{L_{C_{F_{N}}}\le N^{d}u_{N}(z)\}$.
We denote the cumulative distribution function of the standard Gumbel
distribution by\begin{equation}
G(z)=\exp(-e^{-z}).\label{eq:GumbelCDF}\end{equation}

We now state a quantitative version of \prettyref{eq:coverlevelgumbnonquant}
which will be what we actually use in the proof of \prettyref{thm:GumbelForLocTime}.
Recall the definition \prettyref{eq:coverleveldef} of the cover level
$\tilde{C}_{F}$ of a set $F$. We have that (see Theorem 0.1 of \cite{Belius2010})
for all finite non-empty $F\subset\mathbb{Z}^{d+1}$\begin{eqnarray}
 & \sup_{z\ge-\log|F|}|Q_{0}(\tilde{C}_{F}\le u_{F}(z))-G(z)|\le c|F|^{-c}.\label{eq:QuantCovLevResult-1}\end{eqnarray}

\section{\label{sec:Applications}Applications: Convergence of cover times
and point process of uncovered vertices}

In this section we will derive from \prettyref{thm:GumbelForLocTime}
and the coupling \prettyref{eq:IntroOneBoxCoupling} the convergence
in law of the rescaled cover times (i.e. \prettyref{eq:RealCovTimeConvInformal}),
the convergence in law of the point process of vertices covered last
(i.e. \prettyref{eq:PointProcConvInformal}) and the statement that
{}``the last two vertices to be hit are far apart'' (i.e. \prettyref{eq:LastTwoIndiansInformal}),
in the form of \prettyref{cor:RealCovTimeConv}, \prettyref{cor:PointProcessConv}
and \prettyref{cor:LastTwoIndians} respectively. \prettyref{cor:RealCovTimeConv}
will follow easily from \prettyref{thm:GumbelForLocTime} once we
have related the random walk local time $L_{n}$ to the local time
of Brownian motion using \prettyref{eq:RWLocTimeToBMLocTime}. To
prove \prettyref{cor:PointProcessConv} we will use Kallenberg's theorem
which allows us to conclude that $\mathcal{N}_{N}^{z}$ converges
weakly to a homogeneous Poisson point process by checking two conditions
involving convergence of the intensity measure and the probability
that the point measure does not charge a set. Finally \prettyref{cor:LastTwoIndians}
follows from \prettyref{cor:PointProcessConv} by a calculation involving
the Palm measure of the limit of the $\mathcal{N}_{N}^{z}$.
\begin{cor}[Convergence in law for cover times]
\label{cor:RealCovTimeConv}$(d\ge2)$ Let $F_{N}$ be a sequence
of sets as in \prettyref{thm:GumbelForLocTime}. \textup{\emph{Then
under $P$}}\begin{equation}
\frac{C_{N}}{(N^{d}\log|F_{N}|)^{2}}\overset{\mbox{law}}{\rightarrow}\zeta(\frac{g(0)}{\sqrt{d+1}}),\mbox{ as }N\rightarrow\infty,\label{eq:RealCovTimeConv}\end{equation}
where $C_{N}=C_{F_{N}}$.\end{cor}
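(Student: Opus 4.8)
The plan is to combine the Gumbel limit \prettyref{eq:GumbelForLocTime} from \prettyref{thm:GumbelForLocTime} with the relation \prettyref{eq:RWLocTimeToBMLocTime} between the random-walk local time $L_n$ and the Brownian local time, using the fact that $C_N = \gamma_{L_{C_N}}$ up to the trivial correction caused by the possibility that $X_\cdot$ is not in the zero level at time $C_N$. First I would observe that from \prettyref{eq:GumbelForLocTime} we get $L_{C_N}/(g(0)N^d\log|F_N|)\to 1$ in probability as $N\to\infty$, since $L_{C_N}/(g(0)N^d) - \log|F_N|$ is tight while $\log|F_N|\to\infty$. In other words, writing $\ell_N = L_{C_N}$, we have $\ell_N = g(0)N^d\log|F_N|(1+o_P(1))$.

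Next I would control $C_N$ in terms of $\gamma_{\ell_N}$. By definition of $C_N$ as a cover time, at time $C_N$ the walk hits the last uncovered vertex of $F_N$, which lies in $\mathbb{T}_N\times[-\frac N2,\frac N2]$, so the walk is within distance $N$ of the zero level; between $\gamma_{\ell_N-1}$ and $\gamma_{\ell_N}$ (the last visit to the zero level before or at time $C_N$) and $C_N$ itself only a short excursion of the $\mathbb{Z}$-component occurs, and more usefully $\gamma_{\ell_N}\le C_N \le \gamma_{\ell_N} + (\text{one departure time from }\tilde B)$, or one can simply note $\gamma_{\ell_N}\le C_N$ and $C_N\le \gamma_{\ell_N+W_N}$ where $W_N$ is a negligible overshoot. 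The cleanest route is: since $L_n$ increases by at most $1$ per step, $C_N$ and $\gamma_{\ell_N}$ differ only through the final sub-excursion away from the zero level after the walk has covered $F_N$ but this does not increase $L$; hence $\gamma_{\ell_N}\le C_N$, and on the other hand $C_N \le \gamma_{\ell_N} + R$ where $R = R_1\circ\theta_{\gamma_{\ell_N}}$ is a return time to $B$, which by standard random-walk estimates (the $\mathbb{Z}$-component being a lazy simple random walk, returns to a window of size $N$ take time of order $N^2$) satisfies $R/(N^d\log|F_N|)^2\to 0$ in probability. Thus $C_N/(N^d\log|F_N|)^2$ and $\gamma_{\ell_N}/(N^d\log|F_N|)^2$ have the same limit in law, if either exists.

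Now I would identify the limit of $\gamma_{\ell_N}/(N^d\log|F_N|)^2$. Set $u_N = \ell_N/(N^d) = g(0)\log|F_N|(1+o_P(1))$, which tends to infinity. Applying \prettyref{eq:RWLocTimeToBMLocTime} (valid for each fixed $N$ as $u\to\infty$, but the point is the limit object is $N$-independent) together with the scaling relation \prettyref{eq:ZetaScalingRelation}, one has $\gamma_{u}/u^2 \overset{\text{law}}{\to}\zeta(1/\sqrt{d+1})$ and hence $\gamma_{u_N N^d}/(u_N N^d)^2 \approx \zeta(1/\sqrt{d+1})$; multiplying through, $\gamma_{\ell_N}/(N^d\log|F_N|)^2 = (u_N/(g(0)\log|F_N|))^2\cdot \gamma_{\ell_N}/(N^d\,u_N/g(0)\cdot\text{\dots})$ — more carefully, $\gamma_{\ell_N}/(N^d u_N)^2 \to \zeta(1/\sqrt{d+1})$ and since $N^d u_N = \ell_N$ and $\ell_N/(N^d\log|F_N|)\to g(0)$ in probability, a Slutsky-type argument gives $\gamma_{\ell_N}/(N^d\log|F_N|)^2 \to g(0)^2\zeta(1/\sqrt{d+1}) \overset{\text{law}}{=}\zeta(g(0)/\sqrt{d+1})$, the last identity again by \prettyref{eq:ZetaScalingRelation}. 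Combining with the previous paragraph yields \prettyref{eq:RealCovTimeConv}.

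The main obstacle is the double limit: \prettyref{eq:RWLocTimeToBMLocTime} is a statement ``for fixed $N$, as $u\to\infty$'', whereas here $N\to\infty$ and the relevant local-time level $\ell_N$ is itself random and $N$-dependent. The careful way to handle this is to go back to the coupling \prettyref{eq:RWLocTimeBMLocTimeCouple} of $L_\cdot$ with Brownian local time $\hat L_\cdot$, which holds with an error $o(n^{3/8})$ uniformly — this coupling is available for each $N$ but the scaling $(d+1)\hat L_{n/(d+1)}$ is the same for all $N$, so one can run the whole argument (the sandwiching in \prettyref{eq:Sandwhich}) with $u$ replaced by a sequence $u_N\to\infty$ and with the random recentering absorbed via the tightness of $L_{C_N}/(g(0)N^d)-\log|F_N|$ from \prettyref{thm:GumbelForLocTime}. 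Concretely, for any $\alpha\in(0,1)$ one sandwiches $\{C_N \le z(N^d\log|F_N|)^2\}$ between events of the form $\{(1\pm\alpha)g(0)\log|F_N|\le$ (rescaled Brownian local time)$\}$ using both the local-time/Brownian coupling and the high-probability two-sided bound $\ell_N\in[(1-\alpha)g(0)N^d\log|F_N|,(1+\alpha)g(0)N^d\log|F_N|]$, evaluates the Brownian probability via \prettyref{eq:ZetaCDF}, and lets $\alpha\to 0$. The rest — the overshoot bound on $R$ and the passage from $C_N$ to $\gamma_{\ell_N}$ — is routine.
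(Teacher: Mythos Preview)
Your overall strategy—combine the Gumbel limit for $L_{C_N}$ with the local-time convergence \eqref{eq:RWLocTimeToBMLocTime}—is correct, and the ``careful'' sandwiching you describe in your last paragraph is essentially the paper's argument. The detour in the middle through $\gamma_{\ell_N}$ and the overshoot $R$, however, is both unnecessary and shaky: $\ell_N=L_{C_N}$ is not a stopping time, so neither is $\gamma_{\ell_N}$, and your claim that $R=R_1\circ\theta_{\gamma_{\ell_N}}$ controls $C_N-\gamma_{\ell_N}$ cannot be justified by the strong Markov property (indeed, at time $\gamma_{\ell_N}$ the walk is already at the zero level, hence in $B$, so $R_1\circ\theta_{\gamma_{\ell_N}}=0$). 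The paper bypasses all of this with the one-line monotonicity observation
\[
\{L_{C_N}<L_{[zu^2]}\}\ \subset\ \{C_N\le zu^2\}\ \subset\ \{L_{C_N}\le L_{[zu^2]}\},\qquad u=N^d\log|F_N|,
\]
then replaces $L_{C_N}$ by the \emph{deterministic} levels $g(0)(1\pm\delta)u$ using $L_{C_N}/u\to g(0)$ in probability (from \eqref{eq:GumbelForLocTime}), converts $\{g(0)(1\pm\delta)u\le L_{[zu^2]}\}=\{\gamma_{g(0)(1\pm\delta)u}\le zu^2\}$ via \eqref{eq:GammaLocRelation}, applies \eqref{eq:RWLocTimeToBMLocTime}, and lets $\delta\downarrow0$. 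No overshoot bound and no Slutsky argument with a random index is needed.

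Your ``double limit'' worry is also misplaced. The $\mathbb{Z}$-component of $X_\cdot$ under $P$ is a lazy simple random walk on $\mathbb{Z}$ with laziness $d/(d+1)$, independent of $N$; hence the law of $(L_n)_{n\ge0}$, and therefore of $\gamma_\cdot$, is the \emph{same} for every $N$. So \eqref{eq:RWLocTimeToBMLocTime} is a single limit statement about one fixed process, and applying it along the sequence $g(0)(1\pm\delta)N^d\log|F_N|\to\infty$ is immediate—there is no need for uniformity in $N$ or to revisit the coupling \eqref{eq:RWLocTimeBMLocTimeCouple}.
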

\begin{proof}
We set $u=N^{d}\log|F_{N}|$. For any $z\ge0$ we have\begin{equation}
\begin{array}{cccc}
\{L_{C_{N}}<L_{[zu^{2}]}\} & \overset{\prettyref{eq:DefOfLocTime}}{\subset} & \{C_{N}\le zu^{2}\} & \overset{\prettyref{eq:DefOfLocTime}}{\subset}\{L_{C_{N}}\le L_{[zu^{2}]}\}.\end{array}\label{eq:CNSandwich}\end{equation}
It follows from \prettyref{eq:GumbelForLocTime} that $L_{C_{N}}/u\rightarrow g(0)$
in probability as $N\rightarrow\infty$, so that for any $\delta\in(0,1)$\begin{equation}
\begin{array}{ccl}
\underline{\lim}_{N\rightarrow\infty}P(g(0)(1+\delta)u\le L_{[zu^{2}]}) & \overset{\prettyref{eq:CNSandwich}}{\le} & \overline{\underline{\lim}}_{N\rightarrow\infty}P(C_{N}\le zu^{2})\\
 & \overset{\prettyref{eq:CNSandwich}}{\le} & \overline{\lim}_{N\rightarrow\infty}P(g(0)(1-\delta)u\le L_{[zu^{2}]}).\end{array}\label{eq:CNSandwhich2}\end{equation}
But by \prettyref{eq:GammaLocRelation} we have $P(g(0)(1\pm\delta)u\le L_{[zu^{2}]})=P(\gamma_{g(0)u(1\pm\delta)}\le zu^{2})$
and thus from \prettyref{eq:CNSandwhich2}, \prettyref{eq:RWLocTimeToBMLocTime}
(using that $u\rightarrow\infty$ as $N\rightarrow\infty$) and the
continuity of the limit law (see \eqref{eq:ZetaCDF}) we get:\begin{eqnarray*}
\mathbb{P}(\zeta(\frac{1}{\sqrt{d+1}})\le\frac{z}{(g(0)(1+\delta))^{2}}) & \le & \overline{\underline{\lim}}_{N\rightarrow\infty}P(C_{N}\le zu^{2})\\
 & \le & \mathbb{P}(\zeta(\frac{1}{\sqrt{d+1}})\le\frac{z}{(g(0)(1-\delta))^{2}}).\end{eqnarray*}
Once again by the continuity of the law of $\zeta(\cdot)$ we can
now let $\delta\downarrow0$ to get $\lim_{N\rightarrow\infty}P(C_{N}\le zu^{2})=\mathbb{P}(\zeta(\frac{1}{\sqrt{d+1}})\le\frac{z}{g(0)^{2}})$
and \prettyref{eq:RealCovTimeConv} then follows by the scaling relation
\prettyref{eq:ZetaScalingRelation}.
\end{proof}
Next we prove the weak convergence of the point process of vertices
covered last (recall the definition of this process from \prettyref{eq:PointProcOfPointsCovereLastDef}).
\begin{cor}[Convergence of point process of vertices covered last]
\label{cor:PointProcessConv}($z\in\mathbb{R}$) In the topology
of point processes\textup{\begin{equation}
\begin{array}{c}
\mathcal{N}_{N}^{z}\mbox{ converges weakly to a Poisson point process}\\
\mbox{ on }(\mathbb{R}/\mathbb{Z})^{d}\times\mathbb{R}\mbox{ of intensity }\exp(-z)\lambda,\end{array}\label{eq:asdsad}\end{equation}
}where $\lambda$ denotes Lebesgue measure on $(\mathbb{R}/\mathbb{Z})^{d}\times\{0\}$.\end{cor}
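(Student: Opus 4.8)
The plan is to apply Kallenberg's theorem on convergence to a Poisson point process. Since both $\mathcal{N}_N^z$ (see \prettyref{eq:PointProcOfPointsCovereLastDef}) and the candidate limit put all their mass on the slice $(\mathbb{R}/\mathbb{Z})^d\times\{0\}$, it is enough to take test sets of the form $B=A\times(-\eta,\eta)$ with $\eta>0$ and $A\subset(\mathbb{R}/\mathbb{Z})^d$ a finite union of half-open boxes (so that $\lambda(\partial A)=0$); such sets form a dissecting ring on which Kallenberg's criterion can be verified, and the limit intensity $e^{-z}\lambda$ is diffuse. Writing $F_N^A=\{x\in\mathbb{T}_N\times\{0\}:x/N\in A\}$ one has $|F_N^A|/N^d\to\lambda(A)$, so it suffices to prove, for each such $B$ with $\lambda(A)>0$ (for $B$ disjoint from the slice both quantities below vanish for large $N$),
\begin{equation}
\mathbb{E}\big[\mathcal{N}_N^z(B)\big]\longrightarrow e^{-z}\lambda(A)\qquad\text{and}\qquad P\big(\mathcal{N}_N^z(B)=0\big)\longrightarrow\exp\!\big(-e^{-z}\lambda(A)\big).\label{eq:PlanTwoConditions}
\end{equation}

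For the intensity condition, translation invariance of $\mathbb{T}_N$ makes all the summands in \prettyref{eq:PointProcOfPointsCovereLastDef} equally likely, so $\mathbb{E}[\mathcal{N}_N^z(B)]=|F_N^A|\,P(L_{H_{x_0}}>N^d u(z))$ for a fixed $x_0\in\mathbb{T}_N\times\{0\}$, where $u(z)=g(0)\{d\log N+z\}$. To pin down the one-site probability I would apply the coupling \prettyref{eq:IntroOneBoxCoupling} with the box there taken to be a box of side $N^{1-\varepsilon}$ centred at $x_0$ (any fixed $\varepsilon\in(0,1)$), level parameter $u=u(z)\ge N^{-1/2}$ and $\delta=c_2/(\log N)^2$, and combine it with \prettyref{lem:ExcTimeToLocTime} (i.e.\ \prettyref{eq:ExcTimeToLocTime}) to identify the time $D_{[u(z)K_N]}$ with the time at which the local time at the zero level first reaches $N^d u(z)$, at the cost of a further multiplicative factor $1\pm\delta'$ with $\delta'$ of order $(\log N)^{-2}$. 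This sandwiches $P(L_{H_{x_0}}>N^d u(z))$ between $Q_0(x_0\notin\mathcal{I}^{(1-o(1))u(z)})$ and $Q_0(x_0\notin\mathcal{I}^{(1+o(1))u(z)})$ up to additive errors of order $N^{-3d}\log N=o(N^{-d})$, where the $o(1)$'s decay like $(\log N)^{-2}$. Since $Q_0(x_0\notin\mathcal{I}^v)=e^{-v/g(0)}$ by \prettyref{eq:RIOneandTwoPoint} and $(1\pm o(1))u(z)/g(0)=d\log N+z+o(1)$ (because $o(1)\cdot\log N\to0$), one gets $N^dP(L_{H_{x_0}}>N^d u(z))\to e^{-z}$; multiplying by $|F_N^A|\sim\lambda(A)N^d$ gives the first limit in \eqref{eq:PlanTwoConditions}.

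For the avoidance condition, $\{\mathcal{N}_N^z(B)=0\}$ is the event that every $x\in F_N^A$ satisfies $L_{H_x}\le N^d u(z)$; since $L_\cdot$ is non-decreasing and $C_{F_N^A}=\max_{x\in F_N^A}H_x$, this event coincides with $\{L_{C_{F_N^A}}\le N^d u(z)\}$ (up to the irrelevant ambiguity of the local time crossing the non-integer value $N^d u(z)$). Because $\lambda(A)>0$ we have $|F_N^A|\to\infty$, so \prettyref{thm:GumbelForLocTime} applies and gives that $L_{C_{F_N^A}}/(g(0)N^d)-\log|F_N^A|$ converges in law to $G$. Since
\[
\frac{N^d u(z)}{g(0)N^d}-\log|F_N^A|=d\log N+z-\log|F_N^A|\longrightarrow z-\log\lambda(A),
\]
continuity of $G$ (see \prettyref{eq:GumbelCDF}) yields $P(L_{C_{F_N^A}}\le N^d u(z))\to G(z-\log\lambda(A))=\exp(-e^{-z}\lambda(A))$, the second limit in \eqref{eq:PlanTwoConditions}. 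Kallenberg's theorem then gives \prettyref{eq:asdsad}.

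The main obstacle is the intensity condition: one must carry the multiplicative corrections $1\pm\delta$, $1\pm\delta'$ and the error terms coming from \prettyref{eq:IntroOneBoxCoupling} and \prettyref{eq:ExcTimeToLocTime} through the computation and check that they are all negligible precisely on the scale $N^{-d}$, which is the scale of the one-site hitting probability; the avoidance condition, by contrast, is essentially a direct reading of \prettyref{thm:GumbelForLocTime}. A minor point to handle with care is that the limiting intensity is supported on a measure-zero slice of $(\mathbb{R}/\mathbb{Z})^d\times\mathbb{R}$, which is the reason the generating ring is taken of the product form above.
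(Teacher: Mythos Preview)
Your proposal is correct and follows essentially the same route as the paper: Kallenberg's criterion, with the avoidance probability read off from \prettyref{thm:GumbelForLocTime} and the intensity obtained by combining the coupling \prettyref{eq:IntroOneBoxCoupling} with \prettyref{lem:ExcTimeToLocTime}. The only cosmetic differences are that the paper first proves the result for the auxiliary process $\mathcal{M}_N^z=\sum_x 1_{\{L_{H_x}\ge N^d u(z)\}}$ and then sandwiches $\mathcal{M}_N^{z+\delta}\le\mathcal{N}_N^z\le\mathcal{M}_N^z$ to handle the strict-versus-nonstrict inequality (which you dismiss as irrelevant by continuity of $G$, also fine), and that the paper organises the intensity estimate via a fixed slack parameter $a$ sent to zero at the end rather than letting $\delta\sim(\log N)^{-2}$ directly as you do.
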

\begin{proof}
Let $\mathcal{M}_{N}^{z}=\sum_{x\in\mathbb{T}_{N}\times\{0\}}1_{\{L_{H_{x}}\ge N^{d}u(z)\}}$.
We will first show \eqref{eq:asdsad} with $\mathcal{M}_{N}^{z}$
in place of $\mathcal{N}_{N}^{z}$. By Kallenberg's theorem (see Proposition
3.22, p. 156 of \cite{Resnick_ExtremeValRegVarAndPP}) to show \eqref{eq:asdsad}
with $\mathcal{M}_{N}^{z}$ in place of $\mathcal{N}_{N}^{z}$ it
suffices to check that for all $I$ in\[
\mathcal{J}=\{I:I\mbox{ a union of products of open intervals in }(\mathbb{R}/\mathbb{Z})^{d}\times\mathbb{R},\lambda(I)>0\},\]
the following two statements hold:\begin{eqnarray}
\lim_{N\rightarrow\infty}E\mathcal{M}_{N}^{z}(I) & = & \exp(-z)\lambda(I),\label{eq:KallenbergToCheck1-1}\\
\lim_{N\rightarrow\infty}P(\mathcal{M}_{N}^{z}(I)=0) & = & \exp(-e^{-z}\lambda(I)).\label{eq:KallenbergToCheck2-1}\end{eqnarray}
For any $I\in\mathcal{J}$ define $F_{N}^{'}=NI\cap\mathbb{T}_{N}\times\{0\}$.
Note that because of the special form of $I$ (recall $\lambda(I)>0$)
we have \begin{equation}
|F_{N}^{'}|\rightarrow\infty\mbox{ and }\frac{|F_{N}^{'}|}{|\mathbb{T}_{N}\times\{0\}|}\rightarrow\lambda(I)\mbox{ as }N\rightarrow\infty.\label{eq:niceconvergencetolamda}\end{equation}
To show \eqref{eq:KallenbergToCheck2-1} we note that\begin{equation}
P(\mathcal{M}_{N}^{z}(I)=0)=P(L_{C_{N}^{'}}<N^{d}u(z)),\label{eq:EqualityQWERTY}\end{equation}
where $C_{N}^{'}=C_{F_{N}^{'}}$ and $u(z)=u_{\mathbb{T}_{N}\times\{0\}}(z)$
(recall the notation from \prettyref{eq:DefOfuF}). Let $u_{N}^{'}=u_{F_{N}^{'}}$
and note that $u(z)=u_{N}^{'}(z-\log\frac{|F_{N}^{'}|}{|\mathbb{T}_{N}\times\{0\}|})$
so that for all $a>0$ and $N\ge c(I,z,a)$ we have \begin{equation}
u_{N}^{'}(z-a-\log\lambda(I))\overset{\prettyref{eq:niceconvergencetolamda}}{<}u(z)\overset{\prettyref{eq:niceconvergencetolamda}}{<}u_{N}^{'}(z+a-\log\lambda(I)).\label{eq:InequalityQWERTY}\end{equation}
Therefore using \prettyref{eq:GumbelForLocTime} with $F_{N}^{'}$
and $C_{N}^{'}$ in the place of $F_{N}$ and $C_{F_{N}}$ we get
\begin{eqnarray*}
\exp(-e^{-z+a}\lambda(I)) & \overset{\eqref{eq:GumbelForLocTime}}{=} & \underline{\lim}_{N\rightarrow\infty}P(L_{C_{N}^{'}}\le N^{d}u_{N}^{'}(z-a-\log\lambda(I)))\\
 & \overset{\prettyref{eq:EqualityQWERTY},\prettyref{eq:InequalityQWERTY}}{\le} & \underline{\overline{\lim}}_{N\rightarrow\infty}P(\mathcal{M}_{N}^{z}(I)=0)\\
 & \overset{\prettyref{eq:EqualityQWERTY},\prettyref{eq:InequalityQWERTY}}{\le} & \overline{\lim}_{N\rightarrow\infty}P(L_{C_{N}^{'}}\le N^{d}u_{N}^{'}(z+a-\log\lambda(I)))\\
 & \overset{\eqref{eq:GumbelForLocTime}}{=} & \exp(-e^{-z-a}\lambda(I)),\end{eqnarray*}
so that so letting $a\downarrow0$ we find \prettyref{eq:KallenbergToCheck2-1}.

It remains to check \eqref{eq:KallenbergToCheck1-1}. Note that\begin{equation}
E\mathcal{M}_{N}^{z}(I)\overset{\prettyref{eq:GammaLocRelation}}{=}\sum_{x\in F_{N}^{'}}P\left(\gamma_{N^{d}u(z)}\le H_{x}\right).\label{eq:SplitIntoSum-1}\end{equation}
Let us now record (for use now and later) that\begin{equation}
\begin{array}{c}
\mbox{for }F\subset\mathbb{T}_{N}\times[-\frac{N}{2},\frac{N}{2}]\mbox{ with }|F|\ge2,z\in[-\frac{1}{2}\log|F|,-\frac{1}{2}\log|F|],\\
a\in(0,\frac{1}{10}],\delta=(\log N)^{-3/2}\mbox{ and }N\ge c(a)\mbox{ we have }\\
\frac{2}{\sqrt{N}}\le u_{F}(z-a)\le u_{F}(z)(1-\delta)\le u_{F}(z)(1+\delta)\le u_{F}(z+a)\le c\log N.\end{array}\label{eq:uMinusuPluscondition}\end{equation}
Thus for any $a\in(0,\frac{1}{10}]$ and $N\ge c(z,a)$ it follows
from \eqref{eq:ExcTimeToLocTime} (note that $\frac{1}{2}>\delta\ge c_{0}\frac{r_{N}}{h_{N}}$
for $N\ge c$, with $\delta$ as in \prettyref{eq:uMinusuPluscondition})
that $P(D_{[K_{N}u(z+a)]}<\gamma_{N^{d}u(z)}),P(D_{[K_{N}u(z-a)]}\ge\gamma_{N^{d}u(z)})\le ce^{-cN^{c}}$
and thus  \begin{eqnarray}
\begin{array}{ccl}
P\left(D_{[K_{N}u(z+a)]}<H_{x}\right)-ce^{-cN^{c}} & \le & P\left(\gamma_{N^{d}u(z)}\le H_{x}\right)\\
 & \le & P\left(D_{[K_{N}u(z-a)]}<H_{x}\right)+ce^{-cN^{c}}.\end{array}\label{eq:Bound1-1}\end{eqnarray}
Now since $\{D_{k}<H_{x}\}\overset{\eqref{eq:TraceDefinition}}{=}\{x\notin X(0,D_{k})\}$
for any $k\ge1$ we have \begin{equation}
P(D_{[K_{N}u(z\pm a)]}<H_{x})=P(x\notin X(0,D_{[K_{N}u(z\pm a)]})).\label{eq:Equality}\end{equation}
We can now use \prettyref{eq:IntroOneBoxCoupling} twice to get that
if $a\in(0,\frac{1}{10}]$ and $N\ge c(z,a)$ then\begin{eqnarray}
\begin{array}{ccc}
P\left(x\notin X(0,D_{[K_{N}u(z-a)]})\right) & \le & Q_{0}\left(x\notin\mathcal{I}^{u(z-2a)}\right)+cN^{-3d},\\
P\left(x\notin X(0,D_{[K_{N}u(z+a)]})\right) & \ge & Q_{0}\left(x\notin\mathcal{I}^{u(z+2a)}\right)-cN^{-3d},\end{array}\label{eq:DominateSplitGuys}\end{eqnarray}
where we have also used \prettyref{eq:uMinusuPluscondition} with
$u(z\pm a)$ in the place of $z$, that $\delta\ge c_{2}\frac{r_{N}}{h_{N}}$
($\delta$ as in \prettyref{eq:uMinusuPluscondition}) for $N\ge c$,
the fact that $\mathcal{I}^{(1\pm\delta)u_{F}(z\pm a)}$ has the same
law under the coupling $Q_{1}$ as under the canonical probability
$Q_{0}$ and \prettyref{eq:InterlacementIsIncreasing}. But $Q_{0}\left(x\notin\mathcal{I}^{u(z\pm2a)}\right)=\frac{\exp(-z\mp2a)}{|\mathbb{T}_{N}\times\{0\}|}$
by \prettyref{eq:RIOneandTwoPoint}, so combining \prettyref{eq:Bound1-1},
\eqref{eq:Equality} and \prettyref{eq:DominateSplitGuys} we get
that if $N\ge c(z,a)$ then

\texttt{\begin{eqnarray*}
\frac{\exp(-z-2a)}{|\mathbb{T}_{N}\times\{0\}|}-cN^{-3d}\le P\left(\gamma_{N^{d}u(z)}\le H_{x}\right)\le\frac{\exp(-z+2a)}{|\mathbb{T}_{N}\times\{0\}|}+cN^{-3d}.\end{eqnarray*}
}So by \prettyref{eq:niceconvergencetolamda} and \prettyref{eq:SplitIntoSum-1}
and we have that for all $z\in\mathbb{R}$ and $a\in(0,\frac{1}{10}]$

\[
\lambda(I)\exp(-z-2a)\le\overline{\underline{\lim}}_{N\rightarrow\infty}E\mathcal{M}_{N}^{z}(I)\le\lambda(I)\exp(-z+2a).\]
We simply have to take $a\downarrow0$ to get \eqref{eq:KallenbergToCheck1-1}.
This completes the proof of \eqref{eq:asdsad} with $\mathcal{M}_{N}^{z}$
in place of $\mathcal{N}_{N}^{z}$. Thus the limit of the Laplace
functionals of $\mathcal{M}_{N}^{z}$ is the map $f\rightarrow\exp(-e^{-z}\int(1-e^{-f(x)})\lambda(dx))$,
which is continuous in $z$. By using the inequality $\mathcal{M}_{N}^{z+\delta}\overset{\prettyref{eq:PointProcOfPointsCovereLastDef}}{\le}\mathcal{N}_{N}^{z}\overset{\prettyref{eq:PointProcOfPointsCovereLastDef}}{\le}\mathcal{M}_{N}^{z}$,
valid for any $z\in\mathbb{R},\delta>0$ and $N\ge1$, and letting
$\delta\downarrow0$ we see that the limit of the Laplace functionals
of $\mathcal{N}_{N}^{z}$ is $f\rightarrow\exp(-e^{-z}\int(1-e^{-f(x)})\lambda(dx))$
as well, and therefore \eqref{eq:asdsad} holds.
\end{proof}
Finally we derive \prettyref{cor:LastTwoIndians} from \prettyref{cor:PointProcessConv}.
\begin{cor}[Last vertices to be hit are far apart]
\label{cor:LastTwoIndians}Let the random vector $(Y_{1,}Y_{2},...,Y_{|\mathbb{T}_{N}\times\{0\}|})$
be the vertices of $\mathbb{T}_{N}\times\{0\}$ ordered by their entrance
time with $Y_{1}$ being hit last, so that:\[
C_{\mathbb{T}_{N}\times\{0\}}=H_{Y_{1}}>H_{Y_{2}}>..>H_{Y_{|\mathbb{T}_{N}\times\{0\}|}}=H_{\mathbb{T}_{N}\times\{0\}}.\]
Then for all $k\ge2$\begin{equation}
\lim_{\delta\rightarrow0}\overline{\lim}_{N\rightarrow\infty}P(\exists1\le i<j\le k\mbox{ such that }d(Y_{i},Y_{j})\le\delta N)=0,\label{eq:LastTwoIndians}\end{equation}
or in other words for large $N$ the last $k$ vertices of $\mathbb{T}_{N}\times\{0\}$
to be hit are separated, at typical distance of order $N$.\end{cor}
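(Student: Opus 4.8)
The plan is to read off \prettyref{cor:LastTwoIndians} from the convergence of the point process of vertices covered last, \prettyref{cor:PointProcessConv}, using that a Poisson point process with finite intensity supported on a compact set has, almost surely, finitely many atoms, all distinct. Fix $k\ge2$. The first ingredient is an order-theoretic observation about $\mathcal{N}_N^z$ (recall \prettyref{eq:PointProcOfPointsCovereLastDef}): since $L_\cdot$ is non-decreasing while $n\mapsto H_{Y_n}$ is strictly decreasing (distinct vertices have distinct entrance times), for every $z$ the set $\{x\in\mathbb{T}_N\times\{0\}:L_{H_x}>N^d u(z)\}$ coincides with $\{Y_1,\dots,Y_{J_N}\}$, where $J_N:=\mathcal{N}_N^z((\mathbb{R}/\mathbb{Z})^d\times\mathbb{R})$ is the total number of atoms of $\mathcal{N}_N^z$; moreover $x\mapsto x/N$ is injective on $\mathbb{T}_N$, so $\mathcal{N}_N^z$ is a simple point process. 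Hence, on $\{J_N\ge k\}$ the points $Y_1/N,\dots,Y_k/N$ are among the atoms of $\mathcal{N}_N^z$, so that for every $z\in\mathbb{R}$ and $\delta>0$
\[
\{\exists\,1\le i<j\le k:\ d(Y_i,Y_j)\le\delta N\}\ \subset\ \{J_N<k\}\ \cup\ \bigl\{\mathcal{N}_N^z\ \text{has two distinct atoms at distance}\le\delta\bigr\}.
\]
It now suffices to bound $\limsup_{N\to\infty}$ of each event on the right, then send $\delta\to0$ and finally let the free parameter $z\to-\infty$.

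For the first event, the limit process in \prettyref{cor:PointProcessConv} has intensity $e^{-z}\lambda$ with $\lambda$ a probability measure supported on $(\mathbb{R}/\mathbb{Z})^d\times\{0\}$, hence total mass distributed as $\mathrm{Poisson}(e^{-z})$. As $\mathcal{N}_N^z$ is supported on $(\mathbb{R}/\mathbb{Z})^d\times\{0\}\subset A_0:=(\mathbb{R}/\mathbb{Z})^d\times(-1,1)$, with $A_0$ relatively compact, open, and $\partial A_0$ being $e^{-z}\lambda$-null, the convergence of point processes gives $J_N=\mathcal{N}_N^z(A_0)\to\mathrm{Poisson}(e^{-z})$ in law; since these are $\mathbb{Z}_{\ge0}$-valued, $\limsup_{N}P(J_N<k)=P(\mathrm{Poisson}(e^{-z})\le k-1)$, which tends to $0$ as $z\to-\infty$.

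For the second event I would cover $(\mathbb{R}/\mathbb{Z})^d$ by open cubes $C_1,\dots,C_{M(\delta)}$ of diameter $\le c\delta$, with $M(\delta)\le c\delta^{-d}$, chosen (e.g. overlapping cubes of side $4\delta$ on a grid of mesh $\delta$) so that any two points at distance $\le\delta$ lie in a common cube. Then that event is contained in $\bigcup_{s=1}^{M(\delta)}\{\mathcal{N}_N^z(C_s\times(-1,1))\ge2\}$. Each $C_s\times(-1,1)$ lies in the class $\mathcal{J}$ used in the proof of \prettyref{cor:PointProcessConv} and is a continuity set for $e^{-z}\lambda$, so $\mathcal{N}_N^z(C_s\times(-1,1))\to\mathrm{Poisson}(e^{-z}\lambda(C_s))$ in law, whence $\limsup_N P(\mathcal{N}_N^z(C_s\times(-1,1))\ge2)\le\tfrac12\bigl(e^{-z}\lambda(C_s)\bigr)^2\le c\,e^{-2z}\delta^{2d}$; summing over the $M(\delta)\le c\delta^{-d}$ cubes gives
\[
\limsup_{N\to\infty}P\bigl(\mathcal{N}_N^z\ \text{has two distinct atoms at distance}\le\delta\bigr)\ \le\ c\,e^{-2z}\delta^{d}.
\]

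Combining, for every $z\in\mathbb{R}$,
\[
\limsup_{N\to\infty}P(\exists\,1\le i<j\le k:\ d(Y_i,Y_j)\le\delta N)\ \le\ P(\mathrm{Poisson}(e^{-z})\le k-1)+c\,e^{-2z}\delta^{d},
\]
so letting $\delta\downarrow0$ and then $z\to-\infty$ yields \prettyref{eq:LastTwoIndians}. The only step that is not pure bookkeeping is passing from the weak convergence of $\mathcal{N}_N^z$ to a bound on the probability of having two close atoms: that event is neither open nor closed in the vague topology, which is why I route the argument through the finitely many continuity sets $C_s\times(-1,1)$ (equivalently, the bound $c\,e^{-2z}\delta^{d}$ is exactly the expected number of ordered pairs of distinct atoms within distance $\delta$ predicted by the Palm formula for the limiting Poisson process). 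Everything else — the order-theoretic identification of $\{x:L_{H_x}>N^d u(z)\}$ with $\{Y_1,\dots,Y_{J_N}\}$, and the choice of $z$ making $J_N\ge k$ likely — is routine.
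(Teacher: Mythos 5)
Your proof is correct, and it uses the same overall architecture as the paper: bound the limsup by the sum of (i) the probability of having two close atoms of $\mathcal{N}_{N}^{z}$ and (ii) the probability that the total number of atoms is below $k$, then send $\delta\downarrow0$ followed by $z\to-\infty$. The one place where you diverge technically is in bounding term (i). The paper works with a continuous test function $f$ that equals $1$ on $\{d(0,\cdot)\le\delta\}$ and vanishes outside $\{d(0,\cdot)\le2\delta\}$, applies Markov's inequality to the counting functional $\sum_{x\ne y\in\mathrm{Supp}(\mathcal{N}_{N}^{z})}f(x-y)$, passes to the limit using weak convergence of the product $\mathcal{N}_{N}^{z}\otimes\mathcal{N}_{N}^{z}$, and then evaluates the limiting expectation by the Palm formula, obtaining the estimate $c(z)\delta^{d}$. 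You instead cover the torus by $O(\delta^{-d})$ cubes of size $O(\delta)$, use weak convergence of the counts $\mathcal{N}_{N}^{z}(C_{s}\times(-1,1))$ to their Poisson limits, apply the elementary bound $P(\mathrm{Poisson}(\lambda)\ge2)\le\lambda^{2}/2$, and take a union bound — arriving at the same $c(z)\delta^{d}$. Your covering argument avoids Palm calculus and weak convergence of product measures at the cost of a (benign) covering/union-bound bookkeeping step; either works. One small thing worth being explicit about if you flesh this out: your use of the bound $\limsup_{N}P(\mathcal{N}_{N}^{z}(C_{s}\times(-1,1))\ge2)\le\tfrac12(e^{-z}\lambda(C_{s}))^{2}$ requires that each $C_{s}\times(-1,1)$ is a stochastic continuity set for the limiting Poisson process so that the count converges in law — this holds because $\lambda$ gives no mass to the boundaries of the cubes, but it should be stated. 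Your careful order-theoretic identification of $\{x:L_{H_{x}}>N^{d}u(z)\}$ with $\{Y_{1},\dots,Y_{J_{N}}\}$ (using that distinct vertices have distinct hitting times and that $L$ is non-decreasing) spells out a point the paper leaves implicit, which is a nice addition.
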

\begin{proof}
For $z\in\mathbb{R}$, \prettyref{cor:PointProcessConv} says that
$\mathcal{N}_{N}^{z}$ converges weakly to $\mathcal{N}^{z}$, a Poisson
point process on $(\mathbb{R}/\mathbb{Z})^{d}\times\mathbb{R}$ of
intensity $\exp(-z)\lambda$ ($\lambda$ as in \prettyref{cor:PointProcessConv}).
Note that for any $z\in\mathbb{R}$ and $\delta>0$ the limsup of
the probability in \prettyref{eq:LastTwoIndians} is bounded above
by \begin{eqnarray}
\underset{N\rightarrow\infty}{\overline{\lim}}P(\exists x\ne y\in\textnormal{Supp}(\mathcal{N}_{N}^{z}),d(x,y)\le\delta)+\mathbb{P}(\mathcal{N}^{z}((\mathbb{R}/\mathbb{Z})^{d}\times\mathbb{R})<k),\label{eq:SplittGuys}\end{eqnarray}
where we have used that $P(\mathcal{N}_{N}^{z}((\mathbb{R}/\mathbb{Z})^{d}\times\mathbb{R})<k)\rightarrow\mathbb{P}(\mathcal{N}^{z}((\mathbb{R}/\mathbb{Z})^{d}\times\mathbb{R})<k)$
as $N\rightarrow\infty$. Let $f:(\mathbb{R}/\mathbb{Z})^{d}\times\mathbb{R}\rightarrow[0,1]$
be a continuous function such that $f(x)=1$ if $d(0,x)\le\delta$
and $f(x)=0$ if $d(0,x)\ge2\delta$. Then\begin{equation}
\begin{array}{rc}
\lim_{\delta\rightarrow0}\overline{\lim}_{N\rightarrow\infty}P(\exists x\ne y\in\mbox{Supp}(\mathcal{N}_{N}^{z})\mbox{ s.t. }d(x,y)\le\delta) & \le\\
\lim_{\delta\rightarrow0}\overline{\lim}_{N\rightarrow\infty}P(\sum_{x,y\in\textnormal{Supp}(\mathcal{N}_{N}^{z}),x\ne y}f(x-y)\ge1) & \le\\
\lim_{\delta\rightarrow0}\overline{\lim}_{N\rightarrow\infty}E(\sum_{x,y\in\textnormal{Supp(}\mathcal{N}_{N}^{z}),x\ne y}f(x-y)),\end{array}\label{eq:MarkovETC}\end{equation}
where we have used Markov's inequality going from the middle to the
last line. Consider the sum $\sum_{x,y\in\textnormal{Supp(}\mathcal{N}_{N}^{z}),x\ne y}f(x-y)=\mathcal{N}_{N}^{z}\otimes\mathcal{N}_{N}^{z}(f(\cdot-\cdot))-f(0)\mathcal{N}_{N}^{z}((\mathbb{R}/\mathbb{Z})^{d}\times\mathbb{R})$.
Note that $\mathcal{N}_{N}^{z}\otimes\mathcal{N}_{N}^{z}$ tends weakly
to $\mathcal{N}^{z}\otimes\mathbf{\mathcal{N}}^{z}$ so that\begin{equation}
\lim_{N\rightarrow\infty}E\left[\sum_{x,y\in\textnormal{Supp(}\mathcal{N}_{N}^{z}),x\ne y}f(x-y)\right]=\mathbb{E}\left[\sum_{x,y\in\textnormal{Supp(}\mathcal{N}^{z}),x\ne y}f(x-y)\right].\label{eq:ApplicationofWeakConvGen}\end{equation}
By Proposition 13.1.VII p. 280 of \cite{DaleyVereJones_IntroToTheTheoryOfPointProcesses}
the local Palm distribution for $\mathcal{N}^{z}$ at $x\in(\mathbb{R}/\mathbb{Z})^{d}\times\mathbb{R}$
is the distribution of $\mathcal{N}^{z}+\delta_{x}$. Therefore (by
e.g. Proposition 13.1.IV p. 273 of \cite{DaleyVereJones_IntroToTheTheoryOfPointProcesses})
the right hand side of \eqref{eq:ApplicationofWeakConvGen} equals\begin{equation}
\begin{array}{ccl}
e^{-z}\intop\mathbb{E}\left[(\mathcal{N}^{z}+\delta_{x})(1_{\{y\ne x\}}f(x-\cdot))\right]\lambda(dx) & = & e^{-2z}\iint f(x-y)\lambda(dy)\lambda(dx)\\
 & \le & c(z)\delta^{d}.\end{array}\label{eq:ApplicationofPalmMeasureGen}\end{equation}
Combining \prettyref{eq:ApplicationofWeakConvGen} and \prettyref{eq:ApplicationofPalmMeasureGen}
we get that the right-hand side of \prettyref{eq:MarkovETC} equals
zero, and therefore from \prettyref{eq:SplittGuys} we see that for
all $z\in\mathbb{R}$\begin{equation}
\lim_{\delta\rightarrow0}\underset{N\rightarrow\infty}{\overline{\lim}}P(\exists1\le i<j\le k\mbox{ such that }d(Y_{i},Y_{j})\le\delta N)\le\mathbb{P}(\mathcal{N}^{z}((\mathbb{R}/\mathbb{Z})^{d}\times\mathbb{R})<k).\label{eq:LastTwoIndiansAlmostDone}\end{equation}
But if we take $z\rightarrow-\infty$ then the right hand side of
\prettyref{eq:LastTwoIndiansAlmostDone} tends to zero, so \prettyref{eq:LastTwoIndians}
follows.
\end{proof}

\section{\label{sec:ConvToGumbel}Convergence to Gumbel}

We now turn to the proof of \prettyref{thm:GumbelForLocTime}. Intuitively
speaking \prettyref{eq:GumbelForLocTime} says that {}``$L_{C_{F_{N}}}$
is approximately distributed as a Gumbel random variable with location
$N^{d}g(0)\log|F_{N}|$ and scale $N^{d}g(0)$'' (recall that a Gumbel
random variable with location $\mu$ and scale $\beta$ has cumulative
distribution function $\exp(-e^{-(x-\mu)/\beta})$ and that the standard
Gumbel distribution has location $0$ and scale $1$). The first step
of the proof is to use \prettyref{eq:ExcTimeToLocTime} to reduce
this to the statement {}``the number of excursions needed to cover
$F_{N}$ is approximately Gumbel distributed with location $K_{N}g(0)\log|F_{N}|$
and scale $K_{N}g(0)$''. To prove this latter statement we want
to use the coupling result \prettyref{thm:CouplingManyBoxes} from
\prettyref{sec:CoupleRWwithRI} that couples the trace of the random
walk $X_{\cdot}$ with random interlacements, and apply \prettyref{eq:QuantCovLevResult-1}
(i.e. Theorem 0.1 of \cite{Belius2010}) which gives the corresponding
distributional limit result in the random interlacements model. It
is however not immediately obvious how this might be done because
\prettyref{thm:CouplingManyBoxes} only couples the trace of $X_{\cdot}$
in several \emph{separated} {}``local boxes'' of side length $N^{1-\varepsilon},0<\varepsilon<1$
centred in $\mathbb{T}_{N}\times[-\frac{N}{2},\frac{N}{2}]$ with
random interlacements, and in general it will not be possible to cover
$F_{N}$ by a collection of such local boxes. We are able to deal
with this problem by splitting the sequence $F_{N}$ into two subsequences,
one with $F_{N}$ that are small in the sense that $|F_{N}|\le N^{1/8}$
and one with $F_{N}$ that are big in the sense that $|F_{N}|>N^{1/8}$.
In the first case (small $F_{N}$) we \emph{are }able to apply \prettyref{thm:CouplingManyBoxes}
and \prettyref{eq:QuantCovLevResult-1} to get that the number of
excursions needed to cover $F_{N}$ is approximately Gumbel distributed
with appropriate parameters. Moreover we are able to reduce the second
case (big $F_{N}$) to the first case.

We now state \prettyref{pro:PropSmallSets} which deals the first
case. Recall \prettyref{eq:qnotation}, \prettyref{eq:DefOfuF} and
\prettyref{eq:GumbelCDF} for notation.
\begin{prop}[Gumbel for {}``small'' $F$]
\label{pro:PropSmallSets}($d\ge2$) For any $\theta\in(0,\frac{1}{10}]$,
$N\ge1$ and $F\subset\mathbb{T}_{N}\times[-\frac{N}{2},\frac{N}{2}]$
such that $N^{1/8}\ge|F|\ge c(\theta)$ we have:\begin{equation}
\sup_{z\in[-\frac{1}{2}\log|F|,\frac{1}{2}\log|F|],l\in[-N,N]}|P_{q_{l}}(F\subset X(0,D_{[K_{N}u_{F}(z)]}))-G(z)|\le\theta.\label{eq:PropSmallSets}\end{equation}

\end{prop}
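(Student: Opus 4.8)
**The plan is to transfer the known Gumbel limit for cover levels in random interlacements, estimate \eqref{eq:QuantCovLevResult-1}, to the random walk via the coupling \eqref{eq:IntroOneBoxCoupling}, taking advantage of the fact that a small set $F$ (with $|F|\le N^{1/8}$) can be decomposed into pieces each lying inside a separate local box, so that the full strength of \prettyref{thm:CouplingManyBoxes} (independence of the interlacements coupled in distant boxes) can be exploited.**

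First I would reduce the starting measure issue: the claim is stated for $P_{q_l}$ with arbitrary $l\in[-N,N]$, but the events $\{F\subset X(0,D_k)\}$ only concern the trace in $\tilde B$, and by \eqref{eq:GeometricLemma}--\eqref{eq:GeometricLemmaStrongMarkov} (\prettyref{lem:ExcTimeToLocTime}'s Lemma, i.e.\ the ``Geometric Lemma'') the excursions between $B$ and $\partial_e\tilde B$ under $P_{q_l}$ have, after $R_1$, exactly the law dictated by $\kappa_q$; so up to a negligible error coming from the first partial excursion one may as well work with the Poisson process of excursions of intensity $\nu=K_N\kappa_q$ from \eqref{eq:PPoEIntensity}. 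Concretely, I would partition $F$ (which has at most $N^{1/8}$ points, all in $\mathbb{T}_N\times[-\tfrac N2,\tfrac N2]$) into clusters $S_1,\dots,S_k$, each of diameter at most, say, $N^{1-\varepsilon}/3$, such that distinct clusters are at mutual distance at least $N^{1-\varepsilon}/2$ — this is possible for $N$ large because $|F|$ is bounded by $N^{1/8}$ and the cylinder's base has side $N$, so a greedy/packing argument gives $k\le|F|$ clusters with the required separation. Enclose each $S_i$ in a local box $A_i$ of side $N^{1-\varepsilon}$; the $A_i$ are then ``far apart'' in the sense required by \prettyref{thm:CouplingManyBoxes}.

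Next I would apply the coupling: with $u=u_F(z)$ (which satisfies $u\ge 2/\sqrt N$ and $u\le c\log N$ for $z$ in the stated range and $|F|\le N^{1/8}$, so the hypotheses of \eqref{eq:IntroOneBoxCoupling}/\prettyref{thm:CouplingManyBoxes} hold) and $\delta=(\log N)^{-3/2}$, obtain a coupling $Q_1$ of $X_\cdot$ with \emph{independent} random interlacements $\mathcal{I}_i^{u(1\pm\delta)}$, one pair per box $A_i$, on the event of probability $\ge 1-cuN^{-3d-1}\cdot k\ge 1-cN^{-3d-1+1/8}\log N$ that
\[
\mathcal{I}_i^{u(1-\delta)}\cap A_i\ \subset\ X(0,D_{[uK_N]})\cap A_i\ \subset\ \mathcal{I}_i^{u(1+\delta)}\cap A_i\quad\text{for all }i.
\]
On this event, $\{F\subset X(0,D_{[uK_N]})\}$ is sandwiched between $\{S_i\subset\mathcal{I}_i^{u(1-\delta)}\ \forall i\}$ and $\{S_i\subset\mathcal{I}_i^{u(1+\delta)}\ \forall i\}$; by independence of the $\mathcal{I}_i$ and \eqref{eq:coverleveldef}, the probability of the latter two events equals $\prod_i Q_0(\tilde C_{S_i}\le u(1\pm\delta))$. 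Since $\sum_i|S_i|=|F|$ and each $|S_i|\le N^{1/8}$, I would apply \eqref{eq:QuantCovLevResult-1} to each factor — writing $u(1\pm\delta)=u_{S_i}(z_i^{\pm})$ with $z_i^{\pm}=z+\log\frac{|F|}{|S_i|}\pm\delta\log|F|$ (so $z_i^\pm\ge-\log|S_i|$, as needed) — to get $Q_0(\tilde C_{S_i}\le u(1\pm\delta))=G(z_i^\pm)+O(|S_i|^{-c})$. Multiplying, the product of the $G(z_i^\pm)=\exp(-e^{-z_i^\pm})$ telescopes: $\sum_i e^{-z_i^\pm}=e^{-z}e^{\mp\delta\log|F|}$, which tends to $e^{-z}$ as $N\to\infty$ since $\delta\log|F|\le(\log N)^{-3/2}\cdot\tfrac18\log N\to0$. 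Hence both sandwiching probabilities converge to $G(z)=\exp(-e^{-z})$, uniformly in $z$ over the stated range and in $l$, giving \eqref{eq:PropSmallSets} once $|F|\ge c(\theta)$.

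The main obstacle I anticipate is \textbf{controlling all the error terms uniformly} and making the excursion-count bookkeeping exact. Three points need care: (i) the discrepancy between running $X_\cdot$ up to $D_{[K_Nu_F(z)]}$ (a \emph{random} number of excursions determined by the walk) and the Poisson-process-of-excursions picture that \prettyref{thm:CouplingManyBoxes} actually couples with — one must insert \prettyref{lem:ExcTimeToLocTime}-type concentration (or rather a direct Poissonization estimate) to pass between a deterministic number $[K_Nu]$ of excursions and a Poisson$([K_Nu])$-many, at cost $e^{-cN^c}$; (ii) the accumulation of the per-box coupling errors $cuN^{-3d-1}$ over $k\le|F|\le N^{1/8}$ boxes and the multiplicative errors $|S_i|^{-c}$ from \eqref{eq:QuantCovLevResult-1} — these are all polynomially small in $N$ and hence vanish, but one must check they do so uniformly over $z$ and $l$, which is why the $z$-range is restricted to $[-\tfrac12\log|F|,\tfrac12\log|F|]$ (keeping $u_F(z)$ in a benign window) and why $|F|\le N^{1/8}$ (keeping $k$ small); and (iii) verifying that the ``far apart'' hypothesis of \prettyref{thm:CouplingManyBoxes} — which governs how many disjoint local boxes may be coupled simultaneously with independent interlacements — is genuinely met by the clustering, i.e.\ that $|F|\le N^{1/8}$ boxes of side $N^{1-\varepsilon}$ fit with the required separation; this is where the smallness of $F$ is essential and is the reason the large-$F$ case must be handled separately (in \prettyref{pro:PropLargeSets}).
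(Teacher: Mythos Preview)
Your overall strategy --- decompose the small set $F$ into well-separated pieces, couple each piece with an independent random interlacement via \prettyref{thm:CouplingManyBoxes}, and then compare with the Gumbel distribution via \eqref{eq:QuantCovLevResult-1} --- is exactly the paper's. However, there are two concrete problems.

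\medskip
\textbf{(a) The per-piece use of \eqref{eq:QuantCovLevResult-1} does not work.} You propose to apply \eqref{eq:QuantCovLevResult-1} to each factor $Q_0(\tilde C_{S_i}\le u(1\pm\delta))$ separately, getting an error $O(|S_i|^{-c})$ in each, and then multiply. But nothing prevents many of the clusters $S_i$ from being singletons (for instance, take $F$ consisting of $|F|$ points all at mutual distance $\gg|F|^3$); then $|S_i|^{-c}=1$ for every $i$, and the accumulated error is $O(|F|)$, not $o(1)$. Even the explicit computation for singletons does not save you: writing $a_i=e^{-z_i^\pm}=e^{-z}|S_i|/|F|$, one has $|Q_0(\tilde C_{\{x\}}\le u_{\{x\}}(z_i))-G(z_i)|=|(1-a_i)-e^{-a_i}|\asymp a_i^2$, and at $z=-\tfrac12\log|F|$ with all $|S_i|=1$ this sums to $\tfrac12 e^{-2z}/|F|=\tfrac12$, which is not small. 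So the telescoping of $\prod_i G(z_i^\pm)$ is correct and pretty, but the passage from $\prod_i Q_0(\cdot)$ to $\prod_i G(z_i^\pm)$ breaks down.

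The paper's fix is to avoid the per-piece comparison altogether: one reassembles the shifted pieces $S_i-x_i$ into a single set
\[
\tilde F=\bigcup_{i=1}^k\bigl((S_i-x_i)+i\cdot e^{N}e_1\bigr)\subset\mathbb Z^{d+1},
\]
placed so far apart that the decoupling estimate \eqref{eq:Lemma21Belius2010} gives
\[
\Bigl|Q_0(\tilde F\subset\mathcal I^{v})-\prod_{i=1}^k Q_0(S_i-x_i\subset\mathcal I^{v})\Bigr|\le c\log|F|\,\frac{|F|^2}{e^{N}}.
\]
One then applies \eqref{eq:QuantCovLevResult-1} \emph{once} to $\tilde F$, with $|\tilde F|=|F|\ge c(\theta)$, and the error is $c|F|^{-c}$. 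This reassembly step is the missing idea in your proposal.

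\medskip
\textbf{(b) The clustering as you describe it is impossible.} You ask for clusters of diameter at most $N^{1-\varepsilon}/3$ and mutual separation at least $N^{1-\varepsilon}/2$; if $F$ contains two points at distance exactly $N^{1-\varepsilon}/2.5$ you can neither merge nor separate them. The paper's clustering connects points within distance $|F|^3$; then each connected component has diameter at most $|F|\cdot|F|^3=|F|^4\le N^{1/2}$ and distinct components are at distance $>|F|^3$, which (via \eqref{eq:BoundOnMutualEnergy}) makes the mutual-energy error in \prettyref{thm:CouplingManyBoxes} at most $c/|F|$. Choosing $\varepsilon=\tfrac12$ lets each component sit in a box of side $N^{1/2}$.

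\medskip
Finally, your concern (i) about ``Poissonization'' and the starting measure is unnecessary: \prettyref{thm:CouplingManyBoxes} is stated directly for $X_\cdot$ under $P_{q_l}$ for any $l\in[-N,N]$ and directly couples $X(0,D_{[uK_N]})$ (a deterministic-index departure time) with interlacements; no additional Poissonization step is needed at this point.
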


We postpone the proof of \prettyref{pro:PropSmallSets} until after
the proof of \prettyref{thm:GumbelForLocTime} and instead state \prettyref{pro:PropLargeSets}
which deals with the second case.
\begin{prop}[Gumbel for {}``big'' $F_{N}$]
\label{pro:PropLargeSets}($d\ge2$) Let $F_{N}$ be a sequence of
sets as in \prettyref{thm:GumbelForLocTime} with the additional requirement
that $\left|F_{N}\right|>N^{1/8}$ for all $N\ge1$. Then for all
$z\in\mathbb{R}$ (recall that $u_{N}$ is a shorthand for $u_{F_{N}}$):
\begin{equation}
P(F_{N}\subset X(0,D_{[K_{N}u_{N}(z)]}))\rightarrow G(z)\mbox{ as }N\rightarrow\infty.\label{eq:PropLargeSets}\end{equation}

\end{prop}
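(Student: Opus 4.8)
The plan is to reduce the ``big $F_N$'' case to the ``small $F$'' case (\prettyref{pro:PropSmallSets}) by the strategy sketched in the introduction: first run the walk for a deterministic number of excursions equal to a fraction $(1-\rho)$ of the typical covering time, identify the (random) set $F_N^\rho$ of as yet uncovered vertices, show it is typically ``small'' and has cardinality concentrating around $|F_N|^\rho$, then argue that the remaining portion of the walk covers $F_N^\rho$ essentially as an independent walk would. I would set $\rho$ to something like $(\log N)^{-1/4}$ (small but not too small), so that $\rho\log|F_N|\to\infty$ on the one hand, while $(1-\rho)u_N(z)$ is still close to $u_N(z)$ on the other.

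First I would define $F_N^\rho = F_N\setminus X(0,D_{[K_N u_N(-\rho\log|F_N|)]})$, i.e.\ the vertices of $F_N$ not yet covered after $[K_N (1-\rho) g(0)\log|F_N| + \text{const}]$ excursions (being careful about the exact additive constant, but it is lower order). To control $|F_N^\rho|$ I would tile $\mathbb{T}_N\times[-\frac N2,\frac N2]$ by disjoint local boxes $A_i$ of side length $N^{1-\varepsilon}$, apply the coupling \prettyref{eq:IntroOneBoxCoupling} once in each box with parameters $u=(1-\rho)g(0)\log|F_N|$ and $\delta=c_2(\log N)^{-2}$, and thereby compare, with probability $\ge 1-cN^{-3d-1}$ per box (so $\ge 1-cN^{-d-1}$ after a union bound over $O(N^{d\varepsilon})$ boxes), the random walk trace in each box with $\mathcal{I}^{(1-\rho)(1\pm\delta)g(0)\log|F_N|}$. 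Using the one-point formula $Q_0(x\notin\mathcal{I}^{g(0)v})=|\mathbb{T}_N\times\{0\}|^{-v}$ more generally $e^{-v\log|\cdots|}$ — actually the relevant scaling is that for a single vertex $Q_0(x\notin\mathcal{I}^{g(0)(1-\rho)\log|F_N|}) = |F_N|^{-(1-\rho)}$ up to the torus-vs-$F_N$ bookkeeping — I get that $\mathbb{E}|F_N^\rho|$ is of order $|F_N|\cdot|F_N|^{-(1-\rho)} = |F_N|^\rho$, and a second moment estimate using the two-point function \prettyref{eq:RIOneandTwoPoint} together with the sum bound in \prettyref{lem:Calc} gives concentration; in particular $|F_N^\rho|\le N^{1/8}$ with high probability (since $\rho\to 0$, $|F_N|^\rho = o(N^{1/8})$ once $N$ is large, using $|F_N|\le N^{d+1}$). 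This is where \prettyref{lem:Calc} and the independence estimate \prettyref{eq:Lemma21Belius2010} do the work of turning the per-box Poissonian computation into a global statement about $|F_N^\rho|$.

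Next I would condition on the state of the walk at time $D_{[K_N u_N(-\rho\log|F_N|)]}$ and on the realization $F'$ of $F_N^\rho$. The point is that between this time and the later excursions during which $F_N^\rho$ gets covered, the walk makes many excursions far from $\mathbb{T}_N\times[-N,N]$ (out to depth $h_N\sim N(\log N)^2$), and by \prettyref{eq:GeometricLemma}/\prettyref{eq:GeometricLemmaStrongMarkov} the entrance point and post-entrance trajectory of each fresh excursion into $B$ is governed by the normalized equilibrium measure $e_{F',\tilde B}/K_N$, which does not depend on the past — so after excluding one ``forgetting'' excursion the law of how $X_\cdot$ covers $F'$ is (up to a total-variation error that I would bound using \prettyref{eq:GeometricLemmaStrongMarkov} and the fact that the starting level $l\in[-N,N]$ is immaterial by \prettyref{pro:PropSmallSets}, which is stated uniformly in $l$) the same as the law under $P_{q_{l'}}$ of covering the deterministic set $F'$. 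Then I apply \prettyref{pro:PropSmallSets} to $F'$: for $N$ large and $|F'|$ in the good range, $P_{q_{l'}}(F'\subset X(0,D_{[K_N u_{F'}(w)]}))$ is within $\theta$ of $G(w)$, uniformly over $w$ in a window of width $\log|F'|$. Since $[K_N u_N(z)] - [K_N u_N(-\rho\log|F_N|)] = [K_N u_{F'}(w)]$ with $w = z + (\log|F_N| - \log|F'|) - (\text{lower order})$ and $\log|F'|\approx\rho\log|F_N|$ on the concentration event, we get $w\to z$, and $G(w)\to G(z)$ by continuity of $G$. Assembling: on the high-probability good event $P(F_N\subset X(0,D_{[K_N u_N(z)]})) = \mathbb{E}[G(w)+O(\theta)] + o(1)\to G(z)$ after letting first $N\to\infty$ and then $\theta\to0$.

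The main obstacle I expect is the ``forgetting'' / conditional-independence step: making rigorous that, conditionally on $F_N^\rho=F'$ and on the walk's position, the future behaviour near $F'$ has essentially the same law as a fresh walk covering the fixed set $F'$. The difficulty is that $F_N^\rho$ is measurable with respect to the past of the walk, so one cannot naively claim independence; the resolution is precisely that \prettyref{eq:GeometricLemmaStrongMarkov} shows each new excursion into $B$ ``resamples'' from the equilibrium measure relative to $\tilde B$, which depends on $F'$ only through its geometry and not through the history, provided one discards a single excursion during which the walk travels to depth $\sim h_N$ and thereby decorrelates from its entry configuration; quantifying the total-variation cost of this decoupling (it should be $O(r_N/h_N)=O((\log N)^{-2})$ per relevant excursion by the Bernoulli structure already exploited in the proof of \prettyref{lem:ExcTimeToLocTime}) and checking it is negligible against $\theta$ is the technical heart of the argument. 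A secondary nuisance is bookkeeping the various additive $O(1)$ and $O(\log\log N / \log N)$ shifts between $[K_N u_N(z)]$, $[K_N u_N(-\rho\log|F_N|)]$ and $[K_N u_{F'}(w)]$, and confirming they all vanish in the limit.
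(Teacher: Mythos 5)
Your overall strategy (strip $F_N$ down to the uncovered set $F_N^\rho$ after a fraction $(1-\rho)$ of the typical excursion count, show $|F_N^\rho|$ concentrates around $|F_N|^\rho$ by tiling and applying the coupling box by box, then invoke \prettyref{pro:PropSmallSets} via the strong Markov property) is the same as the paper's. But there are two genuine problems with your execution.

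First, your choice $\rho=(\log N)^{-1/4}\to 0$ is both unnecessary and obstructive. It is unnecessary because your stated motivation --- wanting $(1-\rho)u_N(z)$ close to $u_N(z)$ --- misreads the decomposition: the point is not that $(1-\rho)u_N(z)\approx u_N(z)$, but that the two contributions add up exactly, $u_N(z)-(1-\rho)u_N(0)=g(0)\{\rho\log|F_N|+z\}\approx u_{F_N^\rho}(z)$ once $\log|F_N^\rho|\approx\rho\log|F_N|$, and this holds for any fixed $\rho$. It is obstructive because the union bound in the concentration step forces the tiling exponent $\varepsilon$ to be taken proportional to $\rho$ (one needs the number of boxes $N^{(d+1)\varepsilon}$ to be dominated by the square of the per-box fluctuation relative to $|F_N|^\rho$), and \prettyref{thm:CouplingManyBoxes} requires $N\ge c(\varepsilon)$ with $c(\varepsilon)\to\infty$ as $\varepsilon\to 0$. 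So with $\rho\to 0$ the coupling is no longer applicable along the whole sequence. The paper fixes $\rho=\frac{1}{16(d+1)}$, and then $\varepsilon=\frac{\rho}{48(d+1)}$ is a fixed constant; everything goes through.

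Second, you misidentify the technical heart of the second half. You write that because $F_N^\rho$ is measurable with respect to the past of the walk, ``one cannot naively claim independence'' and imagine a total-variation cost of order $(\log N)^{-2}$ per excursion from some Bernoulli decorrelation. In fact the independence from the past is exactly, not approximately, handled by the strong Markov property at time $D_{r(\rho)}$: conditionally on $\mathcal{F}_{D_{r(\rho)}}$, and hence on $\{F_N^\rho=F',\,X_{D_{r(\rho)}}=x\}$ (both $\mathcal{F}_{D_{r(\rho)}}$-measurable), the future walk has law $P_x$, and $F'$ is now a fixed deterministic set. The only distributional replacement that is actually needed is to pass from the specific starting point $x\in\partial_e\tilde B$ to the uniform measure $q_v$ on $\mathbb{T}_N\times\{v\}$ in order to invoke \prettyref{pro:PropSmallSets}; this is done at the first subsequent return $R_1$ to $B$, and the cost is a one-shot total-variation error $O(N^{-5d})$ coming from the local-CLT type estimate quoted from \cite{Sznitman2009-OnDOMofRWonDCbyRI}, not a per-excursion $(\log N)^{-2}$ cost. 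Your ``forgetting'' mechanism conflates the two issues (past-independence vs.\ starting-distribution uniformization) and badly overestimates the error; the resolution is simpler and cleaner than you anticipate.

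A minor point: in the concentration step the paper uses only a per-box second moment computation (via \prettyref{lem:Calc}) and a union bound over boxes; the decorrelation estimate \prettyref{eq:Lemma21Belius2010} is not used there but rather in the ``small sets'' proposition.
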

The proof of \prettyref{pro:PropLargeSets} is also postponed until
after the proof of \prettyref{thm:GumbelForLocTime}, which we now
start.
\begin{proof}[Proof of \prettyref{thm:GumbelForLocTime}]
We write $C_{N}$ for $C_{F_{N}}$. By \prettyref{eq:GammaLocRelation}
we have $P(L_{C_{N}}<N^{d}u_{N}(z))=P(C_{N}<\gamma_{N^{d}u_{N}(z)})$
for all $z\ge-\frac{1}{2}\log|F_{N}|$. Also note that for all $z\in\mathbb{R}$,
$a\in(0,\frac{1}{10}]$ and $N\ge c(z,a,(F_{N})_{N\ge1})$ it holds
that \begin{eqnarray*}
P(C_{N}<\gamma_{N^{d}u_{N}(z)}) & \ge & P(C_{N}\le D_{\left[u_{N}(z-a)K_{N}\right]})-P(D_{\left[u_{N}(z-a)K_{N}\right]}\ge\gamma_{N^{d}u_{N}(z)}),\\
P(C_{N}<\gamma_{N^{d}u_{N}(z)}) & \le & P(C_{N}\le D_{\left[u_{N}(z+a)K_{N}\right]})+P(D_{\left[u_{N}(z+a)K_{N}\right]}<\gamma_{N^{d}u_{N}(z)}).\end{eqnarray*}
Now using that $P(C_{N}\le D_{\left[u_{N}(z\pm a)K_{N}\right]})\overset{\prettyref{eq:TraceDefinition}}{=}P(F_{N}\subset X(0,D_{\left[u_{N}(z\pm a)K_{N}\right]}))$
and \prettyref{eq:uMinusuPluscondition} with $F=F_{N}$ (similarly
to under \prettyref{eq:uMinusuPluscondition} but with $z\pm a$ in
place of $z$) it follows from the above and two applications of \prettyref{eq:ExcTimeToLocTime}
that for all $z\in\mathbb{R}$ and $a\in(0,\frac{1}{10}]$ \begin{equation}
\begin{array}{ccl}
\underline{\lim}_{N\rightarrow\infty}P(F_{N}\subset X(0,D_{\left[u_{N}(z-a)K_{N}\right]})) & \le & \overline{\underline{\lim}}_{N\rightarrow\infty}P(L_{C_{N}}<N^{d}u_{N}(z))\\
 & \le & \overline{\lim}_{N\rightarrow\infty}P(F_{N}\subset X(0,D_{\left[u_{N}(z+a)K_{N}\right]})).\end{array}\label{eq:BajsMacka}\end{equation}
But by splitting the sequence $F_{N}$ into two subsequences and applying
\prettyref{pro:PropSmallSets} (recall that $P=P_{q_{0}}$) and \prettyref{pro:PropLargeSets}
it follows that $\lim_{N\rightarrow\infty}P(F_{N}\subset X(0,D_{\left[u_{N}(z\pm a)K_{N}\right]}))=G(z\pm a)$.
We can thus replace the right- and left-hand sides of \eqref{eq:BajsMacka}
with $G(z+a)$ and $G(z-a)$ respectively, and then let $a\downarrow0$
and use the continuity of $G$ to conclude that $\lim_{N\rightarrow\infty}P(L_{C_{N}}<N^{d}u_{N}(z))=G(z)$
for all $z\in\mathbb{R}$, and therefore that \eqref{eq:GumbelForLocTime}
holds.
\end{proof}

We now turn to the proof of \prettyref{pro:PropSmallSets} which deals
with {}``small'' sets $F_{N}$. It turns out that such small sets
can be chopped into pieces $S_{1},S_{2},...,S_{k}$ such that each
individual piece is contained in a local box of side length $N^{1/2}$,
and is separated from the other pieces by a distance of at least $|F|^{3}$.
The separation will allow us to apply \prettyref{thm:CouplingManyBoxes}
to conclude that the traces left by $X_{\cdot}$ on each $S_{i}$
are approximated by $k$ independent random interlacements, and thus
that the number of excursions needed to cover $F$ is approximately
$K_{N}$ times $\max_{k}\tilde{C}_{S_{k}}$, the maximum of the cover
levels of the $S_{k}$ by the $k$ random interlacements. Using \prettyref{eq:Lemma21Belius2010}
we will be able to assemble the pieces (placing them suitably far
apart) into a single random interlacement, so that $\max_{k}\tilde{C}_{S_{k}}$
is close in distribution to the cover level $\tilde{C}_{\tilde{F}}$
of a set $\tilde{F}$ which consists of the reassembled pieces $S_{1},S_{2},...,S_{k}$
and thus has the same cardinality as $F$. It will then be straight-forward
to prove that $\tilde{C}_{\tilde{F}}$ (and thus $\max_{k}\tilde{C}_{S_{k}}$)
is approximately distributed as a Gumbel random variable with location
$g(0)\log|F|$ and scale $g(0)$ by applying \prettyref{eq:QuantCovLevResult-1}.
This in turn will imply that the number of excursions needed to cover
$F$ is approximately distributed as a Gumbel random variable with
location $K_{N}g(0)\log|F|$ and scale $K_{N}g(0)$, which is essentially
speaking what \prettyref{pro:PropSmallSets} claims.
\begin{proof}[Proof of \prettyref{pro:PropSmallSets}]
Construct a graph $(F,E_{F})$ with vertices in $F$ and edge set
$E_{F}$ such that $\{a,b\}\in E_{F}$ iff $a,b\in F$ and $d_{\infty}(a,b)\le|F|^{3}$.
Let $S_{1},S_{2},...,S_{k}$ be the connected components of $(F,E_{F})$
and let $x_{1},...,x_{k}\in E_{N}$ be arbitrarily selected $x_{i}\in S_{i},i=1,...,k$.
Then for each $i$ and for all $y\in S_{i}$ we have $d_{\infty}(y,x_{i})\le|F|\times|F|^{3}=|F|^{4}\le N^{1/2}$,
where the last inequality follows by our assumption on the cardinality
of $F$. Therefore $S_{i}\subset B(x_{i},N^{1/2})$ for all $i$,
and if $i\ne j$ we have $d_{\infty}(S_{i},S_{j})>|F|^{3}$.

We now apply \prettyref{thm:CouplingManyBoxes} with $\varepsilon=\frac{1}{2}$,
$u=u_{F}(z)$, $\delta$ as in \prettyref{eq:uMinusuPluscondition}
(using also \prettyref{eq:uMinusuPluscondition} with $a=\theta/4$,
similarly to under \prettyref{eq:DominateSplitGuys}) and $l$ in
the place of $z$ to get that for any $l\in[-N,N]$, $z\in[-\frac{1}{2}\log|F|,\frac{1}{2}\log|F|]$
and $|F|\ge c(\theta)$ 

\begin{equation}
\begin{array}{ccl}
\prod_{i=1}^{k}Q_{0}(S_{i}-x_{i}\subset\mathcal{I}^{u_{F}(z-\frac{\theta}{4})})-\frac{\theta}{4} & \le & P_{q_{l}}(F\subset X(0,D_{[K_{N}u_{F}(z)]}))\\
 & \le & \prod_{i=1}^{k}Q_{0}(S_{i}-x_{i}\subset\mathcal{I}^{u_{F}(z+\frac{\theta}{4})})+\frac{\theta}{4},\end{array}\label{eq:RISandwhich}\end{equation}
where we have also used that $N\ge|F|^{8}$ and (when $k>1$) that
$\sum_{i\ne j}\mathcal{E}(S_{i},S_{j})\le\sum_{i\ne j}c\frac{|S_{i}||S_{j}|}{|F|^{3}}\le\frac{c}{|F|}$
by \prettyref{eq:BoundOnMutualEnergy} and $d(S_{i},S_{j})\ge c|F|^{3}$.
Consider the sets $\tilde{S}_{i}=(S_{i}-x_{i})+i\cdot\exp(N)e_{1}\subset\mathbb{Z}^{d+1}$,
where $S_{i}-x_{i}$ is viewed as a subset of $\mathbb{Z}^{d+1}$
and $e_{1}=(1,0,...,0)\in\mathbb{Z}^{d+1}$. Let $\tilde{F}=\bigcup_{i=1}^{k}\tilde{S}_{i}$.
By \prettyref{eq:Lemma21Belius2010} and \prettyref{eq:DefOfCapInZd}
the following holds for $z\in[-\frac{1}{2}\log|F|,\frac{1}{2}\log|F|]$:\[
|Q_{0}(\tilde{F}\subset\mathcal{I}^{u_{F}(z\pm\frac{\theta}{4})})-Q_{0}(\tilde{S}_{1}\subset\mathcal{I}^{u_{F}(z\pm\frac{\theta}{4})})Q_{0}(\tilde{F}\backslash\tilde{S}_{1}\subset\mathcal{I}^{u_{F}(z\pm\frac{\theta}{4})})|\le c\log|F|\frac{|\tilde{S}_{1}||\tilde{F}|}{\exp(N)}.\]
Applying \prettyref{eq:Lemma21Belius2010} another $k-1$ times and
using the triangle inequality we get that \begin{eqnarray}
\begin{array}{r}
\left|Q_{0}(\tilde{F}\subset\mathcal{I}^{u_{F}(z\pm\frac{\theta}{4})})-\prod_{i=1}^{k}Q_{0}(S_{i}-x_{i}\subset\mathcal{I}^{u_{F}(z\pm\frac{\theta}{4})})\right|\le c\log|F|\frac{|F|^{2}}{\exp(N)}\le\frac{\theta}{4}\end{array}\label{eq:PropA3}\end{eqnarray}
holds for all $|F|\ge c(\theta)$ and $z\in[-\frac{1}{2}\log|F|,\frac{1}{2}\log|F|]$
(using also that $Q_{0}(S_{i}-x_{i}\subset\mathcal{I}^{u_{F}(z\pm\frac{\theta}{4})})\overset{\prettyref{eq:InterlacementIsIncreasing}}{=}Q_{0}(\tilde{S}_{i}\subset\mathcal{I}^{u_{F}(z\pm\frac{\theta}{4})})$).
Now finally we apply \prettyref{eq:QuantCovLevResult-1}, using that
$|F|=|\tilde{F}|$, to get that for all $|F|\ge c(\theta)$ and $z\ge-\frac{1}{2}\log|F|$:\begin{equation}
\left|Q_{0}(\tilde{F}\subset\mathcal{I}^{u_{F}(z\pm\frac{\theta}{4})})-G(z\pm\frac{\theta}{4})\right|\le\frac{\theta}{4}.\label{eq:PropA4}\end{equation}
Combining \prettyref{eq:RISandwhich}, \prettyref{eq:PropA3} and
\prettyref{eq:PropA4} with the fact that $|G(z)-G(z\pm\frac{\theta}{4})|\le\frac{\theta}{4}$
for all $z$, we get \prettyref{eq:PropSmallSets}.
\end{proof}
Next we prove \prettyref{pro:PropLargeSets}, which deals with {}``big''
sets $F_{N}$. In this case we will consider for some $0<\rho<1$
the set $F_{N}^{\rho}=F_{N}\backslash X(0,D_{[(1-\rho)K_{N}u_{N}(0)]})$,
i.e. the subset of $F_{N}$ left uncovered by a fraction $1-\rho$
of the typical number of excursions $K_{N}u_{N}(0)$ needed to cover
$F_{N}$, and show that the cardinality of $F_{N}^{\rho}$ concentrates
around its typical value which turns out to be $|F_{N}|^{\rho}$ (this
is the content of \prettyref{lem:GoodEvent}). To do this we will
once again split $F_{N}$ into pieces $S_{1},...,S_{n}$ that are
contained in local boxes (but this time they will not in general be
far apart). Using the coupling \prettyref{thm:CouplingManyBoxes}
for one $i$ at a time will allow us to use a random interlacements
calculation to prove that $|F_{N}^{\rho}\cap S_{i}|$ concentrates
around $\frac{|S_{i}|}{|F_{N}|}\times|F_{N}|^{\rho}$. A union bound
will then ensure that $|F_{N}^{\rho}\cap S_{i}|$ concentrates around
this value for all $i$ at the same time with high probability, and
thus that $|F_{N}^{\rho}|$ concentrates around $|F_{N}|^{\rho}$
with high probability.

Now for the $\rho$ we pick (cf. \prettyref{eq:DefOfRho}) we will
have $|F_{N}|^{\rho}\le((2N+1)N^{d})^{\rho}\le N^{1/8}$ so that (with
high probability) $F_{N}^{\rho}$ is a {}``small set'' in the sense
of \prettyref{pro:PropSmallSets}. It will turn out that the excursions
after the $[(1-\rho)K_{N}u_{N}(0)]$-th departure are {}``almost''
independent of $F_{N}^{\rho}$ and therefore we will be able to apply
\prettyref{pro:PropSmallSets} to the set $F_{N}^{\rho}$ to prove
that the number of additional excursions needed to cover it is approximately
a Gumbel random variable with location $K_{N}g(0)\log|F_{N}^{\rho}|\approx K_{N}g(0)\log|F_{N}|^{\rho}=K_{N}g(0)\rho\log|F_{N}|$
and scale $K_{N}g(0)$. Adding to this random variable $(1-\rho)K_{N}u_{N}(0)=(1-\rho)K_{N}\log|F_{N}|$,
the approximate number of excursions that reduced $F_{N}$ to $F_{N}^{\rho}$,
we find that the number of excursions needed to cover $F_{N}$ is
approximately distributed as a Gumbel random variable with location
$K_{N}g(0)\log|F_{N}|$ and scale $K_{N}g(0)$, which is essentially
speaking what \prettyref{pro:PropLargeSets} claims.
\begin{proof}[Proof of \prettyref{pro:PropLargeSets}]
Fix a $z\in\mathbb{R}$. Define for any $\rho\in(0,\frac{1}{4}]$
and $N\ge c$\begin{equation}
r(\rho)=r(\rho,N)=[(1-\rho)K_{N}u_{N}(0)]\mbox{ and }F_{N}^{\rho}=F_{N}\backslash X(0,D_{r(\rho)}).\label{eq:DefinitionofR}\end{equation}
Because $\{F_{N}\subset X(0,D_{[K_{N}u_{N}(z)]})\}=\{F_{N}^{\rho}\subset X(R_{r(\rho)+1},D_{[K_{N}u_{N}(z)]})\}$
for $N\ge c$ it suffices to show that for some $\rho\in(0,\frac{1}{4}]$
\begin{equation}
P(F_{N}^{\rho}\subset X(R_{r(\rho)+1},D_{[K_{N}u_{N}(z)]}))\rightarrow G(z)\mbox{ as }N\rightarrow\infty.\label{eq:GumbelForFRhoN}\end{equation}
For each $\lambda\in(0,\frac{1}{100})$ and $N\ge1$ we define the
collection of {}``good sets'' by:\begin{equation}
\mathcal{G}_{N,\lambda}=\{F'\subset F_{N}:(1-\lambda)|F_{N}|^{\rho}\le|F'|\le(1+\lambda)|F_{N}|^{\rho}\}.\label{eq:DefOfGoodSets}\end{equation}
To show \prettyref{eq:GumbelForFRhoN} we will use the following lemma:
\begin{lem}
\label{lem:GoodEvent}For all $\lambda\in(0,\frac{1}{100})$ we have
$\lim_{N\rightarrow\infty}P\left(F_{N}^{\rho}\in G_{N,\lambda}\right)=1.$\end{lem}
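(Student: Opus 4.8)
The plan is to prove the lemma by a second moment computation for $|F_N^\rho|$ (recall \eqref{eq:DefinitionofR}, and that here $\rho\in(0,\frac14]$ is a fixed value). I will show, uniformly over the sequences $F_N$ under consideration (as in \prettyref{pro:PropLargeSets}), that $E|F_N^\rho|=(1+o(1))|F_N|^\rho$ and $\mathrm{Var}(|F_N^\rho|)=o(|F_N|^{2\rho})$ as $N\to\infty$. Since $|F_N|>N^{1/8}$ forces $|F_N|^\rho\ge N^{\rho/8}\to\infty$, Chebyshev's inequality then gives $P(||F_N^\rho|-|F_N|^\rho|>\lambda|F_N|^\rho)\to0$ for every $\lambda\in(0,\frac1{100})$, which by \eqref{eq:DefOfGoodSets} is exactly the assertion $P(F_N^\rho\in\mathcal G_{N,\lambda})\to1$. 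Throughout I tile $\mathbb T_N\times[-\frac N2,\frac N2]$ by boxes $A$ of side $N^{1-\varepsilon}$ for a small $\varepsilon=\varepsilon(d,\rho)>0$ to be fixed at the end; there are at most $cN^{(d+1)\varepsilon}$ of them, each identified with a box of $\mathbb Z^{d+1}$ since its side is $<N$, and writing $S_A=F_N\cap A$ and $U_A=|F_N^\rho\cap S_A|=|S_A\setminus X(0,D_{r(\rho)})|$ we have $|F_N^\rho|=\sum_A U_A$ and $\sum_A|S_A|=|F_N|$.

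The tool is the one-box coupling \eqref{eq:IntroOneBoxCoupling}, applied to one box at a time with $u=(1-\rho)g(0)\log|F_N|$ (so that $[uK_N]=r(\rho)$) and $\delta=(\log N)^{-3/2}$: on an event of probability $\ge1-cuN^{-3d-1}$ it sandwiches $|S_A\setminus\mathcal I^{u(1+\delta)}|\le U_A\le|S_A\setminus\mathcal I^{u(1-\delta)}|$, the interlacements having their canonical law under the coupling. Summing the one-point identity $Q_0(x\notin\mathcal I^{u(1\pm\delta)})=|F_N|^{-(1\pm\delta)(1-\rho)}=|F_N|^{-(1-\rho)}(1+o(1))$ of \eqref{eq:RIOneandTwoPoint} (using $\delta\log|F_N|=O((\log N)^{-1/2})$) over $x\in F_N$, and absorbing the $cuN^{-3d-1}$ errors, gives $E|F_N^\rho|=(1+o(1))|F_N|^\rho$. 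For the variance, the coupling reduces $\mathrm{Var}(U_A)$ to $\mathrm{Var}(|S_A\setminus\mathcal I^{u(1-\delta)}|)$ up to an error $O(M_A^2(\log N)^{-1/2})+O(|S_A|^2uN^{-3d-1})$ with $M_A:=|S_A||F_N|^{-(1-\rho)}$, and inside the interlacements, expanding the variance into diagonal and off-diagonal pair sums and bounding the latter by \prettyref{lem:Calc} applied with $z=y-x$ and $F=S_A-x$ (its leading term $|S_A|^2e^{-2u'}$, with $u'=(1-\delta)(1-\rho)\log|F_N|$, cancelling the square of the mean) yields
\[
\mathrm{Var}(|S_A\setminus\mathcal I^{u(1-\delta)}|)\le E|S_A\setminus\mathcal I^{u(1-\delta)}|+c(\log N)|S_A|^{1+\frac{2}{d+1}}|F_N|^{-2(1-\rho)}+c|S_A||F_N|^{-c_1(1-\rho)}(1+o(1)).
\]

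The pieces are then assembled by Cauchy--Schwarz, $\mathrm{Var}(|F_N^\rho|)=\sum_{A,A'}\mathrm{Cov}(U_A,U_{A'})\le\big(\sum_A\sqrt{\mathrm{Var}(U_A)}\big)^2$; this is the device that removes any need to control cross-box covariances, so that the single-box coupling \eqref{eq:IntroOneBoxCoupling} (rather than the full \prettyref{thm:CouplingManyBoxes}) suffices here. It remains to check that each term of the per-box bound contributes $o(|F_N|^\rho)$ to $\sum_A\sqrt{\mathrm{Var}(U_A)}$; using $\sum_A|S_A|=|F_N|$, the bound on the number of boxes, the elementary inequalities $\sum_A|S_A|^\beta\le|F_N|^\beta$ for $\beta\ge1$ and $\sum_A|S_A|^\beta\le(cN^{(d+1)\varepsilon})^{1-\beta}|F_N|^\beta$ for $0\le\beta\le1$, the fact that $c_1>1$, and the fact that $|F_N|^\rho\ge N^{\rho/8}$ beats any fixed power of $\log N$, this holds once $\varepsilon$ is taken small enough in terms of $d$ and $\rho$ (roughly $\varepsilon<\min\{\rho/(8(d+1)),\,(d-1)/(16(d+1)^2)\}$). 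I expect the main obstacle to be precisely this last bookkeeping: the estimates must be uniform over all admissible $F_N$, and the delicate regime is when $F_N$ is scattered over $\sim N^{(d+1)\varepsilon}$ nearly-empty boxes, where one must ensure that the $\log N$ produced by the $cu|F|^{2/(d+1)}$ term in \prettyref{lem:Calc}, and the $N^{-3d-1}$ coupling errors summed over the tiling and weighted by $|S_A|^2\le|F_N|^2$, still lose to the polynomial lower bound $|F_N|^{2\rho}\ge N^{\rho/4}$ — which is exactly what dictates how small $\varepsilon$ must be.
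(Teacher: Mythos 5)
Your plan hits the same key ingredients as the paper's proof: tile $F_N$ with $\lesssim N^{(d+1)\varepsilon}$ boxes of side $N^{1-\varepsilon}$, apply the one-box coupling \eqref{eq:IntroOneBoxCoupling} box by box, run a Chebyshev estimate inside random interlacements whose second moment is controlled by \prettyref{lem:Calc}, and note that cross-box dependence never needs to be controlled directly. The paper organizes the last point differently: instead of computing $\mathrm{Var}(|F_N^\rho|)$ and using Cauchy--Schwarz on covariances, it does a union bound over the per-box events $E_i^\pm$ of \eqref{eq:DefofEpm}, with a deviation threshold $\frac{\lambda}{2}a_i|F_N|^\rho+\frac{\lambda}{2t}\sqrt{a_i}|F_N|^\rho$ split between a part proportional to box size and a part proportional to $\sqrt{a_i}/t$ (where $t=\sum_i\sqrt{a_i}$). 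Since the per-box Chebyshev bound produces a factor $t^2/|F_N|^\rho$ in both routes, the two assembly devices are arithmetically the same and the choice of $\varepsilon$ works out identically.

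Where the two genuinely differ is in the step that transfers from the walk to interlacements. The paper exploits the one-sided inclusions separately: $U_A\le|S_A\setminus\mathcal I^{u_-}|$ gives $P(E_i^+)$ and $U_A\ge|S_A\setminus\mathcal I^{u_+}|$ gives $P(E_i^-)$, with the levels $u_\pm(\lambda)$ of \eqref{eq:Defofus-1} chosen so that the interlacement means equal $(1\pm\frac{\lambda}{2})a_i|F_N|^\rho$ exactly; this way the gap $|S_A\setminus\mathcal I^{u(1-\delta)}|-|S_A\setminus\mathcal I^{u(1+\delta)}|$ never needs to be estimated. Your variance-transfer step, in contrast, does require a second-moment bound on that gap, and the error you write, $O(M_A^2(\log N)^{-1/2})$, is not complete: the gap $V_+-V_-=|S_A\cap(\mathcal I^{u(1+\delta)}\setminus\mathcal I^{u(1-\delta)})|$ also has a diagonal contribution to its second moment of order $M_A(\log N)^{-1/2}$, which dominates $M_A^2(\log N)^{-1/2}$ whenever $|S_A|<|F_N|^{1-\rho}$ (sparse boxes). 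This does not sink the argument --- that diagonal term is $o(E[V_+])$ and gets absorbed into the leading $E|S_A\setminus\mathcal I^{u(1-\delta)}|$ term --- but it should be stated, and the claimed bound on the gap itself needs a short argument using the Markov property \eqref{eq:markovpropforI} to factor $Q_0(x,y\in\mathcal I^{u(1+\delta)}\setminus\mathcal I^{u(1-\delta)})=Q_0(x,y\notin\mathcal I^{u(1-\delta)})Q_0(x,y\in\mathcal J)$ for an independent $\mathcal J$ at level $2u\delta$, after which \prettyref{lem:Calc} applies. So your route is viable but carries an extra estimate that the paper's one-sided bookkeeping makes unnecessary; aside from that, the two proofs are essentially the same.
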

\begin{proof}
Let \begin{equation}
\varepsilon=\frac{1}{3(d+1)}\frac{1}{16}\rho.\label{eq:DefofEpsilon}\end{equation}
For each $N$ we can select $x_{1},..,x_{n(N)}\in\mathbb{T}_{N}\times[-\frac{N}{2},\frac{N}{2}],n(N)\le cN^{(d+1)\varepsilon}$
and partition $F_{N}$ into disjoint non-empty sets $S_{N}^{1},...,S_{N}^{n}$
such that $S_{N}^{i}\subset B(x_{i},N^{1-\varepsilon})$ for each
$i$ and $F_{N}=\bigcup_{i=1}^{n}S_{N}^{i}$. Consider for each $i=1,...,n(N)$
the events\begin{equation}
\begin{array}{ccl}
E_{i}^{+} & = & \left\{ \left|S_{N}^{i}\backslash X(0,D_{r(\rho)})\right|\ge(1+\frac{\lambda}{2})a_{i}|F_{N}|^{\rho}+\frac{\lambda}{2t}\sqrt{a_{i}}|F_{N}|^{\rho}\right\} \mbox{ and }\\
E_{i}^{-} & = & \left\{ \left|S_{N}^{i}\backslash X(0,D_{r(\rho)})\right|\le(1-\frac{\lambda}{2})a_{i}|F_{N}|^{\rho}-\frac{\lambda}{2t}\sqrt{a_{i}}|F_{N}|^{\rho}\right\} ,\end{array}\label{eq:DefofEpm}\end{equation}
where $a_{i}=\frac{|S_{N}^{i}|}{|F_{N}|}$ and $t=\sum_{i=1}^{n(N)}\sqrt{a_{i}}$.
Because the right-hand sides of the inequalities in the events in
\eqref{eq:DefofEpm} sum up to $(1\pm\lambda)|F_{N}|^{\rho}$ we have:\begin{equation}
\begin{array}{ccccl}
P\left(\left|F_{N}^{\rho}\right|\ge(1+\lambda)|F_{N}|^{\rho}\right) & \le & \sum P(E_{i}^{+}) & \le & cN^{(d+1)\varepsilon}\sup_{i}P(E_{i}^{+}),\\
P\left(\left|F_{N}^{\rho}\right|\le(1-\lambda)|F_{N}|^{\rho}\right) & \le & \sum P(E_{i}^{-}) & \le & cN^{(d+1)\varepsilon}\sup_{i}P(E_{i}^{-}).\end{array}\label{eq:UnionBound}\end{equation}
We therefore wish to bound $P(E_{i}^{\pm})$. To this end let\begin{eqnarray}
u_{\pm}(\lambda)=g(0)\{(1-\rho)\log|F_{N}|+\log\frac{1}{1\mp\lambda/2}\}\mbox{ and }u=(1-\rho)u_{N}(0),\label{eq:Defofus-1}\end{eqnarray}
and note that we can apply \prettyref{thm:CouplingManyBoxes} with
$k=1$, $\varepsilon$ as in \prettyref{eq:DefofEpsilon}, $u$ as
in \prettyref{eq:Defofus-1}, $\delta$ as in \prettyref{eq:uMinusuPluscondition}
(using that $u_{-}(\lambda)\le(1-\rho)u_{N}(-\frac{\lambda}{4})\le(1-\rho)u_{N}(\frac{\lambda}{4})\le u_{+}(\lambda)$,
$u=(1-\rho)u_{N}(0)$, so that $[uK_{N}]=r(\rho)$, and \prettyref{eq:uMinusuPluscondition}
with $a=\lambda/4$ similarly to under \prettyref{eq:DominateSplitGuys}
and above \prettyref{eq:RISandwhich}) once for each $i$ to show
that if $N\ge c(\lambda)$ then\begin{equation}
\begin{array}{ccc}
P(E_{i}^{+})\le Q_{0}(|S_{N}^{i}\backslash\mathcal{I}^{u_{-}}|\ge(1+\frac{\lambda}{2})a_{i}|F_{N}|^{\rho}+\frac{\lambda}{2t}\sqrt{a_{i}}|F_{N}|^{\rho})+cN^{-3d},\\
P(E_{i}^{-})\le Q_{0}(|S_{N}^{i}\backslash\mathcal{I}^{u_{+}}|\le(1-\frac{\lambda}{2})a_{i}|F_{N}|^{\rho}-\frac{\lambda}{2t}\sqrt{a_{i}}|F_{N}|^{\rho})+cN^{-3d},\end{array}\label{eq:BoundEbyQ}\end{equation}
where we view $S_{N}^{i}$ as a subset of $\mathbb{Z}^{d+1}$ and
have also used \prettyref{eq:InterlacementIsIncreasing}. Note that
\begin{equation}
\mathbb{E}^{Q_{0}}(|S_{N}^{i}\backslash\mathcal{I}^{u_{\mp}}|)=\sum_{x\in S_{N}^{i}}Q_{0}(x\notin\mathcal{I}^{u_{\mp}})\overset{\prettyref{eq:RIOneandTwoPoint},\eqref{eq:Defofus-1}}{=}(1\pm\frac{\lambda}{2})a_{i}|F_{N}|^{\rho}.\label{eq:ExpectationComputation}\end{equation}
Thus the probabilities on the right hand sides of \eqref{eq:BoundEbyQ}
are bounded above by:\begin{equation}
\begin{array}{ccc}
Q_{0}\left(\left||S_{N}^{i}\backslash\mathcal{I}^{u_{\mp}}|-\mathbb{E}^{Q_{0}}(|S_{N}^{i}\backslash\mathcal{I}^{u_{\mp}}|)\right|\ge\frac{\lambda}{2t}\sqrt{a_{i}}|F_{N}|^{\rho}\right). & \mbox{}\end{array}\label{eq:PreChebyshev}\end{equation}
Using the Chebyshev inequality we see that \eqref{eq:PreChebyshev}
is bounded above by:\begin{equation}
\frac{\mathbb{E}^{Q_{0}}(|S_{N}^{i}\backslash\mathcal{I}^{u_{\mp}}|^{2})-\left(\mathbb{E}^{Q_{0}}(|S_{N}^{i}\backslash\mathcal{I}^{u_{\mp}}|)\right)^{2}}{\frac{\lambda^{2}}{4t^{2}}a_{i}|F_{N}|^{2\rho}}\label{eq:PostChebyshev}\end{equation}
We thus wish to bound $\mathbb{E}^{Q_{0}}(|S_{N}^{i}\backslash\mathcal{I}^{u_{\mp}}|^{2})$.
Note that\begin{eqnarray}
\mathbb{E}^{Q_{0}}(|S_{N}^{i}\backslash\mathcal{I}^{u^{\mp}}|^{2}) & = & \sum_{x,y\in S_{N}^{i}}Q_{0}(x,y\notin\mathcal{I}^{u^{\mp}})\overset{\prettyref{eq:ExpectationComputation}}{\le}ca_{i}|F_{N}|^{\rho}\nonumber \\
 &  & +\sum_{x\in S_{N}^{i}}\sum_{y\in S_{N}^{i},y\ne x}Q_{0}(x,y\notin\mathcal{I}^{u_{\mp}}).\label{eq:SecondMomentasSumofTwoPoint}\end{eqnarray}
Now using the translation invariance of $\mathcal{I}^{u^{\mp}}$ (cf.
\prettyref{eq:InterlacementIsIncreasing}) and \prettyref{lem:Calc}
(with $a=10000N$ say) we get that for $N\ge c$: \begin{eqnarray}
\begin{array}{rc}
\sum_{x\in S_{N}^{i},y\ne x}Q_{0}(x,y\notin\mathcal{I}^{u^{\mp}}) & \overset{\eqref{eq:Defofus-1}}{\le}\\
(1\pm\frac{\lambda}{2})^{2}|F_{N}|^{2\rho-2}\{|S_{N}^{i}|^{2}+c\log|F_{N}||S_{N}^{i}|^{\frac{2}{d+1}+1}\}+c|S_{N}^{i}||F_{N}|^{\rho-1} & \overset{\prettyref{eq:ExpectationComputation}}{\le}\\
\left(\mathbb{E}^{Q_{0}}(|S_{N}^{i}\backslash\mathcal{I}^{u^{\mp}}|)\right)^{2}+ca_{i}\log|F_{N}||F_{N}|^{2\rho-\frac{d-1}{d+1}}+ca_{i}|F_{N}|^{\rho} & \overset{\rho\le\frac{1}{4}}{\le}\\
\left(\mathbb{E}^{Q_{0}}(|S_{N}^{i}\backslash\mathcal{I}^{u^{\mp}}|)\right)^{2}+ca_{i}|F_{N}|^{\rho}\end{array}\label{eq:BoundonSumofTwoPoint}\end{eqnarray}
Combining \prettyref{eq:BoundonSumofTwoPoint} with \eqref{eq:PreChebyshev},
\eqref{eq:PostChebyshev} and \eqref{eq:SecondMomentasSumofTwoPoint}
we find from \eqref{eq:BoundEbyQ} that $P(E_{i}^{\pm})\le c(\lambda)t^{2}|F_{N}|^{-\rho}+cN^{-3d}\overset{t\ge1}{\le}c(\lambda)t^{2}N^{-\frac{1}{8}\rho}$
and thus from \prettyref{eq:UnionBound} that\[
P(\left\{ (1-\lambda)|F_{N}|^{\rho}\le\left|F_{N}^{\rho}\right|\le(1+\lambda)|F_{N}|^{\rho}\right\} ^{c})\le c(\lambda)N^{(d+1)\varepsilon}t^{2}N^{-\frac{1}{8}\rho}\le c(\lambda)N^{-\frac{1}{16}\rho},\]
where to get the last inequality we have used that $t=\sum_{i=1}^{n(N)}\sqrt{a_{i}}\le n(N)\le cN^{(d+1)\varepsilon}$
and \prettyref{eq:DefofEpsilon}. Thus we just have to let $N\rightarrow\infty$
and recall the definition \prettyref{eq:DefOfGoodSets} of $\mathcal{G}_{N,\lambda}$
to complete the proof of \prettyref{lem:GoodEvent}.
\end{proof}
We now continue with the proof of \prettyref{eq:GumbelForFRhoN}.
Fix \begin{eqnarray}
\rho & = & \frac{1}{16(d+1)}\in(0,\frac{1}{4}]\label{eq:DefOfRho}\end{eqnarray}
and write $r$ in place of $r(\rho)$. Note that by the strong Markov
property\begin{equation}
\begin{array}{rcc}
P(F_{N}^{\rho}\subset X(R_{r+1},D_{[K_{N}u_{N}(z)]}),F_{N}^{\rho}\in\mathcal{G}_{N,\lambda}) & =\\
\sum P(F_{N}^{\rho}=F',X_{D_{r}}=x)P_{x}(F'\subset X(0,D_{[K_{N}u_{N}(z)]-r})).\end{array}\label{eq:UseOfMarkov}\end{equation}
where the sum is over all $x\in\partial_{e}\tilde{B}$ and $F'\in\mathcal{G}_{N,\lambda}$.
We now need the following lemma:
\begin{lem}
\label{lem:F'CloseToGumbel}For any $\lambda\in(0,\frac{1}{100})$
and $N\ge c(\lambda,z)$ we have that if $F'\in\mathcal{G}_{N,\lambda}$
and $x\in\partial_{e}\tilde{B}$ then\begin{equation}
|P_{x}(F'\subset X(0,D_{[K_{N}u_{N}(z)]-r}))-G(z)|\le c\lambda.\label{eq:F'CloseToGumbel}\end{equation}
\end{lem}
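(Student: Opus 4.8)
The plan is to reduce \eqref{eq:F'CloseToGumbel} to \prettyref{pro:PropSmallSets} by restarting the walk at its first return $R_{1}$ to $B$. Write $m=[K_{N}u_{N}(z)]-r$, and fix $F'\in\mathcal{G}_{N,\lambda}$ and $x\in\partial_{e}\tilde{B}$; by the reflection symmetry of $E_{N}$ in the zero level we may assume that $x$ lies at level $h_{N}$. Since $F'\subset F_{N}\subset\mathbb{T}_{N}\times[-\tfrac{N}{2},\tfrac{N}{2}]\subset B$, since the walk started at $x$ stays in $B^{c}$ up to time $R_{1}$, and since $X_{R_{1}}\in\partial_{i}B=\mathbb{T}_{N}\times\{-r_{N},r_{N}\}$, which is disjoint from $F'$, we get $X(0,R_{1})\cap F'=\emptyset$, so that $\{F'\subset X(0,D_{m})\}=\{F'\subset X(R_{1},D_{m})\}$ under $P_{x}$; moreover, starting from level $h_{N}$ the $\mathbb{Z}$-coordinate can only reach $[-r_{N},r_{N}]$ through the value $r_{N}$, so in fact $X_{R_{1}}\in\mathbb{T}_{N}\times\{r_{N}\}$. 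Applying the strong Markov property at $R_{1}$ then gives
\[
P_{x}(F'\subset X(0,D_{m}))=\sum_{y\in\mathbb{T}_{N}\times\{r_{N}\}}P_{x}(X_{R_{1}}=y)\,P_{y}(F'\subset X(0,D_{m})).
\]

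The next step is a standard mixing estimate for simple random walk in the cylinder, which I would use to show that $\sup_{x\in\partial_{e}\tilde{B}}\|P_{x}(X_{R_{1}}\in\cdot)-q_{\pm r_{N}}\|_{\mathrm{TV}}\to0$ as $N\to\infty$, where the sign matches the level of $x$. Indeed, before $R_{1}$ the $\mathbb{Z}$-coordinate must travel a distance $h_{N}-r_{N}$, which is of order $N(\log N)^{2}$; hence for every fixed $M$ one has $P_{x}(R_{1}\ge MN^{2})\to1$, and on this event the $\mathbb{T}_{N}$-component has made, with high probability, at least $cMN^{2}$ simple-random-walk steps on $\mathbb{T}_{N}$, so that (by submultiplicativity of the distance to equilibrium on $\mathbb{T}_{N}$, whose mixing time is $\Theta(N^{2})$) the law of the $\mathbb{T}_{N}$-coordinate of $X_{R_{1}}$ is within $\delta(M)$ of uniform in total variation, uniformly in the starting point, with $\delta(M)\to0$ as $M\to\infty$; letting first $N\to\infty$ and then $M\to\infty$ gives the claim. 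Feeding this into the identity of the previous paragraph and using $0\le P_{y}(F'\subset X(0,D_{m}))\le1$ yields
\[
P_{x}(F'\subset X(0,D_{m}))=P_{q_{\pm r_{N}}}(F'\subset X(0,D_{m}))+o(1),
\]
uniformly over $F'\in\mathcal{G}_{N,\lambda}$ and $x\in\partial_{e}\tilde{B}$.

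It then remains to apply \prettyref{pro:PropSmallSets}. Recalling $r=[(1-\rho)K_{N}g(0)\log|F_{N}|]$, $[K_{N}u_{N}(z)]=[K_{N}g(0)(\log|F_{N}|+z)]$ and $K_{N}\to\infty$, a short computation shows that the real number $z'$ defined by $[K_{N}u_{F'}(z')]=m$ satisfies $z'=z-\log(|F'|/|F_{N}|^{\rho})+o(1)$, so $|z'-z|\le3\lambda$ for $N$ large because $F'\in\mathcal{G}_{N,\lambda}$, and in particular $z'\in[-\tfrac{1}{2}\log|F'|,\tfrac{1}{2}\log|F'|]$ for $N\ge c(\lambda,z)$; also $c(\lambda)\le|F'|\le N^{1/8}$ for $N\ge c(\lambda,z)$, using $\rho=\tfrac{1}{16(d+1)}$, $|F_{N}|\le cN^{d+1}$ and $|F_{N}|>N^{1/8}$. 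Thus \prettyref{pro:PropSmallSets} applied with $\theta=\lambda$ and $l=\pm r_{N}\in[-N,N]$ gives $|P_{q_{\pm r_{N}}}(F'\subset X(0,D_{m}))-G(z')|\le\lambda$, and combining this with the previous paragraph and the Lipschitz bound $|G(z')-G(z)|\le e^{-1}|z'-z|$ gives the estimate \eqref{eq:F'CloseToGumbel} with a numerical constant $c$, for all $N\ge c(\lambda,z)$.

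The only substantive ingredient is the mixing estimate in the second paragraph; everything else is the strong Markov property together with bookkeeping about the definitions of $r$, $K_{N}$, $\rho$ and $\mathcal{G}_{N,\lambda}$. The point to be careful about is precisely that although $X_{R_{1}}$ is supported on a single level $\pm r_{N}$, its law there is not exactly the uniform measure $q_{\pm r_{N}}$, and it is the mixing of the $\mathbb{T}_{N}$-component over the long excursion back to $B$ that repairs this and allows \prettyref{pro:PropSmallSets} (stated for uniform starting measures) to be invoked.
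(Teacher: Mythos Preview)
Your proof is correct and follows essentially the same route as the paper: apply the strong Markov property at $R_{1}$, replace the law of $X_{R_{1}}$ by the uniform measure $q_{\pm r_{N}}$ via a mixing estimate, and then invoke \prettyref{pro:PropSmallSets} after checking the arithmetic on $m=[K_{N}u_{N}(z)]-r$ versus $K_{N}u_{F'}(\cdot)$. The only notable difference is in the mixing step: the paper simply quotes the quantitative bound $\sup_{x'}|P_{x}(X_{R_{1}}=x')-q_{v}(x')|\le cN^{-5d}$ from (2.2) of \cite{Sznitman2009-OnDOMofRWonDCbyRI}, whereas you sketch the underlying reason (the $\mathbb{Z}$-component must traverse $h_{N}-r_{N}\sim N(\log N)^{2}$, forcing $\gg N^{2}$ torus steps and hence mixing); your argument is morally right but would need a line or two more care about the conditioning to be fully rigorous, and it yields only $o(1)$ rather than a power of $N$, which is however all that is needed here.
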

\begin{proof}
By the strong Markov property we have\begin{equation}
P_{x}(F'\subset X(0,D_{[K_{N}u_{N}(z)]-r}))=E_{x}[P_{X_{R_{1}}}(F'\subset X(0,D_{[K_{N}u_{N}(z)]-r}))].\label{eq:SugaBuga}\end{equation}
Let $x=(y,w)$, with $y\in\mathbb{T}_{N}$ and $w=\{-h_{N},h_{N}\}$,
and let $v\in\{-r_{N},r_{N}\}$ with $wv>0$. Then by (2.2) of \cite{Sznitman2009-OnDOMofRWonDCbyRI}
we have\[
\sup_{x'\in\mathbb{T}_{N}\times\{v\}.}|P_{x}(X_{R_{1}}=x')-q_{v}(x')|\le cN^{-5d}.\]
Thus for $N\ge c(\lambda)$ we have \begin{equation}
|E_{x}[P_{X_{R_{1}}}(F'\subset X(0,D_{[K_{N}u_{N}(z)]-r}))]-P_{q_{v}}(F'\subset X(0,D_{[K_{N}u_{N}(z)]-r}))|\le\lambda.\label{eq:TheFirstTheta}\end{equation}
Note that $g(0)\{\rho\log|F_{N}|+z\}=g(0)\{\log|F'|+z+\log\frac{|F_{N}|^{\rho}}{|F'|}\}$
and since $F'\in\mathcal{G}_{N,\lambda}$ we have $-4\lambda\le\log\frac{1}{1+\lambda}\le\log\frac{|F_{N}|^{\rho}}{|F'|}\le\log\frac{1}{1-\lambda}\le4\lambda$.
Thus for all $N\ge c(z,\lambda)$ it holds that \[
K_{N}u_{F'}(z-8\lambda)\overset{\prettyref{eq:DefinitionofR}}{\le}[K_{N}u_{N}(z)]-r\overset{\prettyref{eq:DefinitionofR}}{\le}K_{N}u_{F'}(z+8\lambda),\]
and therefore also that \begin{eqnarray}
P_{q_{v}}(F'\subset X(0,D_{[K_{N}u_{F'}(z-8\lambda)]})) & \le & P_{q_{v}}(F'\subset X(0,D_{[K_{N}u_{N}(z)]-r}))\label{eq:asdasdsda-1}\\
 & \le & P_{q_{v}}(F'\subset X(0,D_{[K_{N}u_{F'}(z+8\lambda)]})).\nonumber \end{eqnarray}
Now if $N\ge c(\lambda,z)$ then $|F'|\overset{\eqref{eq:DefOfGoodSets},\eqref{eq:DefOfRho}}{\le}cN^{\frac{1}{16}}\le N^{\frac{1}{8}}$,
$|F'|\overset{\eqref{eq:DefOfGoodSets}}{\ge}cN^{\frac{1}{8}\rho}\ge c(\lambda)$
and $-\frac{1}{2}\log|F'|\le z-8\lambda\le z+8\lambda\le\frac{1}{2}\log|F^{'}|$.
We can therefore use \prettyref{pro:PropSmallSets} with $\lambda$
in the place of $\theta$ on the right- and left-hand sides of \prettyref{eq:asdasdsda-1}
to get that \begin{equation}
G(z-8\lambda)-\lambda\le P_{q_{v}}(F'\subset X(0,D_{[K_{N}u_{N}(z)]-r})\le G(z+8\lambda)+\lambda.\label{eq:ohlalalala-1}\end{equation}
Now we simply have to combine \prettyref{eq:SugaBuga}, \prettyref{eq:TheFirstTheta}
and \prettyref{eq:ohlalalala-1} with the inequality $|G(z\pm8\lambda)-G(z)|\le c\lambda$
to get \prettyref{eq:F'CloseToGumbel}.
\end{proof}
We are now ready to complete the proof of \prettyref{pro:PropLargeSets}.
From \prettyref{eq:F'CloseToGumbel} and \prettyref{eq:UseOfMarkov}
we see that if $\lambda\in(0,\frac{1}{100})$ and $N\ge c(\lambda,z)$
then \[
|P(F_{N}^{\rho}\subset X(R_{r+1},D_{[K_{N}u_{N}(z)]}),F_{N}^{\rho}\in\mathcal{G}_{N,\lambda})-P(F_{N}^{\rho}\in\mathcal{G}_{N,\lambda})G(z)|\le c\lambda.\]
Letting $N\rightarrow\infty$ and using \prettyref{lem:GoodEvent}
we see that\[
|\lim_{N\rightarrow\infty}P(F_{N}^{\rho}\subset X(R_{r+1},D_{[K_{N}u_{N}(z)]}))-G(z)|\le c\lambda.\]
Now letting $\lambda\downarrow0$ we get \prettyref{eq:GumbelForFRhoN}
and therefore the proof of \prettyref{pro:PropLargeSets} is complete.

\end{proof}
We have now completely reduced the proofs \prettyref{thm:GumbelForLocTime}
and its corollaries to the coupling \prettyref{thm:CouplingManyBoxes}.

\section{\label{sec:CoupleRWwithRI}Coupling random walk with random interlacements}

In this section we state and prove the main coupling theorem, \prettyref{thm:CouplingManyBoxes},
which couples random walk $X_{\cdot}$ with random interlacements.
More precisely, for some $\varepsilon\in(0,1),$ $k\ge1$ and suitably
large $N$ we select $k$ vertices\begin{equation}
\begin{array}{c}
x_{1},...,x_{k}\in\mathbb{T}_{N}\times[-\frac{N}{2},\frac{N}{2}]\mbox{ and }k\mbox{ non-empty sets }\\
S_{1},S_{2},...,S_{k}\subset B(0,N^{1-\varepsilon})\mbox{ such that }S_{i}+x_{i},i=1,...,k\mbox{ are disjoint,}\end{array}\label{eq:CouplingTheBoxes}\end{equation}
and then construct, for appropriate $u$ and $\delta$, $k$ independent
pairs of random sets $\mathcal{I}_{i}^{u(1-\delta)}\cap S_{i},\mathcal{I}_{i}^{u(1+\delta)}\cap S_{i}$,
with the law of random interlacements at level $u(1-\delta)$, respectively
$u(1+\delta)$, intersected with $S_{i}$, such that the following
event holds with high probability (provided the $S_{i}+x_{i}$ have
low mutual energy, see \prettyref{eq:DefOfMutualEnergy}, for example
if they are {}``far apart''):\begin{equation}
\{\mathcal{I}_{i}^{u(1-\delta)}\cap S_{i}\subset(X(0,D_{[uK_{N}]})-x_{i})\cap S_{i}\subset\mathcal{I}_{i}^{u(1+\delta)}\cap S_{i}\mbox{ for all }i\}.\label{eq:CouplingManyBoxesEvent}\end{equation}

A weaker version of the coupling which gave the upper inclusion for
$k=1$, \emph{fixed} $u$ and $\delta$ and large $N$ is contained
in \cite{Sznitman2009-OnDOMofRWonDCbyRI} (a similar lower inclusion
is contained implicitly in \cite{Sznitman2009-UBonDTofDCandRI}).
\prettyref{thm:CouplingManyBoxes} improves on this by allowing $u$
and $\delta$ to vary with $N$, by constructing $\mathcal{I}_{i}^{u(1\pm\delta)}\cap S_{i}$
such that they have the \emph{joint} law of random interlacements
at levels $u(1\pm\delta)$ intersected with $S_{i}$ (the naive way
of combining the explicit coupling from \cite{Sznitman2009-OnDOMofRWonDCbyRI}
with the implicit coupling from \cite{Sznitman2009-UBonDTofDCandRI}
to get a double inclusion, as in \prettyref{eq:CouplingManyBoxesEvent},
does not guarantee the {}``correct'' joint law), and by coupling
the trace in several sets $S_{1}+x_{1},...,S_{k}+x_{k}$. For more
on why constructing $\mathcal{I}_{i}^{u(1\pm\delta)}\cap S_{i}$ such
that they have the correct joint law is useful see \prettyref{rem:EndRemark}
(2).

The proof of \prettyref{thm:CouplingManyBoxes} is divided into three
steps: The first is to construct two independent Poisson processes
of excursions (that is point processes on the space $\mathcal{T}_{\tilde{B}}$
of intensity proportional to $\nu$, see \prettyref{eq:PPoEIntensity})
$\mu_{1}$ and $\mu_{2}$, such that with high probability $(\mathcal{I}(\mu_{1})-x_{i})\cap S_{i}\subset(X(0,D_{[uK_{N}]})-x_{i})\cap S_{i}\subset(\mathcal{I}(\mu_{1})\cup\mathcal{I}(\mu_{2})-x_{i})\cap S_{i}$
for all $i$. This is done in \prettyref{sec:CoupleRWandPPoE}; in
the proof of \prettyref{thm:CouplingManyBoxes} we invoke \prettyref{cor:CoupleExcwithPPP}
for this step. 

The second step is to exploit the fact that {}``the traces of a Poisson
process of excursions on sets of low mutual energy are approximately
independent'' (if $k>1$, otherwise this step is trivial) to construct
$k$ \emph{independent} pairs of Poisson processes of excursions $\mu_{1}^{i},\mu_{2}^{i},i=1,...,k$
such that with high probability $(\mathcal{I}(\mu_{1}^{i})-x_{i})\cap S_{i}=(\mathcal{I}(\mu_{1})-x_{i})\cap S_{i}$
and $(\mathcal{I}(\mu_{1}^{i})\cup\mathcal{I}(\mu_{2}^{i})-x_{i})\cap S_{i}=(\mathcal{I}(\mu_{1})\cup\mathcal{I}(\mu_{2})-x_{i})\cap S_{i}$
for $i=1,...,k$. This is done in \prettyref{lem:IndepLemmaForPPoE}.

The third step is to construct from $\mu_{1}^{i},\mu_{2}^{i}$ the
random sets $\mathcal{I}_{i}^{u(1-\delta)}\cap S_{i},\mathcal{I}_{i}^{u(1+\delta)}\cap S_{i}$
from \prettyref{eq:CouplingManyBoxesEvent} such that with high probability
$\mathcal{I}_{i}^{u(1-\delta)}\cap S_{i}\subset(\mathcal{I}(\mu_{1}^{i})-x_{i})\cap S_{i}$
and $(\mathcal{I}(\mu_{1}^{i})\cup\mathcal{I}(\mu_{2}^{i})-x_{i})\cap S_{i}\subset\mathcal{I}_{i}^{u(1+\delta)}\cap S_{i}$.
This is done mainly in \prettyref{sec:CouplePPoEwithRI}; in the proof
of \prettyref{thm:CouplingManyBoxes} we invoke \prettyref{pro:PPoEandRIcoupling}
for this step.

We thus postpone a large part of the work to Sections 5 and 6, and
here only prove \prettyref{thm:CouplingManyBoxes} conditionally on
the results of these two sections. We now state the theorem.
\begin{thm}
\label{thm:CouplingManyBoxes}($d\ge2$) For any $k\ge1$, $\varepsilon\in(0,1)$,
$N\ge c(\varepsilon)$, $z\in[-N,N]$, $1\ge\delta\ge c_{2}\frac{r_{N}}{h_{N}}$,
\textup{$u$ satisfying $uK_{N}\ge(\log N)^{6}$,} and $x_{1},...,x_{k},S_{1},...,S_{k}$
as in \eqref{eq:CouplingTheBoxes} we can construct on a space $(\Omega_{1},\mathcal{A}_{1},Q_{1})$
an $E_{N}-$valued random walk $X_{\cdot}$ with law $P_{q_{z}}$,
and an independent collection $((\mathcal{I}_{i}^{u(1-\delta)}\cap S_{i},\mathcal{I}_{i}^{u(1+\delta)}\cap S_{i}))_{i=1}^{k}$
such that the $i$-th member of the collection has the (joint) law
of $(\mathcal{I}^{u(1-\delta)}\cap S_{i},\mathcal{I}^{u(1+\delta)}\cap S_{i})$
under $Q_{0}$ and \begin{eqnarray}
\begin{array}{rc}
Q_{1}(F^{c})\le\begin{cases}
cuN^{-3d-1} & \mbox{if }k=1,\\
cu\sum_{i\ne j}\mathcal{E}(S_{i}+x_{i},S_{j}+x_{j}) & \mbox{if }k>1,\end{cases}\end{array}\label{eq:CouplingManyBoxes}\end{eqnarray}
where $F$ is the event from \prettyref{eq:CouplingManyBoxesEvent}.
(Recall the definition of $\mathcal{E}$ from \prettyref{eq:DefOfMutualEnergy}
and the bound on it from \prettyref{eq:BoundOnMutualEnergy}.)\end{thm}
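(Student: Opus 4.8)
The plan is to build the coupling on a single space $(\Omega_1,\mathcal{A}_1,Q_1)$ by composing the three couplings sketched in the introduction to this section: one linking $X_\cdot$ to a pair of independent Poisson processes of excursions, one splitting that pair into $k$ box‑adapted independent pairs, and one replacing each box‑adapted pair by honest random interlacements at the sprinkled levels $u(1\mp\delta)$. The three pieces are proved separately — the first in \prettyref{sec:CoupleRWandPPoE} (\prettyref{cor:CoupleExcwithPPP}), the second in \prettyref{lem:IndepLemmaForPPoE}, the third in \prettyref{sec:CouplePPoEwithRI} (\prettyref{pro:PPoEandRIcoupling}) — and here I would only assemble them and run the final union bound.

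For the first step I would use that between its first entrance to $B$ and time $D_{[uK_N]}$ the walk $X_\cdot$ under $P_{q_z}$ performs exactly $[uK_N]$ excursions which, given their entrance points, are i.i.d.\ with law $\kappa_q$ up to a small perturbation of the entrance distribution (the walk re‑enters $B$ with distribution close to $q$), and I would match these with a Poisson collection of i.i.d.\ $\kappa_q$‑excursions of slightly smaller, resp.\ slightly larger, total intensity than $u\nu=uK_N\kappa_q$, so that the traces sandwich $X(0,D_{[uK_N]})$ from inside and outside. The gap between the deterministic count $[uK_N]$ and the two sprinkled Poisson counts is handled by a Poisson large‑deviation estimate — this is where the hypothesis $uK_N\ge(\log N)^{6}$ enters — and together with the entrance‑distribution error this costs $\le cuN^{-3d-1}$, producing $\mu_1,\mu_2$ with $(\mathcal I(\mu_1)-x_i)\cap S_i\subset (X(0,D_{[uK_N]})-x_i)\cap S_i\subset ((\mathcal I(\mu_1)\cup\mathcal I(\mu_2))-x_i)\cap S_i$ for all $i$, off an event of probability $\le cuN^{-3d-1}$.

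For the second step I would split each $\mu_j$ according to which set $S_i+x_i$ an excursion enters, and call an excursion \emph{bad} if it enters two different such sets. By \prettyref{eq:GeometricLemma}, \prettyref{eq:HittingToGreenEquil} and \prettyref{eq:DefOfMutualEnergy} the $\nu$‑mass of excursions entering $S_i+x_i$ and later $S_j+x_j$ is, up to a constant, $\mathcal E(S_i+x_i,S_j+x_j)$, so the expected number of bad excursions in $\mu_1\cup\mu_2$ is $\le cu\sum_{i\ne j}\mathcal E(S_i+x_i,S_j+x_j)$; on the event of no bad excursion the point processes of excursions meeting distinct boxes are restrictions of a Poisson process to disjoint trajectory sets, hence independent, which yields independent pairs $(\mu_1^i,\mu_2^i)$ agreeing with $(\mu_1,\mu_2)$ inside $S_i+x_i$. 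For the third step, fixing $i$, I would couple $((\mathcal I(\mu_1^i)-x_i)\cap S_i,\ ((\mathcal I(\mu_1^i)\cup\mathcal I(\mu_2^i))-x_i)\cap S_i)$ with a pair having the $Q_0$‑law of $(\mathcal I^{u(1-\delta)}\cap S_i,\mathcal I^{u(1+\delta)}\cap S_i)$: inside the local box $B(x_i,N^{1-\varepsilon})$ the trace of a Poisson process of excursions of intensity $u\nu$ is close to a random interlacement because, by \prettyref{eq:GeometricLemma}, such excursions enter $S_i+x_i$ with distribution proportional to $e_{S_i+x_i,\tilde B}$, which is close to the $\mathbb{Z}^{d+1}$ equilibrium measure $e_{S_i+x_i}$ of \prettyref{eq:lawofIuCAPK} since $S_i+x_i$ lies deep in $\tilde B$, and the calibration constant $K_N$ of \prettyref{eq:DefOfKn} is exactly what matches the two intensities. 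The difference between a killed excursion and a bi‑infinite walk is absorbed by the sprinkling: for the upper inclusion the missing points are supplied by writing $\mathcal I_i^{u(1+\delta)}=\mathcal I_i^{u(1-\delta)}\cup\mathcal J_i$ with $\mathcal J_i$ an independent random interlacement at level $2u\delta$, using \prettyref{eq:markovpropforI} and \prettyref{eq:lawofIuCAPK}, which also forces the correct joint law of the two levels; this step costs $\le cuN^{-3d-1}$ per box.

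Finally I would compose the three couplings on one space, declare $(\mathcal I_i^{u(1-\delta)}\cap S_i,\mathcal I_i^{u(1+\delta)}\cap S_i)_{i=1}^k$ to be the independent collection thus produced, and bound $Q_1(F^c)$ by the three failure probabilities plus the $k$ per‑box errors of the third step, i.e.\ $\le cuN^{-3d-1}+cu\sum_{i\ne j}\mathcal E(S_i+x_i,S_j+x_j)+ckuN^{-3d-1}$. For $k=1$ the middle term is absent and this is $cuN^{-3d-1}$; for $k>1$, \prettyref{eq:MutualEnergyLowerBound} gives $\mathcal E(S_i+x_i,S_j+x_j)\ge cN^{1-d}\ge cN^{-3d-1}$ for every pair, so both $O(uN^{-3d-1})$ and $O(kuN^{-3d-1})$ contributions are dominated by $cu\sum_{i\ne j}\mathcal E(S_i+x_i,S_j+x_j)$, giving \prettyref{eq:CouplingManyBoxes}. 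I expect the main obstacle to be the third step: making the statement ``a Poisson process of excursions looks locally like random interlacements'' quantitative \emph{uniformly in $u$ and in $\delta$ down to order $r_N/h_N$}, while at the same time realising the two sprinkled levels with the right \emph{joint} (not merely marginal) law; this forces a delicate comparison of $\kappa_q$ with the random‑interlacement trajectory measure together with a Poissonian superposition/thinning argument. Making the first step quantitative in $u,\delta,N$ jointly — it was previously available only for fixed $u,\delta$ — is the secondary difficulty and is what dictates the hypothesis $uK_N\ge(\log N)^{6}$.
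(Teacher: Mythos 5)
Your proposal follows the paper's proof essentially step for step: you assemble the coupling from \prettyref{cor:CoupleExcwithPPP} (walk to Poisson process of excursions, with $\delta$ sprinkled to $\delta/2$), \prettyref{lem:IndepLemmaForPPoE} (splitting by which $S_i+x_i$ each excursion first enters, paying $\mathcal E$), and \prettyref{pro:PPoEandRIcoupling} applied per box to each of $\mu_1^i,\mu_2^i$ with the sprinkling in $\delta$ absorbing the intensity mismatch and \prettyref{eq:markovpropforI} giving the correct joint law, and then close with the same union bound and the same use of \prettyref{eq:MutualEnergyLowerBound} to subsume the $O(kuN^{-3d-1})$ terms when $k>1$. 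The only place you are less explicit than the paper is the precise distribution of the sprinkled levels (the paper uses $u(1-\delta)$, $u\tfrac{3}{4}\delta$, $u\tfrac{5}{4}\delta$ across three independent pieces), but the idea you describe is the same and you correctly flag this as the delicate point.
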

\begin{proof}
Let $c_{2}=2c_{4}$ where $c_{4}$ is the constant from \prettyref{cor:CoupleExcwithPPP}.
We first apply \prettyref{cor:CoupleExcwithPPP} with $\frac{\delta}{2}$
in place of $\delta$ (note that $1>\frac{\delta}{2}\ge c_{4}\frac{r_{N}}{h_{N}}$)
to construct on a space $(\Omega_{3},\mathcal{A}_{3},Q_{3})$ a coupling
of $X_{\cdot}$ with law $P_{q_{z}}$ and two Poisson point processes
$\mu_{1},\mu_{2}$ on $\mathcal{T}_{\tilde{B}}$ with intensities
$u(1-\frac{\delta}{2})\nu,\delta u\nu$ respectively such that\begin{equation}
\begin{array}{rc}
Q_{3}(\forall i,\mathcal{I}(\mu_{1})\cap(S_{i}+x_{i})\subset X(0,D_{[uK_{N}]})\cap(S_{i}+x_{i})\subset(\mathcal{I}(\mu_{2})\cup\mathcal{I}(\mu_{2}))\cap(S_{i}+x_{i})) & \ge\\
1-cuN^{-3d-1}.\end{array}\label{eq:first}\end{equation}
For the case $k>1$ we will need the following lemma:
\begin{lem}
\label{lem:IndepLemmaForPPoE}($N\ge c(\varepsilon)$) Let $\mu$
be a Poisson process on $\mathcal{T}_{\tilde{B}}$ of intensity $s\nu,s\ge0$.
We can then define (by extending the space) an iid collection $\eta_{1},...,\eta_{k}$
of Poisson processes such that $\eta_{i}\overset{\mbox{law}}{=}\mu$
and \begin{equation}
Q_{1}(\exists i,\mathcal{I}(\eta_{i})\cap(S_{i}+x_{i})\ne\mathcal{I}(\mu)\cap(S_{i}+x_{i}))\le2s\sum_{i,j:j\ne i}\mathcal{E}(S_{i}+x_{i},S_{j}+x_{j}).\label{eq:SplitthePPoE}\end{equation}
\end{lem}
\begin{proof}
For all $j,i$ let $\mathcal{A}_{j,i}\subset\mathcal{T}_{\tilde{B}}$
denote the set $\{H_{S_{j}+x_{j}}<H_{S_{i}+x_{i}}<T_{\tilde{B}}\}$,
and for all $i$ let $\mathcal{B}_{i}\subset\mathcal{T}_{\tilde{B}}$
denote $\cup_{j:j\ne i}\mathcal{A}_{j,i}$ and let $\mathcal{C}_{i}\subset\mathcal{T}_{\tilde{B}}$
denote $\{H_{S_{i}+x_{i}}<T_{\tilde{B}}\}$. For each $i$ make the
decomposition $\mu=\phi_{i}+\psi_{i}$ where $\phi_{i}=1_{\mathcal{C}_{i}\cap\mathcal{B}_{i}^{c}}\mu$
and $\psi_{i}=1_{\mathcal{C}_{i}^{c}\cup\mathcal{B}_{i}}\mu$. Because
$\mathcal{C}_{i}\cap\mathcal{B}_{i}^{c}$ ({}``the excursion reaches
$S_{i}+x_{i}$ first'') are disjoint the $\phi_{i},i=1,...,k$ are
mutually independent Poisson point processes. Now extend the space
by adding an independent collection of point processes $\psi_{i}^{'},i=1,...,k$
such that $\psi_{i}^{'}\overset{\mbox{law}}{=}\psi_{i}$ and define
$\eta_{i}=\psi_{i}^{'}+\phi_{i}$. Then the $\eta_{i}$ are independent
and $\eta_{i}\overset{\mbox{law}}{=}\mu$, so to complete the proof
of the lemma it suffices to show \prettyref{eq:SplitthePPoE}. Note
that for each $i$:\begin{equation}
\begin{array}{ccl}
Q_{1}(\mathcal{I}(\eta_{i})\cap(S_{i}+x_{i})\ne\mathcal{I}(\mu)\cap(S_{i}+x_{i})) & \le & Q_{1}(\psi_{i}(\mathcal{C}{}_{i})\ne0\mbox{ or }\psi_{i}^{'}(\mathcal{C}{}_{i})\ne0)\\
 & \overset{\prettyref{eq:PPoEIntensity}}{\le} & 2sK_{N}P_{q}(\mathcal{\mathcal{B}}{}_{i})\\
 & \le & 2s\underset{j:j\ne i}{\sum}K_{N}P_{q}(\mathcal{A}_{j,i}).\end{array}\label{eq:BoundOnBeingDifferent}\end{equation}
If we let $K=(S_{j}+x_{j})\cup(S_{i}+x_{i})$ then (provided $N\ge c(\varepsilon)$
so that $K\subset\mathbb{T}_{N}\times(-r_{N},r_{N})$)\begin{equation}
\begin{array}{ccl}
K_{N}P_{q}(A_{j,i}) & = & \underset{x\in S_{j}+x_{j}}{\sum}K_{N}P_{q}(H_{K}<T_{\tilde{B}},X_{H_{K}}=x)P_{x}(H_{S_{i}+x_{i}}<T_{\tilde{B}})\\
 & \overset{\prettyref{eq:GeometricLemma},\prettyref{eq:HittingToGreenEquil}}{=} & \underset{x\in S_{j}+x_{j},y\in S_{i}+x_{i}}{\sum}e_{K,\tilde{B}}(x)g_{\tilde{B}}(x,y)e_{S_{i}+x_{i},\tilde{B}}(y)\\
 & \overset{\prettyref{eq:DefOfCapInCylinder},\prettyref{eq:DefOfMutualEnergy}}{\le} & \mathcal{E}(S_{j}+x_{j},S_{i}+x_{i}).\end{array}\label{eq:BoundOnHittingTwoSis}\end{equation}
 Now combining \prettyref{eq:BoundOnBeingDifferent} and \prettyref{eq:BoundOnHittingTwoSis}
we get \prettyref{eq:SplitthePPoE}.
\end{proof}
We now continue with the proof of \prettyref{thm:CouplingManyBoxes}.
If $k>1$ we apply \prettyref{lem:IndepLemmaForPPoE} once for $\mu_{1}$
and once for $\mu_{2}$ and extend our space with independent $\mu_{n}^{i},n=1,2,i=1,...,k$,
such that $\mu_{1}^{i}\overset{\mbox{law}}{=}\mu_{1}$ and $\mu_{2}^{i}\overset{\mbox{law}}{=}\mu_{2}$
for all $i$ and \begin{eqnarray}
\begin{array}{c}
Q_{1}(F'^{c})\le cu\sum_{i\ne j}\mathcal{E}(S_{i}+x_{i},S_{j}+x_{j})\mbox{ where }F'\mbox{ is the event}\\
\{(\mathcal{I}(\mu_{n}^{i})-x_{i})\cap S_{i}=(\mathcal{I}(\mu_{n})-x_{i})\cap S_{i}\mbox{ for all }n=1,2,i=1,...,k\},\end{array}\label{eq:second}\end{eqnarray}
If $k=1$ then simply define $\mu_{1}^{1}=\mu_{1}$ and $\mu_{2}^{1}=\mu_{2}$.
When $N\ge c(\varepsilon)$ we now apply \prettyref{pro:PPoEandRIcoupling}
with $x=x_{i}$, $\mu=\mu_{1}^{i}$, $u(1-\delta/2)$ in place of
$u$, $u_{-}=u(1-\delta)$ and $u_{+}=u(1-\frac{1}{4}\delta)$ (note
that $\frac{u(1-\delta/2)}{u_{-}}=1+\frac{\delta/2}{1-\delta}\ge N^{-c_{5}}$
and $\frac{u_{+}}{u(1-\delta/2)}=1+\frac{\delta/4}{1-\delta/2}\ge N^{-c_{5}(\varepsilon)}$
for $N\ge c(\varepsilon$)) once for every $1\le i\le k$, each time
extending our space by adding a pair of independent random sets $\mathcal{I}_{1,i}$,$\mathcal{I}_{2,i}$
depending only on $\mu_{1}^{i}$ with the distribution under $Q_{0}$
of $\mathcal{I}^{u(1-\delta)}\cap B(0,N^{1-\varepsilon})$ and $\mathcal{I}^{u\frac{3}{4}\delta}\cap B(0,N^{1-\varepsilon})$
respectively, such that for each $i$\begin{eqnarray}
\begin{array}{rc}
Q_{1}(\forall i,\mathcal{I}_{1,i}\subset(\mathcal{I}(\mu_{1}^{i})-x_{i})\cap B(0,N^{1-\varepsilon})\subset\mathcal{I}_{1,i}\cup\mathcal{I}_{2,i})\ge1-cukN^{-10(d+1)}.\end{array}\label{eq:third}\end{eqnarray}
We then apply \prettyref{pro:PPoEandRIcoupling} again, this time
with $\mu=\mu_{2}^{i}$, $u\delta$ in place of $u$, $u_{-}=0$ and
$u_{+}=u\frac{5\delta}{4}$ (so that $\frac{u_{+}}{u}=\frac{5}{4}\ge N^{-c_{5}(\varepsilon)}$
for $N\ge c(\varepsilon)$) once for every $1\le i\le k$, each time
extending our space by adding a random set $\mathcal{I}_{3,i}$ depending
only on $\mu_{2}^{i}$ and distributed as $\mathcal{I}^{u\frac{5\delta}{4}}\cap B(0,N^{1-\varepsilon})$
under $Q_{0}$ such that\begin{equation}
Q_{1}(\forall i,(\mathcal{I}(\mu_{2}^{i})-x_{i})\cap B(0,N^{1-\varepsilon})\subset\mathcal{I}_{3,i})\ge1-cukN^{-10(d+1)}.\label{eq:fourth}\end{equation}
We now define $\mathcal{I}_{i}^{u(1-\delta)}\cap S_{i}$ and $\mathcal{I}_{i}^{u(1+\delta)}\cap S_{i}$
by \[
\mathcal{I}_{i}^{u(1-\delta)}\cap S_{i}=\mathcal{I}_{1,i}\cap S_{i}\mbox{ and }\mathcal{I}_{i}^{u(1+\delta)}\cap S_{i}=(\mathcal{I}_{1,i}\cup\mathcal{I}_{2,i}\cup\mathcal{I}_{3,i})\cap S_{i},1\le i\le k.\]
Since $\mathcal{I}_{1,i}$, $\mathcal{I}_{2,i}$ and $\mathcal{I}_{3,i}$
are independent, we get from \prettyref{eq:markovpropforI} that $(\mathcal{I}_{i}^{u(1-\delta)}\cap S_{i},\mathcal{I}_{i}^{u(1+\delta)}\cap S_{i})$
has the law of $(\mathcal{I}^{u(1-\delta)}\cap S_{i},\mathcal{I}^{u(1+\delta)}\cap S_{i})$
under $Q_{0}$. Also the collection $((\mathcal{I}_{i}^{u(1-\delta)}\cap S_{i},\mathcal{I}_{i}^{u(1+\delta)}\cap S_{i}))_{i=1}^{k}$
is independent so it only remains to show \prettyref{eq:CouplingManyBoxes}.
But \prettyref{eq:CouplingManyBoxes} in the case $k=1$ follows directly
from \prettyref{eq:first}, \prettyref{eq:third} and \prettyref{eq:fourth},
and if $k>1$ it follows from \prettyref{eq:first}, \prettyref{eq:second},
\prettyref{eq:third} and \prettyref{eq:fourth} (using the crude
bound $k\le cN^{d+1}$ and \prettyref{eq:MutualEnergyLowerBound}). 
\end{proof}
We have now completed the proofs of all the main results this article
(\prettyref{thm:GumbelForLocTime}, its corollaries and \prettyref{thm:CouplingManyBoxes})
conditionally on the results of Sections 5 and 6.

\section{\label{sec:CoupleRWandPPoE}Coupling random walk with the Poisson
process of excursions}

In this section we state and prove \prettyref{cor:CoupleExcwithPPP}
which couples $E_{N}-$valued random walk $X_{\text{\ensuremath{\cdot}}}$
with two independent Poisson processes of excursions $\mu_{1},\mu_{2}$,
i.e. Poisson processes on $\mathcal{T}_{\tilde{B}}$ with intensity
proportional to $\nu$ (see \prettyref{eq:PPoEIntensity}), such that
with high probability the trace $X(0,D_{[uK_{N}]})$ (for appropriate
$u$) dominates the trace of $\mu_{1}$ (see \prettyref{eq:ITraceDef})
and such that the union of the traces of $\mu_{1}$ and $\mu_{2}$
dominate $X(0,D_{[uK_{N}]})$ (cf. \prettyref{eq:CoupleExcWithPP}).
The majority of the work will be to couple $X_{\cdot}$ with \begin{equation}
\mbox{iid }E_{N}-\mbox{valued processes }\hat{X}^{1},\hat{X}^{2}...,\hat{X}^{'1},\hat{X}^{'2},...,\mbox{ with law }\kappa_{q}\label{eq:TheSequences}\end{equation}
(see \prettyref{eq:KappaDefinition}), such that for suitable $u$
and $\delta$ the following double inclusion event holds with high
probability:\begin{eqnarray}
\begin{array}{clclcc}
 & I=\{\overset{[u(1-\delta)K_{N}]}{\underset{i=1}{\cup}}\hat{X}^{i}(0,D_{1}) & \subset & \overset{[uK_{N}]}{\underset{i=1}{\cup}}X(R_{i},D_{i})\\
 &  & \subset & \overset{[u(1+\delta/2)K_{N}]}{\underset{i=1}{\cup}}\hat{X}^{i}(0,D_{1})\bigcup\overset{[u\delta/2K_{N}]}{\underset{i=1}{\cup}}\hat{X}^{'i}(0,D_{1})\}.\end{array}\label{eq:IIDExcInclusionEvent}\end{eqnarray}
 To get $\mu_{1},\mu_{2}$ one must then carry out {}``poissonization'',
that is one must {}``put a Poisson number of iid the excursions''
into each of $\mu_{1}$ and $\mu_{2}$. This relatively simple step
is carried out in \prettyref{cor:CoupleExcwithPPP}.

The more challenging step of coupling $X_{\cdot}$ with the iid excursions
$\hat{X}^{1},\hat{X}^{2}...$,$\hat{X}^{'1},\hat{X}^{'2},...$, is
carried out in \prettyref{pro:CoupleRWwithIIDExc}. To prove this
proposition we first quote a result from \cite{Sznitman2009-OnDOMofRWonDCbyRI}
that couples $X_{\cdot}$ with {}``conditionally independent'' excursions
$\tilde{X}^{1},\tilde{X}^{2},...$ which are such that conditionally
on $\tilde{X}_{D_{1}}^{i}\in\mathbb{T}_{N}\times\{zh_{N}\}$, where
$z=\pm1$, the next excursion $\tilde{X}^{i+1}$ has law $\kappa_{q_{zr_{N}}}$.
We then couple the conditionally independent excursions $\tilde{X}^{i}$
with the truly independent excursions $\hat{X}^{1},\hat{X}^{2}...,\hat{X}^{'1},\hat{X}^{'2},...$
by using Sanov's theorem for the empirical distribution of successive
pairs of values of the Markov chain $(\frac{1}{h_{N}}\tilde{X}_{D_{1}}^{i})_{i\ge1}$
(with state space $\{-1,1\}$) to show that for any given $z_{1}\in\{-r_{N},r_{N}\}$
and $z_{2}\in\{-h_{N},h_{N}\}$ the number of $\tilde{X}^{i}$ that
start in $\mathbb{T}_{N}\times\{z_{1}\}$ and end in $\mathbb{T}_{N}\times\{z_{2}\}$
is close to what this value would be if the $\tilde{X}^{i}$ were
truly independent.

Weaker forms of the {}``upper inclusions'' in \prettyref{pro:CoupleRWwithIIDExc}
and \prettyref{cor:CoupleExcwithPPP} appeared as Propositions 3.1
and 4.1 in \cite{Sznitman2009-OnDOMofRWonDCbyRI}. However, as opposed
to the results in this paper, the results in \cite{Sznitman2009-OnDOMofRWonDCbyRI}
require that $u$ and $\delta$ are fixed as $N\rightarrow\infty$.
Our proofs follows the proofs in \cite{Sznitman2009-OnDOMofRWonDCbyRI}
with the most important improvement taking the form of the improved
bound \prettyref{eq:RBoundAbove} on the empirical distribution of
successive pairs of the Markov chain $(\frac{1}{h_{N}}\tilde{X}_{D_{1}}^{i})_{i\ge1}$,
which allows for $\delta$ to go to zero as $N\rightarrow\infty$,
as long as it does not do so too quickly. 
\begin{prop}
\label{pro:CoupleRWwithIIDExc}($d\ge2$) For any $N\ge c$ and $z\in[-N,N]$
one can construct on a space $(\Omega_{2},\mathcal{A}_{2},Q_{2})$
a process $X_{\cdot}$ with law $P_{q_{z}}$ and processes $\hat{X}^{1},\hat{X}^{2}...,\hat{X}^{'1},\hat{X}^{'2},...$,
as in \prettyref{eq:TheSequences} such that for any $u$ and $\delta$
satisfying $uK_{N}\ge(\log N)^{6}$ and $\frac{1}{2}\ge\delta\ge c_{3}\frac{r_{N}}{h_{N}}$
one has, for $I$ as in \prettyref{eq:IIDExcInclusionEvent}, \begin{equation}
\begin{array}{c}
Q_{2}(I^{c})\le cuN^{-3d-1}.\end{array}\label{eq:CoupleRWwithIIDExc}\end{equation}
\end{prop}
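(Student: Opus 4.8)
\emph{Overall plan and Step 1 (conditionally independent excursions).} I would realise the three-step scheme announced before the statement, the genuinely new ingredient being a large-deviation bound for an empirical pair distribution; all constants are uniform in $N$. First I invoke the coupling of \cite{Sznitman2009-OnDOMofRWonDCbyRI}: on an enlarged space one builds $X_\cdot$ with law $P_{q_z}$ together with excursions $\tilde X^1,\tilde X^2,\dots$ that agree with the successive excursions of $X_\cdot$ between $B$ and $\partial_e\tilde B$, i.e.\ $X(R_i,D_i)=\tilde X^i(0,D_1)$ for all $i\le[uK_N]$ off an event of probability at most $c\,[uK_N]N^{-5d}\le cuN^{-3d-1}$ (using $K_N\le N^{d-1}$), and that are \emph{conditionally independent} given the exit-side sequence $Y_i:=\tfrac1{h_N}\tilde X^i_{D_1}\in\{-1,+1\}$: conditionally on $(Y_i)_{i\ge0}$ the $\tilde X^i$ are independent with $\tilde X^i$ distributed as $\kappa_{q_{Y_{i-1}r_N}}$ conditioned on $\{\text{exit side}=Y_i\}$ (the first excursion, which under $P_{q_z}$ emanates from level $z$, being handled separately exactly as there). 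On returning to $B$ the walk lands at level $r_N$ or $-r_N$ according to its last exit side, so $(Y_i)$ is a Markov chain on $\{-1,+1\}$; optional stopping for the martingale $\mathbb Z$-component gives symmetric transition probabilities $P(\varepsilon,\varepsilon)=p$, $P(\varepsilon,-\varepsilon)=1-p$ with $p=\tfrac12+\tfrac{r_N}{2h_N}$, so the chain is reversible with uniform stationary law and, since $p$ stays in a compact subinterval of $(0,1)$, has a spectral gap bounded below uniformly in $N$.

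\emph{Step 2 (type counts).} Call $(\varepsilon_1,\varepsilon_2)\in\{-1,+1\}^2$ the \emph{type} of an excursion starting at level $\varepsilon_1r_N$ and exiting at $\varepsilon_2h_N$, and set $\pi(\varepsilon_1,\varepsilon_2)=\tfrac12P(\varepsilon_1,\varepsilon_2)$ (bounded below, summing to $1$). Writing $\kappa_q=\tfrac12\kappa_{q_{r_N}}+\tfrac12\kappa_{q_{-r_N}}$, each iid excursion $\hat X^j$ has type distributed as $\pi$, these types are iid across $j$, and conditionally on the types the $\hat X^j$ of type $(\varepsilon_1,\varepsilon_2)$ are iid with the \emph{same} law $\rho_{\varepsilon_1,\varepsilon_2}$ ($=\kappa_{q_{\varepsilon_1r_N}}$ conditioned on exiting at $\varepsilon_2h_N$) as the type-$(\varepsilon_1,\varepsilon_2)$ excursions among the $\tilde X^i$. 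With $M_n(\varepsilon_1,\varepsilon_2)=\#\{i\le n:Y_{i-1}=\varepsilon_1,\,Y_i=\varepsilon_2\}$ and $\hat M_m,\hat M'_m$ the analogous counts for $\hat X^1,\dots,\hat X^m$ resp.\ $\hat X'^1,\dots,\hat X'^m$, the crucial input (the improved bound \prettyref{eq:RBoundAbove}) is the large-deviation estimate: for $n=[uK_N]$ and $0<t\le1$,
\[
Q_2\!\left(\,\big|M_n(\varepsilon_1,\varepsilon_2)-n\,\pi(\varepsilon_1,\varepsilon_2)\big|\ge t\,n\ \text{ for some }(\varepsilon_1,\varepsilon_2)\,\right)\ \le\ c\,e^{-c\,t^2 n},
\]
which for the two-state chain reduces to concentration of the occupation times of $\pm1$ and of the number of $+\!\to\!-$ transitions (a renewal-type count), the rate constant being uniform in $N$ precisely because $p=p_N$ does not degenerate; a matching Chernoff bound holds for the binomial counts $\hat M_m(\varepsilon_1,\varepsilon_2)$ and $\hat M'_m(\varepsilon_1,\varepsilon_2)$.

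\emph{Step 3 (matching and conclusion).} I would apply these estimates with $t$ a small multiple of $\delta$. The means of the type counts of $[u(1-\delta)K_N]$, of $[uK_N]$, and of $[u(1+\delta/2)K_N]+[u\delta/2K_N]$ excursions are $(1-\delta)n\,\pi(\varepsilon_1,\varepsilon_2)+O(1)$, $n\,\pi(\varepsilon_1,\varepsilon_2)+O(1)$ and $(1+\delta)n\,\pi(\varepsilon_1,\varepsilon_2)+O(1)$, and $\delta n\ge c_3\tfrac{r_N}{h_N}\,[uK_N]\ge c(\log N)^4\gg1$, so off an event of probability at most $c\,e^{-c\delta^2 uK_N}$ one has, simultaneously for all four types,
\[
\hat M_{[u(1-\delta)K_N]}(\varepsilon_1,\varepsilon_2)\ \le\ M_{[uK_N]}(\varepsilon_1,\varepsilon_2)\ \le\ \hat M_{[u(1+\delta/2)K_N]}(\varepsilon_1,\varepsilon_2)+\hat M'_{[u\delta/2K_N]}(\varepsilon_1,\varepsilon_2).
\]
Conditionally on all the type sequences, within each type the excursions $\tilde X^i$, $\hat X^j$, $\hat X'^j$ are iid with common law $\rho_{\varepsilon_1,\varepsilon_2}$ and independent across types and (a priori) across the three families; drawing one iid $\rho_{\varepsilon_1,\varepsilon_2}$-pool per type and assigning distinct pool elements, one couples them so that on the above event each of $\hat X^1,\dots,\hat X^{[u(1-\delta)K_N]}$ equals some $\tilde X^i$ with $i\le[uK_N]$, while every $\tilde X^i$ with $i\le[uK_N]$ equals some $\hat X^j$ with $j\le[u(1+\delta/2)K_N]$ or some $\hat X'^j$ with $j\le[u\delta/2K_N]$, all unused excursions being fresh iid samples; this preserves the required marginal laws ($X_\cdot\sim P_{q_z}$, and $(\hat X^j)_j,(\hat X'^j)_j$ jointly iid with law $\kappa_q$). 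Intersecting with the Step-1 good event and using $X(R_i,D_i)=\tilde X^i(0,D_1)$ for $i\le[uK_N]$ yields the double inclusion $I$ of \prettyref{eq:IIDExcInclusionEvent}, so $Q_2(I^c)\le cuN^{-3d-1}+c\,e^{-c\delta^2 uK_N}$; since $\delta\ge c_3r_N/h_N$ and $r_N/h_N\ge c/(\log N)^2$ we get $\delta^2uK_N\ge c(\log N)^2$, whence $e^{-c\delta^2uK_N}\le N^{-c\log N}\le cuN^{-3d-1}$ for $N\ge c$, giving \prettyref{eq:CoupleRWwithIIDExc}.

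\emph{Main obstacle.} The delicate point is the uniform-in-$N$ large-deviation bound $e^{-ct^2n}$ of Step 2 for the empirical pair distribution of the chain $(Y_i)$ whose transition probabilities themselves vary with $N$: one must bound the rate function below by a fixed multiple of $t^2$ near its minimum with constant free of $N$, and it is exactly the hypothesis $\delta\ge c_3r_N/h_N$ that then forces $\delta^2uK_N$ — which governs the failure probability — to outgrow $N^{-3d-1}$. The bookkeeping for the first excursion (emanating from level $z$ rather than from $\partial_iB$) is a routine technicality inherited from \cite{Sznitman2009-OnDOMofRWonDCbyRI}.
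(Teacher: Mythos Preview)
Your proposal is correct and follows essentially the same route as the paper: Sznitman's coupling to conditionally independent excursions, Sanov-type large deviations for the pair-empirical distribution of the two-state exit-side chain (the paper's Lemma~\ref{lem:LDPApplication}, proved via the elementary bound $H_{2,N}(\mu)\ge c\,\{(\mu(1,1)-\tfrac12 p_N)^2+(\mu(-1,-1)-\tfrac12 p_N)^2\}$ with $c$ free of $N$), and matching via a shared pool $(\zeta^\gamma_m)$ per type $\gamma\in\Gamma$.

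The one difference worth noting is the role of $(\hat X'^j)$. In the paper this sequence is taken fully independent of the pool and is used \emph{solely} to absorb the anomalous first excursion (which emanates from level $z$ rather than $\pm r_N$): one sets $\bar X^1=\hat X'^{i_0}{}_{H_{\mathbb T_N\times\{z\}}+\cdot}$ and only needs $i_0\le[u\tfrac{\delta}{2}K_N]$. The upper type-count inequality is then simply $N_{[uK_N]}(\gamma)\le N'_{[u(1+\delta/2)K_N]}(\gamma)$, with no contribution from $\hat X'$, and the whole construction of $(\hat X^j),(\hat X'^j)$ is independent of $u,\delta$, matching the quantifier order in the proposition (``one can construct\ldots\ such that for any $u,\delta$\ldots''). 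Your overflow scheme $M\le\hat M+\hat M'$ instead forces you to offset the $\hat X'$ pool indices by $\hat M_{[u(1+\delta/2)K_N]}(\gamma)$ to keep $(\hat X^j)_j$ and $(\hat X'^j)_j$ independent; the coupling you build therefore depends on the particular $(u,\delta)$. This is a slightly weaker statement than the proposition as phrased, but it is entirely adequate for the only place the proposition is invoked (Corollary~\ref{cor:CoupleExcwithPPP}, with one fixed pair $(u,\delta)$).
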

\begin{proof}
Let $X_{\cdot}=(Y_{\cdot},Z_{\cdot})$ where $Y_{\cdot}$ is $\mathbb{T}_{N}-$valued
and $Z_{\cdot}$ is $\mathbb{Z}-$valued. For $\gamma=(z_{1},z_{2})\in\Gamma\overset{\textnormal{def}}{=}\{-r_{N},r_{N}\}\times\{-h_{N},h_{N}\}$
we let $P_{\gamma}$ denote the law of $X_{\cdot\wedge D_{1}}$ under
$P_{q_{z_{1}}}$ conditioned on $\left\{ Z_{D_{1}}=z_{2}\right\} $.
By Proposition 2.2 of \cite{Sznitman2009-OnDOMofRWonDCbyRI} we can
construct a coupling $(Q^{'},\Omega',\mathcal{A}')$ of the random
walk $X_{\cdot}$ with law $P_{q_{z}}$, a $\mathbb{Z}^{2}-$valued
process $(\tilde{Z}_{R,k},\tilde{Z}_{D,k})_{k\ge1}$ distributed as
$(Z_{R_{k}},Z_{D_{k}})_{k\ge1}$ under $P_{q_{z}}$ and $E_{N}$-valued
processes $(\tilde{X}_{\cdot}^{k})_{k\ge1}$ which conditionally on
$(\tilde{Z}_{R,k},\tilde{Z}_{D,k})_{k\ge1}$ are independent with
the law of $\tilde{X}_{\cdot}^{k}$ given by $P_{\tilde{Z}_{R,k},\tilde{Z}_{D,k}}$,
such that $Q'(X_{(R_{k}+\cdot)\wedge D_{k}}\ne\tilde{X}_{\cdot}^{k})\le cN^{-4d}$
for all $k$. Thus:\begin{equation}
Q'(\exists k\le2uK_{N}\mbox{ such that }X(R_{k},D_{k})\ne\tilde{X}^{k}(R_{1},D_{1}))\overset{\prettyref{eq:DefOfKn}}{\le}cuN^{-3d-1}.\label{eq:CoupleRWWithDepExc}\end{equation}
We will construct on a space $(\Sigma,\mathcal{B},M)$ a coupling
of a sequence of processes $(\bar{X}^{k})_{k\ge1}$ with the law of
$(\tilde{X}{}^{k})_{k\ge1}$ under $Q^{'}$ and $\hat{X}^{1},\hat{X}^{2}...,\hat{X}^{'1},\hat{X}^{'2},...$,
iid with law $\kappa_{q}$, such that:\begin{equation}
\begin{array}{c}
M(F'^{c})\le\exp(-c(\log N)^{2})\overset{u\ge N^{-1000d},N\ge c}{\le}cuN^{-3d-1}.\end{array}\label{eq:MCoupling}\end{equation}
where $F'$ is the event given in \prettyref{eq:IIDExcInclusionEvent}
with $\overset{\left[uK_{N}\right]}{\underset{i=1}{\cup}}X(R_{i},D_{i})$
replaced by $\overset{\left[uK_{N}\right]}{\underset{i=1}{\cup}}\bar{X}^{i}(R_{1},D_{1})$.
Using the argument below (3.22) in \cite{Sznitman2009-OnDOMofRWonDCbyRI},
this, together with \prettyref{eq:CoupleRWWithDepExc}, is enough
to show the existence of the desired coupling of $X_{\cdot}$ and
$\hat{X}^{i},\hat{X}^{'i}$ such that \prettyref{eq:CoupleRWwithIIDExc}
holds (essentially speaking because we can construct $(\Omega',\mathcal{A}',Q')$
such that the regular conditional probability of $X_{\cdot}$ given
$(\tilde{X}_{\cdot}^{i})_{i\ge1}$ exists).

We thus proceed with the construction of $(\Sigma,\mathcal{B},M)$.
We start by defining on $(\Sigma,\mathcal{B},M)$ the following collections
of processes\begin{equation}
\gamma_{k}\in\mathbb{Z}^{2},k\ge1,(\gamma_{k}\in\Gamma\mbox{ if }k\ge2\mbox{) with the law of }(Z_{R_{k}},Z_{D_{k}})_{k\ge1}\mbox{ under }P_{q_{z}},\label{eq:TheCollection1}\end{equation}
\begin{equation}
\gamma_{k}^{'}\in\Gamma,k\ge1,\mbox{ iid, where }\gamma_{k}^{'}\mbox{ has the law of }(Z_{R_{1}},Z_{D_{1}})\mbox{ under }P_{q},\end{equation}
\begin{equation}
\mbox{for all }\gamma\in\Gamma\mbox{ an iid sequence }\left(\zeta_{i}^{\gamma}(\cdot)\right)_{i\ge1}\mbox{of processes with law }P_{\gamma},\end{equation}
\begin{equation}
\mbox{an iid sequence }(\hat{X}_{\cdot}^{'i})_{i\ge1}\mbox{ of processes with law }\kappa_{q},\end{equation}
such that the collections are mutually independent. Also define for
every $\gamma\in\Gamma$: \begin{equation}
N_{k}(\gamma)=|\{j\in[2,k]:\gamma_{j}=\gamma\}|,k\ge2,N_{k}^{'}(\gamma)=|\{j\in[1,k]:\gamma_{j}^{'}=\gamma\}|,k\ge1.\label{eq:DefOfNs}\end{equation}
We further let:\begin{equation}
\begin{array}{cclcc}
\hat{X}_{\cdot}^{k} & = & \zeta_{N_{k}^{'}(\gamma_{k}^{'})}^{\gamma_{k}^{'}}(\cdot)\mbox{ for }k\ge1,\\
\bar{X}_{\cdot}^{k} & = & \zeta_{N_{k}(\gamma_{k})}^{\gamma_{k}}(\cdot)\mbox{ for }k\ge2\mbox{ and }\bar{X}_{\cdot}^{1} & = & \hat{X}_{H_{\mathbb{T}_{N}\times\{z\}}+\cdot}^{'i_{0}},\end{array}\label{eq:XProcDef}\end{equation}
where $i_{0}=\inf\{i\ge1:J_{i}\mbox{ holds}\}$, $J_{i}=\{\hat{X}^{'i}\mbox{ hits }\mathbb{T}_{N}\times\{z\}\mbox{ before leaving }\tilde{B},\hat{X}_{D_{1}}^{'i}\in\mathbb{T}_{N}\times\{z_{2}\}\}$
and $\gamma_{1}=(z,z_{2})$. We then have that:\begin{equation}
(\bar{X}_{\cdot}^{k})_{k\ge1}\mbox{ under }M\mbox{ has the same law as }(\tilde{X}_{\cdot}^{k})_{k\ge1}\mbox{ under }Q^{'}\mbox{, and }\end{equation}
\begin{equation}
\hat{X}^{1},\hat{X}^{2},...,\hat{X}^{'1},\hat{X}^{'2},...,\mbox{ under }M\mbox{ are iid with law }\kappa_{q}.\end{equation}
Thus it only remains to show \eqref{eq:MCoupling}. We introduce
the {}``good event'':\begin{eqnarray*}
\mathcal{G} & = & \{i_{0}\le\left[\frac{u\delta}{2}K_{N}\right],N_{\left[u(1-\delta)K_{N}\right]}^{'}(\gamma)\le N_{\left[uK_{N}\right]}(\gamma)\le N_{\left[u(1+\frac{\delta}{2})K_{N}\right]}^{'}(\gamma)\forall\gamma\in\Gamma\}.\end{eqnarray*}
By \prettyref{eq:DefOfNs} and \prettyref{eq:XProcDef} we have $\mathcal{G}\subset F'$
so to show \eqref{eq:MCoupling} it suffices to show that \begin{equation}
M(\mathcal{G}^{c})\le c\exp(-c(\log N)^{2}).\label{eq:GoodEventSufficesToShow-1}\end{equation}
Note that $M(i_{0}>n)\le M(J_{1}^{c})^{n},n=0,1,...$, and $M(J_{1})\ge\frac{49}{100}$
for $N\ge c$ by a one dimensional random walk calculation (see (3.23)
of \cite{Sznitman2009-OnDOMofRWonDCbyRI}). So if $N\ge c$ and $\delta\ge\frac{r_{N}}{h_{N}}$:\begin{equation}
M(i_{0}>\left[\frac{u\delta}{2}K_{N}\right])\le\left(\frac{51}{100}\right)^{\left[\frac{u\delta}{2}K_{N}\right]}\le c\exp\left(-cuK_{N}\frac{r_{N}}{h_{N}}\right).\label{eq:i0bound}\end{equation}
Recall that the sequence $\gamma_{1}^{'},\gamma_{2}^{'},...,$ is
iid and note that $M(\gamma_{1}^{'}=\gamma)=\frac{1}{2}p_{N}1_{\{z_{1}z_{2}>0\}}+\frac{1}{2}q_{N}1_{\{z_{1}z_{2}<0\}}$,
for $\gamma=(z_{1},z_{2})\in\Gamma$, where $p_{N}=\frac{1}{2}+\frac{1}{2}\frac{r_{N}}{h_{N}}$
and $q_{N}=\frac{1}{2}-\frac{1}{2}\frac{r_{N}}{h_{N}}$. Using the
exponential Chebyshev inequality we get that if $N\ge c$ and $\delta\ge6\frac{r_{N}}{h_{N}}$
(ensuring that $\frac{1}{2}q_{N}\left[u(1+\frac{\delta}{2})K_{N}\right]$,
the {}``typical size'' of $N_{\left[u(1+\frac{\delta}{2})K_{N}\right]}^{'}(z_{1},z_{2})$
when $z_{1}z_{2}<0$, is {}``much larger'' than $\frac{1}{4}u(1+\frac{\delta}{4})K_{N}$):
 \begin{eqnarray}
\begin{array}{ccc}
\sup_{\gamma\in\Gamma}M\left(N_{\left[u(1-\delta)K_{N}\right]}^{'}(\gamma)\ge\frac{1}{4}u(1-\frac{\delta}{4})K_{N}\right) & \le & c\exp\left(-cuK_{N}\left(\frac{r_{N}}{h_{N}}\right)^{2}\right),\\
\sup_{\gamma\in\Gamma}M\left(N_{\left[u(1+\frac{\delta}{2})K_{N}\right]}^{'}(\gamma)\le\frac{1}{4}u(1+\frac{\delta}{4})K_{N}\right) & \le & c\exp\left(-cuK_{N}\left(\frac{r_{N}}{h_{N}}\right)^{2}\right).\end{array}\label{eq:IIDGuysBound}\end{eqnarray}
If we write $\gamma^{i}=(z_{1}^{i},z_{2}^{i})$ then $V_{i}=\frac{z_{2}^{i}}{h_{N}},i\ge1,$
is a Markov chain on $\{-1,1\}$ with transition probabilities $P(V_{i+1}=a|V_{i}=b)=p_{N}1_{\{ab>0\}}+q_{N}1_{\{ab<0\}}$
for $a,b=\pm1$. Also $\gamma^{i}=(V_{i-1}r_{N},V_{i}h_{N})$ almost
surely for all $i\ge2$. The sequence of consecutive pairs $(V_{i-1},V_{i}),i\ge2,$
is a Markov chain on $\{-1,1\}^{2}$. Let $\left(U_{i}\right)_{i\ge0}$
under the probability $\tilde{R}_{\sigma}$ be a Markov chain on $\{-1,1\}^{2}$
with the same transition probabilities as $(V_{i-1},V_{i})$ but with
$U_{0}=\sigma\in\{-1,1\}^{2}$ almost surely. If $a,b=\pm1$ let $I_{1}$
and $I_{2}$ denote the events $\{\overset{\left[uK_{N}\right]-1}{\underset{i=1}{\sum}}1_{\left\{ U_{i}=(a,b)\right\} }\ge\frac{1}{4}u(1+\frac{\delta}{4})K_{N}\}$
and $\{\overset{\left[uK_{N}\right]-1}{\underset{i=1}{\sum}}1_{\left\{ U_{i}=(a,b)\right\} }\le\frac{1}{4}u(1-\frac{\delta}{4})K_{N}\}$
respectively. Then by \prettyref{eq:DefOfNs} we have:\begin{equation}
\begin{array}{ccc}
M\left(N_{\left[uK_{N}\right]}((ar_{N},bh_{N}))\ge\frac{1}{4}u(1+\frac{\delta}{4})K_{N}\right) & \le & \sup_{\sigma}\tilde{R}_{\sigma}(I_{1}),\\
M\left(N_{\left[uK_{N}\right]}((ar_{N},bh_{N}))\le\frac{1}{4}u(1-\frac{\delta}{4})K_{N}\right) & \le & \sup_{\sigma}\tilde{R}_{\sigma}(I_{2}).\end{array}\label{eq:RProbOfPairs}\end{equation}
We have the following lemma:
\begin{lem}
\label{lem:LDPApplication}($N\ge c$) If $\frac{1}{2}\ge\delta\ge32\frac{r_{N}}{h_{N}}$
then for all $a,b=\pm1$ \begin{equation}
\sup_{\sigma}\tilde{R}_{\sigma}(I_{i})\le\exp\left(-cuK_{N}\left(\frac{r_{N}}{h_{N}}\right)^{2}\right)\mbox{ for }i=1,2.\label{eq:RBoundAbove}\end{equation}
\end{lem}
\begin{proof}
By symmetry it suffices to check the cases $a=1,b=1$ and $a=1,b=-1$.
For a probability $\mu(\cdot,\cdot)$ on $\{-1,1\}^{2}$ we write
$\mu_{1},\mu_{2}$ for its marginals and $\mu(j|i)=\frac{\mu(i,j)}{\mu_{1}(i)}$.
By Theorem 3.1.13 p. 79 of \cite{DemboZeitouniLargeDeviations} and
by sub-additivity (cf. Lemma 6.1.11 p. 255 and Lemma 6.3.1 p. 273
of \cite{DemboZeitouniLargeDeviations}), we have that for all $n\ge1$
and $x\in(0,\frac{1}{4}]$ \begin{equation}
\begin{array}{ccl}
\inf_{\sigma}\tilde{R}_{\sigma}(\frac{1}{n}\sum_{i=0}^{n-1}1_{\{U_{i}=(1,b)\}}\ge\frac{1}{4}+x)) & \le & \exp(-n\Psi_{2,N}^{+}(x)),\\
\inf_{\sigma}\tilde{R}_{\sigma}(\frac{1}{n}\sum_{i=1}^{n}1_{\{U_{i}=(1,b)\}}\le\frac{1}{4}-x)) & \le & \exp(-n\Psi_{2,N}^{-}(x))\end{array}\label{eq:LargeDevInfBound}\end{equation}
where\begin{eqnarray*}
\Psi_{2,N}^{+}(x) & = & \inf\{H_{2,N}(\mu):\mu\mbox{ probability on }\{-1,1\}^{2},\mu_{1}=\mu_{2},\mu(1,b)\ge\frac{1}{4}+x\},\\
\Psi_{2,N}^{-}(x) & = & \inf\{H_{2,N}(\mu):\mu\mbox{ probability on }\{-1,1\}^{2},\mu_{1}=\mu_{2},\mu(1,b)\le\frac{1}{4}-x\},\\
H_{2,N}(\mu) & = & \mu_{1}(1)\{\mu(1|1)\log\frac{\mu(1|1)}{p_{N}}+\mu_{1}(-1|1)\log\frac{\mu(-1|1)}{q_{N}}\}+\\
 &  & \mu_{1}(-1)\{\mu(1|-1)\log\frac{\mu(1|-1)}{q_{N}}+\mu_{1}(-1|-1)\log\frac{\mu(-1|-1)}{p_{N}}\}.\end{eqnarray*}
Because $\inf_{\sigma,j}\tilde{R}_{\sigma}(U_{2}=j)\ge c$ we have\begin{eqnarray}
\begin{array}{cl}
 & \sup_{\sigma}\tilde{R}_{\sigma}\left(\sum_{i=1}^{\left[uK_{N}\right]-1}1_{\{U_{i}=(1,b)\}}\ge\frac{1}{4}u(1+\frac{\delta}{4})K_{N}\right)\\
\le & \frac{1}{c}\inf_{\sigma}\tilde{R}_{\sigma}\left(\sum_{i=1}^{\left[uK_{N}\right]+1}1_{\{U_{i}=(1,b)\}}\ge\frac{1}{4}u(1+\frac{\delta}{4})K_{N}\right)\\
\overset{uK_{N}\delta\ge2,\prettyref{eq:LargeDevInfBound}}{\le} & c\exp\left(-\left(\left[uK_{N}\right]+1\right)\Psi_{2,N}^{+}(\frac{\delta}{32})\right)\le c\exp\left(-uK_{N}\Psi_{2,N}^{+}(\frac{\delta}{32})\right),\end{array}\label{eq:inftosup1}\end{eqnarray}
and similarly\begin{equation}
\sup_{\sigma}\tilde{R}_{\sigma}\left(\sum_{i=1}^{\left[uK_{N}\right]-1}1_{\{U_{i}=(1,b)\}}\le\frac{1}{4}u(1-\frac{\delta}{4})K_{N}\right)\le c\exp\left(-uK_{N}\Psi_{2,N}^{-}(\frac{\delta}{32})\right).\label{eq:inftosup2}\end{equation}
To conclude the proof of the lemma it thus suffices to show that for
$b=-1,1$: \begin{equation}
\Psi_{2,N}^{\pm}\left(\frac{r_{N}}{h_{N}}\right)\ge c\left(\frac{r_{N}}{h_{N}}\right)^{2}.\label{eq:SufficesToShowPsi}\end{equation}
Consider the function $f_{p}(x)=x\log\frac{x}{p}+(1-x)\log\frac{1-x}{1-p}$
for $p\in(0,1)$ and $x\in[0,1]$. Since $f_{p}^{''}(x)=\frac{1}{x}+\frac{1}{1-x}\ge4$
and $f_{p}(p)=f_{p}^{'}(p)=0$ integrating twice gives that $f_{p}(x)\ge2(x-p)^{2}$
for all $p\in(0,1)$ and $x\in[0,1]$. Using this and $\mu_{1}(1),\mu_{1}(-1)\ge1$
we get\begin{eqnarray*}
H_{2,N}(\mu) & \ge & 2\left\{ \mu_{1}(1)^{2}(\mu(1|1)-p_{N})^{2}+\mu_{1}(-1)^{2}(\mu(-1|-1)-p_{N})^{2}\right\} \\
 & = & 2\left\{ (\mu(1,1)(1-p_{N})-p_{N}\mu(1,-1))^{2}+(\mu(-1,-1)(1-p_{N})-p_{N}\mu(-1,1))^{2}\right\} \end{eqnarray*}
Write $\theta=\frac{1}{2}p_{N}$, $\mu(1,1)=\theta+y$ and $\mu(-1,-1)=\theta+z$.
If $\mu_{1}=\mu_{2}$ then $\mu(1,-1)=\mu(-1,1)=\frac{1-y-z}{2}-\theta$
and plugging this into the above formula we get \begin{eqnarray*}
H_{2,N}(\mu) & \ge & 2\left\{ \left(y(1-\theta)+z\theta\right)^{2}+\left(z(1-\theta)+y\theta\right)^{2}\right\} \\
 & \overset{yz\ge-\frac{y^{2}+z^{2}}{2}}{\ge} & 2(1-2\theta)^{2}\{y^{2}+z^{2}\}\\
 & \ge & c\left\{ (\mu(1,1)-\frac{1}{2}p_{N})^{2}+(\mu(-1,-1)-\frac{1}{2}p_{N})^{2}\right\} .\end{eqnarray*}
Now if $|\mu(1,1)-\frac{1}{4}|\ge\frac{r_{N}}{h_{N}}$ or $|\mu(-1,-1)-\frac{1}{4}|\ge\frac{r_{N}}{h_{N}}$
and $\mu_{1}=\mu_{2}$ then $|\mu(1,1)-\frac{1}{2}p_{N}|\ge\frac{3}{4}\frac{r_{N}}{h_{N}}$
or $|\mu(-1,-1)-\frac{1}{2}p_{N}|\ge\frac{3}{4}\frac{r_{N}}{h_{N}}$,
so by the above inequality $H_{2,N}(\mu)\ge c\left(\frac{r_{N}}{h_{N}}\right)^{2}$
and thus \prettyref{eq:SufficesToShowPsi} holds when $b=1$. Furthermore
if $|\mu(1,-1)-\frac{1}{4}|\ge\frac{r_{N}}{h_{N}}$ and $\mu_{1}=\mu_{2}$
then $|\mu(1,1)-\frac{1}{4}|\ge\frac{r_{N}}{h_{N}}$ or $|\mu(-1,-1)-\frac{1}{4}|\ge\frac{r_{N}}{h_{N}}$
so \prettyref{eq:SufficesToShowPsi} holds also when $b=-1$. Thus
the proof of the lemma is complete.
\end{proof}
We can now finish the proof of \prettyref{pro:CoupleRWwithIIDExc}.
Combining \prettyref{eq:i0bound}, \emph{\prettyref{eq:IIDGuysBound},
\prettyref{eq:RProbOfPairs} }and\emph{ \prettyref{eq:RBoundAbove}},
and using the bounds $c\exp(-cuK_{N}\frac{r_{N}}{h_{N}})\le c\exp(-cuK_{N}(\frac{r_{N}}{h_{N}})^{2})\le c\exp(-c(\log N)^{2})$
(recall $uK_{N}\ge(\log N)^{6}$), we deduce that \prettyref{eq:GoodEventSufficesToShow-1}
holds. Thus the proof of \prettyref{pro:CoupleRWwithIIDExc} is complete.

\end{proof}

We are now ready to carry out the process of {}``poissonization''
to construct Poisson processes of excursions (i.e. Poisson processes
on $\mathcal{T}_{\tilde{B}}$ of intensity a multiple of $\nu$, cf.
\prettyref{eq:PPoEIntensity}) from the iid excursions of the previous
proposition.
\begin{cor}
\label{cor:CoupleExcwithPPP}For all $N\ge c$, $z\in[-N,N]$, $1>\delta\ge c_{4}\frac{r_{N}}{h_{N}}$
and $u$ satisfying $uK_{N}\ge(\log N)^{6}$ we can define on a space
$(\Omega_{3},\mathcal{A}_{3},Q_{3})$ a process $X_{\cdot}$ with
law $P_{q_{z}}$ and two independent Poisson point processes $\mu_{1},\mu_{2}$
on $\mathcal{T}_{\tilde{B}}$ with intensities $u(1-\delta)\nu$ and
$2\delta u\nu$ respectively such that\begin{equation}
\begin{array}{rr}
Q_{3}(\mathcal{I}(\mu_{1})\subset\cup_{i=1}^{\left[uK_{N}\right]}X(R_{i},D_{i})\subset\mathcal{I}(\mu_{1})\bigcup\mathcal{I}(\mu_{2}))\ge1-cuN^{-3d-1}.\end{array}\label{eq:CoupleExcWithPP}\end{equation}
\end{cor}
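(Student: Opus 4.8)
The plan is to build $\mu_1,\mu_2$ out of the iid excursions produced by \prettyref{pro:CoupleRWwithIIDExc} by a poissonization. First I would set $c_4=2c_3$ and apply \prettyref{pro:CoupleRWwithIIDExc} with $\hat\delta:=\delta/2$ in place of its $\delta$; this is admissible, since $\delta<1$ gives $\hat\delta\le\tfrac12$ and $\delta\ge c_4\frac{r_N}{h_N}$ gives $\hat\delta\ge c_3\frac{r_N}{h_N}$. This yields, on a space $(\Omega_2,\mathcal A_2,Q_2)$ carrying a walk $X_\cdot$ with law $P_{q_z}$, iid $\kappa_q$-distributed excursions $\hat{X}^{1},\hat{X}^{2},\dots$ and $\hat{X}^{'1},\hat{X}^{'2},\dots$ for which the double-inclusion event $I$ of \prettyref{eq:IIDExcInclusionEvent} (built with $\hat\delta$) satisfies $Q_2(I^c)\le cuN^{-3d-1}$.

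Next I would enlarge the space by three mutually independent Poisson variables, independent of all of the above: $P_1\sim\mathrm{Poisson}(u(1-\delta)K_N)$, $P_2\sim\mathrm{Poisson}(\tfrac32\delta uK_N)$ and $P_3\sim\mathrm{Poisson}(\tfrac12\delta uK_N)$, and put $\mu_1=\sum_{i=1}^{P_1}\delta_{\hat{X}^{i}}$ and $\mu_2=\sum_{i=P_1+1}^{P_1+P_2}\delta_{\hat{X}^{i}}+\sum_{i=1}^{P_3}\delta_{\hat{X}^{'i}}$. By the elementary poissonization fact --- if $Y_1,Y_2,\dots$ are iid with law $\kappa_q$ and $M_1,M_2$ are independent Poisson variables independent of the $Y_i$, then $\sum_{i\le M_1}\delta_{Y_i}$ and $\sum_{M_1<i\le M_1+M_2}\delta_{Y_i}$ are independent Poisson processes on $\mathcal T_{\tilde B}$ with mean measures $\mathbb E[M_1]\kappa_q$ and $\mathbb E[M_2]\kappa_q$ --- applied to the $\hat{X}$-sequence, together with the independence of the $\hat{X}^{'}$-sequence and of $P_3$ from the rest and the superposition property of Poisson processes, one checks that $\mu_1$ and $\mu_2$ are independent Poisson processes on $\mathcal T_{\tilde B}$ of intensities $u(1-\delta)\nu$ and $(\tfrac32\delta+\tfrac12\delta)u\nu=2\delta u\nu$ (recall $\nu=K_N\kappa_q$), exactly as demanded.

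It then remains to deduce the inclusion event of \prettyref{eq:CoupleExcWithPP}. On $I$ one has
\begin{multline*}
\bigcup_{i=1}^{[u(1-\hat\delta)K_N]}\hat{X}^{i}(0,D_1)\subseteq\bigcup_{i=1}^{[uK_N]}X(R_i,D_i)\\
\subseteq\bigcup_{i=1}^{[u(1+\hat\delta/2)K_N]}\hat{X}^{i}(0,D_1)\cup\bigcup_{i=1}^{[u\hat\delta/2\,K_N]}\hat{X}^{'i}(0,D_1),
\end{multline*}
so on $I\cap G$, where $G:=\{P_1\le[u(1-\hat\delta)K_N]\}\cap\{P_1+P_2\ge[u(1+\hat\delta/2)K_N]\}\cap\{P_3\ge[u\hat\delta/2\,K_N]\}$, one obtains simultaneously $\mathcal I(\mu_1)=\bigcup_{i\le P_1}\hat{X}^{i}(0,D_1)\subseteq\bigcup_{i=1}^{[uK_N]}X(R_i,D_i)$ and $\bigcup_{i=1}^{[uK_N]}X(R_i,D_i)\subseteq\bigcup_{i\le P_1+P_2}\hat{X}^{i}(0,D_1)\cup\bigcup_{i\le P_3}\hat{X}^{'i}(0,D_1)\subseteq\mathcal I(\mu_1)\cup\mathcal I(\mu_2)$, i.e.\ the event in \prettyref{eq:CoupleExcWithPP} holds on $I\cap G$. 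Since $\mathbb E[P_1]=u(1-\delta)K_N$, $\mathbb E[P_1+P_2]=u(1+\tfrac\delta2)K_N$ and $\mathbb E[P_3]=\tfrac12\delta uK_N$, while the thresholds entering $G$ are $u(1-\tfrac\delta2)K_N$, $u(1+\tfrac\delta4)K_N$ and $\tfrac14\delta uK_N$, each of the three deviations defining $G^c$ is by an amount of order $\delta uK_N$, so a standard Poisson tail estimate gives $Q_3(G^c)\le c\exp(-c\delta^2uK_N)$; using $uK_N\ge(\log N)^6$ and $\delta\ge c_4\frac{r_N}{h_N}\ge c(\log N)^{-2}$ (from $h_N\le cN(\log N)^2$) this is $\le c\exp(-c(\log N)^2)\le cuN^{-3d-1}$ for $N\ge c$, exactly as at the end of the proof of \prettyref{pro:CoupleRWwithIIDExc}. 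Hence the complement of the event in \prettyref{eq:CoupleExcWithPP} has $Q_3$-probability at most $Q_2(I^c)+Q_3(G^c)\le cuN^{-3d-1}$.

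Most of the real work sits in \prettyref{pro:CoupleRWwithIIDExc}; in this corollary the only point needing care is the arithmetic of the three Poisson means. They are forced to have total mean $u(1+\delta)K_N$ by the prescribed intensities $u(1-\delta)\nu$ and $2\delta u\nu$, yet must be split so that simultaneously $\mathbb E[P_1]<u(1-\hat\delta)K_N$, $\mathbb E[P_1]+\mathbb E[P_2]>u(1+\hat\delta/2)K_N$ and $\mathbb E[P_3]>u\hat\delta/2\,K_N$, with every gap still of order $\delta uK_N$ so that Poisson concentration outweighs $Q_2(I^c)$. The choice $\mathbb E[P_2]=\tfrac32\delta uK_N$, $\mathbb E[P_3]=\tfrac12\delta uK_N$ achieves this, since $1-\delta+\tfrac32\delta=1+\tfrac\delta2>1+\tfrac\delta4$, $\tfrac12\delta>\tfrac14\delta$, and $1-\delta<1-\tfrac\delta2$ (recall $\hat\delta=\delta/2$).
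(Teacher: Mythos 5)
Your proposal is correct and follows essentially the same route as the paper: set $c_4=2c_3$, invoke \prettyref{pro:CoupleRWwithIIDExc} with $\delta/2$, poissonize the two iid sequences using three independent Poisson counters with means $u(1-\delta)K_N$, $\tfrac32\delta uK_N$, $\tfrac12\delta uK_N$, and then bound the deviation of those counters by a Poisson large-deviation estimate. The only differences are notational (your $P_1,P_2,P_3$ and good event $G$ versus the paper's $J_1,J_2,J_3$ and a direct union bound on the complement), and you spell out the poissonization step slightly more explicitly than the paper does.
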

\begin{proof}
Let $c_{4}=2c_{3}$ so that we can apply \prettyref{pro:CoupleRWwithIIDExc}
with $\frac{\delta}{2}$ in place of $\delta$ to get a space $(\Omega_{2},\mathcal{A}_{2},Q_{2})$
with a process $X_{\cdot}$ with law $P_{q_{z}}$ and processes $(\hat{X}^{k})_{k\ge1},(\hat{X}^{'k})_{k\ge1}$,
as in \prettyref{eq:TheSequences} such that $Q_{2}(I^{'c})\le cuN^{-3d-1}$,
where $I'$ is the event in \prettyref{eq:IIDExcInclusionEvent} with
$\delta$ replaced by $\frac{\delta}{2}$. We define $(\Omega_{3},\mathcal{A}_{3},Q_{3})$
by extending $(\Omega_{2},\mathcal{A}_{2},Q_{2})$ with independent
Poisson random variables $J_{1}$ with parameter $K_{N}u(1-\delta)$,
$J_{2}$ with parameter $K_{N}u\frac{3\delta}{2}$ and $J_{3}$ with
parameter $K_{N}u\frac{\delta}{2}$, which are also independent from
$(\hat{X}^{k})_{k\ge1},(\hat{X}^{'k})_{k\ge1}$. We then define\begin{equation}
\mu_{1}=\sum_{1\le k\le J_{1}}\delta_{\hat{X}^{k}}\mbox{ and }\mu_{2}=\sum_{J_{1}+1\le k\le J_{1}+J_{2}}\delta_{\hat{X}^{k}}+\sum_{1\le k\le J_{3}}\delta_{\hat{X}^{'k}}.\label{eq:DefOfMus}\end{equation}
Then $\mu_{1}$ and $\mu_{2}$ are independent Poisson point processes
with intensities $u(1-\delta)\nu$ and $2\delta u\nu$. It thus only
remains to show \prettyref{eq:CoupleExcWithPP}. Note that the complement
of the event in the left-hand side of \prettyref{eq:CoupleExcWithPP}
is included in\begin{eqnarray*}
 & \{J_{1}>[u(1-\frac{\delta}{2})K_{N}]\}\cup\{J_{1}+J_{2}<[u(1+\frac{\delta}{4})K_{N}]\}\cup\{J_{3}<[u\frac{\delta}{4}K_{N}]\}\cup I'^{c}.\end{eqnarray*}
But using standard large deviation bounds we see that the probabilities
of the first three events in the union are bounded above by $\exp(-cuK_{N}\delta^{2})\overset{uK_{N}\delta^{2}\ge(\log N)^{2},N\ge c}{\le}cuN^{-3d-1}$
and thus \eqref{eq:CoupleExcWithPP} follows since we already know
$Q_{3}(I'^{c})\le cuN^{-3d-1}$.
\end{proof}
In finishing the proof of \prettyref{cor:CoupleExcwithPPP} we have
now proved the first of the two main ingredients that were used to
prove \prettyref{thm:CouplingManyBoxes}.

\section{\label{sec:CouplePPoEwithRI}Coupling the Poisson process of excursions
with random interlacements}

In this section we state and prove \prettyref{pro:PPoEandRIcoupling},
which couples the Poisson process of excursions with random interlacements
and whose application was an important part of the proof of \prettyref{thm:CouplingManyBoxes}.
It states that if we have a Poisson process of excursions $\mu$ (i.e.
Poisson process on $\mathcal{T}_{\tilde{B}}$ of intensity $u\nu$
for $u\ge0$, see \prettyref{eq:PPoEIntensity}) then for any $x\in\mathbb{T}_{N}\times[-\frac{N}{2},\frac{N}{2}]$
and $\varepsilon\in(0,1)$ we can, provided $N$ is large enough and
$u_{-}<u$ and $u_{+}>u$ are {}``sufficiently far'' from $u$,
construct independent random sets $\mathcal{I}_{1},\mathcal{I}_{2}\subset\mathsf{A}=B(0,N^{1-\varepsilon})$
such that \begin{equation}
\mathcal{I}_{1}\mbox{ has the law of }\mathcal{I}^{u_{-}}\cap\mathsf{A}\mbox{ under }Q_{0}\mbox{ and }\mathcal{I}_{2}\mbox{ has the law of }\mathcal{I}^{u_{+}-u_{-}}\cap\mathsf{A}\mbox{ under }Q_{0}\label{eq:LawofI1andI2}\end{equation}
and with high probability $\mathcal{I}_{1}\subset(\mathcal{I}(\mu)-x)\cap\mathsf{A}\subset\mathcal{I}_{1}\cup\mathcal{I}_{2}$
(recall that $\mathcal{I}^{u}$ under $Q_{0}$ is a random interlacement
and that $(\mathcal{I}_{1},\mathcal{I}_{1}\cup\mathcal{I}_{2})\overset{\mbox{law}}{=}(\mathcal{I}^{u_{-}}\cap\mathsf{A},\mathcal{I}^{u_{+}}\cap\mathsf{A})$
by \prettyref{eq:markovpropforI}). More precisely: 
\begin{prop}
\label{pro:PPoEandRIcoupling}Assume $\varepsilon\in(0,1),$ $N\ge c(\varepsilon)$,
$x\in\mathbb{T}_{N}\times[-\frac{N}{2},\frac{N}{2}]$ and let $\mathsf{A}=B(0,N^{1-\varepsilon})$.
Suppose that we have a Poisson process $\mu$ on $\mathcal{T}_{\tilde{B}}$
with intensity $u\nu,u\ge0$ defined on a space $(\Omega,\mathcal{A},Q)$
. Then if $0\le u_{-}<u<u_{+},\frac{u_{+}}{u},\frac{u}{u_{-}}\ge N^{-c_{5}}$,
where $c_{5}=c_{5}(\varepsilon)>0$, we can define a space $(\Omega^{'},\mathcal{A}^{'},Q^{'})$
and random sets $\mathcal{I}_{1},\mathcal{I}_{2}\subset\mathsf{A}$
on the product space $(\Omega\times\Omega^{'},\mathcal{A}\otimes\mathcal{A}^{'},Q\otimes Q^{'})$
such that \prettyref{eq:LawofI1andI2} holds, \begin{equation}
\mathcal{I}_{1},\mathcal{I}_{2},\mbox{ are independent, }\sigma(\mu)\otimes\mathcal{A}^{'}-\mbox{measurable and}\label{eq:CorrectMeasurability}\end{equation}
\begin{eqnarray}
Q\otimes Q^{'}(\mathcal{I}_{1}\subset(\mathcal{I}(\mu)-x)\cap\mathsf{A}\subset\mathcal{I}_{1}\cup\mathcal{I}_{2}) & \ge1-cu_{+}N^{-10(d+1)}.\label{eq:InclusionBound}\end{eqnarray}

\end{prop}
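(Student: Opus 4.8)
I would first throw away all of $\mu$ except the excursions that reach $K:=\mathsf A+x$, reducing the assertion to a statement about the law of $\mathcal I(\mu)\cap K$ alone; then compare that law with the interlacement laws via a sharp estimate for the killed Green function; and finally turn the comparison into a coupling by decomposing trajectories into excursions inside a slightly enlarged local box, where the cylinder walk and the $\mathbb Z^{d+1}$ walk are literally the same process.

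\textbf{Step 1: a clean Poissonian description of $\mathcal I(\mu)\cap K$.} For $N\ge c(\varepsilon)$ one has $K\subset\mathbb T_N\times(-r_N,r_N)$. An excursion $w\in\mathcal T_{\tilde B}$ meets $S\subseteq K$ iff $H_S(w)<T_{\tilde B}(w)$, so the Poisson character of $\mu$ (intensity $u\nu=uK_N\kappa_q$) gives $Q(\mathcal I(\mu)\cap S=\emptyset)=\exp(-uK_NP_q(H_S<T_{\tilde B}))$, and summing \prettyref{eq:GeometricLemma} over $x\in S$ yields $K_NP_q(H_S<T_{\tilde B})=\mathrm{cap}_{\tilde B}(S)$. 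Hence
\[
Q\bigl(\mathcal I(\mu)\cap S=\emptyset\bigr)=\exp\bigl(-u\,\mathrm{cap}_{\tilde B}(S)\bigr),\qquad S\subseteq K ,
\]
so $\mathcal I(\mu)\cap K$ is a ``Poissonian interlacement of level $u$ built from the killed Green function $g_{\tilde B}$'' (equivalently, by \prettyref{eq:GeometricLemmaStrongMarkov}, $\mathcal I(\mu)\cap K=\mathcal I(\hat\mu)\cap K$ for a Poisson cloud $\hat\mu$ of $E_N$-excursions of intensity $uP_{e_{K,\tilde B}}$).

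\textbf{Step 2: a sharp capacity comparison.} Identify $K$ with $\mathsf A\subset\mathbb Z^{d+1}$ via the translation $\phi=\,\cdot-x$, which is a graph isomorphism of $B(x,2N^{1-\varepsilon})$ onto $B(0,2N^{1-\varepsilon})$ for $N\ge c(\varepsilon)$. Sharpening the computation behind \prettyref{lem:GreensFuncDecayInCylinder} — using the unwrapping formula \prettyref{eq:GreensFunctionUnwrapping}, the fact that its $n=0$ term $g_U(x',y')$ differs from $g(x'-y')$ by at most $c\,(\mathrm{dist}(K,\partial U))^{1-d}=c\,(N(\log N)^2)^{1-d}$, and that the $n\neq0$ terms contribute at most $c(\log N)^2N^{1-d}$ — one obtains, uniformly in $y\ne z$ in $K$ and using $g(\phi(y)-\phi(z))\ge c|y-z|^{1-d}\ge cN^{(1-\varepsilon)(1-d)}$,
\[
\bigl|\,g_{\tilde B}(y,z)\big/g(\phi(y)-\phi(z))-1\,\bigr|\le c(\log N)^2N^{-\varepsilon(d-1)} .
\]
Since $e_{S,\tilde B}$ solves $\sum_{z\in S}g_{\tilde B}(y,z)e_{S,\tilde B}(z)=1$ on $S$ (a case of \prettyref{eq:HittingToGreenEquil}), a uniform relative perturbation of the kernel yields a uniform relative perturbation of the capacity: $|\mathrm{cap}_{\tilde B}(S)/\mathrm{cap}(\phi(S))-1|\le\eta_N:=c(\log N)^2N^{-\varepsilon(d-1)}$ for all nonempty $S\subseteq K$. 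Choosing $c_5=c_5(\varepsilon)$ small enough that the separation hypotheses on $u_\pm$ force $u_-\le u(1-\eta_N)$ and $u_+\ge u(1+\eta_N)$ for $N\ge c(\varepsilon)$, I get $u_-\mathrm{cap}(\phi(S))\le u\,\mathrm{cap}_{\tilde B}(S)\le u_+\mathrm{cap}(\phi(S))$ for every $S\subseteq K$; with Step 1 and $Q_0(\mathcal I^{v}\cap A=\emptyset)=e^{-v\,\mathrm{cap}(A)}$ this says that $\mathcal I^{u_-}\cap\mathsf A$, $(\mathcal I(\mu)-x)\cap\mathsf A$ and $\mathcal I^{u_+}\cap\mathsf A$ have pointwise ordered vacancy functionals.

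\textbf{Step 3: the coupling, and the main obstacle.} Using \prettyref{eq:markovpropforI} write $\mathcal I^{u_+}\cap\mathsf A=\mathcal I_1\cup\mathcal I_2$ with $\mathcal I_1\overset{\mathrm{law}}{=}\mathcal I^{u_-}\cap\mathsf A$ and $\mathcal I_2\overset{\mathrm{law}}{=}\mathcal I^{u_+-u_-}\cap\mathsf A$ independent, and realise all three interlacement sets, and $(\mathcal I(\mu)-x)\cap\mathsf A$, through their Poisson clouds of trajectories (via \prettyref{eq:lawofIuCAPK}, resp.\ via $\hat\mu$ pushed forward by $\phi$). The crucial point is that the trace of a trajectory inside $\mathsf A$ is already determined by its excursions between $\mathsf A$ and $\partial_e\mathsf A^+$, $\mathsf A^+:=B(0,2N^{1-\varepsilon})$, and that such an excursion — confined to $\mathsf A^+$, which $\phi^{-1}$ maps into $\tilde B$ — has \emph{exactly} the same law under the $E_N$-walk and under the $\mathbb Z^{d+1}$-walk; thus the ``modelling error'' of matching excursions (rather than whole trajectories) is zero. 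One couples the three clouds so that their families of $\mathsf A$--$\partial_e\mathsf A^+$ excursions are nested: Step 2 makes the governing intensities ordered up to the sprinkling gaps $u-u_-$ and $u_+-u$, and a concentration estimate for the excursion counts (in the spirit of \cite{Sznitman2009-OnDOMofRWonDCbyRI}) upgrades this to an almost sure nesting off an event of probability $\le cu_+N^{-10(d+1)}$; on its complement the traces on $\mathsf A$ are nested, which is \prettyref{eq:InclusionBound}, while \prettyref{eq:markovpropforI} and the construction give \prettyref{eq:LawofI1andI2} and \prettyref{eq:CorrectMeasurability}. I expect the hard part to be exactly this quantitative step: the failure probability must be kept below $cu_+N^{-10(d+1)}$ \emph{uniformly} over the whole admissible range of $u$ (which, because $uK_N\ge(\log N)^6$ and $K_N\asymp N^{d-1}(\log N)^{-2}$, runs from $\asymp(\log N)^8N^{-(d-1)}$ upwards) and over the box scale $N^{1-\varepsilon}$ — which is precisely what forces the \emph{sharp} relative capacity estimate of Step 2 in place of the order-of-magnitude bound of \prettyref{lem:GreensFuncDecayInCylinder} or the ``$u,\delta$ fixed'' estimates of \cite{Sznitman2009-OnDOMofRWonDCbyRI}, and where the discrepancy between $g_{\tilde B}$ and $g$, the fluctuations of the excursion counts, and the small corrections in the re-entry law onto $\mathsf A$ must all be balanced below the target threshold.
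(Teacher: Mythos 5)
Your overall strategy — pass to the Poisson cloud restricted to trajectories hitting the local box, decompose each trajectory into its excursions between the small box and a larger box, observe that within the larger box the cylinder walk and the $\mathbb{Z}^{d+1}$ walk are literally the same, then thin/thicken — is the strategy of the paper's Section 6. However, two of the steps as you have set them up do not close.

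First, the central quantitative estimate you propose (sharp relative bound on $g_{\tilde B}$ versus $g$, then the energy characterization of capacity to deduce $|\mathrm{cap}_{\tilde B}(S)/\mathrm{cap}(\phi(S))-1|\le\eta_N$) only controls the one-dimensional family of vacancy probabilities $Q(\mathcal I(\mu)\cap S=\emptyset)$. That is not what the thinning/thickening construction needs. To couple Poisson clouds so that the families of excursions are a.s.\ nested, you must compare the Poisson \emph{intensity densities} on the spaces $W^{\times l}$ of $l$-tuples of excursions, pointwise in the excursion tuple; that is exactly Proposition \ref{pro:XiBound} (the comparison of $\xi_E^l$ and $\xi_{\mathbb{Z}^{d+1}}^l$). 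Writing out these densities (cf.\ \eqref{eq:EZ}, \eqref{eq:EE}) shows they factor into an entrance (equilibrium) weight, the law of each confined excursion (identical in both models — the observation you correctly make), the re-entry kernel from the outer boundary back into the inner box ($s_{\mathbb{Z}^{d+1}}$ vs.\ $s_E$), and a final escape probability ($t_{\mathbb{Z}^{d+1}}$ vs.\ $t_E$). One therefore needs \emph{pointwise} relative comparisons of the equilibrium measures (Lemma \ref{lem:EquilBound}) and of the $s$- and $t$-kernels (Lemmas \ref{lem:sinequality}, \ref{lem:tinequality}); these are proven via Harnack's inequality and last-exit decompositions, not via a capacity variational principle. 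A uniform relative perturbation of the Green kernel on $K$ does give the capacity comparison by the Dirichlet-form characterization, as you say, but it does not by itself give a pointwise relative comparison of the equilibrium measure, let alone of the re-entry kernel $s$ (which depends on the chain's behaviour far from $K$, where the two Green functions are not close in relative terms). So the assertion that ``Step 2 makes the governing intensities ordered up to the sprinkling gaps'' is precisely where the argument has a hole. Also note that a mere ordering of vacancy functionals does not produce a coupling, and in particular cannot produce the required \emph{independent} pair $(\mathcal I_1,\mathcal I_2)$; the thinning/thickening of the excursion Poisson clouds is what delivers both the inclusion and the independence.

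Second, the choice of outer box $\mathsf A^+=B(0,2N^{1-\varepsilon})$ is only a factor $2$ larger than $\mathsf A$, so the probability, starting from $\partial_e\mathsf A^+$, of returning to $\mathsf A$ before escaping is bounded away from $0$; the number of excursions per trajectory then has a geometric tail with a constant rate, and to push the probability of ``some trajectory makes $>r$ excursions'' below $cu_+N^{-10(d+1)}$ you would need $r\gtrsim\log N$, which in turn ruins the $c(l)N^{-c(\varepsilon)}$ control in the density comparison. The paper instead uses $\mathsf B=B(0,N^{1-\varepsilon/2})$, which is at a genuinely larger scale, so by Lemma \ref{lem:EscapeIsLikely} the return probability is $\le N^{-c_6(\varepsilon)}$ and a \emph{constant} $r$ as in \eqref{eq:DefOfR} already gives the $N^{-10(d+1)}$ decay. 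Relatedly, the error event is controlled simply by the Poisson intensity mass of trajectories with $>r$ excursions (cf.\ the computation \eqref{eq:Apa}), not by a concentration estimate for excursion counts — no concentration is used in this proof.
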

Before starting the proof of \prettyref{pro:PPoEandRIcoupling} we
make some definitions and state \prettyref{pro:XiBound}, all of which
we will need in the proof. We define the box\begin{equation}
\mathsf{A}'=B(x,N^{1-\varepsilon}).\label{eq:DefIfAPrime}\end{equation}
The first step in the proof of \prettyref{pro:PPoEandRIcoupling}
will be to extract from $\mu$ a Poisson process $\mu'$, by keeping
only trajectories in $\mu$ that hit $\mathsf{A}'$ (the others are
irrelevant for the coupling). We will see that what is left, i.e.
$\mu'$, is a Poisson process on $\mathcal{T}_{\tilde{B}}$ of intensity
$u\kappa_{e_{\mathsf{A}',\tilde{B}}}$. We define the boxes\begin{equation}
\mathsf{B}=B(0,N^{1-\varepsilon/2}),\mathsf{C}=B(0,\frac{N}{4}),\mathsf{B}^{'}=B(x,N^{1-\varepsilon/2}),\mathsf{C}^{'}=B(x,\frac{N}{4}).\label{eq:DefOfBCBPrimeCPrime}\end{equation}
Note that for $N\ge c(\varepsilon)$\begin{equation}
\mathsf{A}\subset\mathsf{B}\subset\mathsf{C}\mbox{ and }\mathsf{A}'\subset\mathsf{B}'\subset\mathsf{C}'.\label{eq:ABCInclusion}\end{equation}
(One should not confuse $\mathsf{B}$ with $B$ from \prettyref{eq:rnbnbbtilde}.)
We further define for $k\ge1$ the successive returns $\tilde{R}_{k}$
to $\mathsf{A}$ and departures $\tilde{D}_{k}$ from $\mathsf{B}$,
and returns $\tilde{R}_{k}^{'}$ to $\mathsf{A}^{'}$ and departures
$\tilde{D}_{k}^{'}$ from $\mathsf{B}'$, analogously to \prettyref{eq:ExcursionDef}.
For $1\le l<\infty$ we then introduce the maps $\phi_{l},\phi_{l}^{'}$
from $\{\tilde{D}_{k}<\infty=\tilde{R}_{k+1}\}\subset W$ and $\{\tilde{D}_{k}^{'}<T_{\tilde{B}}<\tilde{R}_{k+1}^{'}\}\subset\mathcal{T}_{\tilde{B}}$
respectively into $W^{\times l}$:\begin{equation}
\phi_{l}(w)\overset{\mbox{def}}{=}(w((\tilde{R}_{i}+\cdot)\wedge\tilde{D}_{i}))_{1\le i\le l},\phi_{l}^{'}(w)\overset{\mbox{def}}{=}(w((\tilde{R}_{i}^{'}+\cdot)\wedge\tilde{D}_{i}^{'})-x)_{1\le i\le l}.\label{eq:DefOfPhiPhiPrime}\end{equation}
For $1\le l<\infty$ we will then consider $\mu_{l}$, the image of
$\mu^{'}$ under $\phi_{l}^{'}$, that is from each trajectory we
will only keep its excursions between $\mathsf{A}'$ and $\partial_{e}\mathsf{B}'$.
Essentially speaking we will then be left with Poisson point processes
$\mu_{l},l\ge1$, on the spaces $W^{\times l},l\ge1$, of intensities
$u\xi_{E}^{l},l\ge1$, where for $w_{1},...,w_{l}\in W$\begin{eqnarray}
\xi_{E}^{l}(w_{1},...,w_{l}) & \overset{\mbox{def}}{=} & \left(\phi_{l}^{'}\circ(1_{\{\tilde{D}_{l}^{'}<T_{\tilde{B}}<\tilde{R}_{l+1}^{'}\}}\kappa_{e_{\mathsf{A}',\tilde{B}}})\right)(w_{1}+x,...,w_{l}+x).\label{eq:PsiEDef}\end{eqnarray}
Recall from \prettyref{eq:lawofIuCAPK} that $\mathcal{I}^{s}\cap\mathsf{A}$
has the law of $\mathcal{I}(\mu_{\mathsf{A},s})\cap\mathsf{A}$ for
any $s\ge0$. It will turn out that if we consider the image of $1_{\{\tilde{D}_{l}<\infty=\tilde{R}_{l+1}\}}\mu_{\mathsf{A},s}$
under $\phi_{l}$, thereby only keeping the excursions between $\mathsf{A}$
and $\partial_{e}\mathsf{B}$, we get Poisson processes \emph{on the
same spaces} $W^{\times l},l\ge1$, but with intensities $s\xi_{\mathbb{Z}^{d+1}}^{l}$,
where for $w_{1},...,w_{l}\in W$ \begin{eqnarray}
\xi_{\mathbb{Z}^{d+1}}^{l}(w_{1},...,w_{l}) & \overset{\mbox{def}}{=} & \left(\phi_{l}\circ(1_{\{\tilde{D}_{l}<\infty=\tilde{R}_{l+1}\}}P_{e_{\mathsf{A}}}^{\mathbb{Z}^{d+1}})\right)(w_{1},...,w_{l}).\label{eq:PsiZDef}\end{eqnarray}
The following comparison of $\xi_{\mathbb{Z}^{d+1}}^{l}$ and $\xi_{E}^{l}$
is crucial in the proof of \prettyref{pro:PPoEandRIcoupling}:
\begin{prop}
\label{pro:XiBound}($N\ge c(\varepsilon))$ For all $l\ge1$\begin{equation}
(1-c(l)N^{-c(\varepsilon)})\xi_{\mathbb{Z}^{d+1}}^{l}\le\xi_{E}^{l}\le(1+c(l)N^{-c(\varepsilon)})\xi_{\mathbb{Z}^{d+1}}^{l}.\label{eq:XiBound}\end{equation}

\end{prop}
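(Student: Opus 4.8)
The plan is to exploit that $\xi^{l}_{\mathbb{Z}^{d+1}}$ and $\xi^{l}_{E}$ are assembled from the same \emph{local} pieces --- excursions between $\mathsf{A}$ (resp. $\mathsf{A}'$) and $\partial_{e}\mathsf{B}$ (resp. $\partial_{e}\mathsf{B}'$) --- glued by \emph{global} ``return/escape'' steps, and that the cylinder coincides with $\mathbb{Z}^{d+1}$ on every box of radius $o(N)$. First I would apply the strong Markov property at the successive departure times $\tilde{D}_{i}$ (resp. $\tilde{D}'_{i}$) to write each of $\xi^{l}_{\mathbb{Z}^{d+1}}$ and $\xi^{l}_{E}$, as a measure on $W^{\times l}$, as a product of $2l+1$ factors: an \emph{initial} factor (the starting law on $\partial_{i}\mathsf{A}$, namely $e_{\mathsf{A}}$ versus the translate of $e_{\mathsf{A}',\tilde{B}}$), $l$ \emph{excursion} factors (the law of the walk from a point of $\partial_{i}\mathsf{A}$ stopped on exiting $\mathsf{B}$, resp. $\mathsf{B}'$), $l-1$ \emph{return} factors (from a point of $\partial_{e}\mathsf{B}$, the subprobability of hitting $\mathsf{A}$ again together with the entrance law) and a final \emph{escape} factor (from a point of $\partial_{e}\mathsf{B}$: never hitting $\mathsf{A}$ again, which in $E_{N}$ means exiting $\tilde{B}$ before returning to $\mathsf{A}'$). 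Every excursion factor is carried by trajectories confined to $\mathsf{B}$ (resp. $\mathsf{B}'$), and for $N\ge c(\varepsilon)$ the box $\mathsf{B}'=B(x,N^{1-\varepsilon/2})$ embeds isometrically into $E_{N}$ with $\mathsf{B}'\subset\tilde{B}$; hence these excursion factors are literally equal after the translation by $x$, and the comparison reduces to the initial, return and escape factors, which involve only the walk at scales $\ge N^{1-\varepsilon/2}$.

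Next I would compare those through a two-scale decomposition using the intermediate box $\mathsf{C}=B(0,N/4)$ (resp. $\mathsf{C}'=B(x,N/4)$), where for $N\ge c(\varepsilon)$ one has $\mathsf{A}\subset\mathsf{B}\subset\mathsf{C}$, $\mathsf{A}'\subset\mathsf{B}'\subset\mathsf{C}'\subset\tilde{B}$ and $\mathsf{C}'$ embeds isometrically. Call a return/escape trajectory \emph{good} if it returns to $\mathsf{A}$ (resp. $\mathsf{A}'$, resp. exits $\tilde{B}$) without first reaching $\partial_{e}\mathsf{C}$ (resp. $\partial_{e}\mathsf{C}'$), and \emph{bad} otherwise. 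On the good event the $\mathbb{Z}^{d+1}$-walk and the $E_{N}$-walk killed on $\tilde{B}$ have exactly the same law after translation, so the good parts of the initial, return and escape factors agree \emph{exactly}; it remains to bound the bad parts. The key point is that each bad part is small \emph{relative to the factor it perturbs}: from $\partial_{e}\mathsf{C}$ the chance of ever hitting $\mathsf{A}$ again is at most $c\,\mathrm{cap}(\mathsf{A})N^{1-d}\le cN^{-\varepsilon(d-1)}$ in $\mathbb{Z}^{d+1}$, and at most $c(\log N)^{2}N^{1-d}\,\mathrm{cap}_{\tilde{B}}(\mathsf{A}')\le cN^{-c(\varepsilon)}$ in $E_{N}$ by \prettyref{lem:GreensFuncDecayInCylinder} together with the standard two-sided capacity estimates $cN^{(1-\varepsilon)(d-1)}\le\mathrm{cap}(\mathsf{A})\le cN^{(1-\varepsilon)(d-1)}$ and $\mathrm{cap}_{\tilde{B}}(\mathsf{A}')\le cN^{(1-\varepsilon)(d-1)}$ (the last via \prettyref{eq:HittingToGreenEquil} with $y\in\partial_{e}\mathsf{A}'$ and the lower bound of \prettyref{lem:GreensFuncDecayInCylinder}). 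Feeding this back: for the initial factor, the probability from $x\in\partial_{i}\mathsf{A}$ of reaching $\partial_{e}\mathsf{C}$ before returning to $\mathsf{A}$ is itself $\le 2e_{\mathsf{A}}(x)$ (since reaching $\partial_{e}\mathsf{C}$ and then not returning forces $\tilde H_{\mathsf{A}}=\infty$, while reaching it and then returning costs a factor $N^{-\varepsilon(d-1)}$), so the bad part is $\le c N^{-c(\varepsilon)}e_{\mathsf{A}}(x)$; for the return factor, the bad part is $\le N^{-\varepsilon(d-1)}$ while the total return mass from $\partial_{e}\mathsf{B}$ is $\ge cN^{-\varepsilon(d-1)/2}$ by the capacity lower bound, again giving relative error $N^{-c(\varepsilon)}$; and the escape factor is bounded below by $\tfrac12$, so an additive $N^{-c(\varepsilon)}$ error is relative. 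Hence every one of the $2l+1$ factors of $\xi^{l}_{E}$ equals the corresponding factor of $\xi^{l}_{\mathbb{Z}^{d+1}}$ up to a multiplicative $(1\pm cN^{-c(\varepsilon)})$, uniformly in the starting point.

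Finally I would multiply the $2l+1$ estimates by a telescoping argument; since $(1+cN^{-c(\varepsilon)})^{2l+1}\le 1+c(l)N^{-c(\varepsilon)}$ and likewise for the lower bound once $N\ge c(\varepsilon)$, this yields \prettyref{eq:XiBound}, the dependence on $l$ entering solely through the number of glued factors.

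The step I expect to be the main obstacle is the second one: controlling the bad contributions as a \emph{relative} error rather than merely in absolute terms. It is not enough that trajectories escaping to scale $N$ are rare; one must verify simultaneously, at the three scales $N^{1-\varepsilon}$, $N^{1-\varepsilon/2}$ and $N$, that such trajectories contribute to each global factor an amount dominated by that factor itself, which forces careful two-sided estimates on capacities and on the killed Green functions $g$ and $g_{\tilde{B}}$ and uses the transience of $\mathbb{Z}^{d+1}$ ($d\ge2$). A secondary technical point is organising the strong-Markov factorisation and the telescoping so that the constant $c(l)$ in \prettyref{eq:XiBound} carries no hidden dependence on $N$.
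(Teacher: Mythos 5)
Your proposal matches the paper's proof almost exactly in structure: the same strong-Markov factorization of $\xi^{l}_{\mathbb{Z}^{d+1}}$ and $\xi^{l}_{E}$ into an initial equilibrium factor, $l$ excursion factors $r(w_{i})$ (which are literally equal after translation because $\mathsf{B}'$ embeds isometrically), $l-1$ return factors $s(\cdot,\cdot)$ and a final escape factor $t(\cdot)$; the same intermediate boxes $\mathsf{C},\mathsf{C}'$ and good/bad split at their boundaries; the same escape estimates (\prettyref{lem:EscapeIsLikely} and \prettyref{eq:EscapeIsLikelyCB}) to control the bad contributions; and the same multiplication of $2l+1$ factor-by-factor bounds. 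Lemmas \ref{lem:EquilBound}, \ref{lem:tinequality} and \ref{lem:sinequality} in the paper are precisely these comparisons.

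There is, however, one genuine gap in your treatment of the return factor. What must be compared is $s_{\mathbb{Z}^{d+1}}(z,y)$ and $s_{E}(z,y)$ \emph{pointwise in the arrival point} $y\in\partial_{i}\mathsf{A}$, because $y=w^{s}_{i+1}$ is pinned down by the next coordinate of the trajectory tuple $\mathbf{w}$ on which $\xi^{l}$ is a measure. Your estimate that the bad event $\{T_{\mathsf{C}}<H_{\mathsf{A}}<\infty\}$ from $z\in\partial_{e}\mathsf{B}$ has probability $\le cN^{-\varepsilon(d-1)}$, while the total return probability $P_{z}(H_{\mathsf{A}}<\infty)$ is $\ge cN^{-\varepsilon(d-1)/2}$, controls the relative error only for the summed mass $\sum_{y}s(z,y)$, not for a fixed $y$: a priori the bad trajectories could concentrate their $\mathsf{A}$-entrances at atypical boundary points, in which case the ratio $P_{z}(T_{\mathsf{C}}<H_{\mathsf{A}}<\infty,X_{H_{\mathsf{A}}}=y)/s(z,y)$ need not be small. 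The paper closes this hole with Harnack's inequality applied to the positive harmonic function $z\mapsto P_{z}(H_{\mathsf{A}}<\infty,X_{H_{\mathsf{A}}}=y)$ on $\mathbb{Z}^{d+1}\setminus\mathsf{A}$ (and its cylinder analogue on $\mathsf{C}'\setminus\mathsf{A}'$, identified with a subset of $\mathbb{Z}^{d+1}$), giving $\sup_{z''\in\partial_{e}\mathsf{B}}s(z'',y)\le c\inf_{z''\in\partial_{e}\mathsf{B}}s(z'',y)$; combined with $\sup_{z'\in\partial_{e}\mathsf{C}}P_{z'}(H_{\partial_{e}\mathsf{B}}<\infty)\le N^{-c(\varepsilon)}$ this yields the needed pointwise bound $P_{z}(T_{\mathsf{C}}<H_{\mathsf{A}}<\infty,X_{H_{\mathsf{A}}}=y)\le cN^{-c(\varepsilon)}s(z,y)$. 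You correctly flagged the second step as the main obstacle, but the specific tool that resolves it is a Harnack-type comparison of entrance laws at scale $N^{1-\varepsilon/2}$, not merely two-sided capacity and Green-function estimates; without it, your factor-by-factor claim for the $s$-factors is unjustified.
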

We postpone the proof of \prettyref{pro:XiBound} until after the
proof of \prettyref{pro:PPoEandRIcoupling}. In \prettyref{pro:PPoEandRIcoupling}
we will use \prettyref{pro:XiBound} to {}``thin'' $\mu_{l},l=1,...,r$,
where $r$ is a constant, to get Poisson processes $\mu_{l}^{-},l=1,...,r$
with intensity $u_{-}\mathcal{\xi}_{\mathbb{Z}^{d+1}}^{l}$ and {}``thicken''
$\mu_{l}-\mu_{l}^{-}$ to get Poisson processes $\mu_{l}^{+}=1,...,r$
with intensity $u_{+}\mathcal{\xi}_{\mathbb{Z}^{d+1}}^{l}$, such
for each $l=1,...,r$ we have $\mu_{l}^{-}\le\mu_{l}\le\mu_{l}^{-}+\mu_{l}^{+}$.
We will then essentially speaking define the set $\mathcal{I}_{1}$
in terms of the traces of the $\mu_{l}^{-}$ and define $\mathcal{I}_{2}$
in terms of the traces of the $\mu_{l}^{+}$. We will pick $r$ (see
\prettyref{eq:DefOfR}) such that with high probability $\sum_{l>r}\mu_{l}=0$.
This together with the relation $\mu_{l}^{-}\le\mu_{l}\le\mu_{l}^{-}+\mu_{l}^{+}$
will allow us to prove that the inclusion in \prettyref{eq:InclusionBound}
holds with high probability. Also the relation between $\xi_{\mathbb{Z}^{d+1}}^{l}$,
$\mu_{\mathsf{A},s}$ and $\mathcal{I}^{s}\cap\mathsf{A}$ described
above will allow us to show \prettyref{eq:LawofI1andI2}. We now start
the proof.
\begin{proof}[Proof of \prettyref{pro:PPoEandRIcoupling}.]
We start with by constructing the processes $\mu_{l}$ in the following
lemma:
\begin{lem}
\label{lem:ConstructMuL}We can define on $(\Omega,\mathcal{A},Q)$
processes $\mu_{l},1\le l<\infty$, such that\begin{eqnarray}
 & \mu_{l}\mbox{ are independent }\sigma(\mu)-\mbox{measurable Poisson point processes,}\label{eq:MuLIndependence}\\
 & \mu_{l}\mbox{ has statespace }W^{\times l}\mbox{ and intensity }u\xi_{E}^{l},\label{eq:MuLIntensity}\\
 & (\mathcal{I}(\mu)-x)\cap\mathsf{A}=\cup_{l\ge1}\mathcal{I}(\mu_{l})\mbox{ almost surely.}\label{eq:MuMuLTrace}\end{eqnarray}
\end{lem}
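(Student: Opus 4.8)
The plan is to decompose the Poisson process $\mu$ according to how many excursions between $\mathsf{A}'$ and $\partial_e\mathsf{B}'$ each trajectory performs, and then apply the restriction/mapping theorems for Poisson processes to each piece. First I would observe that since $\mu$ is a Poisson process on $\mathcal{T}_{\tilde B}$ of intensity $u\nu = uK_N\kappa_q$, its restriction $\mu'$ to the set of trajectories hitting $\mathsf{A}'$ is again a Poisson process, of intensity $uK_N P_q(X_{\cdot\wedge T_{\tilde B}}\in dw,\,H_{\mathsf{A}'}<T_{\tilde B})$. By the strong Markov property at $H_{\mathsf{A}'}$ together with \prettyref{eq:GeometricLemmaStrongMarkov}, this intensity is exactly $u\kappa_{e_{\mathsf{A}',\tilde B}}$, matching the notation fixed in the text before \prettyref{eq:PsiEDef}. (Trajectories in $\mu$ not hitting $\mathsf{A}'$ do not meet $\mathsf{A}'\supset$ (support of the eventual trace inside $\mathsf{A}'$), hence are irrelevant for \prettyref{eq:MuMuLTrace}.)

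Next, for each $l\ge1$ I would partition $\mu'$ according to the event $\{\tilde D'_l<T_{\tilde B}<\tilde R'_{l+1}\}$, i.e. ``the trajectory makes exactly $l$ excursions between $\mathsf{A}'$ and $\partial_e\mathsf{B}'$''. These events partition the trajectories in $\mathcal{T}_{\tilde B}$ that hit $\mathsf{A}'$ (a trajectory hitting $\mathsf{A}'$ makes at least one and, since it is eventually constant, only finitely many such excursions), so restricting $\mu'$ to each of them gives independent Poisson processes whose intensities sum to $u\kappa_{e_{\mathsf{A}',\tilde B}}$. I then push the $l$-th piece forward under the map $\phi'_l$ of \prettyref{eq:DefOfPhiPhiPrime}, which records the first $l$ excursions (recentred by $-x$). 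By the mapping theorem, $\mu_l := \phi'_l(\,1_{\{\tilde D'_l<T_{\tilde B}<\tilde R'_{l+1}\}}\mu'\,)$ is a Poisson process on $W^{\times l}$, and unwinding the definitions its intensity is precisely $u\xi_E^l$ as given in \prettyref{eq:PsiEDef}; this establishes \prettyref{eq:MuLIntensity}, and \prettyref{eq:MuLIndependence} follows from the disjointness of the defining events together with measurability of each $\mu_l$ with respect to $\sigma(\mu)$ (each is obtained from $\mu$ by restriction to a measurable set of trajectories followed by a measurable map). For \prettyref{eq:MuMuLTrace} I would note that each excursion of a trajectory $w\in\mathcal{T}_{\tilde B}$ between $\mathsf{A}'$ and $\partial_e\mathsf{B}'$, once recentred by $-x$, is an element of $W$ started in $\mathsf{A}=\mathsf{A}'-x$; since a trajectory can only enter $\mathsf{A}\subset\mathsf{A}'$ (equivalently $\mathsf{A}'$) during one of these excursions, and only during the portion up to the next departure from $\mathsf{B}'\supset\mathsf{A}'$, the trace of $w-x$ inside $\mathsf{A}$ equals the union of the traces inside $\mathsf{A}$ of its recentred excursions; summing over all trajectories of $\mu'$ (hence of $\mu$) gives $(\mathcal{I}(\mu)-x)\cap\mathsf{A}=\bigcup_{l\ge1}\mathcal{I}(\mu_l)\cap\mathsf{A}$, and since each $\mathcal{I}(\mu_l)$ is by construction contained in $\mathsf{A}$ (its excursions start in $\mathsf{A}$ and are stopped upon leaving $\mathsf{B}\supset\mathsf{A}$), the intersection with $\mathsf{A}$ on the right is vacuous.

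The one subtlety — and the step I expect to require the most care — is bookkeeping with the map $\phi'_l$: it is only defined on the set $\{\tilde D'_l<T_{\tilde B}<\tilde R'_{l+1}\}$, and one must check that $\mu'$ almost surely charges no trajectory outside $\bigcup_{l\ge1}\{\tilde D'_l<T_{\tilde B}<\tilde R'_{l+1}\}$ among those hitting $\mathsf{A}'$, which holds because trajectories in $\mathcal{T}_{\tilde B}$ are eventually constant and thus make only finitely many excursions and leave $\tilde B$ in finite time, so exactly one such event occurs for each relevant trajectory. Everything else is a routine application of the restriction theorem, the mapping theorem, and the superposition/disjointness characterization of independence for Poisson processes, together with the strong Markov identity \prettyref{eq:GeometricLemmaStrongMarkov} to identify the intensity of $\mu'$.
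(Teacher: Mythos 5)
Your construction is essentially the paper's: restrict $\mu$ to trajectories hitting $\mathsf{A}'$, time-shift to the first hit (the paper does this explicitly via $w_n \mapsto w_n(H_{\mathsf{A}'}+\cdot)$, which is what turns the intensity into $u\kappa_{e_{\mathsf{A}',\tilde{B}}}$ via \prettyref{eq:GeometricLemmaStrongMarkov} --- the bare restriction does not yet have this intensity, though since $\phi'_l$ only reads off the excursions this is harmless for $\mu_l$), partition by the number $l$ of excursions between $\mathsf{A}'$ and $\partial_e\mathsf{B}'$, and push forward under $\phi'_l$; independence from disjointness, intensity from unwinding \prettyref{eq:PsiEDef}, trace identity from exhaustiveness of the partition. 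One concrete slip: your parenthetical ``each $\mathcal{I}(\mu_l)$ is by construction contained in $\mathsf{A}$'' is false, and your own justification shows why --- the excursions are stopped upon exiting $\mathsf{B}$, not $\mathsf{A}$, so each excursion's trace lies in $\mathsf{B}\cup\partial_e\mathsf{B}$ and leaves $\mathsf{A}$. What you actually prove, correctly, is $(\mathcal{I}(\mu)-x)\cap\mathsf{A}=\bigl(\bigcup_{l\ge1}\mathcal{I}(\mu_l)\bigr)\cap\mathsf{A}$; the displayed form of \prettyref{eq:MuMuLTrace} omits the $\cap\mathsf{A}$ on the right, which appears to be an oversight in the paper's statement (and is immaterial downstream, since the sets $\mathcal{I}_1,\mathcal{I}_2$ in \prettyref{eq:DefofI1I2} are themselves intersected with $\mathsf{A}$). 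You should either keep the $\cap\mathsf{A}$ or flag the discrepancy, rather than paper over it with a containment that does not hold.
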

\begin{proof}
Define on $(\Omega,\mathcal{A},Q)$ the processes $\mu^{'}=\sum_{n\ge0}1_{\{w_{n}\mbox{ hits }\mathsf{A}'\}}\delta_{w_{n}(H_{\mathsf{A}'}+\cdot)}$
when $\mu=\sum_{n\ge0}\delta_{w_{n}}$. Then $\mu^{'}$ is a Poisson
process on $\mathcal{T}_{\tilde{B}}$ of intensity \[
uK_{N}P_{q}(X_{(H_{\mathsf{A}'}+\cdot)\wedge T_{\tilde{B}}}\in dw)\overset{\prettyref{eq:GeometricLemmaStrongMarkov}}{=}u\kappa_{e_{\mathsf{A}',\tilde{B}}}(dw)\]
 and\begin{equation}
(\mathcal{I}(\mu)-x)\cap\mathsf{A}=(\mathcal{I}(\mu^{'})-x)\cap\mathsf{A}\mbox{ almost surely}.\label{eq:MuPrimeMuEquality}\end{equation}
Furthermore define for $1\le l<\infty$ the process $\mu_{l}$ as
the image of $1_{\{\tilde{D}_{l}^{'}<T_{\tilde{B}}<\tilde{R}_{l+1}^{'}\}}\mu^{'}$
under $\phi_{l}$. Then since $\{\tilde{D}_{l}^{'}<T_{\tilde{B}}<\tilde{R}_{l+1}^{'}\}$
are disjoint, and $\mu'$ only depends on $\mu$ we get \prettyref{eq:MuLIndependence}.
By \prettyref{eq:DefOfPhiPhiPrime} and \prettyref{eq:PsiEDef} we
get \prettyref{eq:MuLIntensity}. Finally \prettyref{eq:MuMuLTrace}
follows by \prettyref{eq:MuPrimeMuEquality} and \prettyref{eq:DefOfPhiPhiPrime}
and since $\cup_{l\ge1}\{\tilde{D}_{l}^{'}<T_{\tilde{B}}<\tilde{R}_{l+1}^{'}\}$
equals the support of $\mu'$. 
\end{proof}
The next step in the proof of \prettyref{pro:PPoEandRIcoupling} is
to construct the processes $\mu_{l}^{-},\mu_{l}^{+}$. First let us
define

\begin{equation}
r=\left[\frac{10(d+1)}{c_{6}(\varepsilon)}\right]+1,\label{eq:DefOfR}\end{equation}
where $c_{6}=c_{6}(\varepsilon)$ is the constant from \prettyref{lem:EscapeIsLikely}.
We have the following lemma:
\begin{lem}
$(N\ge c(\varepsilon)$,$0\le u_{-}<u<u_{+},\frac{u_{+}}{u},\frac{u_{-}}{u}\ge N^{-c_{5}}$)
We can construct a space $(\Omega',\mathcal{A}',Q')$ and processes
$\mu_{l}^{-},\mu_{l}^{+},1\le l\le r$ such that\begin{eqnarray}
 & \mu_{l}^{-},\mu_{l}^{+},1\le l\le r,\mbox{ are independent }\sigma(\mu)\times\mathcal{A}'-\mbox{measurable},\label{eq:MuLMinusMeasurability}\\
 & \mu_{l}^{-}\mbox{ is a Poisson point process on }W^{\times l}\mbox{ of intensity }u_{-}\mathcal{\xi}_{\mathbb{Z}^{d+1}}^{l},\label{eq:IntOfMuLMinus-1}\\
 & \mu_{l}^{+}\mbox{ is a Poisson point process on }W^{\times l}\mbox{ of intensity }(u_{+}-u_{-})\xi_{\mathbb{Z}^{d+1}}^{l},\label{eq:IntOfMuP}\\
 & \mu_{l}^{-}\le\mu_{l}\le\mu_{l}^{-}+\mu_{l}^{+}\mbox{ almost surely}.\label{eq:MuLinclusion-1}\end{eqnarray}
\end{lem}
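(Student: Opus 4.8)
The goal is to construct, for each $1\le l\le r$, a thinned process $\mu_l^-\le\mu_l$ with the correct intensity $u_-\xi_{\mathbb{Z}^{d+1}}^l$ and a further process $\mu_l^+$ (built from the points of $\mu_l-\mu_l^-$ together with some extra independent randomness) of intensity $(u_+-u_-)\xi_{\mathbb{Z}^{d+1}}^l$, so that $\mu_l^-\le\mu_l\le\mu_l^-+\mu_l^+$. The plan is to carry this out $l$ by $l$, using \prettyref{pro:XiBound} to control the ratios of the relevant intensities, and to take the product of all the auxiliary probability spaces to form $(\Omega',\mathcal{A}',Q')$.

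\textbf{Thinning to get $\mu_l^-$.} Fix $l$. By \prettyref{pro:XiBound}, for $N\ge c(\varepsilon)$ we have $\xi_{\mathbb{Z}^{d+1}}^l\le(1-c(l)N^{-c(\varepsilon)})^{-1}\xi_E^l$, and combined with $u_-<u$ this gives $u_-\xi_{\mathbb{Z}^{d+1}}^l\le u\xi_E^l$ pointwise on $W^{\times l}$, provided $N\ge c(\varepsilon)$ (here one uses $\frac{u_-}{u}\ge N^{-c_5}$ only to keep $u_-$ from being so large that the comparison fails — in fact for the lower bound $u_-<u$ alone suffices once $N$ is large). Hence $g_l:=\frac{u_-\xi_{\mathbb{Z}^{d+1}}^l}{u\xi_E^l}$ defines a measurable function with values in $[0,1]$ on the support of $\xi_E^l$. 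We then perform a standard Poisson thinning: on an auxiliary space carrying i.i.d.\ uniform marks attached to the points of $\mu_l$, keep each point $w$ with probability $g_l(w)$, letting $\mu_l^-$ be the subprocess of kept points. By the thinning theorem $\mu_l^-$ is Poisson with intensity $g_l\cdot(u\xi_E^l)=u_-\xi_{\mathbb{Z}^{d+1}}^l$, and $\mu_l^-\le\mu_l$ by construction; moreover $\mu_l^-$ is $\sigma(\mu)\otimes\mathcal{A}'$-measurable and, because the $\mu_l$ are independent (\prettyref{eq:MuLIndependence}) and we use independent marks for different $l$, the $\mu_l^-$ are independent of each other.

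\textbf{Thickening to get $\mu_l^+$.} The leftover process $\mu_l-\mu_l^-$ is, again by the thinning theorem, Poisson with intensity $(1-g_l)u\xi_E^l=u\xi_E^l-u_-\xi_{\mathbb{Z}^{d+1}}^l$, and it is independent of $\mu_l^-$. We want $\mu_l^+$ of intensity $(u_+-u_-)\xi_{\mathbb{Z}^{d+1}}^l$ with $\mu_l^+\ge\mu_l-\mu_l^-$. Here we compare the two intensities: by \prettyref{pro:XiBound}, $u\xi_E^l-u_-\xi_{\mathbb{Z}^{d+1}}^l\le u(1+c(l)N^{-c(\varepsilon)})\xi_{\mathbb{Z}^{d+1}}^l-u_-\xi_{\mathbb{Z}^{d+1}}^l=(u-u_-+uc(l)N^{-c(\varepsilon)})\xi_{\mathbb{Z}^{d+1}}^l$, and this is $\le(u_+-u_-)\xi_{\mathbb{Z}^{d+1}}^l$ as soon as $uc(l)N^{-c(\varepsilon)}\le u_+-u$; since $\frac{u_+}{u}\ge N^{-c_5}$ — wait, we need $u_+-u$ not too small, and indeed the hypothesis $\frac{u}{u_+}$ bounded away from $0$ polynomially, i.e.\ $\frac{u_+}{u}-1\ge$ something; more precisely $\frac{u_+}{u}\ge N^{-c_5}$ is the wrong direction, so one uses instead that in every application (see the proof of \prettyref{thm:CouplingManyBoxes}) $u_+/u$ exceeds $1$ by at least $c(\varepsilon)$ or by at least $\delta/4\ge cN^{-c_5}$; in all cases $u_+-u\ge u\cdot N^{-c_5(\varepsilon)}\gg uc(l)N^{-c(\varepsilon)}$ for $N\ge c(l,\varepsilon)$, which holds since $r$ is a constant so $l\le r$. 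Thus $(u_+-u_-)\xi_{\mathbb{Z}^{d+1}}^l-(u\xi_E^l-u_-\xi_{\mathbb{Z}^{d+1}}^l)\ge0$, and on a further auxiliary space we let $\nu_l$ be an independent Poisson process of this nonnegative intensity and set $\mu_l^+=(\mu_l-\mu_l^-)+\nu_l$. Then $\mu_l^+$ is Poisson of intensity $(u_+-u_-)\xi_{\mathbb{Z}^{d+1}}^l$ by superposition, $\mu_l-\mu_l^-\le\mu_l^+$ gives $\mu_l\le\mu_l^-+\mu_l^+$, and independence across $l$ and of $(\mu_l^-,\mu_l^+)$ follows by using independent auxiliary randomness for each $l$ and the independence of the $\mu_l$. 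Defining $(\Omega',\mathcal{A}',Q')$ as the product of all the marking spaces and the spaces carrying the $\nu_l$, $1\le l\le r$, yields \prettyref{eq:MuLMinusMeasurability}--\prettyref{eq:MuLinclusion-1}.

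\textbf{Main obstacle.} The routine part is the thinning/superposition bookkeeping; the real point is verifying the two intensity inequalities $u_-\xi_{\mathbb{Z}^{d+1}}^l\le u\xi_E^l$ and $u\xi_E^l-u_-\xi_{\mathbb{Z}^{d+1}}^l\le(u_+-u_-)\xi_{\mathbb{Z}^{d+1}}^l$ pointwise, which is exactly where \prettyref{pro:XiBound} and the polynomial separation hypotheses $\tfrac{u_+}{u},\tfrac{u}{u_-}\ge N^{-c_5}$ (as used via $c_5=c_5(\varepsilon)$) enter; one must check that the error term $c(l)N^{-c(\varepsilon)}$ from \prettyref{pro:XiBound}, which depends on $l$, is dominated by the gap, uniformly over $l\le r$ — and this is fine precisely because $r$ in \prettyref{eq:DefOfR} is a fixed constant depending only on $\varepsilon$, so $c(l)\le c(r)=c(\varepsilon)$ and $N\ge c(\varepsilon)$ suffices.
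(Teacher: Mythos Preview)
Your approach is exactly the paper's: deduce from \prettyref{pro:XiBound} the pointwise inequalities $u_{-}\xi_{\mathbb{Z}^{d+1}}^{l}\le u\xi_{E}^{l}\le u_{+}\xi_{\mathbb{Z}^{d+1}}^{l}$ for $1\le l\le r$, then thin $\mu_l$ to get $\mu_l^{-}$ and thicken $\mu_l-\mu_l^{-}$ to get $\mu_l^{+}$; your bookkeeping with uniform marks and superposition just spells out what the paper leaves implicit.

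One slip to fix: in the thinning paragraph you say ``for the lower bound $u_-<u$ alone suffices once $N$ is large''. It does not. From $\xi_{\mathbb{Z}^{d+1}}^{l}\le(1-c(l)N^{-c(\varepsilon)})^{-1}\xi_{E}^{l}$ you obtain $u_{-}\xi_{\mathbb{Z}^{d+1}}^{l}\le u\xi_{E}^{l}$ only when $\tfrac{u_-}{u}\le 1-c(l)N^{-c(\varepsilon)}$, i.e.\ when the relative gap $\tfrac{u-u_-}{u}$ is at least $c(l)N^{-c(\varepsilon)}$. You yourself state this correctly in your ``Main obstacle'' paragraph, and you are right that the hypotheses as written (ratios $\ge N^{-c_5}$, which are trivially satisfied) must be read as gap conditions $\tfrac{u_+-u}{u},\tfrac{u-u_-}{u}\ge N^{-c_5(\varepsilon)}$; the paper's own proof confirms this by saying ``if we define $c_5=c_5(\varepsilon)$ appropriately'' precisely to force \prettyref{eq:Einequalitywithuplusminus}, and the checks made when \prettyref{pro:PPoEandRIcoupling} is applied in the proof of \prettyref{thm:CouplingManyBoxes} are gap checks. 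Your observation that $l\le r$ with $r=r(\varepsilon)$ bounds $c(l)$ by a constant depending only on $\varepsilon$ is the right way to make the estimate uniform in $l$.
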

\begin{proof}
By \prettyref{eq:XiBound} it follows (if we define $c_{5}=c_{5}(\varepsilon)$
appropriately) that for $N\ge c(\varepsilon)$\begin{equation}
u_{-}\xi_{\mathbb{Z}^{d+1}}^{l}\le u\xi_{E}^{l}\le u_{+}\xi_{\mathbb{Z}^{d+1}}^{l},\mbox{ for }1\le l\le r.\label{eq:Einequalitywithuplusminus}\end{equation}
Since $\mu_{l}$ has intensity $u\xi_{E}^{l}$ and $u_{-}\xi_{\mathbb{Z}^{d+1}}^{l}\le u\xi_{E}^{l}$
we can thin $\mu_{l}$ (by defining the appropriate random variables
on $(\Omega',\mathcal{A}',Q')$) to get $\mu_{l}^{-}$ such that $\mu_{l}^{-},\mu_{l}-\mu_{l}^{-}$
are independent, $\sigma(\mu)\times\mathcal{A}'$-measurable, $\mu_{l}^{-}\le\mu_{l}$
and \prettyref{eq:IntOfMuP} holds. The $\mu_{l}-\mu_{l}^{-}$ then
have intensity $u\xi_{E}^{l}-u_{-}\xi_{\mathbb{Z}^{d+1}}^{l}\overset{\prettyref{eq:Einequalitywithuplusminus}}{\le}(u_{+}-u_{-})\xi_{\mathbb{Z}^{d+1}}^{l}$
so we can (by extending the space $(\Omega',\mathcal{A}',Q')$) thicken
them to get $\mu_{l}^{+}$ such that \prettyref{eq:MuLMinusMeasurability},
\prettyref{eq:IntOfMuP} and \prettyref{eq:MuLinclusion-1} hold.
\end{proof}
We now continue the proof of \prettyref{pro:PPoEandRIcoupling} by
further extending $(\Omega^{'},\mathcal{A}^{'},Q^{'})$ with two independent
Poisson point process $\bar{\mu}^{+}$ and $\bar{\mu}^{-}$ on $W$
of respective intensities $u_{-}1_{\{\tilde{D}_{r+1}<\infty\}}P_{e_{\mathsf{A}}}^{\mathbb{Z}^{d+1}}$
and $(u_{+}-u_{-})1_{\{\tilde{D}_{r+1}<\infty\}}P_{e_{\mathsf{A}}}^{\mathbb{Z}^{d+1}}$
respectively and define on $(\Omega\times\Omega^{'},\mathcal{A}\otimes\mathcal{A}^{'},Q\otimes Q^{'})$\begin{equation}
\mathcal{I}_{1}=(\cup_{l=1}^{r}\mathcal{I}(\mu_{l}^{+})\cup\mathcal{I}(\bar{\mu}^{+}))\cap\mathsf{A},\mathcal{I}_{2}=(\cup_{l=1}^{r}\mathcal{I}(\mu_{l}^{-})\cup\mathcal{I}(\bar{\mu}^{-}))\cap\mathsf{A}.\label{eq:DefofI1I2}\end{equation}
Then \prettyref{eq:CorrectMeasurability} holds by \prettyref{eq:MuLMinusMeasurability}
and the definitions of $\bar{\mu}^{+}$ and $\bar{\mu}^{-}$. We check
\prettyref{eq:LawofI1andI2} in the following lemma:
\begin{lem}
\prettyref{eq:LawofI1andI2} holds for $\mathcal{I}_{1},\mathcal{I}_{2}$
as in \prettyref{eq:DefofI1I2}. \end{lem}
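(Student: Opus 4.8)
The goal is to verify that the random sets $\mathcal{I}_1,\mathcal{I}_2$ constructed in \prettyref{eq:DefofI1I2} really do have the laws claimed in \prettyref{eq:LawofI1andI2}, namely that $\mathcal{I}_1$ is distributed as $\mathcal{I}^{u_-}\cap\mathsf{A}$ and $\mathcal{I}_2$ as $\mathcal{I}^{u_+-u_-}\cap\mathsf{A}$ under $Q_0$. The natural strategy is to recognise both sides as traces of Poisson point processes and compare intensities. The starting point is \prettyref{eq:lawofIuCAPK}, which says that $\mathcal{I}^{s}\cap\mathsf{A}\overset{\textnormal{law}}{=}\mathcal{I}(\mu_{\mathsf{A},s})\cap\mathsf{A}$, where $\mu_{\mathsf{A},s}$ is a Poisson process on $W$ of intensity $sP^{\mathbb{Z}^{d+1}}_{e_{\mathsf A}}$. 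The key observation is that the trace in $\mathsf{A}$ of a trajectory $w\in W$ hitting $\mathsf{A}$ is completely determined by its excursions between $\mathsf{A}$ and $\partial_e\mathsf{B}$ (since $w$ cannot re-enter $\mathsf{A}$ without first passing through $\partial_e\mathsf B$), i.e. by $\phi_{l}(w)$ on the event $\{\tilde D_l<\infty=\tilde R_{l+1}\}$ together with the contribution of those $w$ for which $\tilde D_{r+1}<\infty$.

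\textbf{Steps.} First I would record the decomposition of $\mu_{\mathsf A,s}$ according to the number of excursions a trajectory makes between $\mathsf{A}$ and $\partial_e\mathsf{B}$: restricting $\mu_{\mathsf A,s}$ to $\{\tilde D_l<\infty=\tilde R_{l+1}\}$ for $l=1,\dots,r$ and to $\{\tilde D_{r+1}<\infty\}$ gives independent Poisson processes (the defining events are disjoint), and pushing the $l$-th piece forward under $\phi_l$ yields, by the very definition \prettyref{eq:PsiZDef} of $\xi^{l}_{\mathbb Z^{d+1}}$, a Poisson process on $W^{\times l}$ of intensity $s\,\xi^{l}_{\mathbb Z^{d+1}}$; the last piece is a Poisson process of intensity $s\,1_{\{\tilde D_{r+1}<\infty\}}P^{\mathbb Z^{d+1}}_{e_{\mathsf A}}$. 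Crucially, the trace in $\mathsf A$ of $\mathcal{I}(\mu_{\mathsf A,s})$ equals the union over $l\le r$ of $\mathcal{I}$ of the $l$-th pushed-forward process, together with $\mathcal{I}(\bar\mu)\cap\mathsf A$ for $\bar\mu$ the last piece — because, as noted above, a trajectory's trace in $\mathsf A$ is a function of its $\mathsf A$–$\partial_e\mathsf B$ excursions. Second, I would apply this with $s=u_-$ and with $s=u_+-u_-$. By construction \prettyref{eq:IntOfMuLMinus-1}, \prettyref{eq:IntOfMuP} the processes $\mu_l^-$ ($1\le l\le r$) are independent Poisson processes on $W^{\times l}$ of intensity $u_-\xi^{l}_{\mathbb Z^{d+1}}$, and $\bar\mu^-$ is Poisson of intensity $u_-1_{\{\tilde D_{r+1}<\infty\}}P^{\mathbb Z^{d+1}}_{e_{\mathsf A}}$; so by the previous step the set $(\cup_{l=1}^r\mathcal I(\mu_l^-)\cup\mathcal I(\bar\mu^-))\cap\mathsf A$ has exactly the law of $\mathcal I(\mu_{\mathsf A,u_-})\cap\mathsf A$, which by \prettyref{eq:lawofIuCAPK} is the law of $\mathcal I^{u_-}\cap\mathsf A$ under $Q_0$. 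This is precisely $\mathcal I_2$ in \prettyref{eq:DefofI1I2}. Wait — I must be careful about which of $\mathcal I_1,\mathcal I_2$ carries which level: from \prettyref{eq:DefofI1I2} it is $\mathcal I_1$ that is built from the $\mu_l^+$ and $\bar\mu^+$ (intensity parameter $u_+-u_-$) and $\mathcal I_2$ from the $\mu_l^-$ and $\bar\mu^-$ (intensity parameter $u_-$); since $\bar\mu^+$ has intensity $(u_+-u_-)1_{\{\tilde D_{r+1}<\infty\}}P^{\mathbb Z^{d+1}}_{e_{\mathsf A}}$ and $\mu_l^+$ intensity $(u_+-u_-)\xi^{l}_{\mathbb Z^{d+1}}$, the identical argument with $s=u_+-u_-$ shows $\mathcal I_1\overset{\textnormal{law}}{=}\mathcal I^{u_+-u_-}\cap\mathsf A$ and $\mathcal I_2\overset{\textnormal{law}}{=}\mathcal I^{u_-}\cap\mathsf A$, which is what \prettyref{eq:LawofI1andI2} asserts.

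\textbf{The main obstacle.} The one point that needs genuine care, rather than bookkeeping, is the claim that the trace in $\mathsf A$ of a Poisson cloud of $\mathbb Z^{d+1}$-trajectories is a measurable function only of their $\mathsf A$–$\partial_e\mathsf B$ excursions in the precise sense encoded by the maps $\phi_l$ and the intensities $\xi^{l}_{\mathbb Z^{d+1}}$ — in other words, that pushing forward $1_{\{\tilde D_l<\infty=\tilde R_{l+1}\}}P^{\mathbb Z^{d+1}}_{e_{\mathsf A}}$ under $\phi_l$ and then taking $\mathcal I(\cdot)$ recovers the right trace, and that nothing is lost by discarding the "tails" of trajectories outside these excursions. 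This is because any $w\in W$ with $H_{\mathsf A}<\infty$ satisfies $w(0,\infty)\cap\mathsf A=\bigcup_{i\ge1}w((\tilde R_i+\cdot)\wedge\tilde D_i)(0,\infty)\cap\mathsf A$, the union being over the (a.s. finite, for the transient walk, on a.e. trajectory of the cloud, except on a set handled by the $\bar\mu$ term) excursions; combined with $\mathsf A\subset\mathsf B$ from \prettyref{eq:ABCInclusion} and the strong Markov property this gives the decomposition of $\mu_{\mathsf A,s}$ into the independent Poisson pieces indexed by the excursion count. Once this is in hand the lemma is immediate from \prettyref{eq:lawofIuCAPK} and the additivity of independent Poisson processes; I would also remark that only the \emph{law} of $\mathcal I_1,\mathcal I_2$ is asserted here (the almost-sure inclusion \prettyref{eq:InclusionBound} being proved separately using \prettyref{eq:MuLinclusion-1}), so no coupling subtleties enter this particular lemma.
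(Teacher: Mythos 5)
Your decomposition of $\mu_{\mathsf{A},s}$ by excursion count and the intensity comparison via \prettyref{eq:lawofIuCAPK} is exactly the paper's argument: it is the analogue of \prettyref{eq:MuAuDecomposition}, with the observation that the restrictions of $\mu_{\mathsf{A},s}$ to the disjoint events $\{\tilde D_l<\infty=\tilde R_{l+1}\}$, $l\le r$, and $\{\tilde D_{r+1}<\infty\}$ are independent Poisson processes whose pushforwards under $\phi_l$ have intensities $s\,\xi^l_{\mathbb Z^{d+1}}$ and $s\,1_{\{\tilde D_{r+1}<\infty\}}P^{\mathbb Z^{d+1}}_{e_{\mathsf A}}$, matched against those of the building blocks of $\mathcal I_1,\mathcal I_2$. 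The gap is in your last sentence: you derive $\mathcal I_1\overset{\textnormal{law}}{=}\mathcal I^{u_+-u_-}\cap\mathsf A$ and $\mathcal I_2\overset{\textnormal{law}}{=}\mathcal I^{u_-}\cap\mathsf A$ and then assert that this is ``what \prettyref{eq:LawofI1andI2} asserts'' --- but \prettyref{eq:LawofI1andI2} assigns level $u_-$ to $\mathcal I_1$ and $u_+-u_-$ to $\mathcal I_2$, the opposite of what you obtained. That mismatch should have been flagged, not glossed over.

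The discrepancy traces to sign typos in the paper which you only half-corrected. Taken literally, \prettyref{eq:DefofI1I2} together with the intensities the paper actually states for $\bar\mu^\pm$ (the paper assigns $\bar\mu^+$ intensity $u_-1_{\{\tilde D_{r+1}<\infty\}}P^{\mathbb Z^{d+1}}_{e_{\mathsf A}}$, not the $(u_+-u_-)$--intensity you quote) would make $\mathcal I_1$ a union of pieces carrying \emph{different} level parameters, so \prettyref{eq:lawofIuCAPK} could not be invoked at all. The consistent reading --- and the only one compatible with \prettyref{eq:LawofI1andI2}, with the paper's own proof of this lemma, and with the use of \prettyref{eq:MuLinclusion-1} to obtain \prettyref{eq:InclusionBound} --- is that $\mathcal I_1$ is built from the thinned processes $\mu_l^-$, $1\le l\le r$, together with the $\bar\mu$ of intensity $u_-1_{\{\tilde D_{r+1}<\infty\}}P^{\mathbb Z^{d+1}}_{e_{\mathsf A}}$, all pieces at level parameter $u_-$, giving $\mathcal I_1\overset{\textnormal{law}}{=}\mathcal I^{u_-}\cap\mathsf A$; and analogously $\mathcal I_2$ from the residuals $\mu_l^+$ plus the $\bar\mu$ of intensity $(u_+-u_-)1_{\{\tilde D_{r+1}<\infty\}}P^{\mathbb Z^{d+1}}_{e_{\mathsf A}}$, giving $\mathcal I_2\overset{\textnormal{law}}{=}\mathcal I^{u_+-u_-}\cap\mathsf A$. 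You silently corrected the $\bar\mu^\pm$ intensity labels (rightly) but kept the typoed $\pm$ assignment on the $\mu_l$'s from \prettyref{eq:DefofI1I2}, which is precisely what swapped the levels in your conclusion. Once both typos are corrected in tandem, your argument yields \prettyref{eq:LawofI1andI2} exactly.
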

\begin{proof}
Recall the definition of $\mu_{\mathsf{A},u_{-}}$ from \prettyref{eq:lawofIuCAPK}.
Similarly to \prettyref{eq:MuPrimeMuEquality} we have: \begin{equation}
\mathcal{I}(\mu_{\mathsf{A},u_{-}})\cap\mathsf{A}=\left(\cup_{l=1}^{r}\mathcal{I}(\phi_{l}(1_{\{\tilde{D}_{l}<\infty=\tilde{R}_{l+1}\}}\mu_{\mathsf{A},u_{-}}))\cup\mathcal{I}(1_{\{\tilde{D}_{r+1}<\infty\}}\mu_{\mathsf{A},u_{-}})\right)\cap\mathsf{A}.\label{eq:MuAuDecomposition}\end{equation}
Also similarly to \prettyref{eq:MuLIndependence} and \prettyref{eq:MuLIntensity}
the $\phi_{l}(1_{\{\tilde{D}_{l}<\infty=\tilde{R}_{l+1}\}}\mu_{\mathsf{A},u_{-}}),1\le l\le r$,$1_{\{\tilde{D}_{r+1}<\infty\}}\mu_{\mathsf{A},u_{-}}$,
are independent Poisson point processes and have intensities $u_{-}\xi_{\mathbb{Z}^{d+1}}^{l},1\le l\le r$,
and $u_{-}1_{\{\tilde{D}_{r+1}<\infty\}}P_{e_{\mathsf{A}}}^{\mathbb{Z}^{d+1}}$
respectively. These coincide with the intensities of $\mu_{l}^{-},1\le l\le r,\bar{\mu}_{l}^{-}$,
so from \prettyref{eq:DefofI1I2} and \prettyref{eq:MuAuDecomposition}
we see that $\mathcal{I}_{1}\overset{\mbox{law}}{=}\mathcal{I}(\mu_{\mathsf{A},u_{-}})\cap\mathsf{A}\overset{\mbox{law},\prettyref{eq:lawofIuCAPK}}{=}\mathcal{I}^{u_{-}}\cap\mathsf{A}$.
Similarly $\mathcal{I}_{2}\overset{\mbox{law}}{=}\mathcal{I}^{u_{+}-u_{-}}\cap\mathsf{A}$
so the proof of the lemma is complete.
\end{proof}
Continuing with the proof of \prettyref{pro:PPoEandRIcoupling} we
see that we are done once we have shown \prettyref{eq:InclusionBound}.
We have (noting that by \prettyref{eq:MuLIntensity} the process $\sum_{l>r}\mu_{l}$
has intensity $u1_{\{\tilde{D}_{r+1}^{'}<\infty\}}\kappa_{e_{\mathsf{A}',\tilde{B}}}$)
\begin{eqnarray}
Q\otimes Q^{'}(\mathcal{I}_{1}\subset(\mathcal{I}(\mu)-x)\cap\mathsf{A}\subset\mathcal{I}_{1}\cup\mathcal{I}_{2}) & \overset{\prettyref{eq:MuMuLTrace},\prettyref{eq:MuLinclusion-1},\prettyref{eq:DefofI1I2},}{\ge}\nonumber \\
Q^{'}(\bar{\mu}^{+}=0\mbox{ and }\bar{\mu}^{-}=0)Q(\sum_{l>r}\mu_{l}=0) & =\nonumber \\
\exp(-\left(u_{+}P_{e_{\mathsf{A}}}^{\mathbb{Z}^{d+1}}(\tilde{D}_{r+1}<\infty)+uP_{e_{\mathsf{A}',\tilde{B}}}(\tilde{D}_{r+1}^{'}<\infty)\right)) & \ge\nonumber \\
1-cu_{+}\left\{ P_{e_{\mathsf{A}}}^{\mathbb{Z}^{d+1}}(\tilde{D}_{r+1}<\infty)+P_{e_{\mathsf{A}',\tilde{B}}}(\tilde{D}_{r+1}^{'}<T_{\tilde{B}})\right\}  & \ge\nonumber \\
1-cu_{+}\left\{ \left(\sup_{z\in\partial_{e}\mathsf{B}}P_{z}^{\mathbb{Z}^{d+1}}(H_{\mathsf{A}}<\infty)\right)^{r}+\left(\sup_{z\in\partial_{e}\mathsf{B}'}P_{z}(H_{\mathsf{A}'}<T_{\tilde{B}})\right)^{r}\right\} \label{eq:Apa}\end{eqnarray}
To bound the last line of the above formula we will need:
\begin{lem}
\label{lem:EscapeIsLikely}($N\ge c(\varepsilon),x\in\mathbb{T}_{N}\times[-\frac{N}{2},\frac{N}{2}]$)\begin{eqnarray}
\sup_{z\in\partial_{e}\mathsf{B}}P_{z}^{\mathbb{Z}^{d+1}}(H_{\mathsf{A}}<\infty) & \le & N^{-c_{6}(\varepsilon)},\label{eq:EscapeIsLikelyZ}\\
\sup_{z\in\partial_{e}\mathsf{B}'}P_{z}(H_{\mathsf{A}'}<T_{\tilde{B}}) & \le & N^{-c_{6}(\varepsilon)}.\label{eq:EscapeIsLikelyE}\end{eqnarray}
\end{lem}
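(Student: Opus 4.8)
\emph{Approach.} The plan is to handle \eqref{eq:EscapeIsLikelyZ} and \eqref{eq:EscapeIsLikelyE} by the same argument: each left-hand side is the probability that a walk started at $l^{\infty}$-distance of order $N^{1-\varepsilon/2}$ from a ball of radius $N^{1-\varepsilon}$ ever hits that ball. Using the last-exit (equilibrium) decomposition I will write this probability as $(\text{a supremum of Green functions})\times(\text{a capacity})$, bound the first factor by $cN^{-(1-\varepsilon/2)(d-1)}$ and the second by $cN^{(1-\varepsilon)(d-1)}$, and multiply to obtain $cN^{-(\varepsilon/2)(d-1)}$. For $N\ge c(\varepsilon)$ this is at most $N^{-c_{6}(\varepsilon)}$ with, say, $c_{6}(\varepsilon):=\tfrac14\varepsilon(d-1)$.

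\emph{The $\mathbb{Z}^{d+1}$ bound.} By the $\mathbb{Z}^{d+1}$ analogue of \eqref{eq:HittingToGreenEquil} (let the confining set increase to $\mathbb{Z}^{d+1}$), for $z\in\partial_{e}\mathsf{B}$
\[
P_{z}^{\mathbb{Z}^{d+1}}(H_{\mathsf{A}}<\infty)=\sum_{y\in\mathsf{A}}g(z,y)e_{\mathsf{A}}(y)\le\Bigl(\sup_{y\in\mathsf{A}}g(z,y)\Bigr)\textnormal{cap}(\mathsf{A}).
\]
Since $\mathsf{A}=B(0,N^{1-\varepsilon})\subset\mathsf{B}=B(0,N^{1-\varepsilon/2})$, for $N\ge c(\varepsilon)$ every $z\in\partial_{e}\mathsf{B}$ and $y\in\mathsf{A}$ satisfy $|z-y|\ge\tfrac12N^{1-\varepsilon/2}$, so \eqref{eq:ZdGreensFuncBound} gives $\sup_{y\in\mathsf{A}}g(z,y)\le cN^{-(1-\varepsilon/2)(d-1)}$. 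Also $\textnormal{cap}(\mathsf{A})\le cN^{(1-\varepsilon)(d-1)}$: this standard bound on the capacity of the box $B(0,N^{1-\varepsilon})$ in $\mathbb{Z}^{d+1}$ follows from \eqref{eq:ZdGreensFuncBound} by applying $P_{z}^{\mathbb{Z}^{d+1}}(H_{\mathsf{A}}<\infty)\le1$ at a point $z$ with $|z|_{\infty}$ of order $N^{1-\varepsilon}$, for which $g(z,y)\ge cN^{-(1-\varepsilon)(d-1)}$ for all $y\in\mathsf{A}$. Multiplying the two bounds gives \eqref{eq:EscapeIsLikelyZ}.

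\emph{The cylinder bound.} Here I run the same computation using \eqref{eq:HittingToGreenEquil} with $K=\mathsf{A}'$ and $U=\tilde{B}$; this is legitimate since $\mathsf{A}',\mathsf{B}'\subset B\subset\tilde{B}$ for $N\ge c(\varepsilon)$ (as $h_{N}$ is of order $N(\log N)^{2}$, much larger than $N=r_{N}$). For $z\in\partial_{e}\mathsf{B}'$,
\[
P_{z}(H_{\mathsf{A}'}<T_{\tilde{B}})=\sum_{y\in\mathsf{A}'}g_{\tilde{B}}(z,y)e_{\mathsf{A}',\tilde{B}}(y)\le\Bigl(\sup_{y\in\mathsf{A}'}g_{\tilde{B}}(z,y)\Bigr)\textnormal{cap}_{\tilde{B}}(\mathsf{A}').
\]
Again $|z-y|\ge\tfrac12N^{1-\varepsilon/2}$ for such $z,y$, and since $h_{N}/N^{d}\le c(\log N)^{2}N^{-(d-1)}\le cN^{-(1-\varepsilon/2)(d-1)}$ for $N\ge c(\varepsilon)$, \eqref{eq:GreensFunctionBoundInCylinder} yields $\sup_{y\in\mathsf{A}'}g_{\tilde{B}}(z,y)\le cN^{-(1-\varepsilon/2)(d-1)}$. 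For the capacity, note that $\mathsf{C}'=B(x,N/4)$ satisfies $\mathsf{C}'\subset\tilde{B}$ and, since $N/4<N/2$, does not wrap around the torus; hence $e_{\mathsf{A}',\tilde{B}}(y)\le P_{y}(\tilde{H}_{\mathsf{A}'}>T_{\mathsf{C}'})=e_{\mathsf{A}',\mathsf{C}'}(y)$, so $\textnormal{cap}_{\tilde{B}}(\mathsf{A}')\le\textnormal{cap}_{\mathsf{C}'}(\mathsf{A}')$, and the latter equals the corresponding quantity for the pair $B(0,N^{1-\varepsilon})\subset B(0,N/4)$ in $\mathbb{Z}^{d+1}$. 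Decomposing at the exit time $T_{\mathsf{C}'}$ and using $\sup_{y\in\partial_{e}\mathsf{C}'}P_{y}^{\mathbb{Z}^{d+1}}(H_{\mathsf{A}'}<\infty)\le cN^{-\varepsilon(d-1)}\le\tfrac12$ for $N\ge c(\varepsilon)$ (again by the $\mathbb{Z}^{d+1}$ equilibrium formula and the box-capacity bound above), one gets $\textnormal{cap}_{\mathsf{C}'}(\mathsf{A}')\le2\,\textnormal{cap}(\mathsf{A}')\le cN^{(1-\varepsilon)(d-1)}$. Multiplying the two factors gives \eqref{eq:EscapeIsLikelyE}.

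\emph{Main obstacle.} Everything above is a routine Green-function estimate except for the bound $\textnormal{cap}_{\tilde{B}}(\mathsf{A}')\le c\,\textnormal{cap}(\mathsf{A}')$, which controls the capacity killed on exiting the slab $\tilde{B}$ by the $\mathbb{Z}^{d+1}$-capacity. Inserting the non-wrapping ball $\mathsf{C}'$ reduces this to a purely $\mathbb{Z}^{d+1}$ comparison of the events ``the walk exits $\mathsf{C}'$ before returning to $\mathsf{A}'$'' and ``the walk never returns to $\mathsf{A}'$'', and the comparison constant $2$ is harmless precisely because $N^{1-\varepsilon}$ is much smaller than $N/4$.
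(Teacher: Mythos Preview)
Your proof is correct and takes a genuinely different route from the paper's. The paper proves \eqref{eq:EscapeIsLikelyZ} by quoting Lawler's hitting-probability estimate (Proposition~1.5.10) directly, and proves \eqref{eq:EscapeIsLikelyE} by a renewal/recursion argument: splitting according to whether the walk hits $\mathsf{A}'$ before or after leaving $\mathsf{C}'$, it obtains
\[
\sup_{z\in\partial_{e}\mathsf{B}'}P_{z}(H_{\mathsf{A}'}<T_{\tilde{B}})\le\frac{\sup_{z\in\partial_{e}\mathsf{B}'}P_{z}(H_{\mathsf{A}'}<T_{\mathsf{C}'})}{\inf_{z\in\partial_{e}\mathsf{C}'}P_{z}(T_{\tilde{B}}<H_{\partial_{e}\mathsf{B}'})},
\]
bounds the numerator by the $\mathbb{Z}^{d+1}$ hitting probability, and controls the denominator from below by $c(\log N)^{-2}$ via the invariance principle plus a one-dimensional gambler's-ruin estimate. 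You instead use the last-exit decomposition \eqref{eq:HittingToGreenEquil} to factor both probabilities as (Green function)$\times$(capacity), bounding the Green function by \eqref{eq:ZdGreensFuncBound} resp.\ \eqref{eq:GreensFunctionBoundInCylinder} and the capacity by the standard $N^{(1-\varepsilon)(d-1)}$ box bound; for the cylinder capacity you dominate $\textnormal{cap}_{\tilde{B}}(\mathsf{A}')$ by $\textnormal{cap}_{\mathsf{C}'}(\mathsf{A}')$ (monotonicity in the confining set) and then compare $\textnormal{cap}_{\mathsf{C}'}$ with $\textnormal{cap}$ via a short recursion. Your argument is more self-contained --- it uses only the Green-function bounds already proved in the paper and avoids the invariance principle and the external citation --- and it yields an explicit value $c_{6}(\varepsilon)=\tfrac14\varepsilon(d-1)$. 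The paper's approach, on the other hand, is slightly more robust in that it never needs the additive $h_{N}/N^{d}$ correction in \eqref{eq:GreensFunctionBoundInCylinder} or a capacity comparison, and its renewal structure is reused later in Remark~6.6 to obtain \eqref{eq:EscapeIsLikelyCB}.
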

\begin{proof}
To prove \prettyref{eq:EscapeIsLikelyE} note that by the strong Markov
property $\sup_{z\in\partial_{e}\mathsf{B}'}P_{z}(H_{\mathsf{A}'}<T_{\tilde{B}})\le\sup_{z\in\partial_{e}\mathsf{B}'}P_{z}(H_{\mathsf{A}'}<T_{\mathsf{C}'})+\sup_{z\in\partial_{e}\mathsf{C}'}P_{z}(H_{\partial_{e}\mathsf{B}'}<T_{\tilde{B}})\times\sup_{z\in\partial_{e}\mathsf{B}'}P_{z}(H_{\mathsf{A}'}<T_{\tilde{B}})$
which implies\begin{eqnarray}
\sup_{z\in\partial_{e}\mathsf{B}'}P_{z}(H_{\mathsf{A}'}<T_{\tilde{B}})\le\frac{\sup_{z\in\partial_{e}\mathsf{B}'}P_{z}(H_{\mathsf{A}'}<T_{\mathsf{C}'})}{\inf_{z\in\partial_{e}\mathsf{C}'}P_{z}(T_{\tilde{B}}<H_{\partial_{e}\mathsf{B}'})}.\label{eq:MCbound}\end{eqnarray}
By the invariance principle $\inf_{z\in\partial_{e}\mathsf{C}'}P_{z}(T_{\mathbb{T}_{N}\times(-N,N)}<H_{\mathsf{B}'})\ge c$
for $N\ge c$ and by a one dimensional random walk estimate we see
$\inf_{\mathbb{T}_{N}\times\{-N,N\}}P_{z}(T_{\tilde{B}}<H_{\mathsf{B}'})\ge c\frac{1}{(\log N)^{2}}$,
so $\inf_{z\in\partial_{e}\mathsf{C}'}P_{z}(T_{\tilde{B}}<H_{\partial_{e}\mathsf{B}'})\ge c\frac{1}{(\log N)^{2}}$.
We also have \[
\sup_{z\in\partial_{e}\mathsf{B}'}P_{z}(H_{\mathsf{A}'}<T_{\mathsf{C}'})\le\sup_{z\in\partial_{e}\mathsf{B}}P_{z}^{\mathbb{Z}^{d+1}}(H_{\mathsf{A}}<\infty).\]
But Proposition 1.5.10, p. 36 of \cite{LawlersLillaGrona} implies
that $\sup_{z\in\partial_{e}\mathsf{B}}P_{z}^{\mathbb{Z}^{d+1}}(H_{\mathsf{A}}<\infty)\le N^{-2c_{6}(\varepsilon)}$,
thus proving \prettyref{eq:EscapeIsLikelyZ} and also, via \prettyref{eq:MCbound},
completing the proof of \prettyref{eq:EscapeIsLikelyE}.\end{proof}
\begin{rem}
Let us record for later that a similar argument (introducing a set
$\mathsf{C}''=B(x,\frac{N}{3})\supset\mathsf{C}'$ which plays the
role of $\mathsf{C}'$) also proves\begin{equation}
\sup_{z\in\partial_{e}\mathsf{C}}P_{z}^{\mathbb{Z}^{d+1}}(H_{\partial_{e}\mathsf{B}}<\infty)\le N^{-c(\varepsilon)}\mbox{ and }\sup_{z\in\partial_{e}\mathsf{C}'}P_{z}(H_{\partial_{e}\mathsf{B}'}<T_{\tilde{B}})\le N^{-c(\varepsilon)}.\label{eq:EscapeIsLikelyCB}\end{equation}
\qed
\end{rem}
We now continue with the proof of \prettyref{pro:PPoEandRIcoupling}.
By \prettyref{eq:EscapeIsLikelyZ} and \prettyref{eq:EscapeIsLikelyE}
we see that\begin{eqnarray*}
\left(\sup_{z\in\partial_{e}\mathsf{B}}P_{z}^{\mathbb{Z}^{d+1}}(H_{\mathsf{A}}<\infty)\right)^{r}+\left(\sup_{z\in\partial_{e}\mathsf{B}'}P_{z}(H_{\mathsf{A}'}<\infty)\right)^{r} & \le & c(cN^{-c_{6}})^{r}\\
 & \overset{\prettyref{eq:DefOfR}}{\le} & cN^{-10(d+1)}.\end{eqnarray*}
 Thus \prettyref{eq:InclusionBound} follows from \prettyref{eq:Apa}.
This completes the proof of \prettyref{pro:PPoEandRIcoupling}.
\end{proof}
It remains to prove \prettyref{pro:XiBound}. 
\begin{proof}[Proof of \prettyref{pro:XiBound}]
For $w\in\mathcal{T}_{\mathsf{B}}$ (see under \prettyref{eq:TraceDefinition}
for the notation) let $w^{s}$ denote the vertex at which $w$ starts
and let $w^{e}$ denote the vertex at which it ends (i.e. stays constant).
Note that for all $\mathbf{w}=(w_{1},...,w_{l})\in(\mathcal{T}_{\mathsf{B}})^{\times l}$
\begin{eqnarray}
\xi_{\mathbb{Z}^{d+1}}^{l}(\mathbf{w}) & \overset{\prettyref{eq:PsiZDef},\prettyref{eq:DefOfPhiPhiPrime}}{=} & P_{e_{\mathsf{A}}}^{\mathbb{Z}^{d+1}}(\tilde{D}_{l}<\infty=\tilde{R}_{l+1},X_{(\tilde{R}_{k}+\cdot)\wedge\tilde{D}_{k}}=w_{k},1\le k\le l\}\nonumber \\
 & = & e_{\mathsf{A}}(w_{1}^{s})\left(\prod_{i=1}^{l}r(w_{l})\right)\left(\prod_{i=1}^{l-1}s_{\mathbb{Z}^{d+1}}(w_{e}^{i},w_{s}^{i+1})\right)t_{\mathbb{Z}^{d+1}}(w_{l}^{e}),\label{eq:EZ}\end{eqnarray}
where the last equality follows by several applications of the strong
Markov property and where we define \begin{eqnarray}
r(w) & = & P_{w^{s}}^{\mathbb{Z}^{d+1}}(X_{\cdot\wedge T_{\mathsf{B}}}=w)=P_{w^{s}+x}(X_{\cdot\wedge T_{\mathsf{B}'}}=w+x)\mbox{ for }w\in\mathcal{T}_{\mathsf{B}},\nonumber \\
s_{\mathbb{Z}^{d+1}}(z,y) & = & P_{z}^{\mathbb{Z}^{d+1}}(H_{\mathsf{A}}<\infty,X_{H_{\mathsf{A}}}=y)\mbox{ for }z\in\partial_{e}\mathsf{B},y\in\partial_{i}\mathsf{A}\mbox{ and }\label{eq:DefOfSZ}\\
t_{\mathbb{Z}^{d+1}}(z) & = & P_{z}^{\mathbb{Z}^{d+1}}(H_{\mathsf{A}}=\infty)\mbox{ for }z\in\partial_{e}\mathsf{B}.\label{eq:DefOfTZ}\end{eqnarray}
Similarly for all $\mathbf{w}=(w_{1},...,w_{l})\in(\mathcal{T}_{\mathsf{B}})^{\times l}$
\begin{eqnarray}
\xi_{E}^{l}(\mathbf{w}) & \overset{\prettyref{eq:PsiEDef},\prettyref{eq:DefOfPhiPhiPrime}}{=} & P_{e_{\mathsf{A}',\tilde{B}}}(\tilde{D}_{l}^{'}<T_{\tilde{B}}<\tilde{R}_{l+1}^{'},X_{(\tilde{R}_{k}^{'}+\cdot)\wedge\tilde{D}_{k}^{'}}-x=w_{k},1\le k\le l\}\nonumber \\
 & = & e_{\mathsf{A}',\tilde{B}}(w_{1}^{s}+x)\left(\prod_{i=1}^{l}r(w_{l})\right)\left(\prod_{i=1}^{l}s_{E}(w_{e}^{i},w_{s}^{i+1})\right)t_{E}(w_{l}^{e}),\label{eq:EE}\end{eqnarray}
where\begin{eqnarray}
s_{E}(z,y) & = & P_{z+x}(H_{\mathsf{A}'}<T_{\tilde{B}},X_{H_{A'}}=y+x)\mbox{ for }z\in\partial_{e}\mathsf{B},y\in\partial_{i}\mathsf{A},\label{eq:DefOfSE}\\
t_{E}(z) & = & P_{z+x}(H_{\mathsf{A}'}>T_{\tilde{B}})\mbox{ for }z\in\partial_{e}\mathsf{B}.\label{eq:DefOfTE}\end{eqnarray}
We will make a factor by factor comparison of the right-hand sides
of \prettyref{eq:EZ} and \prettyref{eq:EE} to obtain \prettyref{eq:XiBound}.
For this we will need the following lemmas:
\begin{lem}
\label{lem:EquilBound}($N\ge c(\varepsilon),x\in\mathbb{T}_{N}\times[-\frac{N}{2},\frac{N}{2}]$)
For all $z\in\partial_{i}\mathsf{A}$\begin{equation}
e_{\mathsf{A}}(z)(1-cN^{-c(\varepsilon)})\le e_{\mathsf{A}^{'},\tilde{B}}(z+x)\le e_{\mathsf{A}}(z)(1+cN^{-c(\varepsilon)}).\label{eq:EquilBound}\end{equation}

\end{lem}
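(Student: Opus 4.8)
We want to compare the $\mathbb{Z}^{d+1}$-equilibrium measure $e_{\mathsf{A}}(z)$ with the relative-to-$\tilde B$ equilibrium measure $e_{\mathsf{A}',\tilde B}(z+x)$ for $z\in\partial_i\mathsf{A}$, where $\mathsf{A}=B(0,N^{1-\varepsilon})$, $\mathsf{A}'=B(x,N^{1-\varepsilon})$. Unwinding the definitions \prettyref{eq:DefOfCapInZd} and \prettyref{eq:DefOfCapInCylinder}, for $z\in\partial_i\mathsf{A}$ we have $e_{\mathsf{A}}(z)=P_z^{\mathbb{Z}^{d+1}}(\tilde H_{\mathsf{A}}=\infty)$ and $e_{\mathsf{A}',\tilde B}(z+x)=P_{z+x}(\tilde H_{\mathsf{A}'}>T_{\tilde B})$. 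Since the walk $X_\cdot$ started at $z+x\in\partial_i\mathsf{A}'$ behaves, before exiting a neighbourhood of $\mathsf{A}'$ well inside $\tilde B$, exactly like $\mathbb{Z}^{d+1}$-walk started from $z\in\partial_i\mathsf{A}$ (translation by $x$ and the fact that $\mathsf{A}'\subset\mathsf{B}'\subset\mathsf{C}'\subset\tilde B$ by \prettyref{eq:ABCInclusion}), the two hitting probabilities differ only through the possibility that the cylinder walk, after leaving $\mathsf{B}'$, returns to $\mathsf{A}'$; in $\mathbb{Z}^{d+1}$ such a return may also happen, but with slightly different probability because of transience versus the killing at $\partial\tilde B$.

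The plan is a one-step decomposition on the first exit from $\mathsf{B}'$ (resp. $\mathsf{B}$). Write
\begin{equation}
e_{\mathsf{A}',\tilde B}(z+x)=1-P_{z+x}(\tilde H_{\mathsf{A}'}<T_{\mathsf{B}'})-P_{z+x}(T_{\mathsf{B}'}<\tilde H_{\mathsf{A}'})\,\sup/\inf\text{-type correction},\nonumber
\end{equation}
but more cleanly: by the strong Markov property at $T_{\mathsf{B}'}$,
\begin{equation}
P_{z+x}(\tilde H_{\mathsf{A}'}<T_{\tilde B})=P_{z+x}(\tilde H_{\mathsf{A}'}<T_{\mathsf{B}'})+E_{z+x}\!\big[T_{\mathsf{B}'}<\tilde H_{\mathsf{A}'};\,P_{X_{T_{\mathsf{B}'}}}(H_{\mathsf{A}'}<T_{\tilde B})\big],\nonumber
\end{equation}
and the same identity in $\mathbb{Z}^{d+1}$ with $T_{\tilde B}$ replaced by $\infty$ and all sets translated back by $x$. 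The first terms on the right are literally equal (translation invariance, and $\mathsf{B}'\subset\tilde B$ means no killing occurs before $T_{\mathsf{B}'}$). For the second terms, the factors $P_{X_{T_{\mathsf{B}'}}}(H_{\mathsf{A}'}<T_{\tilde B})$ and $P_{X_{T_{\mathsf{B}}}}^{\mathbb{Z}^{d+1}}(H_{\mathsf{A}}<\infty)$ are both bounded by $N^{-c_6(\varepsilon)}$ via \prettyref{lem:EscapeIsLikely} (equations \prettyref{eq:EscapeIsLikelyZ}, \prettyref{eq:EscapeIsLikelyE}), so both second terms are $\le cN^{-c_6(\varepsilon)}$ in absolute value; hence $|P_{z+x}(\tilde H_{\mathsf{A}'}<T_{\tilde B})-P_z^{\mathbb{Z}^{d+1}}(\tilde H_{\mathsf{A}}<\infty)|\le cN^{-c(\varepsilon)}$. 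Since both quantities are $\ge c$ uniformly in $z\in\partial_i\mathsf{A}$ and $N\ge c$ (the walk has a uniformly positive chance of escaping $\mathsf{B}$ without returning to $\mathsf{A}$, by the invariance principle applied to a bounded number of steps, or by a Green-function estimate using \prettyref{eq:ZdGreensFuncBound}), passing from the additive error on $1-e$ to a multiplicative error on $e$ costs only a constant, giving \prettyref{eq:EquilBound}.

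The only mild subtlety — and the step I'd expect to need the most care — is verifying that the ``first terms'' really do coincide exactly and that everything is uniform in $z\in\partial_i\mathsf{A}$ and in $x\in\mathbb{T}_N\times[-\tfrac N2,\tfrac N2]$: one must check that $\mathsf{C}'=B(x,N/4)\subset\tilde B$ so that the walk reaching $\partial_e\mathsf{C}'$ has not yet felt the boundary $\partial\tilde B$, and that the relevant hitting probabilities $P_z^{\mathbb{Z}^{d+1}}(\tilde H_{\mathsf{A}}=\infty)$ are bounded below uniformly (which follows from $\sup_{z\in\partial_i\mathsf{A}}P_z^{\mathbb{Z}^{d+1}}(\tilde H_{\mathsf{A}}<T_{\mathsf{B}})$ being bounded away from $1$, itself a consequence of $g_{\mathsf{B}}$ decaying by \prettyref{eq:GreensFunctionBoundInCylinder}-type bounds in $\mathbb{Z}^{d+1}$, or Proposition~1.5.10 of \cite{LawlersLillaGrona}). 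Granting these uniform bounds, the decomposition above is a short computation and \prettyref{lem:EscapeIsLikely} does the rest.
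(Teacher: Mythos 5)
Your first-exit decomposition on $T_{\mathsf{B}'}$ (resp.\ $T_{\mathsf{B}}$) is the right skeleton for a self-contained proof, and indeed it produces an identity in which the first terms cancel exactly (translation invariance plus $\mathsf{B}'\subset\tilde B$). The paper itself gives no details here; it cites Lemma 4.4 of \cite{Sznitman2009-UBonDTofDCandRI} for the upper bound and the argument up to (6.4) of \cite{Sznitman2009-OnDOMofRWonDCbyRI} for the lower bound. So your ambition of an explicit argument is welcome, but the final step of your proposal is broken.

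The gap is your claim that $e_{\mathsf{A}}(z)\ge c$ uniformly in $z\in\partial_i\mathsf{A}$ (equivalently, that $\sup_{z\in\partial_i\mathsf{A}}P_z^{\mathbb{Z}^{d+1}}(\tilde H_{\mathsf{A}}<T_{\mathsf{B}})\le 1-c$). This is false. Since $\mathrm{cap}(B(0,R))\asymp R^{d-1}$ in $\mathbb{Z}^{d+1}$ while $|\partial_i B(0,R)|\asymp R^d$, a typical boundary point of $\mathsf{A}=B(0,N^{1-\varepsilon})$ has $e_{\mathsf{A}}(z)\asymp N^{-(1-\varepsilon)}\to 0$; concretely, from the centre of a face the perpendicular one-dimensional projection is recurrent and forces a return to $\mathsf{A}$ with probability $1-O(R^{-1})$. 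Proposition 1.5.10 of \cite{LawlersLillaGrona} concerns hitting probabilities from points far from the ball and does not give the uniform lower bound you invoke. Consequently an additive error of size $N^{-c(\varepsilon)}$ on the hitting probability cannot be converted into the required \emph{multiplicative} error on $e_{\mathsf{A}}(z)$ by the reasoning you gave.

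Fortunately your own decomposition already yields the multiplicative bound, with no extra input. Writing $p:=P_z^{\mathbb{Z}^{d+1}}(T_{\mathsf{B}}<\tilde H_{\mathsf{A}})=P_{z+x}(T_{\mathsf{B}'}<\tilde H_{\mathsf{A}'})$, the two ``second terms'' are each bounded by $p\cdot N^{-c_6(\varepsilon)}$ using \prettyref{eq:EscapeIsLikelyZ} and \prettyref{eq:EscapeIsLikelyE}, and the same decomposition rearranged gives $e_{\mathsf{A}}(z)\ge p(1-N^{-c_6(\varepsilon)})$, so $p\le e_{\mathsf{A}}(z)/(1-N^{-c_6(\varepsilon)})$. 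Hence the difference $e_{\mathsf{A}',\tilde B}(z+x)-e_{\mathsf{A}}(z)$, which is precisely the difference of the two second terms, satisfies $|e_{\mathsf{A}',\tilde B}(z+x)-e_{\mathsf{A}}(z)|\le 2e_{\mathsf{A}}(z)N^{-c_6(\varepsilon)}$ for $N\ge c(\varepsilon)$. This gives \prettyref{eq:EquilBound} directly. In short: you do not need (and do not have) a uniform lower bound on $e_{\mathsf{A}}$; you need the observation that the error terms are already proportional to $p\asymp e_{\mathsf{A}}(z)$.
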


\begin{lem}
\label{lem:tinequality}($N\ge c(\varepsilon),x\in\mathbb{T}_{N}\times[-\frac{N}{2},\frac{N}{2}]$)
For all $z\in\partial_{e}\mathsf{B}$\textup{\begin{equation}
(1-cN^{-c(\varepsilon)})t_{\mathbb{Z}^{d+1}}(z)\le t_{E}(z)\le(1+cN^{-c(\varepsilon)})t_{\mathbb{Z}^{d+1}}(z).\label{eq:tinequality}\end{equation}
}
\end{lem}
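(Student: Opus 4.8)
The plan is to observe that \prettyref{eq:tinequality} is essentially immediate from \prettyref{lem:EscapeIsLikely}: both $t_{\mathbb{Z}^{d+1}}(z)$ and $t_E(z)$ are complements of return probabilities that have already been shown to be polynomially small, so both are close to $1$ and hence close to each other in the required multiplicative sense. No coupling of the two walks is needed.

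Concretely, I would first record that $z\in\partial_e\mathsf{B}$ holds if and only if $z+x\in\partial_e\mathsf{B}'$, and rewrite the two quantities as complements: by \prettyref{eq:DefOfTZ} and \prettyref{eq:DefOfTE},
\[ t_{\mathbb{Z}^{d+1}}(z)=1-P_z^{\mathbb{Z}^{d+1}}(H_{\mathsf{A}}<\infty) \quad\text{and}\quad t_E(z)=1-P_{z+x}(H_{\mathsf{A}'}<T_{\tilde{B}}). \]
Next, \prettyref{eq:EscapeIsLikelyZ} bounds the first return probability by $N^{-c_6(\varepsilon)}$, and \prettyref{eq:EscapeIsLikelyE} (applied at $z+x\in\partial_e\mathsf{B}'$) bounds the second by $N^{-c_6(\varepsilon)}$ as well. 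Hence both $t_{\mathbb{Z}^{d+1}}(z)$ and $t_E(z)$ lie in the interval $[1-N^{-c_6(\varepsilon)},1]$.

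Finally, for $N\ge c(\varepsilon)$ large enough that $N^{-c_6(\varepsilon)}\le\tfrac12$, any two numbers $a,b\in[1-N^{-c_6(\varepsilon)},1]$ satisfy $|a-b|\le N^{-c_6(\varepsilon)}$ and $b\ge\tfrac12$, so $|a/b-1|\le 2N^{-c_6(\varepsilon)}$. Taking $a=t_E(z)$ and $b=t_{\mathbb{Z}^{d+1}}(z)$ yields \prettyref{eq:tinequality}, with the exponent $c(\varepsilon)$ in the statement taken equal to $c_6(\varepsilon)$ and the constant $c$ taken to be (say) $2$.

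There is no genuine obstacle here — the only mild points to watch are that the exponent $c(\varepsilon)$ in the conclusion must be chosen no larger than $c_6(\varepsilon)$, and that the threshold $N\ge c(\varepsilon)$ must be large enough to make $N^{-c_6(\varepsilon)}\le\tfrac12$, which is exactly what legitimizes the passage from the additive bound $|t_E(z)-t_{\mathbb{Z}^{d+1}}(z)|\le N^{-c_6(\varepsilon)}$ to the multiplicative bound claimed in \prettyref{eq:tinequality}.
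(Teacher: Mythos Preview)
Your argument is correct and in fact more direct than the paper's. Since \prettyref{lem:EscapeIsLikely} has already been established, both $t_{\mathbb{Z}^{d+1}}(z)$ and $t_E(z)$ lie in $[1-N^{-c_6(\varepsilon)},1]$, and the multiplicative comparison follows immediately as you say.

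The paper instead introduces an intermediate quantity $t_{\mathsf{C}}(z)=P_z^{\mathbb{Z}^{d+1}}(H_{\mathsf{A}}>T_{\mathsf{C}})=P_{z+x}(H_{\mathsf{A}'}>T_{\mathsf{C}'})$ and compares each of $t_{\mathbb{Z}^{d+1}}$ and $t_E$ to $t_{\mathsf{C}}$ via a last-exit decomposition through $\partial_e\mathsf{C}$ (respectively $\partial_e\mathsf{C}'$). This is more work than is needed here, but it mirrors exactly the structure of the proof of \prettyref{lem:sinequality}, where an intermediate $s_{\mathsf{C}}$ is genuinely required: the quantities $s_{\mathbb{Z}^{d+1}}(z,y)$ and $s_E(z,y)$ are small (not close to~$1$), so one cannot shortcut the comparison by saying both are near a common constant. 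The paper's approach thus buys uniformity of method across the two lemmas; your approach buys brevity for this particular lemma by exploiting the special feature that both escape probabilities are close to~$1$.
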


\begin{lem}
\label{lem:sinequality}($N\ge c(\varepsilon),x\in\mathbb{T}_{N}\times[-\frac{N}{2},\frac{N}{2}]$)For
all $z\in\partial_{e}\mathsf{B}$ and $y\in\partial_{i}\mathsf{A}$\begin{equation}
(1-cN^{-c(\varepsilon)})s_{\mathbb{Z}^{d+1}}(z,y)\le s_{E}(z,y)\le(1+cN^{-c(\varepsilon)})s_{\mathbb{Z}^{d+1}}(z,y).\label{eq:sinequality}\end{equation}

\end{lem}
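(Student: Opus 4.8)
To prove Lemma~\ref{lem:sinequality} the plan is to split each of the two hitting probabilities according to whether the walk reaches the small box before or after it first exits the large box $\mathsf{C}$ (resp. $\mathsf{C}'$), to observe that the ``before exiting'' contributions are \emph{equal} because $\mathsf{C}$ and $\mathsf{C}'$ are isometric as graphs, and then to show that the ``after exiting'' contributions are negligible compared with the main term. Concretely, I would write $s_{\mathbb{Z}^{d+1}}(z,y)=\hat{s}(z,y)+R_{\mathbb{Z}}(z,y)$ with $\hat{s}(z,y)=P_{z}^{\mathbb{Z}^{d+1}}(H_{\mathsf{A}}<T_{\mathsf{C}},\,X_{H_{\mathsf{A}}}=y)$ and $R_{\mathbb{Z}}(z,y)=P_{z}^{\mathbb{Z}^{d+1}}(T_{\mathsf{C}}\le H_{\mathsf{A}}<\infty,\,X_{H_{\mathsf{A}}}=y)$, and likewise $s_{E}(z,y)=\hat{s}_{E}(z,y)+R_{E}(z,y)$ with $\hat{s}_{E}(z,y)=P_{z+x}(H_{\mathsf{A}'}<T_{\mathsf{C}'},\,X_{H_{\mathsf{A}'}}=y+x)$ and $R_{E}(z,y)=P_{z+x}(T_{\mathsf{C}'}\le H_{\mathsf{A}'}<T_{\tilde{B}},\,X_{H_{\mathsf{A}'}}=y+x)$. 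For $N\ge c(\varepsilon)$ the translation $w\mapsto w+x$ is a graph isomorphism from $\mathsf{C}=B(0,N/4)$ onto $\mathsf{C}'=B(x,N/4)$ --- no identification occurs in $\mathbb{T}_{N}$ since $N/4<N/2$, and $\mathsf{C}'\subset\tilde{B}$ since $h_{N}>N$ --- carrying $\mathsf{A}$ to $\mathsf{A}'$, $\partial_{i}\mathsf{A}$ to $\partial_{i}\mathsf{A}'$ and $\partial_{e}\mathsf{B}$ to $\partial_{e}\mathsf{B}'$, and under it the walk killed on exiting $\mathsf{C}$ has the (translated) law of the walk killed on exiting $\mathsf{C}'$; since the event defining $\hat{s}$ depends only on the killed path, this yields $\hat{s}_{E}(z,y)=\hat{s}(z,y)$. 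Hence $|s_{E}(z,y)-s_{\mathbb{Z}^{d+1}}(z,y)|=|R_{E}(z,y)-R_{\mathbb{Z}}(z,y)|\le R_{E}(z,y)+R_{\mathbb{Z}}(z,y)$, and since $s_{\mathbb{Z}^{d+1}}(z,y)\ge\hat{s}(z,y)$ it suffices to prove $R_{\mathbb{Z}}(z,y)\le cN^{-c(\varepsilon)}\hat{s}(z,y)$ and $R_{E}(z,y)\le cN^{-c(\varepsilon)}\hat{s}(z,y)$.

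The bulk of the work is these two error bounds, which I would deduce from the classical fact that a random walk hitting a finite set of diameter $D$ from distance at least $2D$ enters it with a distribution comparable --- up to constants depending only on $d$ --- to the harmonic measure from infinity $\mathrm{hm}_{\mathsf{A}}(y):=e_{\mathsf{A}}(y)/\mathrm{cap}(\mathsf{A})$ (see \cite{LawlersLillaGrona}), together with the escape estimates \eqref{eq:EscapeIsLikelyCB} recorded in the Remark. Put $\rho:=\sup_{v\in\partial_{e}\mathsf{B}}P_{v}^{\mathbb{Z}^{d+1}}(H_{\mathsf{A}}<\infty)$ and note that $d(v,\mathsf{A})\ge N^{1-\varepsilon/2}-N^{1-\varepsilon}$ for $v\in\partial_{e}\mathsf{B}$, which for $N\ge c(\varepsilon)$ exceeds twice the diameter of $\mathsf{A}$. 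For $R_{\mathbb{Z}}$: applying the strong Markov property at $T_{\mathsf{C}}$ and using that any path from $\partial_{e}\mathsf{C}$ to $\mathsf{A}\subset\mathsf{B}$ must visit $\partial_{e}\mathsf{B}$, one gets
\[
R_{\mathbb{Z}}(z,y)\le\Big(\sup_{w\in\partial_{e}\mathsf{C}}P_{w}^{\mathbb{Z}^{d+1}}(H_{\partial_{e}\mathsf{B}}<\infty)\Big)\,\sup_{v\in\partial_{e}\mathsf{B}}P_{v}^{\mathbb{Z}^{d+1}}(H_{\mathsf{A}}<\infty,\,X_{H_{\mathsf{A}}}=y)\le cN^{-c(\varepsilon)}\,\rho\,\mathrm{hm}_{\mathsf{A}}(y),
\]
by \eqref{eq:EscapeIsLikelyCB} and the harmonic-measure comparison. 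Conversely, chaining Harnack's inequality around the sphere $\partial_{e}\mathsf{B}$ (on which $v\mapsto P_{v}^{\mathbb{Z}^{d+1}}(H_{\mathsf{A}}<\infty)$ is a positive harmonic function off $\mathsf{A}$) gives $P_{z}^{\mathbb{Z}^{d+1}}(H_{\mathsf{A}}<\infty)\ge c\rho$, whence the lower harmonic-measure comparison yields $P_{z}^{\mathbb{Z}^{d+1}}(H_{\mathsf{A}}<\infty,\,X_{H_{\mathsf{A}}}=y)\ge c\,\rho\,\mathrm{hm}_{\mathsf{A}}(y)$; combining this with the bound on $R_{\mathbb{Z}}$ and the identity $\hat{s}(z,y)=P_{z}^{\mathbb{Z}^{d+1}}(H_{\mathsf{A}}<\infty,\,X_{H_{\mathsf{A}}}=y)-R_{\mathbb{Z}}(z,y)$ gives $\hat{s}(z,y)\ge c\,\rho\,\mathrm{hm}_{\mathsf{A}}(y)$ for $N\ge c(\varepsilon)$, and the first error bound follows.

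For $R_{E}$ the argument is parallel. The strong Markov property at $T_{\mathsf{C}'}$ and the fact that a path from $\partial_{e}\mathsf{C}'$ to $\mathsf{A}'$ visits $\partial_{e}\mathsf{B}'$ give
\[
R_{E}(z,y)\le\Big(\sup_{w\in\partial_{e}\mathsf{C}'}P_{w}(H_{\partial_{e}\mathsf{B}'}<T_{\tilde{B}})\Big)\,\sup_{v\in\partial_{e}\mathsf{B}'}P_{v}(H_{\mathsf{A}'}<T_{\tilde{B}},\,X_{H_{\mathsf{A}'}}=y+x),
\]
and the first factor is $\le N^{-c(\varepsilon)}$ by \eqref{eq:EscapeIsLikelyCB}. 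For the second factor I would decompose at the last visit to $\partial_{i}\mathsf{B}'$ before $H_{\mathsf{A}'}$: after that time the walk remains in $\mathsf{B}'\subset\mathsf{C}'$ and hence coincides with a $\mathbb{Z}^{d+1}$ walk, so the harmonic-measure comparison applies and yields $\sup_{v\in\partial_{e}\mathsf{B}'}P_{v}(H_{\mathsf{A}'}<T_{\tilde{B}},\,X_{H_{\mathsf{A}'}}=y+x)\le c\,\mathrm{hm}_{\mathsf{A}}(y)\,M$, where $M:=\sup_{v\in\partial_{e}\mathsf{B}'}P_{v}(H_{\mathsf{A}'}<T_{\tilde{B}})$. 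Splitting this last probability at $T_{\mathsf{C}'}$, using the isomorphism $\mathsf{C}\cong\mathsf{C}'$ to bound the ``$H_{\mathsf{A}'}<T_{\mathsf{C}'}$'' part by $\rho$, and using \eqref{eq:EscapeIsLikelyCB} for the complementary part, one gets $M\le\rho+N^{-c(\varepsilon)}M$, so $M\le2\rho$ for $N\ge c(\varepsilon)$; therefore $R_{E}(z,y)\le cN^{-c(\varepsilon)}\rho\,\mathrm{hm}_{\mathsf{A}}(y)\le cN^{-c(\varepsilon)}\hat{s}(z,y)$ by the lower bound on $\hat{s}$ obtained above. Together with the reduction of the first paragraph this gives \eqref{eq:sinequality}. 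I expect the main difficulty to be precisely this error bookkeeping: one must verify that a ``round trip'' from the local scale out to the intermediate sphere $\partial_{e}\mathsf{C}$ (resp. $\partial_{e}\mathsf{C}'$) and back into $\mathsf{A}$ carries a genuine negative power of $N$ \emph{relative to} the main term $\hat{s}(z,y)$ --- this is where the extra decay of \eqref{eq:EscapeIsLikelyCB}, beyond Lemma~\ref{lem:EscapeIsLikely}, is consumed --- and that this gain is not lost in the passages between $\partial_{e}\mathsf{B}$, $\partial_{e}\mathsf{C}$ and the equilibrium measure of $\mathsf{A}$, which is what forces the Harnack-type comparisons into the argument.
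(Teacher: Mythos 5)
Your decomposition is the same as the paper's: set $s_{\mathsf{C}}(z,y)=P_{z}^{\mathbb{Z}^{d+1}}(H_{\mathsf{A}}<T_{\mathsf{C}},X_{H_{\mathsf{A}}}=y)=P_{z+x}(H_{\mathsf{A}'}<T_{\mathsf{C}'},X_{H_{\mathsf{A}'}}=y+x)$ (your $\hat{s}$), observe $s_{\mathsf{C}}\le s_{\mathbb{Z}^{d+1}},s_{E}$, and reduce \eqref{eq:sinequality} to showing the two remainders $R_{\mathbb{Z}},R_{E}$ are $O(N^{-c(\varepsilon)})$ times the main term; and, as in the paper, each remainder is bounded by a round-trip factor from \eqref{eq:EscapeIsLikelyCB} times a sup over $\partial_{e}\mathsf{B}$ (resp.\ $\partial_{e}\mathsf{B}'$). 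Where you diverge is in comparing that sup to the main term: the paper simply notes that $v\mapsto P_{v}^{\mathbb{Z}^{d+1}}(H_{\mathsf{A}}<\infty,X_{H_{\mathsf{A}}}=y)$ is positive harmonic off $\mathsf{A}$ and that $v\mapsto P_{v}(H_{\mathsf{A}'}<T_{\tilde{B}},X_{H_{\mathsf{A}'}}=y+x)$ is positive harmonic on $\tilde{B}\setminus\mathsf{A}'\supset\mathsf{C}'\setminus\mathsf{A}'$, which is identified with a subset of $\mathbb{Z}^{d+1}$, so one application of Harnack on $\partial_{e}\mathsf{B}$ (resp.\ $\partial_{e}\mathsf{B}'$) gives $\sup\le c\inf$ directly for both target functions. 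You instead route through harmonic measure from infinity $\mathrm{hm}_{\mathsf{A}}$ and compare $\rho$, $M$ separately, and in the cylinder case you invoke a last-exit decomposition at $\partial_{i}\mathsf{B}'$ to transport the $\mathbb{Z}^{d+1}$ harmonic-measure estimate into $E_{N}$. That last-exit step is the one place you should be more careful: the decomposition is not just a strong Markov factorization, and as written the conclusion $\sup_{v\in\partial_{e}\mathsf{B}'}P_{v}(H_{\mathsf{A}'}<T_{\tilde{B}},X_{H_{\mathsf{A}'}}=y+x)\le c\,\mathrm{hm}_{\mathsf{A}}(y)\,M$ would need a genuine last-exit/Green-function identity rather than the informal ``after that time the walk remains in $\mathsf{B}'$''. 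Compare that with the paper's one-line fix (Harnack on $\mathsf{C}'\setminus\mathsf{A}'$ viewed as a piece of $\mathbb{Z}^{d+1}$), which sidesteps harmonic measure altogether and is both shorter and cleaner. Your extra machinery does buy you sharper information — a comparison of hitting distributions to $\mathrm{hm}_{\mathsf{A}}$ rather than merely sup-to-inf control — but none of that is needed here, since \eqref{eq:sinequality} only asks for a multiplicative $1\pm cN^{-c(\varepsilon)}$ comparison.
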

Before proving these lemmas we note that by comparing \prettyref{eq:EZ}
and \prettyref{eq:EE} and applying \prettyref{eq:EquilBound}, \prettyref{eq:tinequality}
and \prettyref{eq:sinequality} we get \prettyref{eq:XiBound}. The
proof of \prettyref{pro:XiBound} is thus done once we have proved
Lemmas \ref{lem:EquilBound}, \ref{lem:tinequality} and \ref{lem:sinequality}.
We start with \prettyref{lem:EquilBound}:
\begin{proof}[Proof of \prettyref{lem:EquilBound}]
The upper bound follows by the argument in the proof of Lemma 4.4
of \cite{Sznitman2009-UBonDTofDCandRI} (that lemma proves the upper
bound with $B(0,2[\frac{N^{1-\varepsilon}}{8}])$ in the place of
$\mathsf{A}'$, but the special form of the radius and that the centre
is at $0$ plays essentially no role in the argument). The lower bound
follows by the argument leading up to (6.4) of \cite{Sznitman2009-OnDOMofRWonDCbyRI}
(that formula is the upper bound in the case $x=0$ , but similarly
the fact that $x=0$ plays no essential role in the argument). 
\end{proof}
We now continue with the proof of \prettyref{pro:XiBound} by proving
\prettyref{lem:tinequality}.
\begin{proof}[Proof of \prettyref{lem:tinequality}]
We will compare $t_{E}(z)$ and $t_{\mathbb{Z}^{d+1}}(z)$ with\begin{equation}
t_{\mathsf{C}}(z)=P_{z}^{\mathbb{Z}^{d+1}}(H_{\mathsf{A}}>T_{\mathsf{C}})\overset{\prettyref{eq:DefIfAPrime},\prettyref{eq:DefOfBCBPrimeCPrime}}{=}P_{z+x}(H_{\mathsf{A}'}>T_{\mathsf{C}'})\mbox{ for }z\in\partial_{e}\mathsf{B}.\label{eq:DefOfTC}\end{equation}
It is obvious from \prettyref{eq:DefOfTZ} that $t_{\mathbb{Z}^{d+1}}(z)\le t_{\mathsf{C}}(z)$,
so to show the first inequality of \prettyref{eq:tinequality} it
suffices to show $(1-cN^{-c(\varepsilon)})t_{\mathsf{C}}(z)\le t_{E}(z)$.
But this follows by the following upper bound on $t_{\mathsf{C}}(z)$:
\begin{eqnarray*}
t_{\mathsf{C}}(z) & \overset{\prettyref{eq:DefOfTC}}{=} & P_{z+x}(H_{\mathsf{A}'}>T_{\tilde{B}})+P_{z}(T_{\tilde{B}}>H_{\mathsf{A}'}>T_{\mathsf{C}'})\\
 & \overset{\prettyref{eq:DefOfTE},\prettyref{eq:DefOfTC}}{\le} & t_{E}(a)+t_{\mathsf{C}}(z)\sup_{z'\in\partial_{e}\mathsf{C}'}P_{z'}(H_{\mathsf{A}'}<T_{\tilde{B}})\\
 & \overset{\prettyref{eq:EscapeIsLikelyE},\mathsf{B}'\subset\mathsf{C}'}{\le} & t_{E}(z)+t_{\mathsf{C}}(z)N^{-c(\varepsilon)}.\end{eqnarray*}
To show the second inequality of \prettyref{eq:tinequality} note
that from \prettyref{eq:DefOfTE}, \prettyref{eq:DefOfTC} and $\mathsf{C}\subset\tilde{B}$
it is obvious that $t_{E}(z)\le t_{\mathsf{C}}(z)$, so it suffices
to show $t_{E}(z)\le(1+cN^{-c(\varepsilon)})t_{\mathbb{Z}^{d+1}}(z)$.
But this follows from by the following upper bound on $t_{\mathsf{C}}(z)$:\begin{eqnarray*}
t_{\mathsf{C}}(z) & \overset{\prettyref{eq:DefOfTC}}{=} & P_{z}^{\mathbb{Z}^{d+1}}(H_{\mathsf{A}}=\infty)+P_{z}^{\mathbb{Z}^{d+1}}(\infty>H_{\mathsf{A}}>T_{\mathsf{C}})\\
 & \overset{\prettyref{eq:DefOfTZ},\prettyref{eq:DefOfTC}}{\le} & t_{\mathbb{Z}^{d+1}}(z)+t_{\mathsf{C}}(z)\sup_{z'\in\partial_{e}\mathsf{C}}P_{z'}^{\mathbb{Z}^{d+1}}(H_{\mathsf{A}}<\infty)\\
 & \overset{\prettyref{eq:EscapeIsLikelyZ},\mathsf{B}\subset\mathsf{C}}{\le} & t_{\mathbb{Z}^{d+1}}(z)+t_{\mathsf{C}}(z)cN^{-c(\varepsilon)}.\end{eqnarray*}
This completes the proof of \prettyref{lem:tinequality}.
\end{proof}
Finally we prove \prettyref{lem:sinequality}; the argument is close
to that of Lemma 5.3 in \cite{Sznitman2009-OnDOMofRWonDCbyRI} and
Lemma 3.2 of \cite{Sznitman2009-UBonDTofDCandRI}.
\begin{proof}[Proof of \prettyref{lem:sinequality}]
For $z\in\partial_{e}\mathsf{B}$, $y\in\partial_{i}\mathsf{A}$
we will compare $s_{\mathbb{Z}^{d+1}}(z,y)$ and $s_{E}(z,y)$ with\begin{equation}
s_{\mathsf{C}}(z,y)=P_{z}^{\mathbb{Z}^{d+1}}(H_{\mathsf{A}}<T_{\mathsf{C}},X_{H_{\mathsf{A}}}=y)\overset{\prettyref{eq:DefIfAPrime},\prettyref{eq:DefOfBCBPrimeCPrime}}{=}P_{z+x}(H_{\mathsf{A}'}<T_{\mathsf{C}'},X_{H_{\mathsf{A}'}}=y+x).\label{eq:DefOfSC}\end{equation}
Recalling \prettyref{eq:DefOfSZ} and using the decomposition $W=\{T_{\mathsf{C}}<H_{\mathsf{A}}\}\cup\{H_{\mathsf{A}}<T_{\mathsf{C}}\}$,
and similarly recalling \prettyref{eq:DefOfSE} and using the decomposition
$\mathcal{T}_{\tilde{B}}=\{T_{\mathsf{C}'}<H_{\mathsf{A}'}\}\cup\{H_{\mathsf{A}'}<T_{\mathsf{C}'}\}$,
we see that\begin{equation}
\begin{array}{ccrcl}
s_{\mathsf{C}}(z,y) & \le & s_{\mathbb{Z}^{d+1}}(z,y) & = & s_{\mathsf{C}}(z,y)+P_{z}^{\mathbb{Z}^{d+1}}(T_{\mathsf{C}}<H_{\mathsf{A}}<\infty,X_{H_{\mathsf{A}}}=y)\mbox{ and }\\
s_{\mathsf{C}}(z,y) & \le & s_{E}(z,y) & = & s_{\mathsf{C}}(z,y)+P_{z+x}(T_{\mathsf{C}'}<H_{\mathsf{A}'}<T_{\tilde{B}},X_{H_{\mathsf{A}'}}=y).\end{array}\label{eq:SComparison}\end{equation}
To prove \prettyref{eq:sinequality} it suffices to show\begin{eqnarray}
P_{z}^{\mathbb{Z}^{d+1}}(T_{\mathsf{C}}<H_{\mathsf{A}}<\infty,X_{H_{\mathsf{A}}}=y) & \le & cN^{-c(\varepsilon)}s_{\mathbb{Z}^{d+1}}(z,y)\mbox{ and }\label{eq:SuffToShowSZ}\\
P_{z+x}(T_{\mathsf{C}'}<H_{\mathsf{A}'}<T_{\tilde{B}},X_{H_{\mathsf{A}'}}=y) & \le & cN^{-c(\varepsilon)}s_{E}(z,y),\label{eq:SuffToShowSE}\end{eqnarray}
since then $s_{\mathbb{Z}^{d+1}}(z,y)(1-cN^{-c(\varepsilon)})\le s_{E}(z,y)$
by using the upper right-hand side of \prettyref{eq:SComparison},
\prettyref{eq:SuffToShowSZ} and then the lower left-hand side of
\prettyref{eq:SComparison}, and similarly $s_{E}(z,y)(1-cN^{-c(\varepsilon)})\le s_{\mathbb{Z}^{d+1}}(z,y)$.

We start with \prettyref{eq:SuffToShowSZ}. We have \begin{eqnarray}
\begin{array}{rl}
\sup_{z\in\partial_{e}\mathsf{B}} & P_{z}^{\mathbb{Z}^{d+1}}(T_{\mathsf{C}}<H_{\mathsf{A}}<\infty,X_{H_{\mathsf{A}}}=y)\\
\le & \sup_{z'\in\partial_{e}\mathsf{C}}P_{z'}^{\mathbb{Z}^{d+1}}(H_{\partial_{e}\mathsf{B}}<\infty)\sup_{z''\in\partial_{e}\mathsf{B}}P_{z''}^{\mathbb{Z}^{d+1}}(H_{\mathsf{A}}<\infty,X_{H_{\mathsf{A}}}=y)\\
\overset{\prettyref{eq:EscapeIsLikelyCB},\prettyref{eq:DefOfSZ}}{\le} & cN^{-c(\varepsilon)}\sup_{z''\in\partial_{e}\mathsf{B}}s_{\mathbb{Z}^{d+1}}(z'',y).\end{array}\label{eq:asdasd}\end{eqnarray}
Note that the map $z\rightarrow P_{z}^{\mathbb{Z}^{d+1}}(H_{\mathsf{A}}<\infty,X_{H_{\mathsf{A}}}=y)$
is positive harmonic on $\mathbb{Z}^{d+1}\backslash\mathsf{A}$ so
that by Harnack's inequality (Theorem 1.7.2 p. 42 of \cite{LawlersLillaGrona})
and a standard covering argument we get $\sup_{z''\in\partial_{e}\mathsf{B}}s_{\mathbb{Z}^{d+1}}(z'',y)\le c\inf_{z''\in\partial_{e}\mathsf{B}}s_{\mathbb{Z}^{d+1}}(z'',y)$.
Combining this with \prettyref{eq:asdasd} we get \prettyref{eq:SuffToShowSZ}.

It remains to show \prettyref{eq:SuffToShowSE}. Similarly to \prettyref{eq:asdasd}
we have: \begin{equation}
\sup_{z\in\partial_{e}\mathsf{B}}P_{z+x}(T_{\mathsf{C}'}<H_{\mathsf{A}'}<T_{\tilde{B}},X_{H_{\mathsf{A}'}}=y)\overset{\prettyref{eq:EscapeIsLikelyCB},\prettyref{eq:DefOfSE}}{\le}cN^{-c(\varepsilon)}\sup_{z\in\partial_{e}\mathsf{B}}s_{E}(z,y).\label{eq:asdasd2}\end{equation}
Now the map $z\rightarrow P_{z}(H_{\mathsf{A}'}<T_{\tilde{B}},X_{H_{\mathsf{A}'}}=y)$
is positive harmonic on $\tilde{B}\backslash\mathsf{A}'\supset\mathsf{C}'\backslash\mathsf{A}'$.
Since $\mathsf{C}'\backslash\mathsf{A}'$ can be identified as a subset
of $\mathbb{Z}^{d+1}$ we have similarly to above by Harnack's inequality
that $\sup_{z\in\partial_{e}\mathsf{B}}s_{E}(z,y)\le c\inf_{z\in\partial_{e}\mathsf{B}}s_{E}(z,y)$.
Combining this with \prettyref{eq:asdasd2} we get \prettyref{eq:SuffToShowSE}.
This completes the proof of \prettyref{lem:sinequality}.
\end{proof}
This also completes the proof of \prettyref{pro:XiBound}. 
\end{proof}
All the steps in the proof of \prettyref{thm:GumbelForLocTime} and
its corollaries have now been completed. We conclude with an open
question and a comment on the use of \prettyref{thm:CouplingManyBoxes}
as a {}``transfer mechanism''.
\begin{rem}
\label{rem:EndRemark}(1) The Gumbel distribution has been proven
to arise as a distributional limit for rescaled cover times of certain
finite graphs (see \cite{MatthewsCoveringProbsForMCs,DevroyeSbihiRWonHighlySymmGraphs}).
One important graph in the study of cover times for which a Gumbel
distributional limit has been conjectured (see Chapter 7, Section
2.2, p. 23 of \cite{aldous-fill:book}), but not proved, is the discrete
torus $\mathbb{T}_{N}=(\mathbb{Z}/N\mathbb{Z})^{d},d\ge3$. It is
an open question whether the methods of the proof of \prettyref{thm:GumbelForLocTime}
could be used to prove that conjecture. A strategy could be to reduce
it to the statement \prettyref{eq:QuantCovLevResult-1} with the help
of a coupling with random interlacements. For bounded $u$ and fixed
$\delta$ a coupling of random interlacements and the trace of random
walk in the torus (in \emph{one} local box) has been produced in \cite{TeixeiraWindischOnTheFrag}.

(2) A coupling of random walk with random interlacements can be used
as a {}``transfer mechanism'' to reduce the proofs of properties
of random walk in the cylinder to proofs of properties purely in term
of random interlacements (as we reduced \prettyref{thm:GumbelForLocTime}
to \prettyref{eq:QuantCovLevResult-1}). Sometimes such transfers
require the use of both inclusions (cf. \prettyref{eq:CouplingManyBoxesEvent})
simultaneously and therefore need a coupling of random walk with \emph{joint}
random interlacements, such as \prettyref{thm:CouplingManyBoxes}.
An example arises when using the random interlacement concept of strong
supercriticality of levels $u>0$ (see Definition 2.4 of \cite{TeixeiraWindischOnTheFrag})
to {}``patch up'' components of the vacant set $(X(0,n))^{c},n\ge1$,
in various local boxes where the walk is coupled with random interlacements
(as was done in the case of the torus in Proposition 2.7, see also
Lemma 2.6, of \cite{TeixeiraWindischOnTheFrag}). For instance if
one could prove that all $u<u^{\star}$ are strongly supercritical
(where $u^{\star}$ is the critical parameter of interlacement percolation,
see (0.13) of \cite{Sznitman2007} and Remark 2.5 (2) of \cite{TeixeiraWindischOnTheFrag})
then \prettyref{thm:CouplingManyBoxes} would be the kind of coupling
that could be used to derive from this, using the aforementioned {}``patching'',
the {}``correct'' lower bound on the disconnection time $T_{N}$
of the cylinder, (and thus improve on Theorem 7.3 of \cite{Sznitman2009-OnDOMofRWonDCbyRI},
see also Remark 7.5 (2) of \cite{Sznitman2009-OnDOMofRWonDCbyRI}).

\qed
\end{rem}

\end{document}